\def\squiggly{\bgroup \markoverwith{\textcolor{red}{\lower3.5\p@\hbox{\sixly \char58}}}\ULon}
\newtheorem{theorem}[subsection]{Theorem}
\newtheorem{proposition}[subsection]{Proposition}
\newtheorem{lemma}[subsection]{Lemma}
\newtheorem{corollary}[subsection]{Corollary}
\newtheorem{definition}[subsection]{Definition}
\newtheorem{claim}[subsection]{Claim}
\newtheorem{remark}[subsection]{Remark}
\def\loccit{\emph{loc. cit. }}
\def\fg{{\mathfrak{g}}}
\def\fsl{{\mathfrak{sl}}}
\def\fgl{{\mathfrak{gl}}}
\def\hgl{{\widehat{\fgl}}}
\def\BC{{\mathbb{C}}}
\def\BN{{\mathbb{N}}}
\def\BP{{\mathbb{P}}}
\def\BQ{{\mathbb{Q}}}
\def\BZ{{\mathbb{Z}}}
\def\woo{\widehat{\otimes}}
\def\CA{{\mathcal{A}}}
\def\CB{{\mathcal{B}}}
\def\DD{{\mathcal{D}}}
\def\CE{{\mathcal{E}}}
\def\CS{{\mathcal{S}}}
\def\CT{{\mathcal{T}}}
\newcommand\CV{{\mathcal{V}}}
\def\tCA{\widetilde{\CA}}
\def\tCB{\widetilde{\CB}}
\def\tR{\widetilde{R}}
\def\oR{\overline{R}}
\def\ph{\varphi}
\def\coop{{\textrm{coop}}}
\def\e{\varepsilon}
\def\vs{\varsigma}
\def\and{\textrm{ }\&\textrm{ }}
\def\sym{\textrm{Sym}}
\def\esym{\emph{Sym}}
\def\eop{\emph{op}}
\def\op{\text{op}}
\def\Tr{\textrm{Tr}}
\def\sym{\textrm{Sym}}
\def\nn{{{\BN}}^n}
\def\zz{{{{\mathbb{Z}}}^n}}
\def\su{{U_q(\dot{\fsl}_n)}}
\def\uui{{U_q(\dot{\fgl}_1)}}
\def\uuip{{U_q^+(\dot{\fgl}_1)}}
\def\uuig{{U_q^\geq(\dot{\fgl}_1)}}
\def\uuil{{U_q^\leq(\dot{\fgl}_1)}}
\def\uu{{U_q(\dot{\fgl}_n)}}
\def\uupm{{U_q^\pm(\dot{\fgl}_n)}}
\def\uug{{U_q^\geq(\dot{\fgl}_n)}}
\def\uul{{U_q^\leq(\dot{\fgl}_n)}}
\def\UU{{U_{q,\oq}(\ddot{\fgl}_n)}}
\def\UUp{{U_{q,\oq}^+(\ddot{\fgl}_n)}}
\def\UUm{{U_{q,\oq}^-(\ddot{\fgl}_n)}}
\def\UUpm{{U_{q,\oq}^\pm(\ddot{\fgl}_n)}}
\def\UUg{{U_{q,\oq}^\geq(\ddot{\fgl}_n)}}
\def\UUl{{U_{q,\oq}^\leq(\ddot{\fgl}_n)}}
\def\UUup{{U_{q,\oq}^\uparrow(\ddot{\fgl}_n)}}
\def\UUdown{{U_{q,\oq}^\downarrow(\ddot{\fgl}_n)}}
\def\UUleft{{U_{q,\oq}^\leftarrow(\ddot{\fgl}_n)}}
\def\UUright{{U_{q,\oq}^\rightarrow(\ddot{\fgl}_n)}}
\def\tUUup{{\widetilde{U}_{q,\oq}^\uparrow(\ddot{\fgl}_n)}}
\def\tUUdown{{\widetilde{U}_{q,\oq}^\downarrow(\ddot{\fgl}_n)}}
\def\tUUleft{{\widetilde{U}_{q,\oq}^\leftarrow(\ddot{\fgl}_n)}}
\def\tUUright{{\widetilde{U}_{q,\oq}^\rightarrow(\ddot{\fgl}_n)}}
\def\bd{{\mathbf{d}}}
\def\be{{\mathbf{e}}}
\def\bs{{\boldsymbol{\vs}}}
\def\bde{{\boldsymbol{\delta}}}
\def\oq{{\overline{q}}}
\def\bari{\bar{i}}
\def\barj{\bar{j}}
\def\bE{\bar{E}}
\def\bF{\bar{F}}
\def\End{\text{End}}
\def\Res{\text{Res}}
\def\Aut{\text{Aut}}
\def\eRes{\emph{Res}}
\def\eEnd{\emph{End}}
\def\ebig{\emph{big}}
\def\tCA{\widetilde{\CA}}
\def\zzz{\frac {\BZ^2}{(n,n)\BZ}}
\def\slope{\text{slope}}
\def\bF{\bar{F}}
\def\bQ{\bar{Q}}
\def\barb{\bar{b}}
\def\barc{\bar{c}}
\def\barf{\bar{f}}
\def\slz{SL_2(\BZ)}
\def\hdeg{\text{hdeg }}
\def\vdeg{\text{vdeg }}
\def\lhs{\text{LHS}}
\def\rhs{\text{RHS}}
\def\oo{\overline}
\def\mtd{\text{m.i.d. }}
\def\fff{\BQ(q,\oq^{\frac 1n})}
\begin{document}

\title[A tale of two shuffle algebras]{\large{\textbf{A tale of two shuffle algebras}}}
\author[Andrei Negu\cb t]{Andrei Negu\cb t}
\address{MIT, Department of Mathematics, Cambridge, MA, USA}
\address{Simion Stoilow Institute of Mathematics, Bucharest, Romania}
\email{andrei.negut@gmail.com}

\maketitle


\begin{abstract} As a quantum affinization, the quantum toroidal algebra $\UU$ is defined in terms of its ``left" and ``right" halves, which both admit shuffle algebra presentations (\cite{E, FO}). In the present paper, we take an orthogonal viewpoint, and give shuffle algebra presentations for the ``top" and ``bottom" halves instead, starting from the evaluation representation $\uu \curvearrowright \BC^n(z)$ and its usual $R$--matrix $R(z) \in \End(\BC^n \otimes \BC^n)(z)$ (see \cite{FRT}). An upshot of this construction is a new topological coproduct on $\UU$ which extends the Drinfeld-Jimbo coproduct on the horizontal subalgebra $\uu \subset \UU$.
	
\end{abstract}

\section{Introduction}

\subsection{} The affine quantum group $\su = U_q(\widehat{\fsl}_n)$ (hats will be replaced by points in the present paper) has the following two presentations: \\

\begin{itemize}

\item as the affinization of $U_q(\fsl_n)$ \\
	
\item as the Drinfeld-Jimbo quantum group whose Dynkin diagram is an $n$-cycle \\

\end{itemize} 

\noindent However, the two presentations above yield different bialgebra structures on $\su$, which is evidenced by the fact that the coproduct in the first bullet is only topological (i.e. $\Delta$ is an infinite sum, which only makes sense in a certain completion). Moreover, the two bullets above yield different triangular decompositions of $\su$ into positive, Cartan, and negative halves:
\begin{equation}  
\label{eqn:quantum 1}
\su \cong U_q^\leftarrow(\dot{\fsl}_n) \otimes \left( \text{Cartan subalgebra} \right) \otimes U_q^\rightarrow(\dot{\fsl}_n)  
\end{equation}
\begin{equation}  
\label{eqn:quantum 2}
\su \ \cong \ U_q^\uparrow(\dot{\fsl}_n) \otimes \left( \text{Cartan subalgebra} \right) \otimes U_q^\downarrow(\dot{\fsl}_n)
\end{equation} 
The two decompositions above are quite different: the positive subalgebra $U_q^\rightarrow(\dot{\fsl}_n)$ of \eqref{eqn:quantum 1} is generated by Drinfeld's elements $e_{i,k}$ over all $1 \leq i < n$ and $k \in \BZ$, while the positive subalgebra $U_q^\uparrow(\dot{\fsl}_n)$ of \eqref{eqn:quantum 2} is generated by the Drinfeld-Jimbo elements $\{e_i\}_{i \in \BZ/n\BZ}$. The connection between these two presentations was given in \cite{B}. \\

\subsection{} 
\label{sub:left right} 

The main purpose of the present paper is to extend the description above to the quantum toroidal algebra $\UU$, which is defined as in the first bullet above:
$$
\UU := \text{affinization of } \uu 
$$
This construction naturally comes with a triangular decomposition (see Subsection \ref{sub:uu} for an overview of the quantum toroidal algebra, as well as of our conventions):
\begin{equation}
\label{eqn:decomp 0}
\UU \cong \tUUleft \otimes \tUUright
\end{equation}
Our $\tUUleft$ and $\tUUright$ are the ``Borel" subalgebras of the quantum toroidal algebra, and they explicitly arise as tensor products:
\begin{align} 
&\tUUright \cong \UUright \otimes \uuig^{n} \label{eqn:nord} \\
&\tUUleft \cong \UUleft \otimes \uuil^{n} \label{eqn:sud}
\end{align}
There is a well-known topological coproduct of $\UU$, which preserves the subalgebras \eqref{eqn:nord} and \eqref{eqn:sud}, and extends the (almost) cocommutative coproduct on the ``vertical" subalgebra:
\begin{equation}
\label{eqn:ver ver}
\uuig^{n} \otimes \uuil^{n} = \uui^{n} \subset \UU
\end{equation}
The main goal of this paper is to define another decomposition into subalgebras:
\begin{equation}
\label{eqn:decomp 1}
\UU \cong \tUUup \otimes \tUUdown 
\end{equation}
(see Corollary \ref{cor:explicit}). We will explicitly construct the tensor factors of \eqref{eqn:decomp 1} as:
\begin{align} 
&\tUUup \cong \UUup \otimes \uug \label{eqn:half plus} \\
&\tUUdown \cong \UUdown \otimes \uul \label{eqn:half minus}
\end{align}
where the ``horizontal" subalgebra:
\begin{equation}
\label{eqn:hor hor}
\uug \otimes \uul = \uu \subset \UU
\end{equation}
will be the quantum group in the RTT presentation (\cite{FRT}). Moreover, we endow $\UU$ with a new topological coproduct which preserves the subalgebras \eqref{eqn:half plus}, \eqref{eqn:half minus}, and extends the usual (Drinfeld-Jimbo) coproduct on $\uu \subset \UU$. \\

\subsection{} To represent the aforementioned decompositions pictorially, we will recall that the quantum toroidal algebra is graded by $\zz \times \BZ$, where $\zz$ is the root lattice of $\su$ and $\BZ$ is the affinization direction. Then the following picture indicates the various subalgebras of $\UU$, by displaying which degrees they live in:

\begin{picture}(100,230)(-110,-75)
\label{pic:par}

\put(0,0){\circle*{2}}\put(20,0){\circle*{2}}\put(40,0){\circle*{2}}\put(60,0){\circle*{2}}\put(80,0){\circle*{2}}\put(100,0){\circle*{2}}\put(120,0){\circle*{2}}\put(0,20){\circle*{2}}\put(20,20){\circle*{2}}\put(40,20){\circle*{2}}\put(60,20){\circle*{2}}\put(80,20){\circle*{2}}\put(100,20){\circle*{2}}\put(120,20){\circle*{2}}\put(0,40){\circle*{2}}\put(20,40){\circle*{2}}\put(40,40){\circle*{2}}\put(60,40){\circle*{2}}\put(80,40){\circle*{2}}\put(100,40){\circle*{2}}\put(120,40){\circle*{2}}\put(0,60){\circle*{2}}\put(20,60){\circle*{2}}\put(40,60){\circle*{2}}\put(60,60){\circle*{2}}\put(80,60){\circle*{2}}\put(100,60){\circle*{2}}\put(120,60){\circle*{2}}\put(0,80){\circle*{2}}\put(20,80){\circle*{2}}\put(40,80){\circle*{2}}\put(60,80){\circle*{2}}\put(80,80){\circle*{2}}\put(100,80){\circle*{2}}\put(120,80){\circle*{2}}\put(0,100){\circle*{2}}\put(20,100){\circle*{2}}\put(40,100){\circle*{2}}\put(60,100){\circle*{2}}\put(80,100){\circle*{2}}\put(100,100){\circle*{2}}\put(120,100){\circle*{2}}\put(0,120){\circle*{2}}\put(20,120){\circle*{2}}\put(40,120){\circle*{2}}\put(60,120){\circle*{2}}\put(80,120){\circle*{2}}\put(100,120){\circle*{2}}\put(120,120){\circle*{2}}

\put(60,-10){\vector(0,1){140}}
\put(-10,60){\vector(1,0){140}}

\put(-25,10){\scalebox{4}{$\{$}}
\put(-63,17){$U^\downarrow_{q,\oq}(\ddot{\fgl}_n)$}
\put(135,56){$\uug$}
\put(-25,90){\scalebox{4}{$\{$}}
\put(-63,97){$U^\uparrow_{q,\oq}(\ddot{\fgl}_n)$}
\put(-48,57){$\uul$}
\put(0,-15){\scalebox{4}{\rotatebox{270}{$\}$}}}
\put(-15,-43){$\UUleft$}
\put(80,-15){\scalebox{4}{\rotatebox{270}{$\}$}}}
\put(81,-43){$\UUright$}
\put(63,120){$\BZ$}
\put(120,63){$\zz$}
\put(42,-18){$\uuil^{n}$}
\put(42,135){$\uuig^{n}$}

\put(-80,-65){F{\scriptsize IGURE} 0. The grading of $\UU$ and its various subalgebras}

\end{picture}

\noindent In the particular case $n=1$, the quantum toroidal algebra is isomorphic to the well-known Ding-Iohara-Miki algebra (\cite{DI, M}) a.k.a. the elliptic Hall (\cite{BS, S}) algebra, on which the universal cover of $\slz$ acts by automorphisms. With respect to this action, the decomposition \eqref{eqn:decomp 1} is obtained from the decomposition \eqref{eqn:decomp 0} by applying the automorphism corresponding to rotation by 90 degrees. However, in the general $n$ case, the algebras featuring in the two decompositions are not isomorphic to each other, which is sensible given the fact that the grading axes $\zz$ and $\BZ$ are quite different. \\

\subsection{} 
\label{sub:describe} 

To describe $\UUright$ of \eqref{eqn:nord}, let us consider the vector space:
\begin{equation}
\label{eqn:old shuf intro}
\CS^+ \subset \bigoplus_{(d_1,...,d_n) \in \nn} \fff (z_{11},...,z_{1d_1},...,z_{n1},...,z_{nd_n})^{\sym}
\end{equation}
of rational functions which satisfy the wheel conditions (as in \cite{FHHSY,FO}): namely that such rational functions have at most simple poles at $z_{ia} q^2 - z_{i+1,b}$ (for all $i,a,b$) and that the residue at such a pole is divisible by $z_{i a'} - z_{i+1,b}$ and $z_{ia} - z_{i+1,b'}$ for all $a' \neq a$ and $b' \neq b$. The vector space \eqref{eqn:old shuf intro} is called a shuffle algebra, akin to the classical construction of Feigin and Odesskii concerning certain elliptic algebras (\cite{FO}). Explicitly, the product on \eqref{eqn:old shuf intro} is constructed using the rational function \eqref{eqn:def zeta}, see Definition \ref{def:shuf classic}. An algebra homomorphism was constructed in \cite{E}:
$$
\UUright \rightarrow \CS^+
$$
and it was shown to be an isomorphism in \cite{Tor}. Similarly, $\UUleft \cong \CS^- := (\CS^+)^\op$. \\

\noindent To describe the subalgebras $U^\uparrow_{q,\bar{q}}(\ddot{\fgl}_n)$ and $U^\downarrow_{q,\bar{q}}(\ddot{\fgl}_n)$ of \eqref{eqn:half plus}--\eqref{eqn:half minus}, we will introduce a new kind of shuffle algebra (let $V$ be an $n$--dimensional vector space):
\begin{equation}
\label{eqn:new shuf intro}
\CA^+ \subset \bigoplus_{k=0}^\infty \End_{\fff}(\underbrace{V \otimes ... \otimes V}_{k \text{ factors}})(z_1,...,z_k)
\end{equation}
and the algebra structure on the RHS is constructed using the $R$--matrix \eqref{eqn:r}, see Propositions \ref{prop:shuf aff 1} and \ref{prop:shuf aff 2}. By definition, the subspace \eqref{eqn:new shuf intro} precisely consists of $\End(V^{\otimes k})$--valued rational functions which have at most simple poles at $z_a \oq^2 - z_b$ (for all $a,b$) and whose residue at such a pole satisfies the conditions outlined in Definition \ref{def:shuf aff}. The subalgebra $\CA^{-}$ is defined similarly, but with $\oq^{-1} q^{-n}$ instead of $\oq$. \\

\begin{theorem}
\label{thm:main}
	
There exist injective algebra homomorphisms:
$$
\CA^+, \CA^{-,\eop} \hookrightarrow \UU
$$
Denoting the images of these maps by $U^\uparrow_{q,\oq}(\ddot{\fgl}_n)$ and $U^\downarrow_{q,\oq}(\ddot{\fgl}_n)$ yields the decompositions featuring in \eqref{eqn:decomp 1}, \eqref{eqn:half plus}, \eqref{eqn:half minus}. Moreover, there exist topological coproducts on the subalgebras:
\begin{align}
\tCA^+ = \CA^+ \otimes \uug \ \hookrightarrow \ \UU \label{eqn:half 1} \\
\tCA^{-,\eop} = (\CA^- \otimes \uul)^\eop  \ \hookrightarrow \ \UU \label{eqn:half 2}
\end{align}
which extend the Drinfeld-Jimbo coproduct on the horizontal subalgebra \eqref{eqn:hor hor}, and realize $\UU$ as the Drinfeld double of its subalgebras \eqref{eqn:half 1} and \eqref{eqn:half 2}. \\ 
	
\end{theorem}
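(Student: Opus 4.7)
The argument breaks into three pieces: constructing the maps $\Phi^\pm : \CA^\pm \to \UU$, deducing the triangular decomposition \eqref{eqn:decomp 1}, and installing the new topological coproduct together with its Drinfeld double structure.

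\emph{Construction of the maps.} I would define $\Phi^+ : \CA^+ \to \UU$ by a fusion recipe mirroring the one used for $\CS^+ \to \UUright$ in \cite{E}, but in the ``orthogonal'' direction. Given an element $F(z_1,\dots,z_k) \in \End_\fff(V^{\otimes k})(z_1,\dots,z_k)$ of $\CA^+$, pair it with an iterated product of matrix entries of the RTT $L$-operator $L(z) = (L_{ij}(z)) \in \End(V) \otimes \uu[z,z^{-1}]$, and lift the result to $\UU$ via the horizontal inclusion \eqref{eqn:hor hor}. The pole/residue conditions of Definition \ref{def:shuf aff} are calibrated precisely so that the apparent denominators $z_a \oq^2 - z_b$ contributed by the $R$-matrix cancel, yielding a genuine element of $\UU$. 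Multiplicativity of $\Phi^+$ reduces to iterated application of the RTT relation $R_{12}(z/w) L_1(z) L_2(w) = L_2(w) L_1(z) R_{12}(z/w)$ and the Yang--Baxter identity for $R(z)$. The map $\Phi^-$ is handled analogously with $R(z)$ replaced by the opposite $R$-matrix, which accounts for the shift from $\oq$ to $\oq^{-1} q^{-n}$ in the definition of $\CA^-$. Injectivity would follow by exhibiting a faithful $\UU$-module on which both sides act compatibly (for example a Fock-type representation), or by a graded dimension count against the already-established shuffle presentations $\CS^\pm$.

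\emph{Triangular decomposition.} Denoting the images of $\Phi^\pm$ by $\UUup$ and $\UUdown$, and extending by the horizontal Cartan $\uug$ and $\uul$, I would show that the multiplication map $\tUUup \otimes \tUUdown \to \UU$ is a $\zz \times \BZ$-graded isomorphism. The upper bound on graded dimensions comes from the RHS of \eqref{eqn:new shuf intro}, and the matching lower bound comes from the PBW basis of $\UU$ afforded by \eqref{eqn:decomp 0}. Matching the two amounts to producing a PBW-type basis for $\UU$ adapted to the vertical (affinization) axis rather than the horizontal one; for $n=1$ this is literally the $90^\circ$-rotation automorphism of the Ding--Iohara--Miki algebra.

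\emph{Coproduct and Drinfeld double.} The topological coproduct on $\tCA^+$ is defined on generators: on the horizontal $\uu$ it is the Drinfeld--Jimbo coproduct coming from the RTT presentation, and on the remaining generators it is determined by an $L$-operator identity of the form $\Delta(L(z)) = L(z) \otimes L(z)$, suitably interpreted in a completed tensor product $\UU \woo \UU$. One must check that this prescription respects the shuffle/wheel relations defining $\CA^+$ and converges in a suitable completion; both follow from the support conditions of Definition \ref{def:shuf aff}. The Drinfeld double statement would then be obtained by producing a Hopf pairing $\tCA^+ \otimes \tCA^{-,\eop} \to \fff$ from the product in $\UU$, verifying non-degeneracy using the triangular decomposition just established, and checking the standard compatibility of the pairing with the coproducts on a generating set.

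The principal obstacle is the verification that the $R$-matrix shuffle product on $\CA^+$ genuinely matches multiplication in $\UU$: unlike the scalar zeta function governing $\CS^\pm$, the $R$-matrix produces matrix-valued poles and residues whose cancellation against the RTT and Drinfeld--current relations requires delicate bookkeeping. A secondary difficulty is the well-definedness of the new coproduct: the series defining $\Delta$ on generators outside $\uu$ is genuinely infinite, and showing convergence in the appropriate completion of $\UU \woo \UU$ is likely to be the technical heart of the coproduct half of the theorem.
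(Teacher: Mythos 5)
Your outline identifies the right three ingredients, but the two steps you treat as routine are precisely where the content of the theorem lies, and as stated they do not go through. First, $\CA^+$ is not given by generators and relations: it is a subspace of matrix-valued rational functions cut out by pole, symmetry and wheel conditions, so the claim that multiplicativity of a map $\Phi^+:\CA^+\to\UU$ "reduces to iterated application of the RTT relation and Yang--Baxter" has nothing to be checked against. Moreover, as literally described, pairing a shuffle element with entries of the horizontal $L$-operator and lifting through the horizontal inclusion \eqref{eqn:hor hor} produces elements of $\uu\subset\UU$, whereas the image $\UUup$ has positive vertical degree and is not contained in the horizontal subalgebra; identifying the images of the vertical-degree-one elements $E_{ij}z^d$ (the substitution \eqref{eqn:correspondence plus}) and verifying their commutation with the RTT series (Propositions \ref{prop:connection plus}--\ref{prop:connection minus}) is already a nontrivial step. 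The paper instead builds the map in the opposite direction, from the explicitly presented PBW algebra $\DD^+$ of \cite{PBW} (relations \eqref{eqn:rel 2 pbw}--\eqref{eqn:rel 3 pbw}) into $\CA^+$, sending PBW generators to the symmetrized elements $F^\mu_{[i;j)}$, and proves bijectivity using the slope filtration, the leading-term coproduct $\Delta_\mu$ and hinge arguments, the functionals $\alpha_{[i;j)}$, and the dimension bound of Lemma \ref{lem:magic} extracted from the wheel conditions together with the linear independence of Claim \ref{claim:claim}. Your proposed dimension count "against $\CS^\pm$" cannot substitute for this: $\CA^\pm$ and $\CS^\pm$ are graded along different axes and are not isomorphic for $n>1$ (there is no $90^\circ$ rotation outside $n=1$), so the count must be against the slope-indexed PBW data, which is exactly the machinery your plan omits.

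The coproduct and double halves have the same circularity. The coproduct is not group-like on an $L$-operator: on a general $X_{1\dots k}$ it is \eqref{eqn:cop shuf aff 3}, with $S$- and $T$-series sandwiching the two halves and $f$-denominators forcing a completion, and it is defined on all shuffle elements at once; defining it "on generators" presupposes that $\CA^+$ is generated in vertical degree one, which in the paper is Corollary \ref{cor:gen} -- itself a consequence of the main isomorphism, so your order of deduction is inverted. Similarly, the bialgebra pairing cannot simply be read off from the product in $\UU$ and checked on a generating set: it must be defined by the trace/contour formulas \eqref{eqn:symm pair 1}--\eqref{eqn:symm pair 2}, and its very well-definedness (independence of the nesting of contours) rests on the residue-vanishing identity \eqref{eqn:trace identity} proved by a braid/trace argument -- an analytic step your plan does not anticipate. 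Finally, nondegeneracy of the pairing is neither needed nor proved; the double is identified with $\UU$ by matching the straightening relations between the halves (\eqref{eqn:formula 1}--\eqref{eqn:formula 8}, \eqref{eqn:plus minus}) with their counterparts \eqref{eqn:formula 1 final}--\eqref{eqn:e plus minus final} in the double, not by a nondegeneracy argument.
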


\noindent We emphasize the fact that $\UUup$ is not the same as the ``vertical subalgebra" that was studied in \cite{FJMM} and numerous other works. The latter construction has to do with $\su$ presented as the affinization of $U_q(\fsl_n)$ and thus implicitly breaks the symmetry among the vertices of the cyclic quiver. Meanwhile, our construction takes the ``horizontal subalgebra" $\uu$ and its evaluation representation $V = \BC^n(z)$ as an input, and outputs half of the quantum toroidal algebra. \\

\noindent More generally, starting from a quantum group $U_q(\fg)$ and a representation $V$ endowed with a unitary $R$--matrix, one may ask if the double shuffle algebra:
\begin{equation}
\label{eqn:mo}
\mathcal{D} \left( \text{an appropriate subalgebra of } \bigoplus_{k=0}^\infty \End(V^{\otimes k}) \right)
\end{equation}
(defined as in Section \ref{sec:old}) is related to the quantum group $U_q(\dot{\fg})$. Theorem \ref{thm:main} deals with the case $\fg = \dot{\fgl}_n$ and $V = \BC^n(z)$. If something along these lines is true in affine types other than $A$, we venture to speculate that the algebra \eqref{eqn:mo} might be related to the extended Yangians of \cite{W}, appropriately $q$--deformed and doubled. \\

\subsection{} The structure of the present paper is the following: \\

\begin{itemize} [leftmargin=*]
	
\item In Section \ref{sec:old}, we construct a \textbf{shuffle algebra} $\CA^+$ starting from a vector space $V$ and a unitary $R$--matrix $\in \End(V^{\otimes 2})$ (see also \cite{Mu}). By adding certain elements, we construct the \textbf{extended} shuffle algebra $\tCA^+$, which admits a coproduct. From two such extended shuffle algebras, we construct their Drinfeld \textbf{double} $\CA$. \\
	
\item In Section \ref{sec:quantum}, we recall the quantum group $\UU$ and its PBW presentation from \cite{PBW}. This will allow us to construct the decomposition \eqref{eqn:decomp 1} as algebras. \\
		
\item In Section \ref{sec:new}, we construct a version of the \textbf{shuffle algebra} of Section \ref{sec:old} that corresponds to the $R$--matrix with spectral parameter \eqref{eqn:r}, thus yielding \eqref{eqn:new shuf intro}. \\
			
\item In Section \ref{sec:extended}, we construct the \textbf{extended} version of the shuffle algebra of Section \ref{sec:new}, endow it with a topological coproduct, and construct a PBW basis of it. \\

\item In Section \ref{sec:double}, we construct a bialgebra pairing between two copies of the extended shuffle algebras of Section \ref{sec:extended}. The corresponding Drinfeld \textbf{double} will precisely match $\UU$, thus completing the proof of Theorem \ref{thm:main}. \\
	
\end{itemize}

\noindent I would like to thank Pavel Etingof, Sachin Gautam, Victor Kac, Andrei Okounkov, and Alexander Tsymbaliuk for many valuable conversations, and all their help along the years. I gratefully acknowledge the NSF grants DMS--1600375,  DMS--1760264 and DMS--1845034, as well as support from the Alfred P. Sloan Foundation. \\

\subsection{} 
\label{sub:notation}

Given a finite-dimensional vector space $V$, we will often write elements $X \in \End(V^{\otimes k})$ as $X_{1...k}$ in order to point out the set of indices of $X$. If $V = \BC^n$, then:
\begin{equation}
\label{eqn:basis}
X = \sum_{i_1,...,i_k,j_1,...,j_k = 1}^n \text{coefficient} \cdot E_{i_1j_1} \otimes ... \otimes E_{i_kj_k}
\end{equation}
for certain coefficients, where $E_{ij} \in \End(V)$ denotes the matrix with entry 1 on row $i$ and column $j$, and 0 everywhere else. For any permutation $\sigma \in S(k)$, we write:
\begin{equation}
\label{eqn:conjugation}
\sigma X \sigma^{-1} = X_{\sigma(1)...\sigma(k)}
\end{equation}
where $\sigma \curvearrowright V^{\otimes k}$ by permuting the factors (therefore, the effect of conjugating \eqref{eqn:basis} by $\sigma$ is to replace the indices $i_1,...,j_k$ by $i_{\sigma(1)},...,j_{\sigma(k)}$). Moreover, we will write:
\begin{equation}
\label{eqn:pseudo sweedler}
X_{1...k} = X_{1...i} \otimes X_{i+1...k} \in \End(V^{\otimes i}) \otimes \End(V^{\otimes k-i}) \cong \End(V^{\otimes k})
\end{equation}
if we wish to set apart the first $i$ tensor factors from the last $k-i$ tensor factors of $X$. There is an implicit summation in the right-hand side of \eqref{eqn:pseudo sweedler} which we will not write down, much alike Sweedler notation. For any $a \in \BN$, we will write:
$$
E_{ij}^{(a)} = 1 \otimes ... \otimes \underbrace{E_{ij}}_{a\text{--th position}} \otimes \ ... \otimes 1 \in \End(V^{\otimes k})
$$
(the number $k \geq a$ will always be clear from context). More generally, for any $X \in \End(V^{\otimes k})$ and any collection of distinct natural numbers $a_1,...,a_k$, write:
$$
X_{a_1...a_k} \in \End(V^{\otimes N})
$$
(the number $N \geq a_1,...,a_k$ will always be clear from context) for the image of $X$ under the map $\End(V^{\otimes k}) \rightarrow \End(V^{\otimes N})$ that sends the $i$--th factor of the domain to the $a_i$--th factor of the codomain, and maps to the unit in all factors $\neq \{a_1,...,a_k\}$. \\

\section{Shuffle algebras and $R$--matrices}
\label{sec:old}

\subsection{} 
\label{sub:old}

The main goal of the present Section is to study shuffle algebras associated to the data contained in the four bullets below: \\

\begin{itemize}

\item a vector space $V$, assumed finite-dimensional for simplicity \\

\item an element ($R$--matrix) $R \in \Aut(V^{\otimes 2})$ satisfying the Yang-Baxter equation:
\begin{equation}
\label{eqn:ybe}
R_{12} R_{13} R_{23} = R_{23} R_{13} R_{12}
\end{equation}

\item an element $\tR \in \Aut(V^{\otimes 2})$ satisfying:
\begin{equation}
\label{eqn:quasi-ybe 1}
\tR_{21} \tR_{31} R_{23} = R_{23} \tR_{31} \tR_{21}
\end{equation}
\begin{equation}
\label{eqn:quasi-ybe 2}
R_{12} \tR_{31} \tR_{32} = \tR_{32} \tR_{31} R_{12}
\end{equation}

\item a scalar $f$ so that:
\begin{equation}
\label{eqn:unitary}
R_{12} R_{21} = f \cdot \text{Id}_{V \otimes V} = R_{21} R_{12}
\end{equation}

\end{itemize}

\noindent The present Section will be concerned with developing a certain algebraic framework in the generality above, while Section \ref{sec:new} will deal with the particular case of:
\begin{equation}
\label{eqn:r intro}
R(x) = \text{RHS of \eqref{eqn:explicit r}} \in \End(\BC^n \otimes \BC^n)(x)
\end{equation}
and $\tR(x) = R_{21}\left( x^{-1} \oq^{-2} \right)$, for a parameter $\oq$. Many Propositions in the current Section have counterparts in Section \ref{sec:new}, and we will only prove such statements once. \\

\subsection{}
\label{sub:braids}

We will represent the tensor product $V^{\otimes k}$ as $k$ labeled dots on a vertical line, and certain elements of $\End(V^{\otimes k})$ will be represented as braids between two such collections of $k$ labeled dots situated on parallel vertical lines (the labels will not change along strands, so they will be represented pictorially as colors). Specifically, the crossings below represent either the automorphisms $R$ or $\tR$, with indices given by the labels of the strands (which are inherited from the labels of their endpoints):
\begin{figure}[h]
\centering
\includegraphics[scale=0.55]{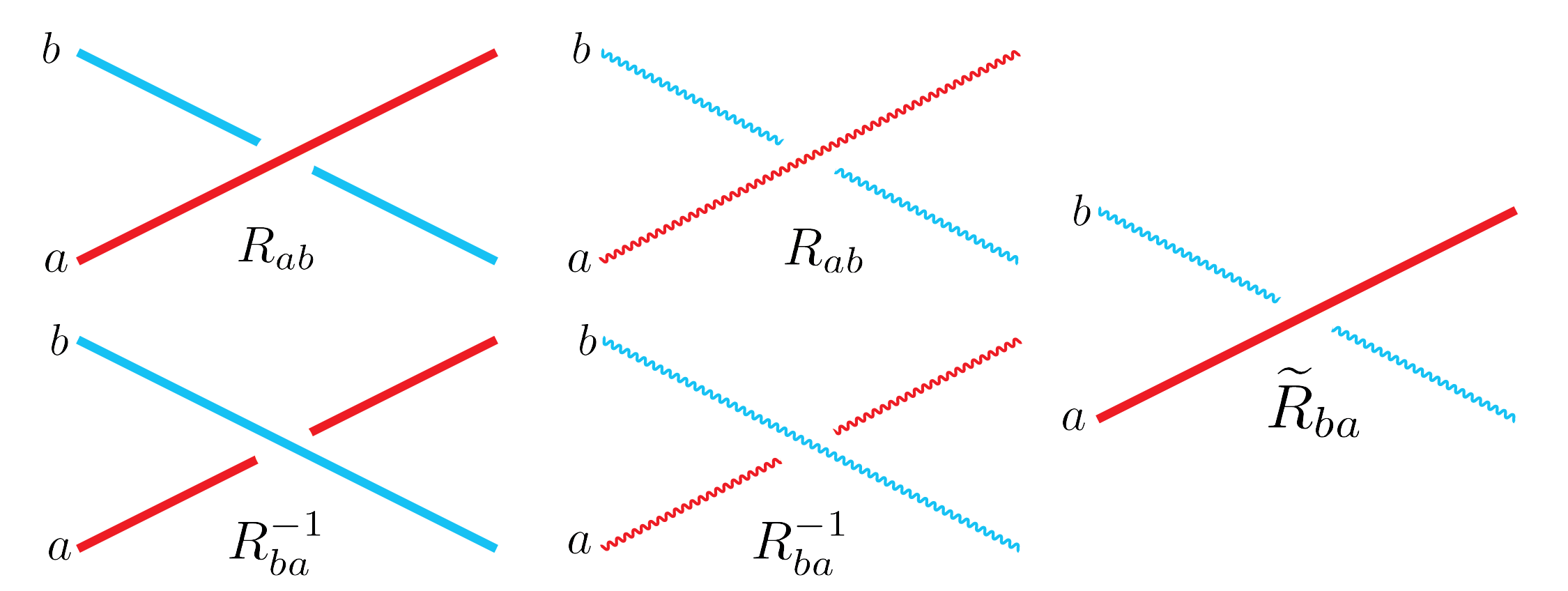} 
\caption{Various crossings}
\end{figure}

\noindent The strands are represented either as straight or squiggly lines, because we wish to indicate whether the picture in question refers to either $R$ or $\tR$. Compositions are always read left-to-right, for example the following equivalence of isotopy classes of braids underlies the Yang-Baxter relation \eqref{eqn:ybe}:
\begin{figure}[h]
\centering
\includegraphics[scale=0.5]{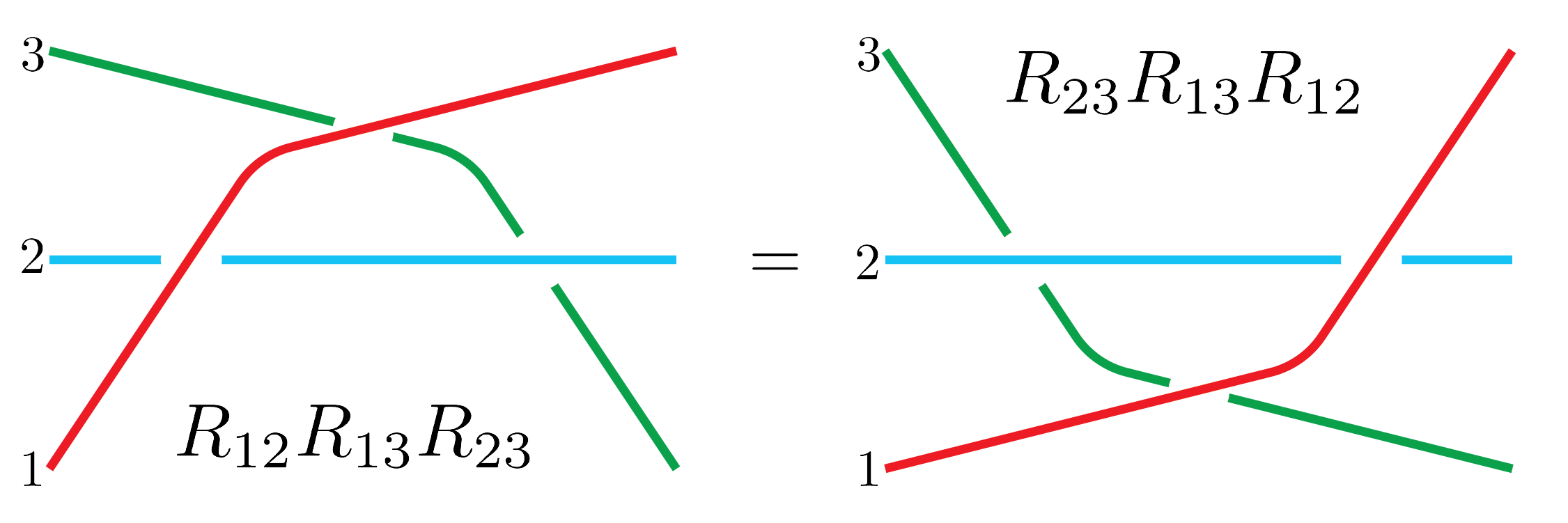} 
\caption{Reidemeister III move - version 1}
\end{figure}

\noindent The following equivalences underlie equations \eqref{eqn:quasi-ybe 1} and \eqref{eqn:quasi-ybe 2}, respectively:
\begin{figure}[H]
\centering
\includegraphics[scale=0.5]{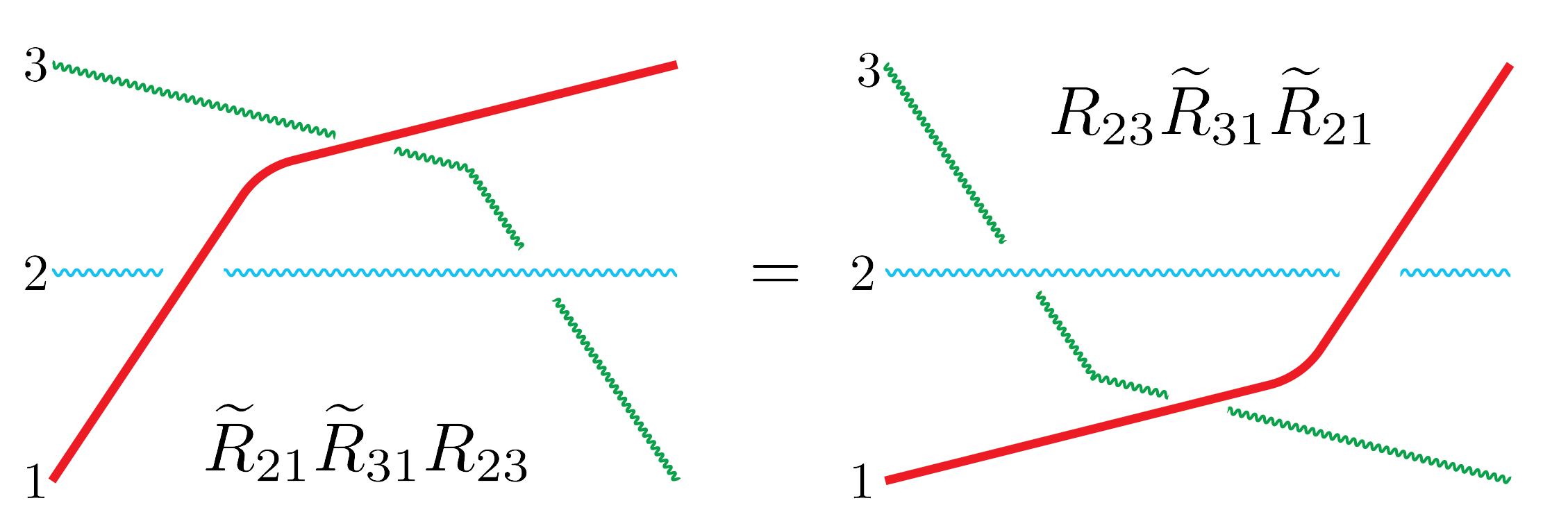}
\caption{Reidemeister III move - version 2}
\end{figure}
\begin{figure}[H]
\centering
\includegraphics[scale=0.5]{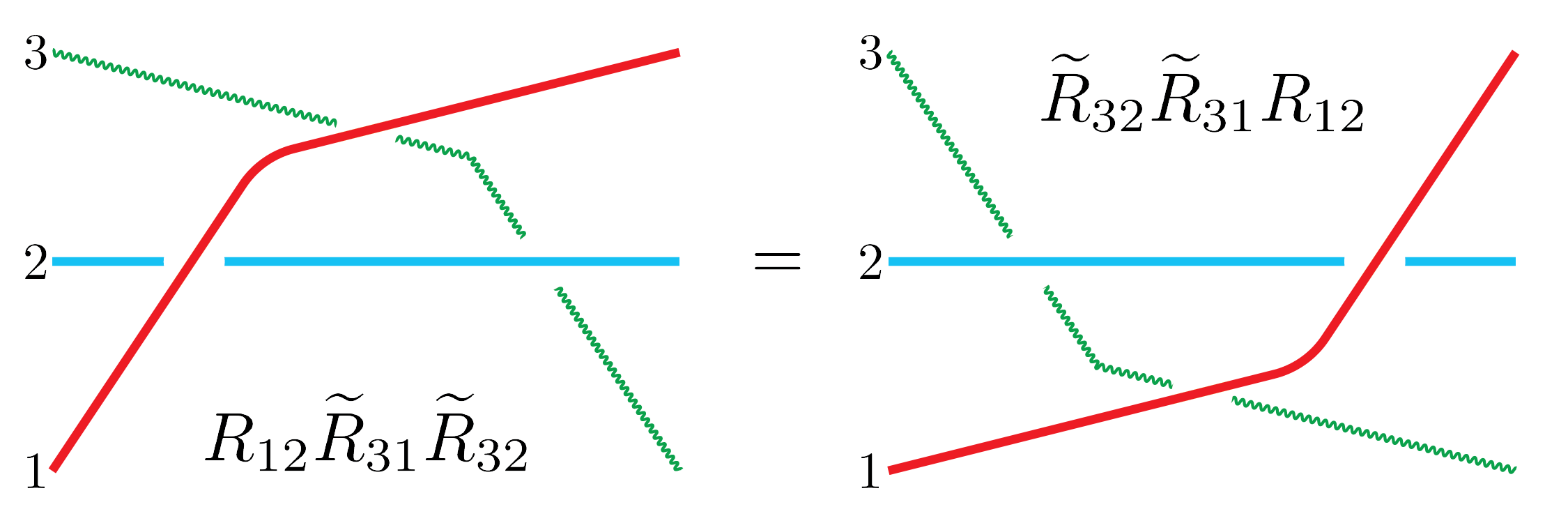}
\caption{Reidemeister III move - version 3}
\end{figure}

\noindent We will equivalate braids connected by the Reidemeister III type moves above. \\

\subsection{} 
\label{sub:classic shuf}

The following construction (when $\tR_{12} = R_{21}$) bears similarities with that of Section 5.2 of \cite{Mu}, itself a dual version of the construction of \cite{FRT}. We will then construct an extended shuffle algebra which admits a coproduct and bialgebra pairing, and then define the corresponding Drinfeld double. \\

\begin{proposition}
\label{prop:shuf 1}

For $V, R, \tR$ as in Subsection \ref{sub:old}, the assignment:
\begin{equation}
\label{eqn:shuf prod}
A_{1...k} * B_{1...l} = \sum^{a_1<...<a_k, \ b_1<...<b_l}_{\{1,...,k+l\} = \{a_1,...,a_k\} \sqcup \{b_1,...,b_l\}} 
\end{equation}
$$
\underbrace{\Big[ R_{a_kb_1} ... R_{a_1b_l} \Big]}_{\text{only if }a_i < b_j}
A_{a_1...a_k} \Big[ \tR_{a_1b_l} ... \tR_{a_kb_1} \Big] B_{b_1...b_l}  \underbrace{\Big[ R_{a_kb_1} ... R_{a_1b_l} \Big]}_{\text{only if }a_i > b_j}
$$
\footnote{The meaning of the indexing sets in the three products of $R$'s or $\tR$'s is that the factors in:
$$
\tR_{a_1b_l}... \tR_{a_kb_1} 
$$
are taken in any order such that $(a_i,b_j)$ is to the left of $(a_{i'},b_j)$ if $i<i'$ and to the right of $(a_i,b_{j'})$ if $j<j'$. The text ``only if $a_i < b_j$" or ``only if $a_i > b_j$" under such a product means that only those pairs of indices $(a_i,b_j)$ satisfying the respective inequalities occur in the product.} yields an associative algebra structure on the vector space:
\begin{equation}
\label{eqn:general shuffle}
\bigoplus_{k=0}^\infty \emph{End}(V^{\otimes k})
\end{equation}
with unit $1 \in \eEnd(V^{\otimes 0})$. We will call \eqref{eqn:shuf prod} the ``shuffle product". \\

\end{proposition}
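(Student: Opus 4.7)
The plan is to prove associativity using the graphical calculus of Subsection \ref{sub:braids}, where each summand on either side of the desired identity $(A*B)*C = A*(B*C)$ is interpreted as a braid diagram, and equality is reduced to a sequence of Reidemeister III moves. Commutativity of the unit with any element is automatic, since $1 \in \End(V^{\otimes 0})$ contributes only the trivial shuffle to \eqref{eqn:shuf prod}.

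First, I would expand both $(A*B)*C$ and $A*(B*C)$ (with $A,B,C$ of arities $k,l,m$ respectively) into sums over triple shuffles
\[
\{1,\ldots,k+l+m\} = \{a_1<\ldots<a_k\} \sqcup \{b_1<\ldots<b_l\} \sqcup \{c_1<\ldots<c_m\}.
\]
By iterating the binary formula \eqref{eqn:shuf prod} and reindexing, each side becomes such a sum whose summand is a product of straight crossings ($R_{ij}$) and squiggly crossings ($\tR_{ij}$) surrounding the slabs $A_{a_1\ldots a_k}$, $B_{b_1\ldots b_l}$, $C_{c_1\ldots c_m}$. The positions and types of crossings are dictated by the rule in \eqref{eqn:shuf prod}: a pair of output positions $(x,y)$ coming from distinct tensor factors contributes an $R$-crossing (to the left or right of the neighboring slab, depending on whether $x < y$ or $x > y$) together with a $\tR$-crossing sandwiched between the two slabs. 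The difference between the two iterations is only the order in which these crossings are assembled, i.e.~whether the $A$-$B$ crossings are grouped first with $C$-crossings added afterwards, or vice versa.

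Next, for each fixed triple shuffle, I would show that the braid diagrams associated to $(A*B)*C$ and $A*(B*C)$ are isotopic modulo the three Reidemeister III moves of Subsection \ref{sub:braids}. The key observation is that any triple crossing of an $A$-strand, a $B$-strand and a $C$-strand appears as exactly one of the three local configurations depicted there (all straight, or with one of the two specific squiggly/straight patterns), and each such configuration is resolved by precisely one of \eqref{eqn:ybe}, \eqref{eqn:quasi-ybe 1}, \eqref{eqn:quasi-ybe 2}. Applying these local moves repeatedly, one can slide the $B$-slab past every $AC$-crossing and reach the canonical form of the opposite side of the associativity equation.

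The main obstacle I expect is bookkeeping: one must verify that the partial orderings on the products of $R$'s and $\tR$'s prescribed in the footnote are preserved by the sliding moves, and that the ``only if $a_i < b_j$'' and ``only if $a_i > b_j$'' prescriptions remain consistent after iterating \eqref{eqn:shuf prod}. Since non-overlapping $R$- or $\tR$-factors (those involving disjoint pairs of strands) commute tautologically, the admissible orderings form a single equivalence class, so matching the two canonical forms reduces to a purely combinatorial check. The unitarity relation \eqref{eqn:unitary} plays no role here and is reserved for the construction of the pairing and the double in later subsections.
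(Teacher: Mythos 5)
Your proposal is correct and is essentially the paper's own argument: the paper also expands both $(A*B)*C$ and $A*(B*C)$ over triple shuffles, draws each summand as a braid surrounding the slabs $A$, $B$, $C$, and identifies the two diagrams by repeatedly applying the Reidemeister III moves encoding \eqref{eqn:ybe}, \eqref{eqn:quasi-ybe 1} and \eqref{eqn:quasi-ybe 2} (pulling the straight and squiggly strands of one slab past the crossings of the other two), with the unit case being trivial. No substantive difference in approach.
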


\noindent We note that the second line of \eqref{eqn:shuf prod} can be represented by the braid in Figure 5, depicted here for $k=2$, $l=2$ and $(a_1,a_2) = (1,3)$, $(b_1,b_2) = (2,4)$. \\

\begin{figure}[ht]              
\centering
\includegraphics[scale=0.45]{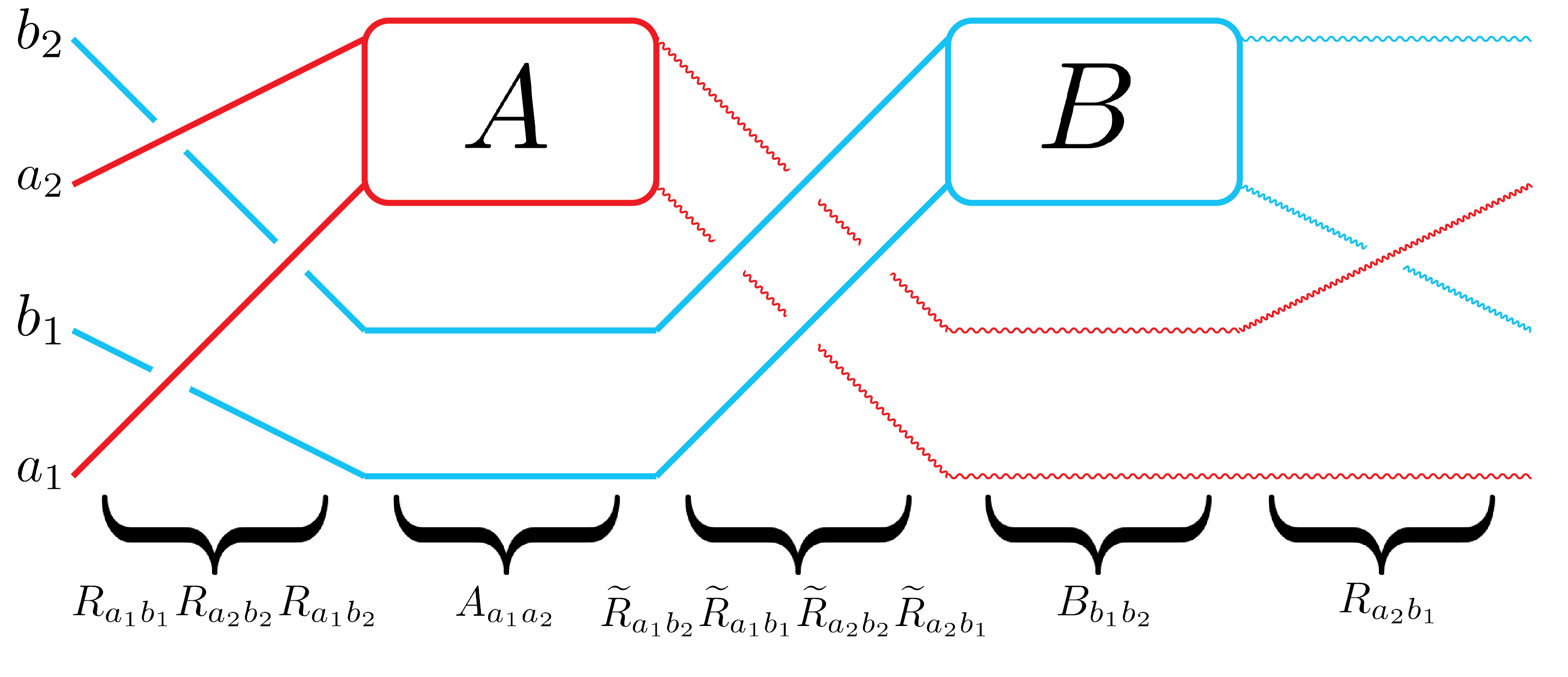}
\caption{$A*B$ as a braid}
\end{figure}

\begin{proof} The associativity of multiplication \eqref{eqn:shuf prod} is pictorially summarized by the equivalence of the braids in Figures 6 and 7. Indeed, in Figure 6, one can pull the straight red strands to the left of the blue-green crossings, the squiggly red strands below the blue-green crossings, and the straight green strands above the red-blue crossings. This procedure is simply a succession of the Reidemeister III moves of Figures 2,3 and 4, which in the end produces the braid in Figure 7. 

\end{proof}

\begin{figure}[ht]              
\centering
\includegraphics[scale=0.25]{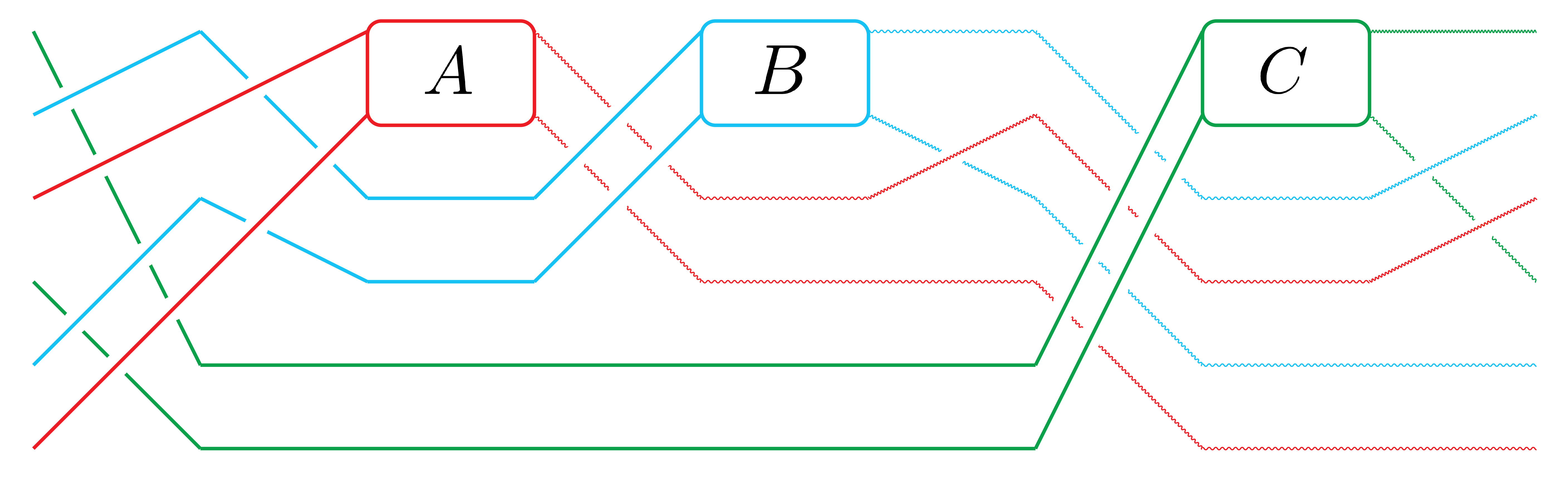}
\caption{$(A*B)*C$}
\end{figure}
\begin{figure}[ht]              
\centering
\includegraphics[scale=0.25]{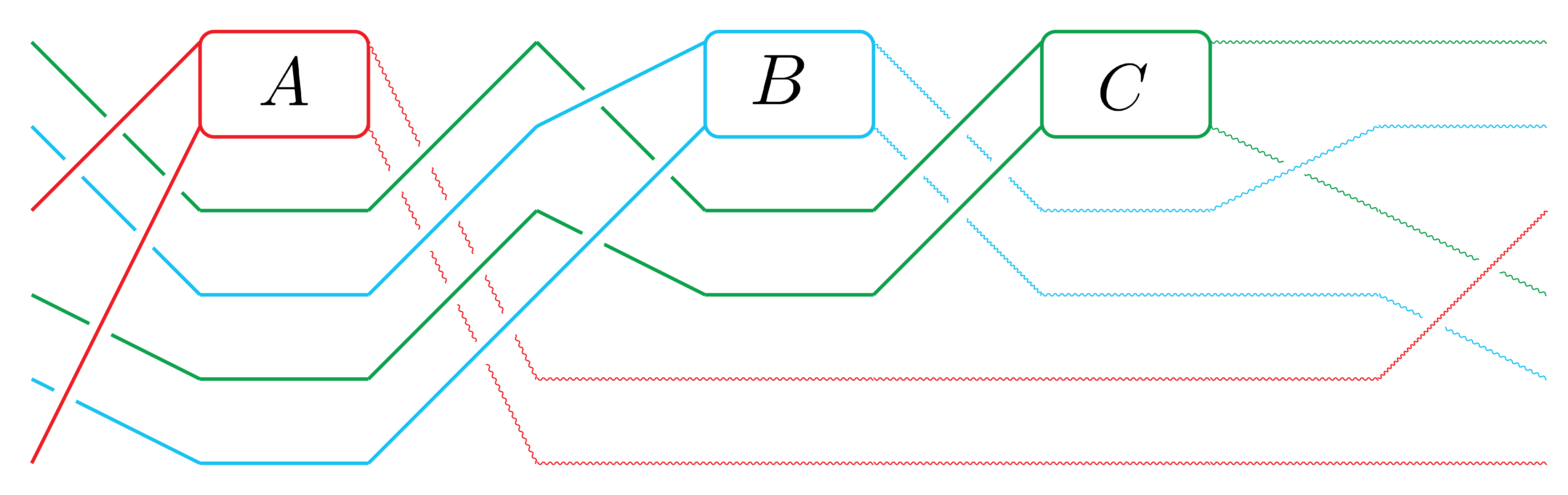}
\caption{$A*(B*C)$}
\end{figure}

\subsection{} A tensor $X \in \End(V^{\otimes k})$ is called symmetric if for all permutations $\sigma \in S(k)$:
\begin{equation}
\label{eqn:symmetry}
X = R_\sigma \cdot (\sigma X \sigma^{-1}) \cdot R_\sigma^{-1}
\end{equation}
where $\sigma X \sigma^{-1}$ is defined in \eqref{eqn:conjugation}, and $R_\sigma \in \End(V^{\otimes k})$ is any braid lift of $\sigma$ (i.e. any braid connecting the $i$--th endpoint on the right with the $\sigma(i)$--th endpoint on the left). Choosing one braid lift of $\sigma$ over another is just the ambiguity of choosing $R_{ab}$ over $R_{ba}^{-1}$ for any crossing between the strands labeled $a$ and $b$. Since \eqref{eqn:unitary} says that these two endomorphisms differ by a scalar, the ambiguity does not affect \eqref{eqn:symmetry}. \\

\noindent Pictorially, the right-hand side of \eqref{eqn:symmetry} is represented by the following braid:

\begin{figure}[ht]              
\centering
\includegraphics[scale=0.35]{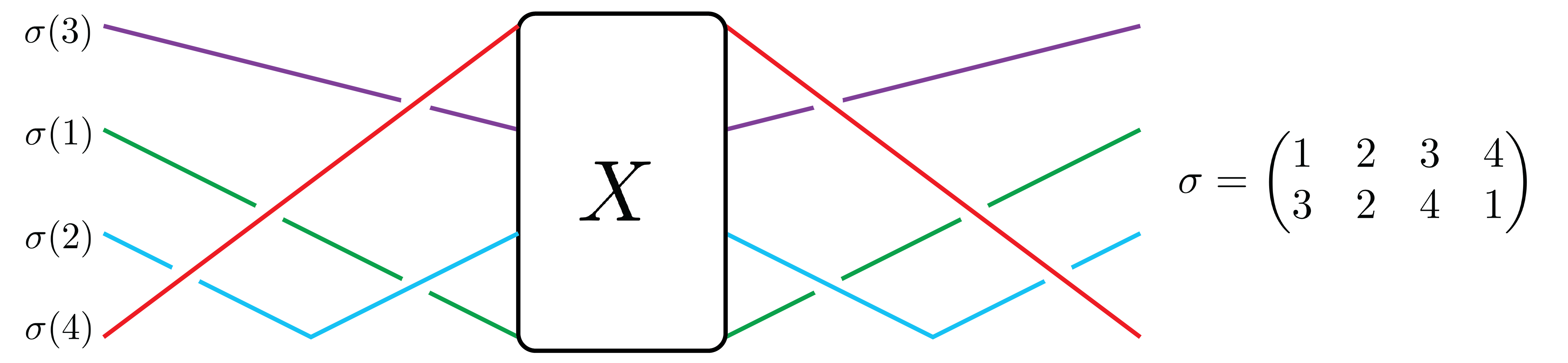}
\caption{A braid representation of $R_\sigma \cdot (\sigma X \sigma^{-1}) \cdot R_\sigma^{-1}$}
\end{figure}

\noindent The symmetrization of any tensor $X \in \End(V^{\otimes k})$ is defined as:
\begin{equation}
\label{eqn:symmetrization}
\sym \ X = \sum_{\sigma \in S(k)} R_\sigma \cdot (\sigma X \sigma^{-1}) \cdot R_\sigma^{-1}
\end{equation}
It is easy to see that the symmetrization of an abitrary tensor is symmetric. \\

\begin{proposition}
\label{prop:shuf 2}

The shuffle product of Proposition \ref{prop:shuf 1} preserves the vector space:
\begin{equation}
\label{eqn:general shuf}
\CA^+ \subset \bigoplus_{k=0}^\infty \emph{End}(V^{\otimes k})
\end{equation}
consisting of symmetric tensors. We will call $\CA^+$ the shuffle algebra. \\

\end{proposition}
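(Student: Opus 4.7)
Since adjacent transpositions generate the symmetric group, it suffices to verify \eqref{eqn:symmetry} for $\sigma = s_i$ with the braid lift $R_\sigma = R_{i,i+1}$, namely $A*B = R_{i,i+1} \cdot s_i(A*B)s_i^{-1} \cdot R_{i,i+1}^{-1}$ for each $i \in \{1,\dots,k+l-1\}$. The plan is to analyze the effect of this conjugation one summand at a time in the expansion \eqref{eqn:shuf prod}, and show that it merely permutes the set of shuffles $\{1,\dots,k+l\} = \{a_1<\cdots<a_k\} \sqcup \{b_1<\cdots<b_l\}$. Fix such a shuffle and examine where $i$ and $i+1$ sit.

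\emph{Case 1: both indices in one block}, say $i = a_p$ and $i+1 = a_{p+1}$ (the subcase $i,i+1 \in \{b_\bullet\}$ is handled identically using the symmetry of $B$). Conjugation by $s_i$ preserves the underlying partition and simply reorders two consecutive arguments of $A$; by the symmetry of $A$, applied to the transposition $s_p \in S(k)$, we may reinsert a factor $R_{i,i+1}^{\pm 1}$ on either side of $A_{a_1\dots a_k}$. Pulling these outer $R_{i,i+1}^{\pm 1}$ through the flanking $R$- and $\tR$-crossings via the Reidemeister moves of Figures 2–4 (i.e.\ \eqref{eqn:ybe}–\eqref{eqn:quasi-ybe 2}) cancels them against the corresponding crossings at the two ends of the braid, returning exactly the original summand.

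\emph{Case 2: the indices split}, say $i = a_p$ and $i+1 = b_q$. Because $a_{p+1} > i+1$ (else $i+1 = a_{p+1}$ would violate the case assumption), we may pair this shuffle with the one obtained by exchanging the roles of $i$ and $i+1$, so that $i+1$ becomes the $p$-th $a$-index and $i$ becomes the $q$-th $b$-index. Conjugation by $s_i$ carries the position labels of the first summand to those of the second. Pulling the remaining outer crossing $R_{i,i+1}$ through the entire braid, using iterated applications of \eqref{eqn:ybe}–\eqref{eqn:quasi-ybe 2} (and \eqref{eqn:unitary} to freely interchange $R_{ab}$ with a scalar multiple of $R_{ba}^{-1}$ where needed), converts the string of $R,\tR$-crossings involving strands $a_p$ and $b_q$ into precisely those prescribed by \eqref{eqn:shuf prod} for the second shuffle. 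Thus Case 2 shuffles are paired and swapped by the conjugation, so summing over all shuffles returns $A*B$ unchanged, proving $A*B \in \CA^+$.

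The main obstacle will be the bookkeeping in Case 2: one must track carefully how the external ``only if $a_i < b_j$'' and ``only if $a_i > b_j$'' conditions in \eqref{eqn:shuf prod} redistribute when the membership of $i$ and $i+1$ is swapped, and verify that the braid identities \eqref{eqn:quasi-ybe 1}–\eqref{eqn:quasi-ybe 2} produce exactly the required matching of factors (rather than merely a matching up to spurious extra crossings). The braid pictures of Figures 2–4 and 7 make the argument transparent visually; the algebraic verification is then a mechanical, if lengthy, case analysis on the relative positions of $a_p$, $b_q$, and the other $a_{p'}$, $b_{q'}$.
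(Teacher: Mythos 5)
Your strategy is sound and, once the deferred verification is carried out, it does prove the Proposition; but it is organized differently from the paper's argument, and the part you postpone is exactly what the paper's reformulation is designed to avoid. You reduce to adjacent transpositions $s_i$ (legitimate, provided you note that braid lifts compose, $R_{\sigma\tau}=R_\sigma\cdot \sigma R_\tau \sigma^{-1}$ up to the scalars of \eqref{eqn:unitary}, which cancel under conjugation) and then work summand-by-summand: your Case 1 is invariance of each summand under conjugation coming from $S(k)\times S(l)$, and your Case 2 pairs the shuffles exchanged by left multiplication by $s_i$. The paper instead proves a single identity: setting $\Phi = \big[R_{k,k+1}\cdots R_{1,k+l}\big]A_{1\dots k}\big[\tR_{1,k+l}\cdots \tR_{k,k+1}\big]B_{k+1\dots k+l}$, each summand of \eqref{eqn:shuf prod} is recognized as $R_\mu\cdot(\mu\Phi\mu^{-1})\cdot R_\mu^{-1}$ for the corresponding $(k,l)$--shuffle $\mu$; invariance of $\Phi$ under conjugation by $\tau\in S(k)\times S(l)$ (your Case 1) plus the unique factorization $\sigma=\mu\tau$ then give $A*B=\frac 1{k!l!}\,\textrm{Sym}\,\Phi$, which is symmetric because any symmetrization is. What this buys is precisely the elimination of your Case 2 bookkeeping: once a summand is written as a conjugate of $\Phi$ by a braid lift of $\mu$, conjugating by $R_{i,i+1}$ just replaces $\mu$ by $s_i\mu$ (either another shuffle or the same $S(k)\times S(l)$--coset), and the ambiguity in the choice of braid lift only contributes factors of $f$ from \eqref{eqn:unitary}, which cancel. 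This is also the mechanism behind your Case 2 if done by hand: the left conjugator $R_{i,i+1}$ annihilates the crossing $R_{i+1,i}$ sitting in the leftmost product at the cost of one factor $f$, while $R_{i,i+1}^{-1}=f^{-1}R_{i+1,i}$ produces the crossing required in the rightmost product, the two $f$'s cancelling, after Reidemeister moves bring these factors adjacent. So your route works, but I would complete the Case 2 check via the $\Phi$--conjugation form (i.e.\ essentially the paper's restatement \eqref{eqn:shuf symm 1}) rather than by redistributing the ordered products in \eqref{eqn:shuf prod}, which is the lengthy bookkeeping you flagged as the obstacle.
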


\begin{proof} Let $A_{1...k} \in \End(V^{\otimes k})$ and $B_{1...l} \in \End(V^{\otimes l})$ be any symmetric tensors. A permutation $\mu \in S(k+l)$ is called a $(k,l)$--shuffle if:
\begin{equation}
\label{eqn:kl shuffle}
\begin{array}{l@{}cc}
a_1 := \mu(1) < ... < a_k := \mu(k) \\ \\
b_1 := \mu(k+1) < ... < b_l := \mu(k+l) 
\end{array}
\end{equation}
Because of the diagram (depicted for $k=2$, $l=2$, $(a_1,a_2) = (1,3)$, $(b_1,b_2) = (2,4)$):
\begin{figure}[H]              
\centering
\includegraphics[scale=0.3]{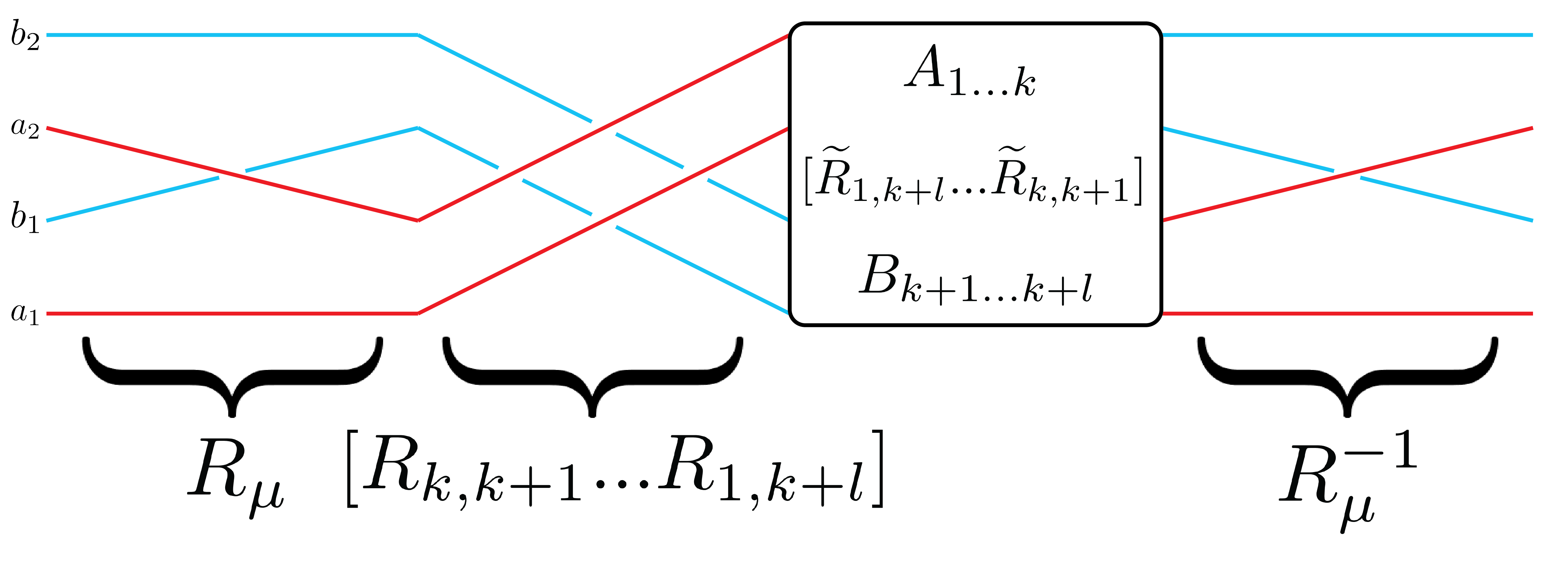}
\caption{$R_\mu \cdot (\mu \Phi\mu^{-1}) \cdot R_\mu^{-1}$}
\end{figure}

\noindent it is easy to see that the definition \eqref{eqn:shuf prod} can be restated as:
\begin{equation}
\label{eqn:shuf symm 1}
A * B = \sum^{\mu \text{ goes over}}_{(k,l)\text{--shuffles}} R_\mu \cdot (\mu \Phi \mu^{-1}) \cdot R_\mu^{-1}
\end{equation}
where:
\begin{equation}
\label{eqn:phi}
\Phi = \Big[R_{k,k+1} ... R_{1,k+l} \Big] A_{1...k} \Big[\tR_{1,k+l}... \tR_{k,k+1} \Big] B_{k+1...k+l}
\end{equation}
For any $\tau \in S(k) \times S(l)$ standardly embedded in $S(k+l)$, we have:
$$
R_\tau \cdot (\tau\Phi \tau^{-1}) \cdot R_\tau^{-1} = \Phi 
$$
which can be seen from the fact that the braid below:
\begin{figure}[ht]              
\centering
\includegraphics[scale=0.3]{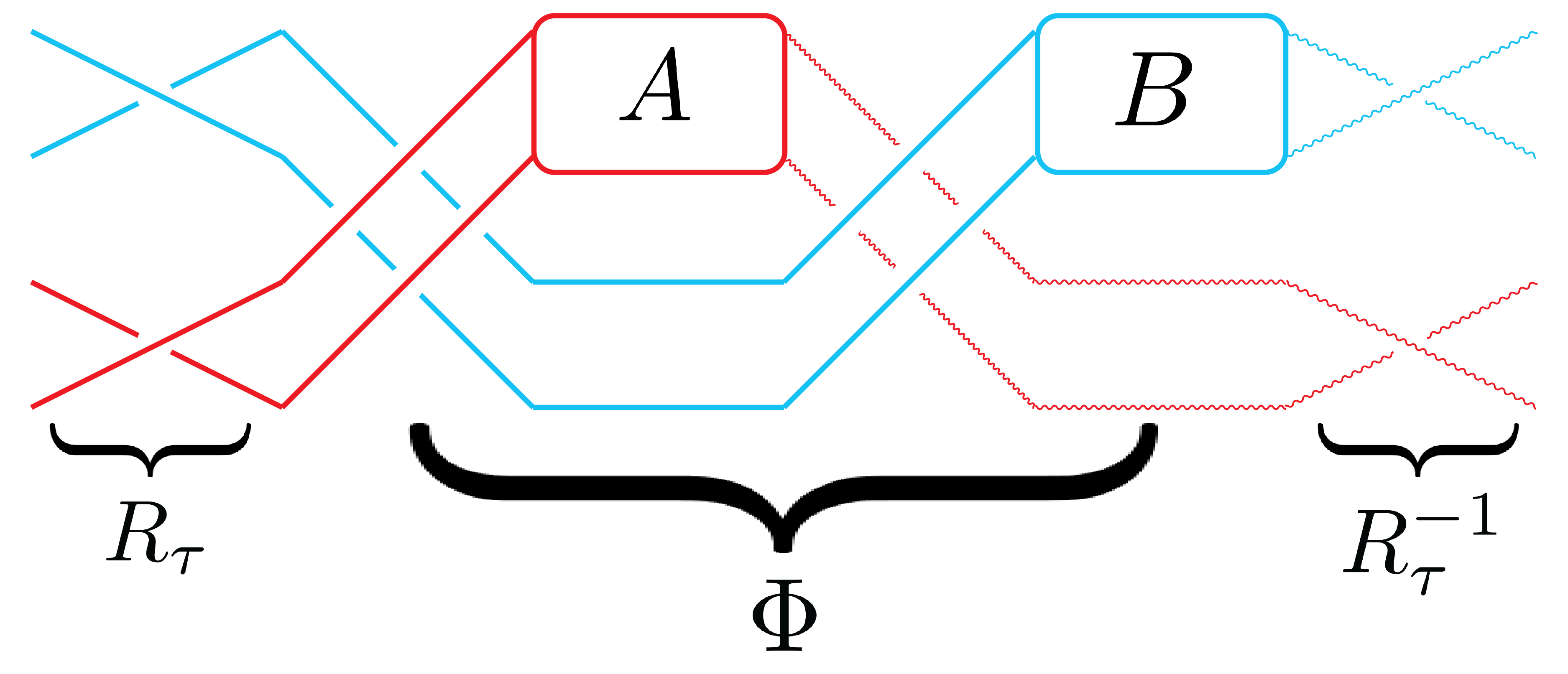}
\caption{$R_\tau \cdot (\tau \Phi \tau^{-1}) \cdot R_\tau^{-1}$}
\end{figure}

\noindent is equivalent to $\Phi$ (since we can cancel the braids representing $R_\tau$ and $R_\tau^{-1}$ by pulling them through the symmetric tensors $A$ and $B$). Then \eqref{eqn:shuf symm 1} implies:
$$
A * B = \frac 1{k! l!} \sum^{\mu \text{ goes over}}_{(k,l)\text{--shuffles}} \sum_{\tau \in S(k) \times S(l)} R_\mu \cdot \mu \left( R_\tau \cdot \tau \Phi \tau^{-1} \cdot R_\tau^{-1} \right) \mu^{-1} \cdot R_\mu^{-1}
$$
Since any $\sigma \in S(k+l)$ can be written uniquely as $\mu \circ \tau$, where $\mu$ is a $(k,l)$--shuffle and $\tau \in S(k) \times S(l) \subset S(k+l)$, the formula above yields:
\begin{equation}
\label{eqn:shuf symm 2}
A * B = \frac 1{k! l!} \sum_{\sigma \in S(k+l)} R_\sigma \cdot (\sigma \Phi \sigma^{-1}) \cdot R_\sigma^{-1} = \frac 1{k!l!} \cdot \sym \ \Phi
\end{equation}
(we have used the fact that $R_{\mu \tau} = R_\mu \cdot \mu R_\tau \mu^{-1}$, times a product of scalars \eqref{eqn:unitary}). Since the symmetrization of any tensor is symmetric, this concludes the proof. 		

\end{proof} 

\noindent By analogy with formula \eqref{eqn:shuf symm 2}, one has the following:
\begin{equation}
\label{eqn:shuf symm 3}
A * B = \frac 1{k!l!} \cdot \sym \ \Psi
\end{equation}
where $\Psi = A_{l+1...l+k} \Big[\tR_{l+1,l}... \tR_{l+k,1} \Big] B_{1...l} \Big[R_{l+k,1} ... R_{l+1,l} \Big]$. The proof is analogous. \\

\subsection{} 
\label{sub:extended}

\noindent Let us fix a basis $v_1,...,v_n$ of $V$ and write $E_{ij}$ for the matrix unit with a single 1 at the intersection of row $i$ and column $j$, and 0 elsewhere. \\

\begin{definition}
\label{def:extended}

Consider the extended shuffle algebra:
$$
\tCA^+ = \Big \langle \CA^+, s_{ij}, t_{ij} \Big \rangle_{1 \leq i,j \leq n} \Big/ \text{relations \eqref{eqn:ext rel 1}--\eqref{eqn:ext rel 5}}
$$
In order to concisely state the relations, we encode the new generators $s_{ij}$, $t_{ij}$ into:
\begin{align*} 
&S = \sum_{i,j = 1}^n s_{ij} \otimes E_{ij} \in \tCA^+ \otimes \emph{End}(V) \\
&T = \sum_{i,j = 1}^n t_{ij} \otimes E_{ij} \in \tCA^+ \otimes \emph{End}(V) 
\end{align*}
and the required relations take the form:
\begin{equation}
\label{eqn:ext rel 1}
R S_1 S_2 = S_2 S_1 R \quad \in \quad \tCA^+ \otimes \emph{End}(V^{\otimes 2})
\end{equation}
\begin{equation}
\label{eqn:ext rel 2}
T_1 T_2 R = R T_2 T_1 \quad \in \quad \tCA^+ \otimes \emph{End}(V^{\otimes 2})
\end{equation}
\begin{equation}
\label{eqn:ext rel 3}
T_1 \tR S_2 = S_2 \tR T_1 \quad \in \quad \tCA^+ \otimes \emph{End}(V^{\otimes 2})
\end{equation}
as well as:
\begin{equation}
\label{eqn:ext rel 4}
X_{1...k} \cdot S_0 = S_0 \cdot \frac {R_{k0}...R_{10}}{f_{k0}...f_{10}} X_{1...k} \tR_{10}...\tR_{k0}
\end{equation}
\begin{equation}
\label{eqn:ext rel 5}
T_0 \cdot X_{1...k} = \tR_{0k}... \tR_{01} X_{1...k} \frac {R_{01}...R_{0k}}{f_{01}...f_{0k}} \cdot T_0
\end{equation}
$\forall X_{1...k} \in \eEnd(V^{\otimes k}) \subset \CA^+$. The latter two formulas should be interpreted as identities in $\tCA^+ \otimes \eEnd(V)$, where the latter copy of $V$ is the one represented by index 0. \footnote{We write $f_{ij} = f_{ji}$ for the scalar $f \in \text{Aut}(V \otimes V)$, interpreted as an element of $\text{Aut}(V^{\otimes k})$ via the inclusion of the $i$--th and $j$--th tensor factors. This notation will come in handy in Section \ref{sec:extended}, when $f_{ij}$ will be a scalar-valued rational function in the variable $z_i/z_j$.} \\

\end{definition}

\begin{proposition} 
\label{prop:extended}

The following assignments make $\tCA^+$ into a bialgebra:
\begin{equation}
\label{eqn:cop shuf 1}
\Delta(S) = (1 \otimes S)(S \otimes 1), \qquad \e(S) = \emph{Id}
\end{equation}
\begin{equation}
\label{eqn:cop shuf 2}
\Delta(T) = (T \otimes 1)(1 \otimes T), \qquad \e(T) = \emph{Id}
\end{equation}
while for all $X = X_{1...k} \in \eEnd(V^{\otimes k}) \subset \CA^+ \subset \tCA^+$ for $k \geq 1$, we set $\e(X) = 0$ and:
\begin{equation}
\label{eqn:cop shuf 3}
\Delta(X) = \sum_{i=0}^k (S_{k}...S_{i+1} \otimes 1) \left( \frac {X_{1...i} \otimes X_{i+1...k}}{\prod_{1 \leq u \leq i < v \leq k} f_{uv}} \right) (T_{i+1} ... T_{k} \otimes 1)
\end{equation}
where the notation $X_{1...k} = X_{1...i} \otimes X_{i+1...k}$ is explained in \eqref{eqn:pseudo sweedler}. \\

\end{proposition}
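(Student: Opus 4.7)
The plan is to verify four statements: (i) the map $\Delta$ respects the defining relations \eqref{eqn:ext rel 1}--\eqref{eqn:ext rel 5} so that it descends to a well-defined linear map on $\tCA^+$; (ii) $\Delta$ is multiplicative on the subalgebra $\CA^+$, namely $\Delta(A * B) = \Delta(A) \Delta(B)$ for all $A, B \in \CA^+$; (iii) $\Delta$ is coassociative; and (iv) $\e$ is a counit for $\Delta$. Items (i) and (ii) together ensure that $\Delta$ is an algebra homomorphism on all of $\tCA^+$, since $\tCA^+$ is generated by $\CA^+$ and the matrix entries $s_{ij}, t_{ij}$ modulo the stated relations.

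For (i), the RTT-style relations \eqref{eqn:ext rel 1}--\eqref{eqn:ext rel 3} admit the usual FRT check: expand $\Delta(S_1)\Delta(S_2) = (1 \otimes S_1)(S_1 \otimes 1)(1 \otimes S_2)(S_2 \otimes 1)$, commute the middle two factors (they live in distinct tensor slots of $\tCA^+ \otimes \tCA^+$), and apply \eqref{eqn:ext rel 1} in each slot; the cases of \eqref{eqn:ext rel 2}, \eqref{eqn:ext rel 3} are entirely analogous. For \eqref{eqn:ext rel 4} and \eqref{eqn:ext rel 5}, which involve the nontrivial coproduct \eqref{eqn:cop shuf 3}, one proceeds summand by summand: commute $S_0$ (or $T_0$) past each $X_{1\ldots i}$ and $X_{i+1\ldots k}$ using the already-valid relations \eqref{eqn:ext rel 4}--\eqref{eqn:ext rel 5}, and then reorder the accumulated $R, \tR$ matrices via \eqref{eqn:ybe} and \eqref{eqn:quasi-ybe 1}--\eqref{eqn:quasi-ybe 2}, with unitarity \eqref{eqn:unitary} absorbing the $f_{uv}$ denominators. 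For (iii), both $(\Delta \otimes 1)\Delta(X)$ and $(1 \otimes \Delta)\Delta(X)$ expand as a double sum indexed by $0 \leq i \leq j \leq k$, with common summand $X_{1\ldots i} \otimes X_{i+1\ldots j} \otimes X_{j+1\ldots k}$ sandwiched between explicit products of $S$'s and $T$'s, and the match follows after reordering via the RTT relations and tracking the $f_{uv}$ scalars. The counit axiom (iv) is immediate: applying $(\e \otimes 1)$ or $(1 \otimes \e)$ to \eqref{eqn:cop shuf 3} retains only the $i = k$ or $i = 0$ summand and yields $X$ directly.

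The main obstacle is (ii). Writing $A \in \End(V^{\otimes k})$ and $B \in \End(V^{\otimes l})$, the left-hand side $\Delta(A * B)$ combines the sum over $(k,l)$-shuffles $\mu$ coming from \eqref{eqn:shuf symm 2} with the sum over cuts at positions $m \in \{0,\ldots,k+l\}$ coming from \eqref{eqn:cop shuf 3}. The right-hand side $\Delta(A)\Delta(B)$ is a double sum over independent cuts of $A$ at $i \in \{0,\ldots,k\}$ and of $B$ at $j \in \{0,\ldots,l\}$, and its interior carries the mixed factor $(T_{i+1}\ldots T_k \otimes 1)(S_l \ldots S_{j+1} \otimes 1)$, which must be rewritten via repeated application of \eqref{eqn:ext rel 3} as $(S_l \ldots S_{j+1} \otimes 1)(T_{i+1}\ldots T_k \otimes 1)$ at the cost of a product of $\tR$'s. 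I would prove the equality via the natural bijection between the two indexing sets: a pair $(\mu, m)$ on the left corresponds to the pair $(i, j)$ on the right, where $i$ (resp. $j$) counts the strands of $A$ (resp. $B$) landing among the first $m$ positions of the shuffle. After this bijection each term on the two sides is depicted as a braid in the style of Subsection \ref{sub:braids}, and the termwise identity reduces to the Reidemeister III manipulations of Figures 2--4 already used to establish associativity of the shuffle product, the $f_{uv}$ scalars from the denominators in \eqref{eqn:cop shuf 3} and the shuffle product being reconciled by repeated use of \eqref{eqn:unitary}.
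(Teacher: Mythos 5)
Your proposal is correct and follows essentially the same route as the paper: the paper's proof likewise verifies that \eqref{eqn:cop shuf 1}--\eqref{eqn:cop shuf 3} respect the defining relations \eqref{eqn:ext rel 1}--\eqref{eqn:ext rel 5} and then establishes $\Delta(A*B)=\Delta(A)*\Delta(B)$ by precisely the shuffle/cut reindexing you describe, treating the counit and coassociativity statements as routine. The only refinement worth noting is that in your step (ii) the termwise match is not just \eqref{eqn:ext rel 3} plus braid moves: one also pushes the $S$'s past the $A$-blocks and the $T$'s past the $B$-blocks via \eqref{eqn:ext rel 4}--\eqref{eqn:ext rel 5}, and moves the outer $R$-factors past the $S$'s and $T$'s via \eqref{eqn:ext rel 1}--\eqref{eqn:ext rel 2}, with \eqref{eqn:unitary} accounting for the $f_{uv}$'s --- but since these are exactly the relations you already invoke in step (i), this is bookkeeping rather than a gap.
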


\begin{proof} The facts that the counit extends to an algebra homomorphism, and that it interacts appropriately with the coproduct, are easy to see. In order to show that the coproduct extends to an algebra homomorphism: 
$$
\tCA^+ \rightarrow \tCA^+ \otimes \tCA^+
$$
we must show that \eqref{eqn:cop shuf 1}, \eqref{eqn:cop shuf 2}, \eqref{eqn:cop shuf 3} respect relations \eqref{eqn:ext rel 1}--\eqref{eqn:ext rel 5}, as well as formula \eqref{eqn:shuf prod} for the shuffle product. To this end, note that:
$$
\Delta(R S_1 S_2) \stackrel{\eqref{eqn:cop shuf 1}}= R (1 \otimes S_1)(S_1 \otimes 1)(1 \otimes S_2)(S_2 \otimes 1) =
$$ 
$$
= R (1 \otimes S_1S_2)(S_1S_2 \otimes 1) \stackrel{\eqref{eqn:ext rel 1}}= (1 \otimes S_2 S_1)R(S_1 S_2 \otimes 1) \stackrel{\eqref{eqn:ext rel 1}}= (1 \otimes S_2 S_1)(S_2 S_1 \otimes 1)R = 
$$
$$
= (1 \otimes S_2)(S_2 \otimes 1)(1 \otimes S_1)(S_1 \otimes 1)R \stackrel{\eqref{eqn:cop shuf 1}}= \Delta(S_2 S_1 R)
$$
An analogous argument shows that $\Delta(T_1 T_2 R) = \Delta(R T_2 T_1)$. As for \eqref{eqn:ext rel 3}:
$$
\Delta (T_1 \tR S_2) = (T_1 \otimes 1)(1 \otimes T_1) \tR (1 \otimes S_2)(S_2 \otimes 1) \stackrel{\eqref{eqn:ext rel 3}}= (T_1 \otimes S_2) \tR (S_2 \otimes T_1) = 
$$
$$
= (1 \otimes S_2)(T_1 \otimes 1) \tR (S_2 \otimes 1)(1 \otimes T_1) \stackrel{\eqref{eqn:ext rel 3}}= (1 \otimes S_2)(S_2 \otimes 1) \tR (T_1 \otimes 1)(1 \otimes T_1) = \Delta(S_2 \tR T_1)
$$
Let us now apply the coproduct to the left-hand side of \eqref{eqn:ext rel 4}:
$$
\Delta(X_{1...k} S_0) = \sum_{i=0}^k (S_{k}...S_{i+1} \otimes 1) (X_{1...i} \otimes X_{i+1...k} ) (T_{i+1} ... T_{k} \otimes 1)(1 \otimes S_0)(S_0 \otimes 1) \stackrel{\eqref{eqn:ext rel 4}}=	
$$
$$
\sum_{i=0}^k (1 \otimes S_0) \frac {R_{k0} ... R_{i+1,0}}{f_{k0}...f_{i+1,0}} (S_{k}...S_{i+1} \otimes 1) (X_{1...i} \otimes X_{i+1...k} ) (T_{i+1} ... T_{k} \otimes 1) \tR_{i+1,0}... \tR_{k0} (S_0 \otimes 1) \stackrel{\eqref{eqn:ext rel 3}}=
$$
$$
\sum_{i=0}^k (1 \otimes S_0) \frac {R_{k0} ... R_{i+1,0}}{f_{k0}...f_{i+1,0}} (S_{k}...S_{i+1} \otimes 1) (X_{1...i} \otimes X_{i+1...k} )  (S_0 \otimes 1)  \tR_{i+1,0}... \tR_{k0} (T_{i+1} ... T_{k} \otimes 1)  \stackrel{\eqref{eqn:ext rel 4}}=
$$
$$
\sum_{i=0}^k (1 \otimes S_0) \frac {R_{k0} ... R_{i+1,0}}{f_{k0}...f_{i+1,0}} (S_{k}...S_{i+1} S_0 \otimes 1) \frac {R_{i0}...R_{10}}{f_{i0}...f_{10}} (X_{1...i} \otimes X_{i+1...k} ) \tR_{10}... \tR_{k0} (T_{i+1} ... T_{k} \otimes 1) \stackrel{\eqref{eqn:ext rel 1}}=
$$
$$
\sum_{i=0}^k (1 \otimes S_0) (S_0 \otimes 1) (S_{k}...S_{i+1} \otimes 1) \frac {R_{k0} ... R_{10}}{f_{k0}...f_{10}} (X_{1...i} \otimes X_{i+1...k} ) \tR_{10}... \tR_{k0} (T_{i+1} ... T_{k} \otimes 1)
$$
The last line above equals $\Delta(\text{RHS of \eqref{eqn:ext rel 4}})$, so we are done. The computation showing that $\Delta$ respects relation \eqref{eqn:ext rel 5} is analogous, and therefore left to the interested reader. As for the shuffle product itself, we must show that $\Delta(A_{1...k} * B_{1...l}) = \Delta(A_{1...k}) * \Delta(B_{1...l})$. Applying formulas \eqref{eqn:shuf prod} and  \eqref{eqn:cop shuf 3} implies:
$$
\Delta(A_{1...k}) * \Delta(B_{1...l}) = \sum_{d=0}^k \sum_{e=0}^l (S_{k}...S_{d+1} \otimes 1) \left( \frac {A_{1...d} \otimes A_{d+1,...k}}{\prod_{1 \leq u \leq d < v \leq k} f_{uv}} \right) (T_{d+1} ... T_{k} \otimes 1) * 
$$
$$
(S_{l}...S_{e+1} \otimes 1) \left( \frac {B_{1...e} \otimes B_{e+1...l}}{\prod_{1 \leq u \leq e < v \leq l} f_{uv}} \right) (T_{e+1} ... T_{l} \otimes 1) = \sum_{d=0}^k \sum_{e=0}^l \mathop{\sum^{a_1<...<a_k, \ b_1<...<b_l}_{\{1,...,d+e\} = \{a_1,...,a_d\} \sqcup \{b_1,...,b_e\}}}_{\{d+e+1,...,k+l\} = \{a_{d+1},...,a_k\} \sqcup \{b_{e+1},...,b_l\}}
$$
$$
\left[\underbrace{R_{a_db_1}... R_{a_1b_e}}_{\text{if }a_i < b_j} \right]  \left[\underbrace{R_{a_kb_{e+1}}...R_{a_{d+1}b_l}}_{\text{if }a_i < b_j} \right] (S_{a_{k}}...S_{a_{d+1}} \otimes 1) \left( \frac {A_{a_1...a_d} \otimes A_{a_{d+1}...a_k}}{\prod_{1 \leq u \leq d < v \leq k} f_{a_ua_v}} \right)
$$
$$
(T_{a_{d+1}} ... T_{a_{k}} \otimes 1) \left[\tR_{a_1b_e} ... \tR_{a_db_1} \right] \left[ \tR_{a_{d+1} b_l} ... \tR_{a_k b_{e+1}} \right] (S_{b_{l}}...S_{b_{e+1}} \otimes 1) 	
$$
$$
\left( \frac {B_{b_1...b_e} \otimes B_{b_{e+1}...b_l}}{\prod_{1 \leq u \leq e < v \leq l} f_{b_ub_v}} \right) (T_{b_{e+1}} ... T_{b_{l}} \otimes 1) \left[\underbrace{R_{a_db_1}... R_{a_1b_e}}_{\text{if }a_i > b_j} \right]  \left[\underbrace{R_{a_kb_{e+1}}...R_{a_{d+1}b_l}}_{\text{if }a_i > b_j} \right]
$$
In the next-to-last row of the expression above, we may apply \eqref{eqn:ext rel 3} in order to move the $T$'s to the right and the $S$'s to the left. Afterwards, we apply \eqref{eqn:ext rel 4} and \eqref{eqn:ext rel 5} to move the $S$'s to the left of $A_{a_1...a_d}$ and the $T$'s to the right of $B_{b_1...b_e}$:
$$	
\Delta(A_{1...k}) * \Delta(B_{1...l}) =  \sum_{d=0}^k \sum_{e=0}^l \mathop{\sum^{a_1<...<a_k, \ b_1<...<b_l}_{\{1,...,d+e\} = \{a_1,...,a_d\} \sqcup \{b_1,...,b_e\}}}_{\{d+e+1,...,k+l\} = \{a_{d+1},...,a_k\} \sqcup \{b_{e+1},...,b_l\}}
$$
$$
\underbrace{R_{a_db_1}... R_{a_1b_e}}_{\text{if }a_i < b_j} \underbrace{R_{a_kb_{e+1}}...R_{a_{d+1}b_l}}_{\text{if }a_i < b_j} (S_{a_{k}}...S_{a_{d+1}} S_{b_{l}}...S_{b_{e+1}} \otimes 1)  \frac {R_{a_db_{e+1}} ... R_{a_1b_l}}{\prod^{1 \leq u \leq d}_{e < v \leq l} f_{a_ub_v}} \frac {A_{a_1...a_d} \otimes A_{a_{d+1},...a_k}}{\prod_{1 \leq u \leq d < v \leq k} f_{a_ua_v}} 
$$
$$
\left[\tR_{a_1b_l} ... \tR_{a_db_{e+1}} \right] \left[\tR_{a_1b_e} ... \tR_{a_db_1} \right] \left[ \tR_{a_{d+1} b_l} ... \tR_{a_k b_{e+1}} \right] \left[ \tR_{a_{d+1} b_e} ... \tR_{a_k b_1} \right]
$$
$$
\frac {B_{b_1...b_e} \otimes B_{b_{e+1}...b_l}}{\prod_{1 \leq u \leq e < v \leq l} f_{b_ub_v}} \frac {R_{a_k b_1} ... R_{a_{d+1} b_e}}{\prod^{d < u \leq k}_{1 \leq v \leq e} f_{a_ub_v}}  (T_{a_{d+1}} ... T_{a_{k}} T_{b_{e+1}} ... T_{b_{l}} \otimes 1) \underbrace{R_{a_db_1}... R_{a_1b_e}}_{\text{if }a_i > b_j} \underbrace{R_{a_kb_{e+1}}...R_{a_{d+1}b_l}}_{\text{if }a_i > b_j} 
$$
Finally, we may use \eqref{eqn:ext rel 1} and \eqref{eqn:ext rel 2} to move the outermost products of $R_{a_ib_j}$ past the $S$'s and the $T$'s, at the cost of re-ordering the latter:
$$
\Delta(A_{1...k}) * \Delta(B_{1...l}) =  \sum_{d=0}^k \sum_{e=0}^l \mathop{\sum^{a_1<...<a_k, \ b_1<...<b_l}_{\{1,...,d+e\} = \{a_1,...,a_d\} \sqcup \{b_1,...,b_e\}}}_{\{d+e+1,...,k+l\} = \{a_{d+1},...,a_k\} \sqcup \{b_{e+1},...,b_l\}} \frac 1{\prod_{1 \leq u \leq d+e < v \leq k+l} f_{uv}}
$$
$$
\left( \prod_{x \in \{a_{d+1},...,a_k, b_{e+1},...,b_l\}}^{\text{decreasing }x} S_x \otimes 1 \right) \left[\underbrace{R_{a_kb_{e+1}}...R_{a_{d+1}b_l}}_{\text{if }a_i < b_j} \right] \left[\underbrace{R_{a_db_1}... R_{a_1b_e}}_{\text{if }a_i < b_j} \right] \left[ R_{a_db_{e+1}} ... R_{a_1b_l} \right] 
$$
$$
(A_{a_1...a_d} \otimes A_{a_{d+1}...a_k}) \left[\tR_{a_1b_l} ...  \tR_{a_k b_1} \right] (B_{b_1...b_e} \otimes B_{b_{e+1}...b_l}) $$
$$
\left[ R_{a_k b_1} ... R_{a_{d+1} b_e} \right] \left[\underbrace{R_{a_kb_{e+1}}...R_{a_{d+1}b_l}}_{\text{if }a_i > b_j} \right] \left[\underbrace{R_{a_db_1}... R_{a_1b_e}}_{\text{if }a_i > b_j} \right] \left( \prod_{x \in \{a_{d+1},...,a_k, b_{e+1},...,b_l\}}^{\text{increasing }x} T_x \otimes 1 \right)
$$
The right-hand side is simply $\Delta$ applied to the RHS of \eqref{eqn:shuf prod}, as we needed to prove. 
	
\end{proof}

\subsection{}
\label{sub:pairing}

Let us consider two copies of the extended shuffle algebra, denoted $\tCA^+$, $\tCA^-$, defined as in the previous Subsections with respect to the same $R$, but with:
\begin{align}
&\tR^+ = \tR, \\ &\tR^- = \left( ( \tR^{\dagger_1} )^{-1} \right)^{\dagger_1}_{21} \label{eqn:tr plus minus}
\end{align}
where $\End(V^{\otimes k}) \stackrel{\dagger_{s}}\rightarrow \End(V^{\otimes k})$ denotes the transposition of the $s$--tensor factor: 
\begin{equation}
\label{eqn:partial transposition}
\left( E_{i_1j_1} \otimes ... \otimes E_{i_sj_s} \otimes ... \otimes E_{i_kj_k} \right)^{\dagger_s} = E_{i_1j_1} \otimes ... \otimes E_{j_si_s} \otimes ... \otimes E_{i_kj_k}
\end{equation}
It is an elementary exercise to show that if properties \eqref{eqn:quasi-ybe 1}--\eqref{eqn:quasi-ybe 2} hold for $\tR^+ = \tR$, then they also hold for $\tR^-$ given by formula \eqref{eqn:tr plus minus}. We will now define a pairing:
\begin{equation}
\label{eqn:shuf pair}
\tCA^+ \otimes \tCA^- \stackrel{\langle \cdot , \cdot \rangle}\longrightarrow \text{ground field}
\end{equation}
which respects the bialgebra structure in the following sense:
\begin{equation}
\label{eqn:bialg 1}
\langle ab, c \rangle = \langle b \otimes a, \Delta(c) \rangle \qquad \forall a,b \in \tCA^+, \ c \in \tCA^-
\end{equation}
\begin{equation}
\label{eqn:bialg 2}
\langle a , b c \rangle = \langle \Delta(a), b \otimes c \rangle \qquad \forall a \in \tCA^+, \ b,c \in \tCA^-
\end{equation}
We will henceforth write $X^\pm$ for the copy of an arbitrary $X \in \End(V^{\otimes k})$ in $\tCA^\pm$. The analogous notation will apply to $S^\pm, T^\pm \in \tCA^\pm \otimes \End(V)$. \\

\begin{proposition} 
\label{prop:bialg pair}

The assignments:
\begin{equation}
\label{eqn:bialg pair 1}
\langle S_2^+, S_1^- \rangle = \tR^+, \qquad \langle T_2^+, T_1^- \rangle = \tR^-
\end{equation}
\begin{equation}
\label{eqn:bialg pair 2}
\langle S_2^+, T_1^- \rangle = \frac Rf, \qquad \langle T_2^+, S_1^- \rangle = R
\end{equation}
and for all $X,Y \in \eEnd(V^{\otimes k})$:
\begin{equation}
\label{eqn:bialg pair 4}
\langle X^+, Y^- \rangle = \frac 1{k!} \emph{Tr}_{V^{\otimes k}}\left(XY \prod_{1\leq i < j \leq k} \frac 1{f_{ij}} \right)
\end{equation}
(the pairings between $X^+$, $Y^-$ on one side and $S^+, T^+, S^-, T^-$ on the other side are defined to be 0) generate a bialgebra pairing \eqref{eqn:shuf pair} satisfying \eqref{eqn:bialg 1}--\eqref{eqn:bialg 2}. \\

\end{proposition}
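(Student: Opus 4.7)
The standard strategy is to extend the specified values on generators to all of $\tCA^+ \otimes \tCA^-$ by imposing the bialgebra axioms \eqref{eqn:bialg 1}--\eqref{eqn:bialg 2}, and then to verify that the resulting pairing is consistent with every defining relation on each side. Concretely, this means checking that both sides of each relation in $\tCA^+$ pair identically with an arbitrary element of $\tCA^-$, and symmetrically. The base cases for the recursion are provided by \eqref{eqn:bialg pair 1}, \eqref{eqn:bialg pair 2} and the trace formula \eqref{eqn:bialg pair 4}, together with the stipulation that mixed pairings between shuffle elements and $S, T$-generators vanish.

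First I would handle the FRT-type relations \eqref{eqn:ext rel 1} and \eqref{eqn:ext rel 2}. Pairing these with $S_0^-$ or $T_0^-$ and unfolding using the matrix-multiplicative coproducts \eqref{eqn:cop shuf 1}--\eqref{eqn:cop shuf 2}, the verification collapses to the Yang--Baxter equation \eqref{eqn:ybe} for $R$. Relation \eqref{eqn:ext rel 3} is analogous, but its compatibility with the coproducts reduces instead to the quasi-Yang--Baxter equations \eqref{eqn:quasi-ybe 1}--\eqref{eqn:quasi-ybe 2} for $\tR^\pm$. These checks together establish that the pairing is well-defined on the subalgebra generated by $S^\pm, T^\pm$.

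The main step is compatibility with the mixed relations \eqref{eqn:ext rel 4} and \eqref{eqn:ext rel 5}, which involve arbitrary $X \in \CA^+$ of arbitrary shuffle degree $k$. I would proceed by induction on $k$, unfolding $\Delta(X)$ via formula \eqref{eqn:cop shuf 3} and using the trace formula \eqref{eqn:bialg pair 4} as the base case. Concretely, one computes $\langle X^+ \cdot S_0^+, Y^- \rangle$ by expanding $\Delta(Y^-)$ and interchanging pairings; the various factors of $R, \tR$ produced by \eqref{eqn:cop shuf 3} and by the tautological coproduct of $S_0^+$ rearrange, via repeated Yang--Baxter moves and unitarity \eqref{eqn:unitary}, precisely into the $R$- and $\tR$-chains appearing on the right-hand side of \eqref{eqn:ext rel 4}. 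The argument for \eqref{eqn:ext rel 5} is entirely analogous, using the opposite coproduct on $T$.

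Finally, compatibility with the shuffle product on $\CA^\pm$ itself requires showing that $\langle A^+ * B^+, Y^- \rangle$, unfolded via \eqref{eqn:bialg 1}, agrees with the trace of $(A*B)^+ \cdot Y^-$ computed directly from \eqref{eqn:bialg pair 4}. Here the symmetrization formula \eqref{eqn:shuf symm 2}, the symmetry property \eqref{eqn:symmetry} of $A, B, Y$, and the invariance of the trace under conjugation by any $R_\sigma$ (modulo the scalar factors produced by \eqref{eqn:unitary}) are the essential inputs. The main obstacle, as anticipated, will be the combinatorial bookkeeping in the inductive step of the previous paragraph: the numerous products of $R$, $\tR$ and scalars $f_{ij}$ on each side must be matched by systematic application of the Reidemeister-III moves of Figures 2--4 together with unitarity.
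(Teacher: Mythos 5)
Your overall strategy is the same as the paper's: define the pairing by extending the values on generators via \eqref{eqn:bialg 1}--\eqref{eqn:bialg 2}, then check that every defining relation of $\tCA^\pm$ (and the shuffle product itself) pairs consistently with arbitrary elements of the other side. Your treatment of \eqref{eqn:ext rel 1}--\eqref{eqn:ext rel 3} and of the shuffle-product compatibility is essentially the paper's argument: the quadratic $S,T$ relations reduce to Yang--Baxter-type identities (though note a small misattribution: pairing \eqref{eqn:ext rel 1} against $S_3^-$ uses \eqref{eqn:quasi-ybe 2} and against $T_3^-$ uses \eqref{eqn:ybe}, while \eqref{eqn:ext rel 3} uses \eqref{eqn:quasi-ybe 1}, so both kinds of identities enter), and the product--coproduct compatibility rests on the symmetrization formula \eqref{eqn:shuf symm 3} together with conjugation invariance of the trace, which is exactly how the paper establishes \eqref{eqn:symmetric pairing 1}--\eqref{eqn:symmetric pairing 2} and then \eqref{eqn:big want 1}--\eqref{eqn:big want 2}.

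The genuine gap is in your handling of the mixed relations \eqref{eqn:ext rel 4}--\eqref{eqn:ext rel 5}. An induction on $k$ has no footing here: the relation for $X \in \End(V^{\otimes k})$ does not reduce to the relation for smaller tensors (in this general setting $\CA^+$ is the full space of symmetric tensors, not something generated in degree one, and even a factorization of $X$ would presuppose the product compatibility you prove later), so there is no inductive step to run. Moreover, ``repeated Yang--Baxter moves and unitarity'' will not by themselves convert $\langle X^+ S_0^+, Y^-\rangle$ into the pairing of the right-hand side of \eqref{eqn:ext rel 4}: the essential mechanism, which your sketch omits, is that both pairings are realized as traces over $V^{\otimes k}$. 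Concretely, expanding $\Delta(Y^-)$ (only the two extreme summands pair nontrivially) and inserting the already-established pairings \eqref{eqn:bialg pair 1}--\eqref{eqn:bialg pair 2} expresses $\langle X^+ S_0^+, Y^-\rangle$ as the $\End(V)$-valued trace \eqref{eqn:temu} (an identity one verifies on elementary matrices), while $\e(S_0)=\text{Id}$ expresses the pairing of the right-hand side as $\frac 1{k!}\Tr_{V^{\otimes k}}\bigl[\tfrac{R_{k0}\cdots R_{10}}{f_{k0}\cdots f_{10}}\, X\, \tR_{10}\cdots \tR_{k0}\, Y \prod_{i<j} f_{ij}^{-1}\bigr]$; the two agree precisely because the trace is invariant under cyclic permutations. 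Without this trace realization and its cyclicity, the check does not close, so this step of your proposal needs to be replaced by that argument.
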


\begin{proof} The data provided are sufficient to completely define the pairing, in virtue of \eqref{eqn:bialg 1} and \eqref{eqn:bialg 2}. The thing that we need to check is that the defining relations of the extended shuffle algebras, namely \eqref{eqn:ext rel 1}--\eqref{eqn:ext rel 5} and the definition of the shuffle product in \eqref{eqn:shuf prod}, are preserved by the pairing. For \eqref{eqn:ext rel 1}, we have:
$$
\langle R_{12} S^+_1 S^+_2 , S_3^- \rangle \stackrel{\eqref{eqn:bialg 1}}= R_{12} \langle S_1^+ \otimes S_2^+, S_3^- \otimes S_3^- \rangle = R_{12} \tR_{31} \tR_{32} \stackrel{\eqref{eqn:quasi-ybe 2}}= 
$$
$$
= \tR_{32} \tR_{31} R_{12} = \langle S_2^+ \otimes S_1^+ , S_3^- \otimes S_3^- \rangle R_{12} \stackrel{\eqref{eqn:bialg 1}}= \langle S_2^+ S_1^+ R_{12}, S_3^- \rangle
$$
and:
$$
\langle R_{12} S_1^+ S_2^+ , T_3^- \rangle \stackrel{\eqref{eqn:bialg 1}}= R_{12} \langle S_2^+ \otimes S_1^+, T_3^- \otimes T_3^- \rangle = \frac {R_{12} R_{32} R_{31}}{f_{32}f_{31}} \stackrel{\eqref{eqn:ybe}}= 
$$
$$
= \frac {R_{31} R_{32} R_{12}}{f_{32}f_{31}} = \langle S_1^+ \otimes S_2^+, T_3^- \otimes T_3^- \rangle R_{12} \stackrel{\eqref{eqn:bialg 1}}= \langle S_2^+ S_1^+ R_{12}, T_3^- \rangle
$$
We leave the analogous formulas when \eqref{eqn:ext rel 1} is replaced by \eqref{eqn:ext rel 2}, or when the roles of the two arguments of the pairing are switched, as exercises to the reader. As for \eqref{eqn:ext rel 3}, we have:
$$
\langle T_1^+ \tR_{12} S_2^+, S_3^- \rangle \stackrel{\eqref{eqn:bialg 1}}= \langle T_1^+, S_3^- \rangle \tR_{12}  \langle S_2^+, S_3^- \rangle = R_{31} \tR_{12} \tR_{32} \stackrel{\eqref{eqn:quasi-ybe 1}}=
$$
$$
= \tR_{32} \tR_{12} R_{31} = \langle S_2^+, S_3^- \rangle \tR_{12} \langle T_1^+, S_3^- \rangle \stackrel{\eqref{eqn:bialg 1}}= \langle S_2^+ \tR_{12} T_1^+, S_3^- \rangle 
$$
The analogous formulas when $S_3^-$ is replaced by $T_3^-$, or when the roles of the arguments of the pairing are switched, are left as exercises to the interested reader. To prove that \eqref{eqn:ext rel 4} pairs correctly with elements of $\CA^-$, note that \eqref{eqn:cop shuf 3} implies:
$$
\Delta(Y^-_{1...k}) = Y^-_{1...k} \otimes 1 + (S^-_k...S_1^- \otimes 1) (1 \otimes Y^-_{1...k})  (T^-_1...T^-_k \otimes 1) + \text{other terms}
$$
where ``other terms" stands for summands in which $Y_{1...k}^-$ has a non-zero number of indices on either side of the $\otimes$ sign. Then we claim that:
$$
\langle X^+_{1...k} S^+_0, Y^-_{1...k} \rangle \stackrel{\eqref{eqn:bialg 1}}= \Big \langle S^+_0 \otimes X^+_{1...k}, (S^-_k...S^-_1 \otimes 1) (1 \otimes Y^-_{1...k})  (T^-_1...T^-_k \otimes 1) \Big \rangle =
$$
\begin{equation}
\label{eqn:temu}
= \frac 1{k!}  \Tr_{V^{\otimes k}} \left[ \frac {R_{k0} ... R_{10}}{f_{k0}...f_{10}} X_{1...k} \tR_{10}...\tR_{k0}  Y_{1...k} \prod_{1\leq i < j \leq n} \frac 1{f_{ij}} \right]
\end{equation}
where $\Tr_{V^{\otimes k}}$ denotes trace with respect to the indices $1,...,k$ only (therefore the expression above is valued in $\End(V)$, corresponding to the index $0$). The equality between the two rows of \eqref{eqn:temu} is proved as follows: because both sides are bilinear in the tensors $X_{1...k}$ and $Y_{1...k}$, it suffices to prove that they are equal for:
$$
X = E_{i_1j_1} \otimes ... \otimes E_{i_kj_k}, \qquad Y = E_{i_1'j_1'} \otimes ... \otimes E_{i_k'j_k'}
$$
for arbitrary $i_a,j_a,i_a',j_a' \in \{1,...,n\}$. In this case, the equality between the two rows of \eqref{eqn:temu} is a straightforward exercise, which is performed by expanding $S_a^\pm$ and $T_a^\pm$ in terms of the elementary matrices $E_{ij}$, and using \eqref{eqn:bialg 2}, \eqref{eqn:bialg pair 1}, \eqref{eqn:bialg pair 2}. Similarly, because $\e(S^+_0) = \text{Id}$, one sees that:
$$
\Big \langle S_0^+ \frac {R_{k0} ... R_{10}}{f_{k0}...f_{10}} X^+_{1...k} \tR_{10}...\tR_{k0}, Y^-_{1...k} \Big \rangle = \text{RHS of \eqref{eqn:temu}}
$$
We conclude that relation \eqref{eqn:ext rel 4} is preserved by the pairing. The proof that \eqref{eqn:ext rel 5} is preserved by the pairing is analogous, and left as an exercise to the reader. \\
	
\noindent Before proving that the pairing \eqref{eqn:bialg pair 4} intertwines the shuffle product with the coproduct, let us show that the trace pairing is symmetric, in the sense that:
\begin{equation}
\label{eqn:symmetric pairing 1} 
\frac 1{k!} \Tr \Big( [ \sym \ A ] \cdot Y \Big) = \Tr(A \cdot Y)
\end{equation}
\begin{equation}
\label{eqn:symmetric pairing 2} 
\frac 1{k!} \Tr \Big(X \cdot [ \sym \ B ] \Big) = \Tr(X \cdot B)
\end{equation}
for all symmetric tensors $X,Y \in \End(V^{\otimes k})$ and all tensors $A,B \in \End(V^{\otimes k})$. Indeed, \eqref{eqn:symmetric pairing 1} follows from the fact that $\forall \sigma \in S(k)$, we have:
$$
\Tr \Big( R_\sigma (\sigma A \sigma^{-1}) R_{\sigma}^{-1} Y \Big) \stackrel{\eqref{eqn:symmetrization}}= \Tr \Big( R_\sigma (\sigma A \sigma^{-1}) (\sigma Y \sigma^{-1}) R_\sigma^{-1} \Big) = \Tr(A Y)
$$
where the latter equality is the conjugation invariance of trace. Property \eqref{eqn:symmetric pairing 2} is proved likewise. As a consequence of \eqref{eqn:shuf symm 3} and \eqref{eqn:symmetric pairing 1}, proving formula \eqref{eqn:bialg 1} for $a = A^+$, $b = B^+$ and $c = Y^-$ boils down to the following equality:
$$
\frac 1{k! l!}\Tr \left( A_{l+1...k+l} [\tR^+_{l+1,l}... \tR^+_{l+k,1}] B_{1...l} [R_{l+k,1} ... R_{l+1,l}] Y_{1...k+l} \prod_{1 \leq i < j \leq k+l} \frac 1{f_{ij}} \right) =
$$
\begin{equation} 
\label{eqn:big want 1}
= \left \langle B^+_{1...l} \otimes A^+_{l+1...l+k}, (S^-_{k+l} ... S^-_{l+1} \otimes 1) \frac {Y^-_{1...l} \otimes Y^-_{l+1...k+l}}{\prod_{1 \leq u \leq l < v \leq k+l} f_{uv}} (T^-_{l+1} ... T^-_{k+l} \otimes 1) \right \rangle
\end{equation}
which we will now prove. We have:
$$
\Delta(B^+_{1...l}) = B^+_{1...l} \otimes 1 + (S_l^+...S^+_1 \otimes 1) (1 \otimes B^+_{1...l}) (T^+_1... T^+_l \otimes 1) + \text{other terms}
$$
where the phrase ``other terms" denotes summands whose second tensor factor has a non-zero number of indices on either side of the $\otimes$ sign. Because of this, formula \eqref{eqn:bialg 2} when one of $b$ and $c$ is either $S^-$ or $T^-$ (which we have already checked yields a consistent bialgebra pairing) implies that:
\begin{align*}
&\langle B^+_{1...l}, S^-_{l+k} ... S^-_{l+1} Y^-_{1...l} T^-_{l+1} ... T^-_{k+l} \rangle = \\ &= \Big \langle (\Delta \otimes \text{Id}) \circ \Delta(B_{1...l}^+), S^-_{l+k} ... S^-_{l+1} \otimes Y_{1...l}^- \otimes T^-_{l+1} ... T^-_{k+l} \Big \rangle = \\ &= \Tr\Big( [\tR^+_{l+1,l}...\tR^+_{k+l,1}] B_{1...l} [R_{k+l,1}...R_{l+1,l}] Y_{1...l} \Big)
\end{align*}
Therefore, the RHS of \eqref{eqn:big want 1} is precisely equal to the LHS of \eqref{eqn:big want 1}, as required. Similarly, proving \eqref{eqn:bialg 2} for $a = X^+$, $b = A^-$, $c = B^-$ boils down to the equality:
$$
\frac 1{k! l!}\Tr_{V^{\otimes k+l}} \left( [R_{k,k+1} ... R_{1,k+l}] A_{1...k} [\tR^-_{1,k+l}... \tR^-_{k,k+1}] B_{k+1...k+l} X_{1...k+l} \prod_{1 \leq i < j \leq k+l} \frac 1{f_{ij}} \right)
$$
\begin{equation} 
\label{eqn:big want 2}
= \left \langle (S^+_{k+l} ... S^+_{k+1} \otimes 1) \frac {X^+_{1...k} \otimes X^+_{k+1...k+l}}{\prod_{1 \leq u \leq k < v \leq k+l} f_{uv}} (T^+_{k+1} ... T^+_{k+l} \otimes 1), A^-_{1...k} \otimes B^-_{k+1...k+l} \right \rangle
\end{equation}
The equality \eqref{eqn:big want 2} is proved by analogy with \eqref{eqn:big want 1}, so we skip the details. 

\end{proof}

\subsection{}
\label{sub:double}

Proposition \ref{prop:bialg pair} allows us to define the Drinfeld double:
\begin{equation}
\label{eqn:double}
\CA = \tCA^+ \otimes \tCA^{-, \op, \coop}
\end{equation}
such that $\tCA^+ \cong \tCA^+ \otimes 1$ and $\tCA^{-, \op,\coop} \cong 1 \otimes \tCA^{-, \op, \coop}$ are sub-bialgebras of $\CA$ (``op" and ``coop" denote the opposite algebra and coalgebra structure, respectively), and the commutation of elements coming from the two factors is governed by:
\begin{equation}
\label{eqn:dd}
\langle a_1,b_1 \rangle a_2b_2 = b_1 a_1 \langle a_2, b_2 \rangle
\end{equation}
for all $a \in \tCA^+$ and $b \in \tCA^{-, \op,\coop}$. Let us now spell out formula \eqref{eqn:dd} when $a$ and $b$ are among the generators of the double algebra \eqref{eqn:double}. The quadratic relations \eqref{eqn:ext rel 1}--\eqref{eqn:ext rel 5} hold as stated between the $+$ generators, and hold with the opposite multiplication between the $-$ generators. As for the relations that involve one of the $+$ generators and one of the $-$ generators, we have: \\

\begin{proposition}
\label{prop:double}

We have the following formulas in $\CA$:
\begin{equation}
\label{eqn:drinfeld 0}
S_2^+ \tR S_1^- = S_1^- \tR S_2^+, \quad \qquad R S_1^+ T_2^- = T_2^- S_1^+ R 
\end{equation}
\begin{equation}
\label{eqn:drinfeld 0 bis}
T_1^+ \tR T_2^- = T_2^- \tR T_1^+, \quad \qquad R T_2^+ S_1^- = S_1^- T_2^+ R 
\end{equation}
as well as:
\begin{equation}
\label{eqn:drinfeld 1} 
S_0^\mp \cdot_{\pm} X_{1...k}^\pm = \tR_{0k}^\pm ... \tR_{01}^\pm X_{1...k}^\pm R_{01}...R_{0k} \cdot_\pm S_0^\mp
\end{equation}
\begin{equation}
\label{eqn:drinfeld 2} 
X_{1...k}^\pm \cdot_\pm T_0^\mp = T_0^\mp \cdot_\pm R_{k0} ... R_{10} X_{1...k}^\pm \tR^\pm_{10} ... \tR^\pm_{k0}
\end{equation}
where $\cdot_+ = \cdot$ and $\cdot_- = \cdot^{\eop}$ (the opposite multiplication in $\CA$). Finally, we have:
\begin{equation}
\label{eqn:drinfeld 5}
[E_{ij}^+, E_{i'j'}^-] = s_{j'i}^+ t_{ji'}^+ - t_{j'i}^- s_{ji'}^- 
\end{equation}
$\forall \ i,j,i',j' \in \{1,...,n\}$, where $E_{ij}^\pm$ are elements in the $k=1$ summand of \eqref{eqn:general shuffle}. \\

\end{proposition}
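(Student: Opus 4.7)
The plan is to derive every identity in Proposition \ref{prop:double} from the defining Drinfeld double compatibility
$$\langle a_{(1)}, b_{(1)} \rangle \cdot a_{(2)} b_{(2)} \;=\; b_{(1)} a_{(1)} \cdot \langle a_{(2)}, b_{(2)} \rangle$$
by feeding in the coproducts of Proposition \ref{prop:extended} and the pairings of Proposition \ref{prop:bialg pair}.

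\emph{Relations \eqref{eqn:drinfeld 0}--\eqref{eqn:drinfeld 0 bis}.} Take $a$ and $b$ each among $\{S^\pm, T^\pm\}$ in appropriate tensor factors of $V$. Since $\Delta(S^\pm) = S^\pm\otimes S^\pm$ and $\Delta(T^\pm) = T^\pm\otimes T^\pm$ are matrix-grouplike, the Sweedler decomposition in \eqref{eqn:dd} collapses, yielding a matrix-valued identity of the form $\langle A,B\rangle\cdot AB = BA\cdot\langle A,B\rangle$. Substituting the four pairings from \eqref{eqn:bialg pair 1} and \eqref{eqn:bialg pair 2} yields the four stated matrix identities directly.

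\emph{Relations \eqref{eqn:drinfeld 1}--\eqref{eqn:drinfeld 2}.} I would focus on the $(+)$ case of \eqref{eqn:drinfeld 1}, the remaining cases being symmetric. Take $a = X^+_{1\ldots k}\in\CA^+$ and $b = S^-_0$, whose $\coop$-coproduct is again matrix-grouplike. The coproduct \eqref{eqn:cop shuf 3} has $k+1$ summands, but because Proposition \ref{prop:bialg pair} declares the pairing of $\CA^\pm$-elements of positive degree with any of the generators $S^\mp, T^\mp$ to vanish, only the extreme indices $i=0$ (on the LHS of \eqref{eqn:dd}) and $i=k$ (on the RHS) survive. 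The surviving first tensor factor reduces to $S^+_k\cdots S^+_1 T^+_1\cdots T^+_k$; iterating $\langle ab, S^-_0\rangle = \langle a,S^-_0\rangle\cdot\langle b,S^-_0\rangle$ (matrix product in $V^{(0)}$) and inserting \eqref{eqn:bialg pair 1}, \eqref{eqn:bialg pair 2} evaluates the pairing as $\tR^+_{0k}\cdots\tR^+_{01}R_{01}\cdots R_{0k}$. Matching both sides gives
$$\tR^+_{0k}\cdots\tR^+_{01}R_{01}\cdots R_{0k}\cdot X^+\cdot S^-_0 \;=\; S^-_0\cdot X^+.$$
The stated form \eqref{eqn:drinfeld 1} then follows from the intertwining identity
$$R_{01}R_{02}\cdots R_{0k}\cdot X^+_{1\ldots k} \;=\; X^+_{1\ldots k}\cdot R_{01}R_{02}\cdots R_{0k},$$
which holds for any symmetric tensor $X^+_{1\ldots k}$ by an easy induction combining the Yang-Baxter relation \eqref{eqn:ybe} with the symmetry property \eqref{eqn:symmetry}. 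The dual cases involve the quasi-YBE relations \eqref{eqn:quasi-ybe 1}--\eqref{eqn:quasi-ybe 2} to move $\tR^\pm$ past $R$ in the analogous way, and the sign flip $\cdot_{-} = \cdot^{\eop}$ accounts for working inside $\tCA^{-,\eop,\coop}$.

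\emph{Relation \eqref{eqn:drinfeld 5}.} Specialize \eqref{eqn:drinfeld 1} and \eqref{eqn:drinfeld 2} to $k=1$ with $X^\pm = E_{ij}$, and extract the appropriate entry in $\End(V^{(0)}\otimes V^{(1)})$; the two matrix-valued relations combine to produce the scalar commutator on the LHS and the polynomial $s^+_{j'i}t^+_{ji'} - t^-_{j'i}s^-_{ji'}$ on the RHS. The principal obstacle throughout is the bookkeeping of matrix-product orderings in the iterated pairings, and in particular the intertwining identity above; modulo this, every identity in the proposition is a direct transcription of \eqref{eqn:dd}.
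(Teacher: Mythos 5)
Your treatment of \eqref{eqn:drinfeld 0}--\eqref{eqn:drinfeld 0 bis} is fine and matches the paper's argument, but the remaining two parts have genuine gaps. For \eqref{eqn:drinfeld 1}: the strategy (apply \eqref{eqn:dd} to $a = X^+_{1...k}$, $b = S_0^-$, note that only the extreme terms of \eqref{eqn:cop shuf 3} pair nontrivially) is the right one, but your bookkeeping of the matrix legs is wrong. In the $i=0$ summand of $\Delta(X^+)$ the matrix coefficient is the ordered product $(S_k\cdots S_1)\cdot X_{1...k}\cdot(T_1\cdots T_k)$, so when you evaluate the pairing with $S_0^-$ the resulting matrices must stay where those factors were: the $\tR^+_{0a}$'s to the \emph{left} of $X_{1...k}$ and the $R_{0a}$'s to its \emph{right}, giving directly $\tR^+_{0k}\cdots\tR^+_{01}\,X^+_{1...k}\,R_{01}\cdots R_{0k}\cdot S_0^- = S_0^-\,X^+_{1...k}$, i.e.\ \eqref{eqn:drinfeld 1} with no further lemma. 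Your version, which pulls all the pairing matrices to the left of $X^+$, is not equivalent, and the ``intertwining identity'' $R_{01}\cdots R_{0k}\,X_{1...k} = X_{1...k}\,R_{01}\cdots R_{0k}$ you invoke to repair it is false: already for $k=1$ every tensor is symmetric, and the identity would force $R$ to commute with $1\otimes\End(V)$, which fails for any $R$--matrix with a nonzero off-diagonal part (e.g.\ the term $E_{12}\otimes E_{21}$ in \eqref{eqn:explicit r} gives $R_{01}(1\otimes E_{11})\neq(1\otimes E_{11})R_{01}$). Neither \eqref{eqn:ybe} nor \eqref{eqn:symmetry} can save this, since slot $0$ is an auxiliary index on which the symmetry of $X$ says nothing.

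The derivation you propose for \eqref{eqn:drinfeld 5} also cannot work. Relations \eqref{eqn:drinfeld 1}--\eqref{eqn:drinfeld 2} only record how elements of $\CA^\pm$ commute with the generators $S^\mp$, $T^\mp$ of the \emph{opposite} extended half; they carry no information whatsoever about the product $E^+_{ij}E^-_{i'j'}$ of two degree-one shuffle elements, so no ``extraction of an entry'' from them can produce the commutator $[E^+_{ij},E^-_{i'j'}]$. The correct route (the paper's) is to apply \eqref{eqn:dd} directly with $a = E^+_{ij}$, $b = E^-_{i'j'}$, using the coproducts \eqref{eqn:delta up basic} and \eqref{eqn:delta down basic}: the surviving terms are governed by the counit on the $s,t$ coefficients together with the $k=1$ trace pairing \eqref{eqn:bialg pair 4}, $\langle E^+_{xy}, E^-_{i'j'}\rangle = \tilde\delta^{x}_{j'}\tilde\delta^{y}_{i'}$, and these produce the two sides $E^+_{ij}E^-_{i'j'} + t^-_{j'i}s^-_{ji'}$ and $E^-_{i'j'}E^+_{ij} + s^+_{j'i}t^+_{ji'}$, whence \eqref{eqn:drinfeld 5}. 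As written, your proposal omits precisely the ingredient (the $\CA^+$--$\CA^-$ trace pairing entering through the middle terms of the coproducts) that makes this relation true.
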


\begin{proof} Let us now prove the first formula in \eqref{eqn:drinfeld 0} and leave the second one and \eqref{eqn:drinfeld 0 bis} as exercises for the reader. Since:
\begin{align} 
&\Delta(S^+) = (1 \otimes S^+)(S^+ \otimes 1) \quad \text{in } \tCA^+ \label{eqn:cop pos} \\
&\Delta(S^-) = (S^- \otimes 1)(1 \otimes S^-) \quad \text{in } \tCA^{-, \op,\coop} \label{eqn:cop neg}
\end{align} 
formula \eqref{eqn:dd} for $a = S_2^+$ and $b = S_1^-$ implies:
$$
S^+_2 \langle S_2^+, S_1^- \rangle S_1^- = S_1^- \langle S_2^+, S_1^- \rangle S_2^+
$$
Using \eqref{eqn:bialg pair 1} to evaluate the pairing implies precisely the first formula in \eqref{eqn:drinfeld 0}. \\
	
\noindent Let us now prove \eqref{eqn:drinfeld 1} and leave the analogous formula \eqref{eqn:drinfeld 2} as an exercise. We will do so in the case $\pm = +$, as $\pm = -$ just involves the opposite of all relations.
$$
\Delta(X_{1...k}^+) = X_{1...k}^+ \otimes 1 + (S^+_k...S_1^+ \otimes 1)(1 \otimes X_{1...k}^+)(T^+_1...T_k^+ \otimes 1) + \text{other terms}
$$
where ``other terms" stands for terms which have a non-zero number of indices on either side of the $\otimes$ sign, so they pair trivially with $S^-$. Meanwhile, $\Delta(S^-)$ is given by \eqref{eqn:cop neg}. Applying \eqref{eqn:dd} for $a = X_{1...k}^+$ and $b_0 = S^-$ yields:
$$
\langle S_k^+ ... S_1^+ , S_0^- \rangle X_{1...k}^+ \langle T_1^+ ... T_k^+, S_0^- \rangle S_0^- = S_0^- X_{1...k}^+
$$
Formulas \eqref{eqn:bialg 1}, \eqref{eqn:bialg pair 1} and \eqref{eqn:bialg pair 2} transform the formula above precisely into \eqref{eqn:drinfeld 1}. \\
	
\noindent As for \eqref{eqn:drinfeld 5}, consider relation \eqref{eqn:cop shuf 3} for $k=1$ and $X = E^+_{ij}$:
\begin{equation}
\label{eqn:delta up basic}
\Delta(E_{ij}^+) = E_{ij}^+ \otimes 1 + \sum_{x,y = 1}^n s^+_{xi} t^+_{jy} \otimes E_{xy}^+
\end{equation}
as well as the $(\op,\coop)$ version of the above equality that holds in $\tCA^{-,\op,\coop}$:
\begin{equation}
\label{eqn:delta down basic}
\Delta(E_{i'j'}^-) = \sum_{x',y' = 1}^n E_{x'y'}^- \otimes t^-_{j'y'} s^-_{x'i'}  + 1 \otimes E_{i'j'}^-
\end{equation}
Then \eqref{eqn:drinfeld 5} follows from \eqref{eqn:dd} for $a = E_{ij}^+$ and $b = E_{i'j'}^-$.
	
\end{proof}

\section{Quantum toroidal $\fgl_n$}
\label{sec:quantum}

\noindent In the present Section, we will define quantum affine and toroidal $\fgl_n$, and recall (in Subsections \ref{sub:uu}--\ref{sub:formulation}) a generators-and-relations presentation   for the latter, which will play a big role in the subsequent Sections. As for quantum affine $\fgl_n$, we recall its Drinfeld-Jimbo presentation in Subsections \ref{sub:special}--\ref{sub:coproducts}, and its RTT presentation in Subsections \ref{sub:bill}--\ref{sub:e infinity}. We also note that Subsections \ref{sub:two realizations}--\ref{sub:bars} serve as a bridge between the Drinfeld-Jimbo and RTT presentations; the reader who is familiar with our notation and conventions may skip this bridge, but otherwise it will serve as useful motivation for the Subsections that follow. \\

\subsection{} Fix $n>1$, and let us start with a few notational remarks. The symbol $\delta$ will refer to several different notions throughout the present paper. Specifically, the main usage of $\delta$ is the following ``mod $n$" variant of the Kronecker symbol:
\begin{equation}
\label{eqn:delta 1}
\delta_j^i = \begin{cases} 1 &\text{if } i \equiv j \text{ mod }n \\
0 &\text{otherwise} \end{cases}
\end{equation}
We will write $\delta_{j \text{ mod }g}^i$ if we need the notion above for congruences modulo another number $g$ instead of $n$. Moreover, if $i,j,i',j' \in \BZ$, we will write:
\begin{equation}
\label{eqn:delta 2}
\delta_{(i',j')}^{(i,j)} = \begin{cases} 1 &\text{if } (i,j) \equiv (i',j') \text{ mod }(n,n) \\
0 &\text{otherwise} \end{cases}
\end{equation}
The ordinary Kronecker symbol will be denoted by $\tilde{\delta}_j^i$, and it takes the value 1 if and only if $i = j$ as integers, and 0 otherwise. Finally, we will use the notation:
$$
\delta(z) = \sum_{k\in \BZ} z^k
$$
for the formal $\delta$ series. Since the various uses of $\delta$ will arise in different contexts, we hope this abuse of notation will not cause confusion. \\

\subsection{} 
\label{sub:special}

Let $\dot{\fsl}_n$ be the Kac-Moody Lie algebra of type $\widehat{A}_{n-1}$. The corresponding Drinfeld-Jimbo quantum group \footnote{We note that the algebra defined below is slightly larger than the usual quantum group, since the Cartan part of the latter is generated by the ratios $\frac {\psi_i}{\psi_j}$, instead of $\psi^{\pm 1}_1,...,\psi_n^{\pm 1}$ themselves} is defined to be the associative algebra:
\begin{equation}
\label{eqn:special}
\su = \BQ(q) \Big \langle x_i^\pm, \psi_s^{\pm 1}, c^{\pm 1} \Big \rangle^{i \in \BZ/n\BZ}_{s \in \{1,...,n\}}
\end{equation}
modulo the fact that $c$ is central, as well as the following relations for all $i,j \in \BZ/n\BZ$ and  $s,s' \in \{1,...,n\}$ (by setting $\psi_{s+n} = c\psi_s$, we extend the notation $\psi_s$ to all $s \in \BZ$):
\begin{equation}
\label{eqn:special 0}
\psi_s \psi_{s'} = \psi_{s'} \psi_s
\end{equation}
\begin{equation}
\label{eqn:special 1}
\psi_s x^\pm_i  = q^{\pm (\delta_s^{i+1} - \delta^i_s)} x^\pm_i \psi_s 
\end{equation}
\begin{equation}
\label{eqn:special 2}
\begin{cases}
\ [x_i^\pm, x_j^\pm] = 0 &\text{if } j \notin \{i-1,i+1\} \\ \\
\ (x_i^\pm)^2 x_j^\pm - (q+q^{-1}) x_i^\pm x_j^\pm x_i^\pm + x_j^\pm (x_i^\pm)^2 = 0  &\text{if } j \in \{i-1,i+1\} 
\end{cases} 
\end{equation}
\begin{equation}
\label{eqn:special 3}
[x_i^+, x_j^-] =  \frac {\delta_i^j}{q-q^{-1}} \left(\frac {\psi_{i+1}}{\psi_i} - \frac {\psi_i}{\psi_{i+1}} \right)
\end{equation}
(note that relation \eqref{eqn:special 2} must be amended in the case $n=2$, but we will not need the modification in question). We also consider the $q$--deformed Heisenberg algebra:
\begin{equation}
\label{eqn:heisenberg}
\uui = \BQ(q) \Big \langle p_{\pm k}, c \Big \rangle_{k \in \BN}
\end{equation}
where $c$ is central, and the $p_{\pm k}$ satisfy the commutation relation:
\begin{equation}
\label{eqn:heis 0}
[p_k, p_l] = k \tilde{\delta}_{k+l}^0 \cdot \frac {c^k - c^{-k}}{q^k-q^{-k}} 
\end{equation}
Then we will consider the algebra:
\begin{equation}
\label{eqn:quantum as tensor}
\uu = \su \otimes \uui \Big/ (c \otimes 1 - 1 \otimes c)
\end{equation}
which serves as an affine $q$--version of the Lie algebra isomorphism $\fgl_n \cong \fsl_n \oplus \fgl_1$. \\

\subsection{}
\label{sub:coproducts}

We can make $\uu$ into a bialgebra by using the counit $\e(c) = 1$, $\e(\psi_s) = 1$, $\e(x_i^\pm) = 0$, $\e(p_k) = 0$ and the coproduct given by $\Delta(c) = c \otimes c$ and:
\begin{equation}
\label{eqn:cop special 1}
\Delta(\psi_s) = \psi_s \otimes \psi_s  
\end{equation}
\begin{align}
&\Delta(x_i^+) = \frac {\psi_i}{\psi_{i+1}} \otimes x_i^+ + x_i^+ \otimes 1 \label{eqn:cop special 2} \\
&\Delta(x_i^-) = 1 \otimes x_i^- + x_i^- \otimes \frac {\psi_{i+1}}{\psi_i} \label{eqn:cop special 3} 
\end{align} 
\begin{equation}
\label{eqn:cop heis 1}
\Delta(p_k) = \frac 1{c^k} \otimes p_k + p_k \otimes 1  
\end{equation}
\begin{equation}
\label{eqn:cop heis 2}
\Delta(p_{-k}) = 1 \otimes p_{-k} + p_{-k} \otimes c^k
\end{equation}
Moreover, the sub-bialgebras:
\begin{align}
&\uug = \BQ(q) \Big \langle x_i^+, p_k, \psi_s^{\pm 1}, c^{\pm 1} \Big \rangle^{i \in \BZ/n\BZ, k \in \BN}_{s \in \{1,...,n\}} \ \ \subset \uu \label{eqn:positive} \\
&\uul = \BQ(q) \Big \langle x_i^-, p_{-k}, \psi_s^{\pm 1}, c^{\pm 1} \Big \rangle^{i \in \BZ/n\BZ, k \in \BN}_{s \in \{1,...,n\}} \subset \uu \label{eqn:negative}
\end{align}
are endowed with a bialgebra pairing:
\begin{equation}
\label{eqn:double 0}
\uug \otimes \uul^{\op,\coop} \stackrel{\langle \cdot, \cdot \rangle}\longrightarrow \BQ(q)
\end{equation}
generated by properties \eqref{eqn:bialg 1}, \eqref{eqn:bialg 2} and:
$$
\langle \psi_s, \psi_{s'} \rangle = q^{-\delta_{s'}^s}, \qquad \langle x_i^+, x_j^-\rangle = \frac {\delta_j^i}{q^{-1}-q}, \qquad \langle p_k, p_{-l} \rangle = \frac {k\tilde{\delta}_k^l}{q^{-k}-q^k}
$$
and all other parings between generators are 0. It is well-known that $\uu$ is the Drinfeld double corresponding to the data \eqref{eqn:double 0} (modulo the identification of the symbols $\psi_s,c$ in the two factors of \eqref{eqn:double 0}). The algebra $\uu$ is $\zz$--graded:
$$
\deg c = 0, \quad \deg \psi_s = 0, \quad \deg x_i^\pm = \pm \bs^i, \quad \deg p_{\pm k} = \pm k\bde 
$$
where $\bs^i = \underbrace{(0,...,0,1,0,...,0)}_{\text{1 on }i\text{--th position}}$ and $\bde = (1,...,1)$. \\

\begin{remark}
\label{rem:simple imaginary}
	
The elements $x_i^\pm$ of $\uu$ are called simple (root) generators, while the elements $p_{\pm k}$ are called imaginary (root) generators. Up to constant multiples, these are all the primitive elements of $\uu$ (see Definition \ref{def:primitive}). \\
	
\end{remark}

\subsection{} 
\label{sub:two realizations}

We will now give a different incarnation of the bialgebra \eqref{eqn:quantum as tensor}, which is obtained by combining the results of \cite{DF} and \cite{D}. We will use the notation of \cite{Tor}. \\

\begin{definition}
\label{def:two realizations}
	
Consider the algebra:
\begin{equation}
\label{eqn:two realizations}
\CE= \BQ(q) \Big \langle f_{\pm [i;j)}, \psi_s^{\pm 1}, c^{\pm 1} \Big \rangle^{1\leq s \leq n}_{(i <j) \in \zzz} \Big/ \text{relations \eqref{eqn:psi f}--\eqref{eqn:f f}}
\end{equation}
where $c$ is central, and the quadratic relations \eqref{eqn:psi f}--\eqref{eqn:f f} will be explained later. \\
	
\end{definition}

\noindent The algebra $\CE$ is a bialgebra with respect to the counit $\e(\psi_s) = \e(c) = 1, \forall s$ and $\e(f_{\pm [i;j)}) = 0, \forall i<j$, as well as the coproduct $\Delta(c) = c \otimes c$ and:
\begin{align}
&\Delta(\psi_s) = \psi_s \otimes \psi_s \label{eqn:cop 0} \\
&\Delta(f_{[i;j)}) = \sum_{s = i}^j f_{[s;j)} \frac {\psi_i}{\psi_s}  \otimes f_{[i;s)} \label{eqn:cop 1} \\
&\Delta(f_{-[i;j)}) = \sum_{s = i}^j f_{-[i;s)}  \otimes f_{-[s;j)} \frac {\psi_s}{\psi_i}  \label{eqn:cop 2}
\end{align}
where the notation $\psi_s$ is extended to all $s \in \BZ$ by $\psi_{s+n} = c\psi_s$. The sub-bialgebras:
\begin{align} 
&\CE^\geq = \BQ(q) \Big \langle f_{+[i;j)}, \psi_s^{\pm 1}, c^{\pm 1} \Big \rangle^{1\leq s \leq n}_{(i < j) \in \zzz} \subset \CE \label{eqn:e geq} \\ 
&\CE^\leq = \BQ(q) \Big \langle f_{-[i;j)}, \psi_s^{\pm 1}, c^{\pm 1} \Big \rangle^{1\leq s \leq n}_{(i < j) \in \zzz} \subset \CE \label{eqn:e leq}
\end{align}
are endowed with a bialgebra pairing:
\begin{equation}
\label{eqn:double 00}
\CE^\geq \otimes \CE^{\leq,\op,\coop} \stackrel{\langle \cdot, \cdot \rangle}\longrightarrow \BQ(q)
\end{equation}
generated by properties \eqref{eqn:bialg 1}, \eqref{eqn:bialg 2} and:
\begin{equation}
\label{eqn:pairing}
\langle \psi_s, \psi_{s'} \rangle = q^{-\delta_{s'}^s}, \qquad \langle f_{[i;j)}, f_{-[i';j')} \rangle = \delta_{(i',j')}^{(i,j)} (1-q^{-2})
\end{equation}
where the right-most delta symbol is defined in \eqref{eqn:delta 2}. With respect to the pairing \eqref{eqn:double 00}, the algebra $\CE$ is isomorphic to the Drinfeld double $\CE^{\geq} \otimes \CE^{\leq}$ modulo the identification of $\psi_s, c$ in the two tensor factors. The algebra $\CE$ is $\zz$--graded:
$$
\deg c = 0, \quad \deg \psi_s = 0, \quad \deg f_{\pm [i;j)} = \pm [i;j)
$$ 
where $[i;j) = \bs^i + ... + \bs^{j-1} \in \nn$ (we write $\bs^k = \bs^{k \text{ mod }n}$). \\

\subsection{} 
\label{sub:sub}

The subalgebras:
$$
\CE \supset \CE^\pm = \BQ(q) \Big \langle f_{\pm [i;j)} \Big \rangle_{(i < j) \in \zzz} 
$$
are graded by $\pm \nn$, and we will write $\CE_{\pm \bd}$ for their graded pieces, for all $\bd \in \nn$. The dimensions of these graded pieces are given by the following formula:
\begin{equation}
\label{eqn:bound affine}
\dim \CE_{\pm \bd} = \# \text{ partitions } \Big \{ \bd = [i_1;j_1) + ... + [i_u;j_u) \Big \}^{u \in \BN}_{(i_a<j_a) \in \zzz}
\end{equation}
In fact, ordered products of the various $f_{\pm [i;j)}$ give rise to a PBW basis of $\CE^\pm$ (\cite{Tor}). \\

\begin{definition}
\label{def:primitive}

An element $x \in \CE_{\pm \bd}$ is called primitive if:
\begin{equation}
\label{eqn:primitive def}
\Delta(x) \in \langle \psi_s^{\pm 1} \rangle_{s \in \BZ} \otimes x + x \otimes \langle \psi_s^{\pm 1} \rangle_{s \in \BZ}
\end{equation}
It is easy to see that any sum of primitive elements is primitive. With this in mind, we will write $\CE_{\pm \bd}^{\emph{prim}} \subset \CE_{\pm \bd}$ for the vector subspace of primitive elements. \\

\end{definition}

\noindent As shown in \cite[Lemma 2.3]{PBW}, we have:
$$
\dim \CE_{\pm \bd}^{\text{prim}} = \begin{cases} 1 &\text{if } \bd \text{ is either } [i;i+1) \text{ or } k\bde \\ 0 &\text{otherwise} \end{cases} 
$$
for various $i \in \BZ/n\BZ$ and $k \in \BN$. Therefore, up to scalar multiples, there is a unique choice of primitive elements:
\begin{equation}
\label{eqn:simple imaginary}
x_i^\pm \in \CE_{\pm [i;i+1)}, \qquad p_{\pm k} \in \CE_{\pm k\bde}
\end{equation}
which will be called simple and imaginary (respectively) primitive generators of $\CE$. Comparing this with Remark \ref{rem:simple imaginary} yields the following: \\

\begin{theorem} (\cite{Tor}) \label{thm:affine} Any choice of simple and imaginary primitive elements \eqref{eqn:simple imaginary} of $\CE$ gives rise to an isomorphism of $\zz$--graded bialgebras $\uu \cong \CE$. \\
	
\end{theorem}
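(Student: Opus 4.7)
The plan is to construct a bialgebra homomorphism $\ph : \uu \to \CE$ by sending Drinfeld--Jimbo generators to their namesakes: $\ph(\psi_s) = \psi_s$, $\ph(c) = c$, $\ph(x_i^\pm) = x_i^\pm$, $\ph(p_{\pm k}) = p_{\pm k}$, where the targets are the (scaled) primitive elements in \eqref{eqn:simple imaginary}. The core task is to verify that $\ph$ respects the defining relations \eqref{eqn:special 0}--\eqref{eqn:special 3} and \eqref{eqn:heis 0}, and then to check surjectivity and injectivity; the bialgebra property will follow because both algebras are Drinfeld doubles of matching halves, and it suffices to match the coproduct on generators, which is forced by primitivity together with the $\zz$--grading.

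First I would verify relations using the primitivity dimension count from \cite{Tor}. The commutation $\psi_s x_i^\pm = q^{\pm(\delta^{i+1}_s - \delta^i_s)} x_i^\pm \psi_s$ is forced by the grading $\deg x_i^\pm = \pm \bs^i$ together with the fact that applying $\Delta$ to $\psi_s x_i^\pm - q^{\pm(...)} x_i^\pm \psi_s$ and using \eqref{eqn:cop 0} plus primitivity of $x_i^\pm$ shows the difference is primitive in grading $\pm \bs^i$; then if the constant is chosen correctly (which it must be, since only one Cartan scalar makes the Hopf pairing compatible), one verifies it in fact vanishes. The quantum Serre relation in \eqref{eqn:special 2} is the crux: the left-hand side lies in $\CE_{\pm(2\bs^i + \bs^j)}$, and a direct coproduct computation from \eqref{eqn:cop 1}--\eqref{eqn:cop 2} shows it is primitive there; but by \eqref{eqn:bound affine} and the classification of primitives in \cite{Tor}, $\CE^{\prim}_{\pm(2\bs^i+\bs^j)} = 0$ for $j \in \{i-1,i+1\}$, so the Serre element must vanish. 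The mixed relation \eqref{eqn:special 3} and the Heisenberg relation \eqref{eqn:heis 0} with $k+l = 0$ follow from the Drinfeld double structure \eqref{eqn:double 00} together with the pairing \eqref{eqn:pairing}: one computes the pairings $\langle x_i^+, x_j^- \rangle$ and $\langle p_k, p_{-k} \rangle$ directly and extracts the commutator via \eqref{eqn:dd}, matching the normalizations in $\uu$. For the commuting relations $[x_i^\pm, x_j^\pm] = 0$ (when $|i-j|>1$ mod $n$) and $[p_k, p_l] = 0$ for $k,l$ same sign, the same ``no primitives in this grading'' argument applies.

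For bijectivity, surjectivity follows because iterated $q$--brackets of the $x_i^\pm$ produce elements proportional to $f_{\pm[i;j)}$ for all $(i,j) \in \zzz$ with $j > i$, while the $p_{\pm k}$ recover the imaginary generators; together with $\psi_s, c$ these generate $\CE$ by definition. Injectivity then follows by comparing graded dimensions: the PBW-type spanning set for $\uu$ (ordered products of real root vectors obtained from $x_i^\pm$ via Lusztig braid operators, together with imaginary root vectors $p_{\pm k}$) surjects onto a basis of $\CE$ given by \eqref{eqn:bound affine}, forcing $\ph$ to be an isomorphism on each graded piece. The main obstacle is establishing the Serre relations cleanly: a priori the Serre combination is not obviously primitive because the coproducts \eqref{eqn:cop 1}--\eqref{eqn:cop 2} involve many terms, so the verification requires carefully reorganizing the sum $\Delta((x_i^\pm)^2 x_j^\pm - (q+q^{-1})x_i^\pm x_j^\pm x_i^\pm + x_j^\pm (x_i^\pm)^2)$ and using that the only ``Cartan-decorated'' tensors with $x$'s on both sides cancel by the quantum $[2]$--identity, leaving precisely the form in \eqref{eqn:primitive def}. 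Once primitivity is established, the vanishing is automatic by dimension count.
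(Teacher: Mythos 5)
The paper itself gives no proof of this statement---it is imported wholesale from \cite{Tor}---so the only thing to measure your argument against is the outline implicit in Subsection 3.5: match the one-dimensional spaces of primitives of $\CE$ against the primitive generators of $\uu$. Your implementation of that outline (send generators to the chosen primitives, kill the relators of $\uu$ by showing they are primitive in degrees where $\CE^{\prim}$ vanishes, get the mixed relations from the Drinfeld double relation \eqref{eqn:dd} and the pairing \eqref{eqn:pairing}, and conclude bijectivity by a PBW count against \eqref{eqn:bound affine}) is the natural one, and most steps are fine. But one step, as written, fails: you claim that $[p_k,p_l]=0$ for $k,l$ of the same sign follows from ``no primitives in this grading''. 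The commutator is indeed primitive, but it sits in degree $(k+l)\bde$, where $\dim \CE^{\prim}_{(k+l)\bde}=1$ (spanned by $p_{k+l}$), not $0$. So this argument only yields $[p_k,p_l]=\lambda\, p_{k+l}$, and you need an extra input to force $\lambda=0$: for instance, apply a functional $\alpha_{[s;s+n(k+l))}$, use the multiplicativity \eqref{eqn:pseudo}, and use the fact that $\alpha_{[s;s+nk)}(p_k)$ is independent of $s$ --- the $\BZ/n\BZ$--invariance quoted right after \eqref{eqn:norm 2}, itself a nontrivial statement from \cite{Tor} --- so that the two products cancel; since $\alpha_{[s;s+n(k+l))}(p_{k+l})\neq 0$, one gets $\lambda=0$. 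Without some such argument the Heisenberg half of \eqref{eqn:heis 0} is not established.

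Two further points. First, the presentation \eqref{eqn:quantum as tensor} of $\uu$ imposes not only \eqref{eqn:special 0}--\eqref{eqn:special 3} and \eqref{eqn:heis 0}, but also that the Heisenberg factor $\uui$ commutes with the whole $\su$ factor; so you must also verify $[p_{\pm k}, x_i^{\pm}]=0$ and $[p_{\pm k}, x_i^{\mp}]=0$ for the images in $\CE$. The same-sign case does follow from your no-primitives argument (degree $\bs^i+k\bde$ is neither a simple root nor a multiple of $\bde$), and the mixed-sign case follows from \eqref{eqn:dd} because the relevant pairing vanishes for degree reasons, but these checks are missing from your list. Second, the phrase ``any choice'' of primitives in \eqref{eqn:simple imaginary} allows arbitrary nonzero scalars, while the constants in \eqref{eqn:special 3} and \eqref{eqn:heis 0} that come out of \eqref{eqn:dd} are proportional to the products of the scalars chosen on the $+$ and $-$ sides; so either you normalize as in \eqref{eqn:norm 1}--\eqref{eqn:norm 2} (equivalently, compatibly with the pairing \eqref{eqn:pairing}), or you precompose with a rescaling automorphism of $\uu$ --- the literal assignment you wrote need not satisfy \eqref{eqn:special 3} for every choice. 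Relatedly, your surjectivity step overstates what iterated $q$--brackets give: for $j-i>n$ the graded piece $\CE_{[i;j)}$ has dimension greater than one, so such brackets are not ``proportional to $f_{[i;j)}$''; surjectivity is better extracted from the same PBW/dimension comparison you invoke for injectivity.
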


\subsection{} As is clear from the Theorem above, understanding the bialgebra structure of $\CE$ boils down to controlling the up-to-scalar ambiguity in choosing the primitive elements. To do this, we consider a formal parameter $\oq$ and define linear functionals:
\begin{equation}
\label{eqn:alpha 0}
\alpha_{\pm [i;j)} : \CE_{\pm [i;j)} \longrightarrow \fff
\end{equation}
for all $(i < j) \in \zzz$, satisfying the following properties:
\begin{equation}
\label{eqn:pseudo} 
\alpha_{\pm [i;j)} (r \cdot r') = \begin{cases} \alpha_{\pm [s;j)}(r) \alpha_{\pm [i;s)}(r') &\text{if }\exists  s \text{ s.t. } r \in \CE_{\pm [s;j)}, r' \in \CE_{\pm [i;s)} \\ 0 &\text{otherwise} \end{cases} 
\end{equation}
and (let $\oq_+ = \oq$ and $\oq_- = (q^n \oq)^{-1}$):
\begin{equation} 
\label{eqn:alpha affine}
\alpha_{\pm [i;j)}(f_{\pm [i';j')}) = \delta_{(i',j')}^{(i,j)} (1-q^2) \oq_\pm^{\frac {j-i}n}
\end{equation}
We henceforth fix the elements \eqref{eqn:simple imaginary} by making the choice of \cite{PBW}, namely:
\begin{equation}
\label{eqn:norm 1}
\alpha_{\pm [i;i+1)}(x_i^\pm) = \pm 1
\end{equation}
\begin{equation}
\label{eqn:norm 2}
\alpha_{\pm [s;s + nk)}(p_{\pm k}) = \pm 1
\end{equation}
$\forall k > 0$, $i,s \in \BZ/n\BZ$. The fact that the right-hand side of \eqref{eqn:norm 2} does not depend on $s$ is a consequence of the $\BZ/n\BZ$--invariance of the elements $p_{\pm k}$, see \cite{Tor}. \\

\subsection{} 
\label{sub:bars}

It is easy to note that the bialgebra $\uu \cong \CE$ possesses an antipode: \footnote{The antipode is a bialgebra anti-automorphism $A : \CE \rightarrow \CE$ obeying certain compatibility conditions with the product, coproduct and pairing. We choose to write $A(x)$ instead of the more common $S(x)$ so as to not confuse the antipode with the series $S(x)$ of Subsection \ref{sub:bill}}
$$
A(\psi_s) = \psi_s^{-1}, \quad A(x_i^+) = - \frac {\psi_{i+1}}{\psi_i} x_i^+, \quad A(x_i^-) = - x_i^- \frac {\psi_i}{\psi_{i+1}}, \quad A(p_{\pm k}) = - c^{\pm k} p_{\pm k}
$$
In terms of the generators $f_{\pm [i;j)}$, we may write:
\begin{equation}
\label{eqn:bars}
A^{\pm 1}(f_{\pm[i;j)}) = \frac {\psi^{\pm 1}_j}{\psi^{\pm 1}_i} \barf_{\pm[i;j)} \oq_\mp^{\frac {2(i-j)}n} 
\end{equation}
where the elements $\barf_{\pm [i;j)}$ are inductively defined in terms of $f_{\pm[i;j)}$ by the formulas:
\begin{equation}
\label{eqn:antipode}
\sum_{s = i}^j \barf_{\pm [s;j)} f_{\pm [i;s)} \oq_\mp^{\frac {2(s-i)}n} = \sum_{s = i}^j f_{\pm [s;j)} \barf_{\pm [i;s)} \oq_\mp^{\frac {2(j-s)}n} = 0
\end{equation}
Alternatively, the elements $\barf_{\pm [i;j)}$ are completely determined by their coproduct:
\begin{align}
&\Delta(\barf_{[i;j)}) = \sum_{s = i}^j  \frac {\psi_s}{\psi_j} \barf_{[i;s)} \otimes \barf_{[s;j)} \label{eqn:cop 11} \\
&\Delta(\barf_{-[i;j)}) = \sum_{s = i}^j \barf_{-[s;j)} \otimes \frac {\psi_j}{\psi_s} \barf_{-[i;s)} \label{eqn:cop 22}
\end{align} 
and by their values under the linear maps \eqref{eqn:alpha 0}:
\begin{equation}
\label{eqn:alpha affine bar}
\alpha_{\pm [i;j)}(\barf_{\pm [i';j')}) = \delta_{(i',j')}^{(i,j)} (1-q^{-2}) \oq_\pm^{\frac {i-j}n}
\end{equation}
Since $\CE \cong \uu$, we will use the notation $f_{\pm [i;j)}$ (respectively $\barf_{\pm [i;j)}$) for the elements of either algebra. These elements will be called root generators of either algebra $\CE \cong \uu$, because $[i;j)$ are positive roots of the affine $A_n$ root system. \\

\subsection{} 
\label{sub:uu}

Affinizations of quantum groups are defined by replacing each generator $x_i^\pm$ as in Subsection \ref{sub:special} by an infinite family of generators $\{x_{i,k}^\pm\}_{k \in \BZ}$. To define affinizations explicitly, let us consider variables $z$ as being colored by an integer $i$, denoted by ``$\text{col }z$". Then we may define the following color-dependent rational function:
\begin{equation}
\label{eqn:def zeta}
\zeta \left( \frac zw \right) = \begin{cases} \displaystyle \frac {zq \oq^{2k} - wq^{-1}}{z \oq^{2k} - w} & \text{if } i - j = nk \\ \displaystyle 1 & \text{if } i - j \notin \{-1,0\} \text{ mod } n \\ \displaystyle \frac {z \oq^{2k} - w}{zq \oq^{2k} - wq^{-1}} & \text{if } i - j = nk - 1 \end{cases}
\end{equation}
for any variables $z,w$ of colors $i$ and $j$ respectively. \\

\begin{definition}
\label{def:toroidal} 
	
The quantum toroidal algebra is:
$$
\UU = \fff \Big \langle x_{i,k}^\pm, \ph_{i,k'}^\pm, \psi_s^{\pm 1}, c^{\pm 1}, \barc^{\pm 1} \Big \rangle_{i \in \BZ/n\BZ, s \in \BZ}^{k \in \BZ, k' >0 } \Big/ \text{relations \eqref{eqn:rel aff 0}--\eqref{eqn:rel aff 6}}
$$
\end{definition} 

\text{}

\noindent In order to spell out the defining relations in the quantum toroidal algebra, let us consider the series $x_i^\pm(z) = \sum_{k \in \BZ} \frac {x_{i,k}^\pm}{z^k}$ and $\ph_s^\pm(w) = \frac {\psi_{s+1}^{\pm 1}}{\psi_s^{\pm 1}}  + \sum_{k=1}^\infty \frac {\ph_{s,k}^\pm}{w^{\pm k}}$, and set:
\begin{equation}
\label{eqn:rel aff 0}
c,\barc \text{ central}, \qquad \psi_{s+n} = \psi_s c, \ \forall s \in \BZ
\end{equation} 
\begin{equation}
\label{eqn:rel aff 1}
\psi_s \text{ commutes with } \psi\text{'s and } \ph\text{'s, and satisfies \eqref{eqn:special 1} with } x_i^\pm \leadsto x_i^\pm(z)
\end{equation}
\begin{equation}
\label{eqn:rel aff 2}
\ph_i^\pm(z) \ph_j^{\pm'}(w) \frac {\zeta \left( \frac {w \barc^{\mp 1}}z \right)}{\zeta \left( \frac {w \barc^{\mp' 1}}z \right)} = \ph_j^{\pm'}(w) \ph_i^{\pm}(z) \frac {\zeta \left( \frac {z \barc^{\mp' 1}}w \right)}{\zeta \left( \frac {z \barc^{\mp 1}}w \right)}
\end{equation}
\begin{equation}
\label{eqn:rel aff 3}
x_i^\pm(z) \ph_j^{\pm'} (w) \zeta \left(\frac {w}{z\barc^{\delta_{\pm'}^\mp}} \right)^{\pm 1} = \ph_j^{\pm'} (w) x_i^\pm(z) \zeta \left(\frac {z\barc^{\delta_{\pm'}^\mp}}{w} \right)^{\pm 1}
\end{equation}
\begin{equation}
\label{eqn:rel aff 4}
x^\pm_i(z) x^\pm_j(w) \zeta \left(\frac wz \right)^{\pm 1} = x^\pm_j(w) x_i^\pm(z) \zeta \left(\frac zw \right)^{\pm 1}
\end{equation} 
\begin{equation}
\label{eqn:rel aff 5}
[x_i^+(z), x_j^-(w)] = \frac {\delta_i^j}{q-q^{-1}} \left[ \delta \left( \frac {z}{w\barc} \right) \ph_i^+(z) - \delta \left( \frac {w}{z\barc} \right) \ph_i^-(w) \right]
\end{equation}
and, for $j \in \{i-1,i+1\}$:
$$
x_i^\pm(z_1) x_i^\pm(z_2) x_j^\pm(w) - (q+q^{-1}) x_i^\pm(z_1) x_j^\pm(w) x_i^\pm(z_2) + x_j^\pm(w) x_i^\pm(z_1) x_i^\pm(z_2) +
$$
\begin{equation}
\label{eqn:rel aff 6}
+ \text{ same expression with } z_1 \text{ and } z_2 \text{ switched} = 0
\end{equation}
for all choices of $\pm, \pm', i, j$, where the variables $z$ and $w$ have color $i$ and $j$, respectively, for the purpose of defining the rational function $\zeta$ (when $n=2$, one needs to modify \eqref{eqn:rel aff 6} similarly to how one needs to modify \eqref{eqn:special 2}). Note that we extend the index $i$ to arbitrary integers, by applying the convention:
$$
x^\pm_{i+n,k} = x^\pm_{i,k} \oq^{-2k}, \qquad \ph^\pm_{i+n,k} = \ph^\pm_{i,k} \oq^{\mp 2k}
$$
We consider the subalgebras $\UUpm \subset \UU$ generated by $\{x^\pm_{i,k}\}^{i \in \BZ/n\BZ}_{k \in \BZ}$. \\

\begin{remark}

In the notation of Subsection \ref{sub:left right}, we have $\UUp = \UUright$ and $\UUm = \UUleft$, but we will henceforth use the $\pm$ notation. \\

\end{remark}

\subsection{} 
\label{sub:old shuf}

Let us now recall the classic shuffle algebra realization of $\UUpm$. Consider variables $z_{ia}$ of color $i$, for various $i \in \{1,...,n\}$ and $a \in \BN$. We call a function $R(z_{11}...,z_{1d_1},...,z_{n1}...,z_{nd_n})$ color-symmetric if it is symmetric in $z_{i1},...,z_{id_i}$ for all $i \in \{1,...,n\}$ separately. Depending on the context, the symbol ``Sym" will refer to either color-symmetric functions in variables $z_{ia}$, or to the symmetrization operator:
$$
\sym \ F(...,z_{i1},...,z_{id_i},...) = \sum_{(\sigma^1,...,\sigma^n) \in S(d_1) \times ... \times S(d_n)} F(...,z_{i,\sigma^i(1)},...,z_{i,\sigma^i(d_i)},...)
$$
Let $\bd! = d_1!...d_n!$. The following construction arose in the context of quantum groups in \cite{E}, by analogy to the work of Feigin-Odesskii on certain elliptic algebras. \\

\begin{definition}
\label{def:shuf classic}
	
Consider the vector space:
\begin{equation}
\label{eqn:shuf classic}
\CV = \bigoplus_{\bd = (d_1,...,d_n) \in \nn} \fff(...,z_{i1},...,z_{id_i},...)^{\emph{Sym}}
\end{equation}
and endow it with an associative algebra structure, by setting $R*R'$ equal to:
$$
\esym \left[ \frac {R(...,z_{i1},...,z_{id_i},...)}{\bd!} \frac {R'(...,z_{i,d_i+1},...,z_{i,d_i+d'_i},...)}{\bd'!} \prod_{i,i' = 1}^n \prod^{1 \leq a \leq d_i}_{d_{i'} < a' \leq d_{i'}+d'_{i'}} \zeta \left( \frac {z_{ia}}{z_{i'a'}} \right) \right] 
$$
Let $\CS^+$ the subalgebra of $\CV$ generated by $\{ z_{i1}^k \}^{1\leq i \leq n}_{k \in \BZ}$ and we let $\CS^- = \left( \CS^+ \right)^\emph{op}$. \\
	
\end{definition}

\noindent The algebras $\CS^\pm$ are graded by $\pm \nn \times \BZ$, where $\deg R(...,z_{i1},...,z_{id_i},...) = (\pm \bd, k)$ if $\bd = (d_1,..,d_n)$, while $k$ denotes the homogeneous degree of $R$. We will write $\CS_{\pm \bd}$ for the graded pieces of $\CS^\pm$ with respect to the $\pm \nn$ direction only. \\

\begin{theorem} (\cite{Tor}) The subalgebras $\CS^\pm$ coincide with the $\fff$--vector subspaces of $\CV$ consisting of rational functions $R(...,z_{ia},...)$ that satisfy:
	$$
	R(...,z_{ia},...) = \frac {r(...,z_{ia},...)}{\prod_{i=1}^n \prod_{a,a'} (z_{ia} q - z_{i+1,a'}q^{-1})} 
	$$
	where $r$ is any Laurent polynomial which vanishes at the specializations:
	$$
	(z_{i1}, z_{i2}, z_{i-1,1}) \mapsto (w,wq^2,w), \qquad  (z_{i1}, z_{i2}, z_{i+1,1}) \mapsto (w,wq^{-2},w)
	$$
	for any $i \in \{1,...,n\}$. This vanishing property is the natural analogue of the wheel conditions studied in \cite{FHHSY, FO}. By convention, we set $z_{n+1,a} = z_{1a} \oq^{-2}$, $z_{0a} = z_{na} \oq^2$. \\
	
\end{theorem}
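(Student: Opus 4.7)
The plan is to prove the two containments separately: first that $\CS^\pm$ lies inside the space of wheel functions, then that every wheel function lies in $\CS^\pm$. Both directions are done by induction on the degree $\bd \in \nn$.

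For the easy inclusion $\CS^\pm \subseteq \{$wheel functions$\}$, I would first check the claim on the generators $z_{i1}^k$, where it is trivial. Then the key step is to show that if $R$ and $R'$ satisfy the pole bound (at most simple poles at $z_{ia}q^2 = z_{i+1,a'}$) and the wheel vanishing at $(w,wq^{\pm 2},w)$, then so does $R * R'$. The pole bound follows immediately from inspection of the $\zeta$--factors in Definition \ref{def:shuf classic}: a factor $\zeta(z_{ia}/z_{i+1,a'})$ contributes at most a single pole at $z_{ia}q^2 = z_{i+1,a'}$, and the color-symmetrization does not worsen this. For the wheel conditions on $R * R'$, one specializes the three variables $(z_{i1},z_{i2},z_{i\pm 1,1})$ as prescribed and checks that in every term of the Sym-sum, either the specialization falls inside a single factor ($R$ or $R'$, where it vanishes by hypothesis) or it straddles the two factors, in which case one of the $\zeta$--factors kills the term because $\zeta(q^{\pm 2}) = 0$ in the relevant case.

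For the hard inclusion $\{$wheel functions$\} \subseteq \CS^\pm$, I would argue by induction on $|\bd| = d_1 + \cdots + d_n$. The base case $|\bd| = 1$ is immediate, since the generators $z_{i1}^k$ already span all Laurent polynomials in one variable of color $i$. For the inductive step, given a wheel function $R$ of degree $\bd$, the goal is to produce an element of $\CS_{\pm\bd}$ with the same ``leading behavior'' and then subtract. The standard device (used in \cite{Tor}) is a \emph{slope / specialization map}: one partitions $\bd$ as $\bd = \bd' + \bd''$ and looks at the behavior of $R$ under the substitution $z_{ia} \mapsto \xi z_{ia}$ for $a \leq d'_i$, letting $\xi \to \infty$ (or $0$). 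The leading coefficient of this limit is a tensor product of wheel functions of smaller degree, and the wheel conditions propagate correctly through this limit because of the control on poles. By the inductive hypothesis, the smaller-degree pieces lie in $\CS^\pm$; reconstituting them via the shuffle product produces an element of $\CS^\pm$ whose leading term matches $R$'s, so the difference has strictly smaller leading behavior and we iterate.

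The main obstacle is the inductive step above: one must set up the ``leading term'' map carefully so that (i) the map is well-defined on the wheel-function space, (ii) the leading term of a shuffle product factors compatibly, and (iii) the induction actually terminates. Concretely, one needs to compare the dimension of the graded piece of the wheel-function space with the PBW bound $\dim \CE_{\pm\bd}$ from \eqref{eqn:bound affine}, and show equality. The upper bound on the wheel-function space is the combinatorial heart of the argument: one constructs enough linear functionals (extracting specializations at geometric progressions in $\oq$) to separate a spanning set of cardinality equal to the number of partitions appearing in \eqref{eqn:bound affine}. Once equality of dimensions is known and the easy inclusion is established, the two spaces must agree. As this is proved in detail in \cite{Tor}, I would simply cite the argument there rather than reproduce the combinatorics.
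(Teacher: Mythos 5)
The paper does not actually prove this theorem --- it quotes it from \cite{Tor} --- and your outline (easy inclusion by checking that the $\zeta$--factors preserve the pole structure and the wheel vanishing, hard inclusion by a specialization/leading-term analysis reduced to comparing graded dimensions of the wheel space with the PBW bound \eqref{eqn:bound affine}, with the combinatorial heart deferred to \cite{Tor}) is precisely the strategy of that reference, so your proposal is correct and takes essentially the same route. The one point you should state more carefully in the easy inclusion is that the equal-color factors $\zeta(z_{ia}/z_{ia'})$ introduce simple poles on the diagonals $z_{ia}=z_{ia'}$ in each summand, and it is the color-symmetrization that cancels them, rather than the symmetrization merely ``not worsening'' the poles.
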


\noindent It is a well-known fact (see \cite{E}) that $\CS^\pm \cong \UUpm$. In order to obtain the entire quantum toroidal algebra and not just its halves, define the double shuffle algebra:
\begin{equation}
\label{eqn:whole}
\CS = \CS^+ \otimes \CS^0 \otimes \CS^- \Big / \text{relations modeled after \eqref{eqn:rel aff 3}, \eqref{eqn:rel aff 5}}
\end{equation}
where: 
$$
\CS^0 = \displaystyle \frac {\fff \Big \langle \ph_{i,k}^\pm, \psi_s^{\pm 1}, c^{\pm 1}, \barc^{\pm 1} \Big \rangle^{k \in \BN, i \in \BZ/n\BZ}_{s \in \BZ}}{\text{relations \eqref{eqn:rel aff 0}--\eqref{eqn:rel aff 2}}}
$$
Therefore, there is a isomorphism:
\begin{equation}
\label{eqn:bialg iso}
\CS \cong \UU
\end{equation}
given by sending $\left( z_{i1}^k \right)^\pm \leadsto x_{i,k}^\pm, \ \ph_{i,k}^\pm \leadsto \ph_{i,k}^\pm, \ \psi_s \leadsto \psi_s, c \leadsto c, \barc \leadsto \barc$. \\

\subsection{} Let us consider the following halves of $\CS$:
\begin{align}
&\CS^\geq = \CS^+ \otimes \frac {\fff \Big \langle \ph_{i,k}^+, \psi^{\pm 1}_s, c^{\pm 1}, \barc^{\pm 1} \Big \rangle}{\text{relations \eqref{eqn:rel aff 0}--\eqref{eqn:rel aff 2}}} \Big/ \text{relations modeled after \eqref{eqn:rel aff 3}} \label{eqn:geq} \\
&\CS^\leq = \CS^- \otimes \frac {\fff \Big \langle \ph_{i,k}^-, \psi^{\pm 1}_s, c^{\pm 1}, \barc^{\pm 1} \Big \rangle}{\text{relations \eqref{eqn:rel aff 0}--\eqref{eqn:rel aff 2}}} \Big/ \text{relations modeled after \eqref{eqn:rel aff 3}} \label{eqn:leq} 
\end{align}
For $\be, \bd \in \nn$, we will write $\be \leq \bd$ if $e_i \leq d_i$ for all $i \in \{1,\dots,n\}$. With this in mind, the algebras \eqref{eqn:geq} and \eqref{eqn:leq} are endowed with topological coproducts:
\begin{align}
&\Delta(R^+) = \sum_{\be \in \nn}^{\be \leq \bd}  \frac {\left[ \prod_{1 \leq i \leq n}^{a>e_i} \ph^+_i(z_{ia}\barc_1) \otimes 1 \right] R^+(z_{i, a \leq e_i} \otimes z_{i, a>e_i} \barc_1)}{\prod_{1 \leq i' \leq n}^{1 \leq i \leq n} \prod^{a \leq e_{i}}_{a' > e_{i'}} \zeta(z_{i'a'}\barc_1/z_{ia})} \label{eqn:coproduct 3} \\
&\Delta(R^-) = \sum_{\be \in \nn}^{\be \leq \bd}  \frac {R^-(z_{i, a \leq e_i}\barc_2 \otimes z_{i, a>e_i}) \left[ \prod_{1 \leq i \leq n}^{a \leq e_i} 1 \otimes \ph^-_i(z_{ia}\barc_2) \right]}{\prod_{1 \leq i' \leq n}^{1 \leq i \leq n} \prod^{a \leq e_{i}}_{a' > e_{i'}} \zeta(z_{ia} \barc_2/z_{i'a'})} \label{eqn:coproduct 4}
\end{align}

\text{}

\begin{remark}
\label{rem:coproduct}
	
To think of \eqref{eqn:coproduct 3} as a tensor, we expand the right-hand side in non-negative powers of $z_{ia} / z_{i'a'}$ for $a\leq e_i$, $e_{i'} < a'$, thus obtaining an infinite sum of monomials. In each of these monomials, we put the symbols $\ph^+_{i,d}$ to the very left of the expression, then all powers of $z_{ia}$ with $a\leq e_i$, then the $\otimes$ sign, and finally all powers of $z_{ia}$ with $a>e_i$. The powers of the central element $\barc_1 = \barc \otimes 1$ are placed in the first tensor factor. The resulting expression will be a power series, and therefore lies in a completion of $\CS^\geq \otimes \CS^\geq$. The same argument applies to \eqref{eqn:coproduct 4}. \\	
	
\end{remark}

\noindent There exists a pairing between the halves $\CS^\geq$ and $\CS^\leq$, given by:
\begin{equation}
\label{eqn:pairshuf1}
\left \langle R^+,R^- \right \rangle = \frac {(1-q^{-2})^{|\bd|}}{\bd!} \oint \frac {R^+(...,z_{ia},...)R^-(...,z_{ia},...)}{\prod_{i,j=1}^{n} \prod^{(i,a) \neq (j,b)}_{a\leq d_i, b \leq d_j} \zeta(z_{ia}/z_{jb})} \prod^{1 \leq i \leq n}_{1 \leq a \leq d_i} \frac {dz_{ia}}{2\pi i z_{ia}} \qquad 
\end{equation}
for any $R^+ \in \CS_{\bd}$ and $R^- \in \CS_{-\bd}$ (we refer the reader to \cite{Tor} or \cite{PBW} for details). \\

\subsection{}
\label{sub:uuu}

In \cite{Tor, PBW}, we constructed a PBW basis of $\CS^\pm \cong \UUpm$. More precisely we construct particular elements of $\CS^\pm$ called ``PBW generators", indexed by a totally ordered set, and claim that a linear basis of $\CS^\pm$ is given by ordered products of the PBW generators. In the case of the algebra $\CE^\pm \cong \uupm$, we have already seen in Subsection \ref{sub:sub} that the PBW generators of $\CE^\pm$ are indexed by:
$$
(i<j) \in \zzz 
$$ 
It should come as no surprise that the PBW generators of $\CS^\pm$ are indexed by:
$$
\Big( (i<j), k \Big) \in \zzz \times \BZ
$$
If we write $\mu = \frac {j-i}k$, we will find it more useful to index the PBW generators by:
$$
\Big( (i<j), \mu \Big) \in \zzz \times (\BQ \sqcup \infty)
$$
such that $\frac {j-i}{\mu} \in \BZ$. For any choice of $i<j$ and $\mu$ as above, we define:
\begin{align} 
&A^\mu_{\pm [i;j)} = \sym \left[ \frac {\prod_{a=i}^{j-1} (z_a \oq^{\frac {2a}n})^{\lceil \frac {a-i+1}{\mu} \rceil  - \lceil \frac {a-i}{\mu} \rceil}}{\left(1 - \frac {z_i q^2}{z_{i+1}}\right) ... \left(1 - \frac {z_{j-2} q^2}{z_{j-1}}\right)} \prod_{i\leq a < b < j} \zeta \left( \frac {z_b}{z_a} \right)  \right] \in \CS^\pm \label{eqn:shuf 1} \\
&B^\mu_{\pm [i;j)} =\sym \left[ \frac {\prod_{a=i}^{j-1} (z_a \oq^{\frac {2a}n})^{\lfloor \frac {a-i+1}{\mu} \rfloor  - \lfloor \frac {a-i}{\mu} \rfloor}}{\left(1 - \frac {z_{i+1}}{z_i}\right) ... \left(1 - \frac {z_{j-1}}{z_{j-2}}\right)} \prod_{i\leq a < b < j} \zeta \left( \frac {z_a}{z_b} \right)  \right] \in \CS^\pm \label{eqn:shuf 2}
\end{align}
In order to think of the RHS of \eqref{eqn:shuf 1} and \eqref{eqn:shuf 2} as shuffle elements, we relabel the variables $z_i,...,z_{j-1}$ according to the following rule $\forall a \in \{i,...,j-1\}$:
\begin{equation}
\label{eqn:identify}
z_{a} \text{ should be replaced with } z_{\bar{a}\bullet_a} \oq^{-2\left \lfloor \frac {a-1}n \right \rfloor}
\end{equation}
where $\bar{a}$ is the residue class of $a$ in the set $\{1,...,n\}$ (and the precise value of $\bullet_a \in \BN$ is immaterial due to the ``Sym" in \eqref{eqn:shuf 1}--\eqref{eqn:shuf 2}). If $\frac {j-i}{\mu} \notin \BZ$, the LHS of \eqref{eqn:shuf 1} and \eqref{eqn:shuf 2} are defined to be 0. We will occasionally write:
$$
A_{\pm [i;j)}^{(\pm k)} = A_{\pm [i;j)}^\mu \qquad \text{and} \qquad  B_{\pm [i;j)}^{(\pm k)} = B_{\pm[i;j)}^\mu
$$
where $k = \frac {j-i}{\mu}$, in order to indicate the fact that $\deg A_{\pm [i;j)}^{(\pm k)} , B_{\pm [i;j)}^{(\pm k)}= \pm ([i;j), k)$. \\

\subsection{} 
\label{sub:slope stuff}

We will now define an algebra $\DD$ that is isomorphic to $\CS \cong \UU$, much like the algebra $\CE$ was isomorphic to $\uu$ (see Theorem \ref{thm:affine}). The first step is to define an infinite family of the latter algebras. Specifically, for any coprime $(a,b) \in \BZ \times \BN$, define:
\begin{equation}
\label{eqn:e mu}
\CE_{\frac ab} = U_q(\dot{\fgl}_{\frac ng})^{\otimes g}
\end{equation}
where $g = \gcd(n,a)$. The root generators of \eqref{eqn:e mu} are parametrized by:
$$
\text{triples } (u,v,r) \quad \text{where} \quad (u , v) \in \frac {\BZ^2}{\left( \frac ng, \frac ng \right) \BZ} \quad \text{and} \quad  r \in \{1,...,g\}
$$
However, we choose to replace a triple $(u,v,r)$ as above by:
\begin{equation}
\label{eqn:eis}
(i,j) = (r + a u, r + a v) \in \zzz
\end{equation}
and therefore we will use the following notation for the root generators of \eqref{eqn:e mu}:
\begin{align}
&f_{[i;j)}^{\frac ab} = 1^{\otimes r-1} \otimes f_{[u;v)} \otimes 1^{\otimes g-r} \label{eqn:root generators} \\
&\barf_{[i;j)}^{\frac ab} = 1^{\otimes r-1} \otimes \barf_{[u;v)} \otimes 1^{\otimes g-r} \label{eqn:root generators 2}
\end{align}
for any indices $i,j$, which are connected to $u,v,r$ by \eqref{eqn:eis}. Note that we allow $i>j$ in formula \eqref{eqn:eis}, via the convention:
$$
[i;j) = - [j;i) \quad \text{if } i > j
$$
Moreover, formula \eqref{eqn:eis} implies that $k := (j-i) \frac ba$ is an integer, so we will write:
\begin{equation}
\label{eqn:f mu}
f_{[i;j)}^{(k)} = f_{[i;j)}^{\frac ab} \quad \text{and} \quad \barf_{[i;j)}^{(k)} = \barf_{[i;j)}^{\frac ab}
\end{equation}
We make the convention that $f_{[i;j)}^{(k)} = \barf_{[i;j)}^{(k)} = 0$ if $k = (j-i)  \frac ba \notin \BZ$. \\

\subsection{} For any $(i,j) \in \zzz$ and $k \in \BN$, let us write $\mu = \frac {j-i}k$. The assignment:
$$
\deg f_{[i;j)}^{(k)} = \deg \barf_{[i;j)}^{(k)} = ([i;j),k) 
$$
makes $\CE_\mu$ into a $\zz \times \BZ$ graded algebra. For all $\bd \in \zz$, we will write $\CE_{\mu|\bd}$ for its degree $\bd \times \BZ$ graded piece. Consider two invertible central elements $c,\barc$, and recall the Cartan elements $\psi_1,...,\psi_n$ of $\CE_\infty = \CE$. As $\mu$ ranges over $\BQ \sqcup \infty$, we will identify the central elements of the algebras $\CE_\mu$ according to the rule:
$$
\Big(\text{central element of }\CE_{\frac ab}\Big) = c^{\frac ag} \barc^{\frac {bn}g} 
$$
and identify the Cartan elements of the algebras $\CE_\mu$ according to the rule:
$$
\Big(\psi_s \text{ on } r\text{--th factor of } U_{q}(\dot{\fgl}_{\frac ng})^{\otimes g} = \CE_{\frac ab} \Big) = \psi_{r+sa} \barc^{bs}
$$
where $g = \gcd(n,a)$. Hence we have the following relation:
\begin{equation}
\label{eqn:rel 1}
\psi_s X = q^{- \langle \bs^s, \bd \rangle} X \psi_s \qquad \forall X \in \CE_{\mu|\bd}, \ \forall s \in \{1,...,n\}
\end{equation} 
where $\langle \cdot, \cdot \rangle$ is the bilinear form on $\BZ^n$ determined by $\langle \bs^i, \bs^j \rangle = \delta_{i}^{j} - \delta_{i}^{j+1}$. \\

\noindent We henceforth extend the scalars in $\CE_\mu$ from $\BQ(q)$ to $\BQ(q,\oq^{\frac 1n})$. The following is an obvious consequence of the structure defined in Subsections \ref{sub:two realizations}--\ref{sub:bars}. \\

\begin{proposition} 
\label{prop:endowed}

For any $\mu$, the algebra $\CE_\mu$ has a coproduct $\Delta_\mu$, for which: \footnote{The formulas below differ from \eqref{eqn:cop 1}--\eqref{eqn:cop 2} and \eqref{eqn:cop 11}--\eqref{eqn:cop 22} by powers of $\barc$. This is achieved by rescaling the $f$ and $\barf$ generators by certain powers of $\barc$, which we will tacitly do.}
\begin{align*}
&\Delta_\mu(f^\mu_{[i;j)}) = \sum_{s \in \{i,...,j\}} f^\mu_{[s;j)} \frac {\psi_i}{\psi_s} \barc^{\frac {i-s}{\mu}} \otimes f^\mu_{[i;s)} \\
&\Delta_\mu(\barf^\mu_{[i;j)}) = \sum_{s \in \{i,...,j\}}  \frac {\psi_s}{\psi_j} \barf^\mu_{[i;s)} \barc^{\frac {s-j}{\mu}} \otimes \barf^\mu_{[s;j)} \\
&\Delta_\mu(f^\mu_{-[i;j)}) = \sum_{s \in \{i,...,j\}} f^\mu_{-[i;s)} \otimes f^\mu_{-[s;j)} \frac {\psi_s}{\psi_i} \barc^{\frac {s-i}{\mu}} \\
&\Delta_\mu(\barf^\mu_{-[i;j)}) = \sum_{s \in \{i,...,j\}} \barf^\mu_{-[s;j)} \otimes \frac {\psi_j}{\psi_s} \barf^\mu_{-[i;s)} \barc^{\frac {j-s}{\mu}}
\end{align*} 
\footnote{The notation $s \in \{i,...,j\}$ means ``$s$ runs between $i$ and $j$", for either $i<j$ or $i>j$.} $\forall (i,j) \in \zzz$ such that $\frac {j-i}{\mu} \in \BN$. For all such $i,j$, we have linear maps:
$$
\alpha_{\pm [i;j)} : \CE_{\mu|\pm [i;j)} \rightarrow \fff
$$ 
that satisfy property \eqref{eqn:pseudo} and:
\begin{align}
&\alpha_{\pm [i;j)}(f^{(\pm k)}_{\pm [i';j')}) = \delta_{(i',j')}^{(i,j)} (1-q^2) \oq_{\pm}^{\frac {\gcd(j-i,k)}n} \label{eqn:f alpha}\\
&\alpha_{\pm [i;j)}(\barf_{\pm [i';j')}^{(k)}) = \delta_{(i',j')}^{(i,j)} (1-q^{-2}) \oq_{\pm}^{-\frac {\gcd(j-i,k)}n} \label{eqn:barf alpha}
\end{align} 

\end{proposition}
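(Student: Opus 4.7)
The plan is to use the tensor product structure
\begin{equation*}
\CE_\mu = U_q(\dot{\fgl}_{n/g})^{\otimes g}, \qquad g = \gcd(n,a), \quad \mu = a/b,
\end{equation*}
from Subsection \ref{sub:slope stuff}, combined with the bialgebra and pairing data already established on each factor in Subsections \ref{sub:two realizations}--\ref{sub:sub}, and deduce the statement by working one tensor slot at a time. By \eqref{eqn:root generators}--\eqref{eqn:root generators 2}, each root generator $f^\mu_{\pm[i;j)}$ or $\barf^\mu_{\pm[i;j)}$ lives in exactly one tensor factor (the $r$-th, where $r$ is uniquely determined by \eqref{eqn:eis}), so every structural formula on $\CE_\mu$ reduces to the corresponding formula on a single copy of $U_q(\dot{\fgl}_{n/g})$.

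First I would define $\Delta_\mu$ as the tensor product of the coproducts \eqref{eqn:cop 0}--\eqref{eqn:cop 2} applied inside each of the $g$ factors. Since the generator $f^\mu_{[i;j)} = 1^{\otimes r-1} \otimes f_{[u;v)} \otimes 1^{\otimes g-r}$ is concentrated in the $r$-th factor, its coproduct is computed inside that factor using \eqref{eqn:cop 1}: the summation index running over $\{u,\ldots,v\}$ in the single-factor label translates via \eqref{eqn:eis} to $s \in \{i, i+a, \ldots, j\}$, which is exactly the condition $\tfrac{s-i}{\mu} \in \BN$ in the statement. The ratio $\psi_i/\psi_s$ from the single-factor formula is then rewritten using the identifications of Cartan elements in Subsection \ref{sub:slope stuff}; the extra $\barc^{(i-s)/\mu}$ needed to match the stated formula is absorbed by the tacit rescaling $f^\mu_{[i;j)} \leadsto \barc^{i/\mu} f^\mu_{[i;j)}$ noted in the footnote. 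Repeating this bookkeeping for $f^\mu_{-[i;j)}$ and $\barf^\mu_{\pm[i;j)}$ using \eqref{eqn:cop 2}, \eqref{eqn:cop 11}, \eqref{eqn:cop 22} yields the remaining three coproduct identities after analogous rescalings.

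Next I would define $\alpha_{\pm[i;j)}$ on $\CE_{\mu \mid \pm[i;j)}$ by applying the single-factor map $\alpha_{\pm[u;v)}$ from \eqref{eqn:alpha 0} in the $r$-th tensor slot and the counit $\e$ in each of the other slots. Property \eqref{eqn:pseudo} then follows from its single-factor counterpart together with the observation that, by degree considerations, any nontrivial factorization $r \cdot r' \in \CE_{\mu \mid \pm[i;j)}$ with $r \in \CE_{\mu \mid \pm[s;j)}$ and $r' \in \CE_{\mu \mid \pm[i;s)}$ forces both $r, r'$ to lie in the same tensor slot as $f^\mu_{\pm[i;j)}$. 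The normalizations \eqref{eqn:f alpha}--\eqref{eqn:barf alpha} then reduce to a direct computation starting from \eqref{eqn:alpha affine}--\eqref{eqn:alpha affine bar} applied inside a rank--$(n/g)$ algebra, upon observing that in the single-factor label $v - u = (j-i)/a$ and $\gcd(j-i,k) = (j-i)/a$, so the exponent of $\oq_\pm$ matches after the natural reinterpretation of $\oq_\pm$ for the rank--$(n/g)$ setting.

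The only real bookkeeping obstacle is keeping the $\barc$-rescalings consistent across the four families $f^\mu_{\pm[i;j)}, \barf^\mu_{\pm[i;j)}$ and compatible with the antipode identities \eqref{eqn:bars}--\eqref{eqn:antipode}. Once a uniform rescaling convention is fixed, all four coproduct formulas and both $\alpha$-formulas follow in parallel from the corresponding single-factor statements, which is why the proposition is advertised as an ``obvious consequence'' of Subsections \ref{sub:two realizations}--\ref{sub:bars}.
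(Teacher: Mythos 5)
Your proposal is correct and is essentially the paper's own (unwritten) argument: Proposition \ref{prop:endowed} is asserted as an ``obvious consequence'' of Subsections \ref{sub:two realizations}--\ref{sub:bars}, and the intended justification is exactly your slot-by-slot transport of the coproduct formulas \eqref{eqn:cop 1}--\eqref{eqn:cop 2}, \eqref{eqn:cop 11}--\eqref{eqn:cop 22} and of the maps \eqref{eqn:alpha 0} through the relabeling \eqref{eqn:eis} and the generators \eqref{eqn:root generators}--\eqref{eqn:root generators 2}, using $\gcd(j-i,k)=(j-i)/a=v-u$ for the exponents. The only cosmetic remark is that much of the $\barc$-power in the stated formulas already arises from the Cartan identification $\psi_s \mapsto \psi_{p+sa}\barc^{bs}$ of Subsection \ref{sub:slope stuff} (and any residual rescaling should be phrased in terms of $j-i$ rather than $\barc^{i/\mu}$, so as to be well-defined on indices modulo $(n,n)\BZ$), which is precisely the ``tacit'' bookkeeping the paper's footnote alludes to.
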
 

\text{}

\subsection{} 
\label{sub:pbw stuff}

Let us consider the subalgebras $\CE^\pm_\mu \subset \CE_\mu$ generated by those elements \eqref{eqn:f mu} where the sign of $k$ is equal to $\pm$. As in \eqref{eqn:simple imaginary}, we obtain the following elements:
\begin{align}
&\{p_{\pm [i;i+a)}^\mu\}_{i \in \BZ/n\BZ} \subset \CE_\mu^\pm \text{ are simple generators, if } n \nmid a \label{eqn:simple gen} \\
&\{p_{\pm l\bde,r}^\mu\}^{r \in \{1,...,g\}}_{l \in \BN \frac ag} \subset \CE_\mu^\pm \text{ are imaginary generators} \label{eqn:imaginary gen}
\end{align}
$\forall \mu = \frac ab \in \BQ \sqcup \infty$. These elements are all primitive for the coproduct $\Delta_\mu$ and satisfy:
\begin{align}
&\alpha_{\pm [u;v)} \left(p_{\pm [i;i+a)}^\mu\right) = \pm \delta_{(u,v)}^{(i,i+a)} \label{eqn:simple alpha} \\
&\alpha_{\pm [s;s+ln)} \left(p_{\pm l\bde,r}^\mu\right) = \pm \delta_{s \text{ mod }g}^r \label{eqn:imaginary alpha}
\end{align} 
for any $u,v,s$. We may use the notation:
\begin{equation}
\label{eqn:primitive elements}
p_{\pm [i;i+a)}^{(\pm b)} = p_{\pm [i;i+a)}^\mu \qquad \text{and} \qquad p_{\pm l\bde,r}^{\left(\pm \frac {ln}{\mu} \right)} = p_{\pm l\bde,r}^\mu
\end{equation}
to emphasize the fact that $\deg p_{\bd}^{(k)} = (\bd,k) \in \zz \times \BZ$. Let us write:
\begin{equation}
\label{eqn:residue class}
\bari = i - n \left \lfloor \frac {i-1}n \right \rfloor 
\end{equation}
for all $i \in \BZ$, and recall that $\delta_i^j = 1$ if $i \equiv j$ modulo $n$, and 0 otherwise. \\

\begin{definition}
\label{def:explicit d} 

Consider the algebras:
\begin{equation}
\label{eqn:explicit d}
\DD^\pm = \bigotimes_{\mu \in \BQ}^\rightarrow \CE^\pm_\mu \Big/ \text{\eqref{eqn:rel 2 pbw}--\eqref{eqn:rel 3 pbw}}
\end{equation}
whose generators, by the discussion above, are denoted by:
$$
\left\{ p_{\pm [i;j)}^{(\pm k)}, p_{\pm l\bde, r}^{(\pm k')}\right\}^{k,k'>0,  l \in \BZ \backslash 0}_{(i,j)\in \zzz, r \in \BZ/g\BZ}
$$
Whenever $d:=  \det \begin{pmatrix} k & k' \\ j-i & nl \end{pmatrix}$ satisfies $|d| = \gcd(k',nl)$, we impose the relation:
\begin{equation}
\label{eqn:rel 2 pbw}
\Big[ p_{\pm [i;j)}^{(\pm k)}, p_{\pm l\bde, r}^{(\pm k')} \Big] = \pm p_{\pm [i;j+ln)}^{(\pm k \pm k')} \left(\delta_{i \text{ mod }g}^r \oq_\pm^{\frac dn} - \delta_{j \text{ mod }g}^r \oq_\pm^{-\frac dn} \right)
\end{equation}
and whenever $\det \begin{pmatrix} k & k' \\ j-i & j'-i' \end{pmatrix} = \gcd(k+k',j+j'-i-i')$, we impose: 
\begin{equation}
\label{eqn:rel 3 pbw}
p_{\pm [i;j)}^{(\pm k)} p_{\pm [i';j')}^{(\pm k')} q^{\delta_{j'}^{i} - \delta_{i'}^{i}} - p_{\pm [i';j')}^{(\pm k')} p_{\pm [i;j)}^{(\pm k)} q^{\delta_{j'}^{j} - \delta_{i'}^{j}} =
\end{equation}
$$
= \sum_{[t;s) = [i';j')} f^\mu_{\pm [t,j)} \barf_{\pm [i;s)}^\mu \left(\delta_{j'}^{s}  \frac {q^{-\delta_{j'}^{i'}}}{q^{-1}-q} - \delta_{j'}^{i'} \frac {\oq_\mp^{\frac {2(\overline{k'(s-i')})}n}}{\oq_\mp^2 - 1} \right)
$$
where $\mu = \frac {j+j'-i-i'}{k+k'}$. \\ 

\end{definition}

\subsection{} 
\label{sub:explicit} 

Let us write $\DD^0 = \CE_\infty$, and use the notation $p^{(0)}_{\pm [i;i+1)}$ and $p_{\pm k\bde, 1}^{(0)}$ for its simple and imaginary generators, respectively, as defined in Subsection \ref{sub:pbw stuff}. \\

\begin{definition}
\label{def:explicit double d} 

Let us define the double of the algebras \eqref{eqn:explicit d} as:
\begin{equation}
\label{eqn:explicit double d}
\DD = \DD^+ \otimes \DD^0 \otimes \DD^- \Big/\text{relations \eqref{eqn:f zero imaginary}--\eqref{eqn:e plus minus}}
\end{equation}
where:
\begin{equation}
\label{eqn:f zero imaginary} 
\Big[ p_{\pm [i;j)}^{(\pm 1)}, p_{\pm l\bde, 1}^{(0)} \Big] = \pm p_{\pm [i;j+ln)}^{(\pm 1)} \left(\oq_\pm^{l} - \oq_\pm^{-l} \right)
\end{equation}
\begin{equation}
\label{eqn:e zero imaginary} 
\left[ p^{(\pm 1)}_{\pm [i;j)}, p_{\mp l\bde, 1}^{(0)} \right] = \pm p^{(\pm 1)}_{\pm [i;j-nl)} c^{\pm l} \left( \oq_\mp^{l} -  \oq_\mp^{-l} \right)
\end{equation}
\begin{multline}
p_{\pm [i;j)}^{(\pm 1)} p_{\pm [s;s+1)}^{(0)} q^{\delta_{s+1}^{i} - \delta_{s}^{i}} - p_{\pm [s;s+1)}^{(0)} p_{\pm [i;j)}^{(\pm 1)} q^{\delta_{s+1}^{j} - \delta_{s}^{j}} = \\
= \pm \left( \delta_{s+1}^{i} \cdot \oq_\mp^{- \frac 1n} p^{(\pm 1)}_{\pm [i-1;j)} - \delta_{s}^{j} \cdot \oq_\mp^{\frac 1n} p^{(\pm 1)}_{\pm [i;j+1)} \right) \label{eqn:f zero simple} 
\end{multline}
\begin{equation}
\label{eqn:e zero simple} 
\left[ p^{(\pm 1)}_{\pm [i;j)}, p^{(0)}_{\mp [s;s+1)} \right] = \pm \left(\delta_{s}^{i} \cdot \oq_\mp^{\frac 1n} p^{(\pm 1)}_{\pm [i+1;j)}  \frac {\psi_{i+1}^{\pm 1}}{\psi_{i}^{\pm 1}} -  \delta_{s+1}^{j} \cdot \oq_\mp^{-\frac 1n} \frac {\psi_j^{\pm 1}}{\psi_{j-1}^{\pm 1}} p^{(\pm 1)}_{\pm [i;j-1)} \right)
\end{equation}
and:
\begin{equation}
\label{eqn:e plus minus} 
\left[ p_{[i;j)}^{(1)}, p_{[i';j')}^{(-1)} \right] = \frac 1{q^{-1}-q}
\end{equation}
$$
\left( \sum_{\left \lceil \frac {i-j'}n \right \rceil \leq k \leq \left \lfloor \frac {j-i'}n \right \rfloor} f^{(0)}_{[i'+nk;j)} \frac {\psi_{j'}}{\psi_{i'}\barc} \barf^{(0)}_{[i;j'+nk)} - \sum_{\left \lceil \frac {j'-i}n \right \rceil \leq k \leq \left \lfloor \frac {i'-j}n \right \rfloor} f^{(0)}_{-[j+nk;i')} \frac {\psi_{j}\barc}{\psi_{i}} \barf_{-[j';i+nk)}^{(0)} \right)
$$

\end{definition} 

\noindent Relations \eqref{eqn:f zero imaginary}--\eqref{eqn:e plus minus} are sufficient to describe all the commutation relations between the three tensor factors of \eqref{eqn:explicit double d}, because the algebras $\DD^\pm$ are generated by:
\begin{equation}
\label{eqn:generators subalgebra}
\Big \{p_{\pm [i;j)}^{(\pm 1)} \Big \}_{(i,j) \in \zzz}
\end{equation}
(we will prove this in Proposition \ref{prop:gen}). Therefore, relations \eqref{eqn:f zero imaginary}--\eqref{eqn:e plus minus} allow us to ``reorder" any product of elements from the subalgebras $\DD^+$, $\DD^0$, $\DD^-$, i.e. to write said product as a sum of products of elements from $\DD^+$, $\DD^0$, $\DD^-$, \underline{in this order}. \\

\begin{theorem}
\label{thm:pbw} (\cite{PBW}) There is an isomorphism $\DD \cong \CS$. \\
\end{theorem}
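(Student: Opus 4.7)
The plan is to construct an explicit bialgebra homomorphism $\phi : \DD \to \CS$ and then verify bijectivity via a PBW basis comparison. On the middle factor $\DD^0 = \CE_\infty$, the map reuses the isomorphism of bialgebras $\uu \cong \CE_\infty \cong \CS^0$ provided by Subsections \ref{sub:sub} and \ref{sub:old shuf}. On the outer factors $\DD^\pm$, the central step is to send the root generators $f^{(\pm k)}_{\pm [i;j)}$ and $\barf^{(\pm k)}_{\pm [i;j)}$ of the slope-$\mu$ piece $\CE^\pm_\mu$ to the shuffle elements $A^\mu_{\pm [i;j)}$ and $B^\mu_{\pm [i;j)}$ defined in \eqref{eqn:shuf 1}--\eqref{eqn:shuf 2} (with $\mu = (j-i)/k$). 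The images of the primitive simple and imaginary generators $p^\mu_{\pm [i;i+a)}$ and $p^\mu_{\pm l\bde, r}$ are then extracted from linear combinations of these $A^\mu, B^\mu$ so as to enforce the normalizations \eqref{eqn:simple alpha}--\eqref{eqn:imaginary alpha}, where the role of $\alpha_{\pm [i;j)}$ is played by leading-coefficient functionals on shuffle elements coming from the coproduct formulas \eqref{eqn:coproduct 3}--\eqref{eqn:coproduct 4}.

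The second stage is to verify that $\phi$ respects all defining relations. The single-slope relations (those internal to each $\CE^\pm_\mu$) reduce to the usual shuffle-algebra verification that the $A^\mu_{\pm [i;j)}$ and $B^\mu_{\pm [i;j)}$ satisfy the Drinfeld-new relations of $U_q^\pm(\dot{\fgl}_{n/g})^{\otimes g}$; this is a symbolic computation with the rational function $\zeta$ of \eqref{eqn:def zeta}, using the tensor decomposition coming from residue classes of colors mod $g = \gcd(n,a)$. The cross-slope relations \eqref{eqn:rel 2 pbw}--\eqref{eqn:rel 3 pbw} are more delicate: both sides have the same $\zz \times \BZ$-degree, and the point is to show that the shuffle product $A^\mu \ast A^{\mu'}$ at slopes satisfying the determinant/$\gcd$ hypothesis lands in the slope-$\frac{j+j'-i-i'}{k+k'}$ subspace, after which one identifies the projections by applying the functionals $\alpha_{\pm [i;j+\bullet)}$ to both sides. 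The double relations \eqref{eqn:f zero imaginary}--\eqref{eqn:e plus minus} follow by the same template combined with the bialgebra pairing \eqref{eqn:pairshuf1}; in particular, \eqref{eqn:e plus minus} is computed directly as a residue of the contour integral that defines the pairing between $A^{(1)}_{[i;j)}$ and $A^{(-1)}_{[i';j')}$.

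For bijectivity, surjectivity is easy since $\CS^\pm$ is generated by the single-variable monomials $z_{i1}^k$, each of which is (up to scalar) the image of $p^{(\pm k)}_{\pm [i;i+1)}$. Injectivity requires a PBW basis. Total-order the index set $\zzz \times \BQ$ by slope (and cyclically within each slope), and show that ordered products of the PBW generators span $\DD^\pm$; this is forced by the cross-slope relations \eqref{eqn:rel 3 pbw}, which are precisely designed to straighten out-of-order products. On the $\CS^\pm$ side, linear independence of the corresponding ordered products of $A^\mu$'s follows from a leading-term argument: the highest-slope factor produces a distinguished specialization at $z_{ia} = \oq^{\bullet}$ from which the slope profile of the factorization is recoverable. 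Matching the two $\zz \times \BZ$-graded Hilbert series, bounded above on each side by refinements of \eqref{eqn:bound affine}, concludes the argument.

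The main obstacle is the verification of the key quadratic relation \eqref{eqn:rel 3 pbw} between PBW generators of differing slopes: this is an identity of symmetric rational functions in many variables whose right-hand side involves a sum over intermediate splittings $[t;s) = [i';j')$ coupled with $\oq$-powers that reflect the determinant-$\gcd$ hypothesis in a nontrivial way. Matching both sides precisely is where the structural constraint $\det = \gcd$ pays off, since it forces the shuffle product to factor through a single intermediate slope rather than a full range of slopes, making the leading-term and functional analysis tractable.
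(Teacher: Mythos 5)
Your proposal follows essentially the same route as the paper's (sketched) proof: identify each slope piece $\CE_\mu^\pm$ with the subalgebra of $\CS$ generated by the explicit shuffle elements $A^\mu_{\pm[i;j)}$, $B^\mu_{\pm[i;j)}$, normalize the simple and imaginary generators via the functionals $\alpha_{\pm[i;j)}$, verify the cross-slope relations \eqref{eqn:rel 2 pbw}--\eqref{eqn:rel 3 pbw} and \eqref{eqn:f zero imaginary}--\eqref{eqn:e plus minus}, and conclude bijectivity by matching the PBW bases of $\DD$ and $\CS$ (the content deferred to \cite{PBW} and \cite{Tor}). The only discrepancy is a normalization detail: the paper's assignment \eqref{eqn:mu pos 1}--\eqref{eqn:mu neg 2} swaps the roles of $A^\mu$ and $B^\mu$ when $\mu<0$ and inserts Cartan and central twists, rather than the uniform $f \mapsto A$, $\barf \mapsto B$ you propose, so your map would need the same sign-dependent correction for the coproduct and $\alpha$-normalizations to come out right.
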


\begin{proof} \emph{(sketch, see \cite[Theorem 5.7, Proposition 5.8]{PBW} for details \footnote{Note that the slope $\mu$ in \loccit is actually $\frac 1{\mu}$ in our notation})} The subalgebra:
$$
\CS \supset \CT_\mu = \begin{cases} \Big \langle A_{[i;j)}^\mu, B_{-[i;j)}^\mu  \Big \rangle^{i<j}_{\frac {j-i}{\mu} \in \BZ} &\text{if } \mu > 0 \text{ or } \mu = \infty \\ \Big \langle A_{-[i;j)}^\mu, B_{[i;j)}^\mu \Big \rangle^{i<j}_{\frac {j-i}{\mu} \in \BZ} &\text{if } \mu < 0 \end{cases}
$$
is isomorphic to $\CE_\mu$ of \eqref{eqn:e mu} for all $\mu \in \BQ \sqcup \infty$, by sending: 
\begin{align}
&f_{[i;j)}^\mu \leadsto \frac 1{\psi_j}  A_{[i;j)}^\mu \psi_i \barc^{\frac {i-j}{\mu}} \cdot \left(-\oq_-^{\frac 2n} \right)^{i-j}  \label{eqn:mu pos 1} \\
&f^\mu_{-[i;j)} \leadsto \frac 1{\psi_i} B_{-[i;j)}^\mu \psi_j \barc^{\frac {i-j}{\mu}} \cdot \left(-\oq_+^{\frac 2n} \right)^{i-j} \label{eqn:mu pos 2}
\end{align}
if $\mu > 0$ or $\mu = \infty$, and:
\begin{align}
&f_{[i;j)}^\mu \leadsto B_{[i;j)}^\mu \cdot q^{j-i} \label{eqn:mu neg 1} \\
&f^\mu_{-[i;j)} \leadsto  A_{-[i;j)}^\mu \cdot q^{j-i} \label{eqn:mu neg 2}
\end{align}
if $\mu < 0$. Similarly, $\CT_0 := \CS^0$ is isomorphic to a tensor product of $n$ Heisenberg algebras. As in Subsection \ref{sub:pbw stuff}, this allows us to construct the images of the simple and imaginary generators:
$$
p_{\pm [i;j)}^{(\pm k)} \leadsto X_{\pm [i;j)}^{(\pm k)} \in \CS, \qquad p_{\pm l\bde, r}^{(\pm k')} \leadsto X_{\pm l\bde, r}^{(\pm k')} \in \CS
$$
In \cite{PBW}, we showed that the simple and imaginary generators $X_{...}^{...} \in \CS$ defined above satisfy relations \eqref{eqn:rel 2 pbw}--\eqref{eqn:rel 3 pbw} and \eqref{eqn:f zero imaginary} --\eqref{eqn:e plus minus} with $p$'s replaced by $X$'s, hence we obtain an algebra homomorphism:
\begin{equation}
\label{eqn:isomorphism}
\Phi : \DD \rightarrow \CS
\end{equation}
This map is an isomorphism because ordered products of the elements $f_{[i;j)}^\mu$ (respectively their images under the assignments \eqref{eqn:mu pos 1}--\eqref{eqn:mu neg 2}) in increasing order of $\mu$ were shown in \cite{PBW} (respectively \cite{Tor}) to form a linear basis of $\DD$ (respectively $\CS$).

\end{proof} 

\begin{corollary}
\label{cor:explicit} 

If we combine \eqref{eqn:bialg iso} with \eqref{eqn:isomorphism}, we obtain an isomorphism:
$$
\Psi : \DD \cong \UU 
$$
Let us define:
$$
\Psi(\DD^+) = \UUup , \quad \Psi(\DD^-) = \UUdown
$$
and consider the usual triangular decomposition (see \eqref{eqn:positive} and \eqref{eqn:negative}):
$$
\Psi(\DD^0) = \uu = \uug \otimes \uul 
$$
Then the algebras 
\begin{align*}
&\tUUup \cong \UUup \otimes \uug \\
& \tUUdown \cong \UUdown \otimes \uul
\end{align*}
yield the decomposition $\UU = \tUUup \otimes \tUUdown$ of \eqref{eqn:decomp 1}. \\

\end{corollary}

\subsection{} 
\label{sub:formulation} 

The purpose of the remainder of the present paper is to make:
$$
\DD^+ \otimes \uug \quad \text{and} \quad \DD^- \otimes \uul 
$$
into bialgebras, in such a way that $\DD$ becomes their Drinfeld double. This will be done by recasting $\DD^\pm$ as a new type of shuffle algebra, as described in Subsection \ref{sub:describe}. With this in mind, we will prove the following more explicit version of Theorem \ref{thm:main}. \\

\noindent \textbf{Theorem 1.5 (explicit):}  \emph{If $\CA^+$ and $\CA^-$ are the shuffle algebras that will be introduced in Definitions \ref{def:shuf aff} and \ref{def:minus algebra}, respectively, then we have algebra isomorphisms:}
$$
\DD^+ \cong \CA^+, \qquad \DD^- \cong \CA^{-,\op}
$$
\emph{Moreover, the extended algebras $\tCA^+ = \CA^+ \otimes \UUg$ and $\tCA^- = \CA^- \otimes \UUl$ defined in \eqref{eqn:extended aff pm} have topological bialgebra structures and a bialgebra pairing between them, such that the Drinfeld double:}
$$
\CA = \tCA^+ \otimes \tCA^{-,\op,\coop}
$$
\emph{is isomorphic to $\DD$ (and hence also with $\CS$ and $\UU$) as an algebra.} \\

\subsection{} 
\label{sub:bill}

In the remainder of this Section, we will study the algebra $\CE \cong \uu$ in more detail, and fill the gaps left in the discussion above. Let us consider the following matrix-valued rational function called an $R$--matrix with parameter $x$:
\begin{equation}
\label{eqn:explicit r}
R(x) = \sum_{1\leq i,j \leq n} E_{ii} \otimes E_{jj} \left(\frac {q - xq^{-1}}{1-x} \right)^{\delta_i^j} + (q - q^{-1}) \sum_{1 \leq i \neq j \leq n} E_{ij} \otimes E_{ji} \frac {x^{\delta_{i<j}}}{1-x}
\end{equation}
where $E_{ij} \in \End(\BC^n)$ denotes the $n \times n$ matrix with a single $1$ at the intersection of row $i$ and column $j$, and zeroes everywhere else. We will now give an alternate version of Definition \ref{def:two realizations}, and afterwards show how to match notations: \\

\begin{definition}
\label{def:two realizations 2}
	
(\cite{DF}) Consider the algebra:
\begin{equation}
\label{eqn:two realizations 2}
\CE:= \BQ(q) \Big \langle s_{[i;j)}, t_{[i;j)}, c^{\pm 1} \Big \rangle^{1 \leq i \leq n}_{i \leq j \in \BZ} \Big/ \text{relations \eqref{eqn:rtt 0}--\eqref{eqn:rtt 3}}
\end{equation}
where:
\begin{equation}
\label{eqn:rtt 0}
c \text{ is central, and } s_{[i;i)} t_{[i;i)} = 1
\end{equation}
\begin{align}
R\left( \frac xy \right) S_1(x) S_2(y) &=  S_2(y)   S_1(x) R\left( \frac xy \right) \label{eqn:rtt 1} \\
R\left( \frac xy \right) T_1(x) T_2(y) &=  T_2(y)   T_1(x) R\left( \frac xy \right) \label{eqn:rtt 2} \\
R\left( \frac {xc}{y} \right) S_1(x) T_2(y) &= T_2(y) S_1(x) R\left( \frac {x}{yc} \right) \label{eqn:rtt 3} 
\end{align}
\footnote{Relations \eqref{eqn:rtt 1} and \eqref{eqn:rtt 2} can be made explicit by expanding in either positive or negative powers of $x/y$, but \eqref{eqn:rtt 3} must be expanded in negative powers of $x/y$.} where $Z_1 = Z \otimes \emph{Id}$ and $Z_2 = \emph{Id} \otimes Z$ for any symbol $Z$, and:
\begin{align}
&S(x) = \sum_{1 \leq i,j \leq n, d \geq 0}^{\text{if } d = 0 \text{ then } i\leq j} s_{[i;j+nd)} \cdot E_{ij}x^{-d} \label{eqn:series s} \\
&T(x) = \sum_{1 \leq i,j \leq n, d \geq 0}^{\text{if } d = 0 \text{ then } i\leq j} t_{[i;j+nd)} \cdot E_{ji}x^{d} \label{eqn:series t}
\end{align}
	
\end{definition}

\noindent The series $S(x)$, $T(y)$ are the transposes of the series denoted $T^-(x)$, $T^+(y)$ in \cite{Tor}, which explains the discrepancy between our conventions and those of \loccit \\

\subsection{} 
\label{sub:hmmm}

 We will write $\psi_k = s_{[k;k)}^{-1} = t_{[k;k)} \in \CE$ for all $1 \leq k \leq n$, and set:
\begin{equation}
\label{eqn:no bars}
f_{[i;j)} = s_{[i;j)} \psi_i, \qquad f_{-[i;j)} = t_{[i;j)}  \psi_i^{-1}
\end{equation}
$\forall 1 \leq i \leq n$ and $i \leq j \in \BZ$. We will extend our notation to all integers by setting:
$$
f_{\pm [i+n;j+n)} = f_{\pm [i;j)}, \qquad \psi_{k+n} = c \psi_k 
$$
$\forall i \leq j, \forall k$. It is elementary to see that relations \eqref{eqn:rtt 1}--\eqref{eqn:rtt 3} can be rewritten as:
\begin{equation}
\label{eqn:f f}
\sum_{\pm [i;j) \pm' [i';j') = \bd} \text{coefficient} \cdot f_{\pm [i;j)} f_{\pm' [i';j')} = 0
\end{equation}
for all $\pm, \pm' \in \{+,-\}$ and $\bd \in \pm \nn \pm' \nn$. We will not need to spell out the coefficients in \eqref{eqn:f f} explicitly, but they can easily be obtained by expanding \eqref{eqn:rtt 1}--\eqref{eqn:rtt 3} as power series in $x/y$ and equating matrix coefficients of every $E_{ij} \otimes E_{i'j'}$. In particular, an important special case of relation \eqref{eqn:f f} reads:
\begin{equation}
\label{eqn:psi f}
\psi_k f_{\pm [i;j)} = q^{\pm \delta_{k}^{j} \mp \delta_{k}^{i}} f_{\pm [i;j)} \psi_k
\end{equation}
The bialgebra (and Drinfeld double) structure on $\CE = \CE^\geq \otimes \CE^\leq$ from \eqref{eqn:cop 0}--\eqref{eqn:cop 2} can be presented in terms of the matrix-valued power series \eqref{eqn:series s}--\eqref{eqn:series t} as: 
\begin{align}
&\Delta(S(x)) = (1 \otimes S(x c_1)) \cdot (S(x) \otimes 1) \label{eqn:cop rtt 1} \\ 
&\Delta(T(x)) = (1 \otimes T(x)) \cdot (T(xc_2) \otimes 1) \label{eqn:cop rtt 2} 
\end{align}
where $\cdot$ denotes matrix multiplication (i.e. the formula $E_{ij} \cdot E_{i'j'} = \delta_j^{i'} E_{ij'}$), and $c_1 = c \otimes 1$, $c_2 = 1 \otimes c$. It is straightforward to check that these coproducts respect relations \eqref{eqn:rtt 1}--\eqref{eqn:rtt 3}, i.e. extend to well-defined coproducts on the algebra $\CE$. \\

\subsection{} The pairing \eqref{eqn:pairing} takes the form:
\begin{equation}
\label{eqn:pairing affine}
\Big \langle S_1(x), T_2(y) \Big \rangle = R \left(\frac xy \right)^{-1} \quad \Leftrightarrow \quad \Big \langle S_1(x), T_2(y)^{-1} \Big \rangle = R \left(\frac xy \right)
\end{equation}
with $R$ expanded in non-negative powers of $\frac yx$. It is elementary to show (see \cite{Tor} for details) that \eqref{eqn:pairing affine} generates a bialgebra pairing, i.e. it intertwines the product with the coproduct on $\CE$, and that $\CE$ is the Drinfeld double of its halves with respect to this pairing. Moreover, we claim that the linear maps \eqref{eqn:alpha 0} which were used to normalize primitive elements of $\CE$ can be recovered from the assignment:
\begin{equation}
\label{eqn:def alpha}
\CE^\pm \stackrel{\alpha^\pm}\longrightarrow \End(\BC^n)[[x^{\mp 1}]], \qquad \begin{cases} \alpha^+(r) = \langle r, T(x)^{-1} \rangle &\text{if } r \in \CE^+ \\ \alpha^-(r) = \langle S(x)^{-1},r \rangle &\text{if } r \in \CE^- \end{cases}
\end{equation}
Indeed, we define:
\begin{equation}
\label{eqn:alpha plus}
\alpha_{\pm [i;j)} : \CE_{\pm [i;j)} \longrightarrow \fff
\end{equation}
as the coefficients of the maps $\alpha^\pm$, appropriately renormalized as follows:
\begin{align}
&\alpha^+(r) = \sum_{(i \leq j) \in \zzz}  \alpha_{[i;j)}(r) \cdot E_{ji} x^{\left \lfloor \frac {j-1}n \right \rfloor - \left \lfloor \frac {i-1}n \right \rfloor} \oq_+^{\frac {i-j}n} \label{eqn:alpha renorm plus} \\
&\alpha^-(r) = \sum_{(i \leq j) \in \zzz}  \alpha_{-[i;j)}(r) \cdot E_{ij} x^{\left \lfloor \frac {i-1}n \right \rfloor - \left \lfloor \frac {j-1}n \right \rfloor} \oq_-^{\frac {i-j}n} \label{eqn:alpha renorm minus} 
\end{align}
Then it is elementary to observe that the bialgebra property \eqref{eqn:bialg 1}--\eqref{eqn:bialg 2} of the pairing, together with definition \eqref{eqn:def alpha}, imply the multiplicativity property \eqref{eqn:pseudo}. \\

\subsection{}
\label{sub:e infinity}

We will henceforth write $S^+(x) = S(x)$ and $T^-(x) = T(x)$, so we have:
\begin{align*} 
&S^+(x) = \sum_{1 \leq i,j \leq n, d \geq 0}^{\text{if } d = 0 \text{ then } i\leq j} \underbrace{f_{[i;j+nd)} \psi_i^{-1}}_{s^+_{[i;j+nd)}} \cdot \ E_{ij} x^{-d} \\
&T^-(x) = \sum_{1 \leq i,j \leq n, d \geq 0}^{\text{if } d = 0 \text{ then } i\leq j} \underbrace{f_{-[i;j+nd)} \psi_i}_{t^-_{[i;j+nd)}} \cdot \ E_{ji} x^{d}
\end{align*}
as elements of $\CE\otimes \End(\BC^n)[[x^{\pm 1}]]$. We define series $S^-(x)$ and $T^+(x)$ by:
\begin{equation}
\label{eqn:new series s}
S^-(x) T^-(x \oq^2) = 1 
\end{equation}
\begin{equation}
 \label{eqn:new series t}
D^{-1} S^+ (x q^{2n} \oq^2)^\dagger D T^+(x)^\dagger = 1
\end{equation}
where $D = \text{diag}(q^2,...,q^{2n})$ and $\dagger$ denotes the transpose operation $E_{ij}^\dagger = E_{ji}$. The coefficients of these series will be denoted by:
\begin{align*} 
&T^+(x) = \sum_{1 \leq i,j \leq n, d \geq 0}^{\text{if } d = 0 \text{ then } i\leq j} \underbrace{\psi_j \barf_{[i;j+nd)} \oq^{\frac {2(j-i)}n}}_{t^+_{[i;j+nd)}} \cdot \ E_{ij} x^{-d} \\
&S^-(x) = \sum_{1 \leq i,j \leq n, d \geq 0}^{\text{if } d = 0 \text{ then } i\leq j} \underbrace{\psi_j^{-1} \barf_{-[i;j+nd)} \oq^{\frac {2(i-j)}n}}_{s^-_{[i;j+nd)}} \cdot \ E_{ji} x^{d}
\end{align*}
It is straightforward to show that formulas \eqref{eqn:new series s} and \eqref{eqn:new series t} imply \eqref{eqn:antipode}. The following identities are easy to prove, as consequences of \eqref{eqn:rtt 1}, \eqref{eqn:rtt 2}, \eqref{eqn:rtt 3}:
\begin{equation}
\label{eqn:remarkable 1}
T^+_1(x) R_{21} \left( \frac y{x\oq^2} \right) S^+_2(y) = S^+_2(y) R_{21} \left( \frac y{x\oq^2} \right) T^+_1(x)
\end{equation}
\begin{equation}
\label{eqn:remarkable 2}
S^-_1(x) R_{21} \left( \frac y{x\oq^2} \right) T^-_2(y) = T^-_2(y) R_{21} \left( \frac y{x\oq^2} \right) S^-_1(x)
\end{equation}
\begin{equation}
\label{eqn:remarkable 3}
R \left( \frac {xc}{y} \right) S_1^+(x) T_2^-(y) = T_2^-(y) S_1^+(x) R \left( \frac {x}{yc} \right) 
\end{equation}
\begin{equation}
\label{eqn:remarkable 4}
R \left( \frac {xc}{y} \right) T_2^+(y) S_1^-(x) = S_1^-(x) T_2^+(y) R \left( \frac x{yc} \right) 
\end{equation}
\begin{equation}
\label{eqn:remarkable 5}
S_1^+(x) R \left( \frac {x}{yc\oq^2} \right) S_2^-(y) = S_2^-(y) R \left( \frac {xc}{y\oq^2} \right)S_1^+(x) 
\end{equation}
\begin{equation}
\label{eqn:remarkable 6}
T_2^+(y) R \left( \frac {x}{y c \oq^2} \right) T_1^-(x) = T_1^-(x) R \left( \frac {x c}{y\oq^2} \right) T_2^+(y)
\end{equation}

\subsection{} In the present Subsection, we will show how to rewrite relations \eqref{eqn:f zero imaginary}--\eqref{eqn:e zero simple} between the generators of $\DD^+$, $\DD^0$, $\DD^-$ in terms of the series $S^\pm(x)$ and $T^\pm(x)$. \\ 

\begin{proposition}
\label{prop:connection plus}
	
Under the substitution: 
\begin{equation}
\label{eqn:correspondence plus}
p_{[i,j+nd)}^{(1)} \leadsto E_{ji} z^d \cdot \frac {\oq^{\frac {2i-1}n}}{1-q^2}
\end{equation}
for all $1\leq i,j\leq n$ and $d \in \BZ$, the following relations hold in $\DD$:
\begin{align}
&X^+_1(z) \cdot S_2^+(w) = S_2^+(w) \cdot \frac {R_{12} \left(\frac zw \right)}{f \left( \frac zw \right)}  X^+_1(z) R_{21} \left(\frac w{z\oq^2} \right) \label{eqn:formula 1} \\
&T_2^+(w) \cdot X^+_1(z) = R_{12} \left(\frac z{w\oq^2} \right) X^+_1(z) \frac {R_{21} \left(\frac wz \right)}{f \left( \frac wz \right)} \cdot T_2^+(w)  \label{eqn:formula 2} \\
&S_2^-(w) \cdot X^+_1(z) = R_{12} \left(\frac {z}{w c \oq^2} \right) X^+_1(z) R_{21} \left(\frac {w c}{z} \right) \cdot S_2^-(w) \label{eqn:formula 3} \\
&X^+_1(z) \cdot T_2^-(w) = T_2^-(w) \cdot R_{12} \left(\frac {z}{w c} \right) X^+_1(z) R_{21} \left(\frac {w c}{z\oq^2} \right) \label{eqn:formula 4}
\end{align}
for any $X^+(z) \in \eEnd(\BC^n)[z^{\pm 1}]$, where $f(x) = \frac {(1-xq^2)(1-xq^{-2})}{(1-x)^2}$. \\

\end{proposition}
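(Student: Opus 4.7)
The plan is as follows. Since both sides of each formula \eqref{eqn:formula 1}--\eqref{eqn:formula 4} are linear in the matrix coefficients of $X^+_1(z)$, it suffices to verify them on the generating family $X^+_1(z) = E_{ji} z^d \cdot \oq^{(2i-1)/n}/(1-q^2)$ for $1 \leq i,j \leq n$ and $d \in \BZ$, which via \eqref{eqn:correspondence plus} corresponds to a single PBW generator $p^{(1)}_{[i;j+nd)}$. Each of the series $S^\pm(w), T^\pm(w)$ has matrix coefficients that are specific elements of $\DD^0 \cong \uu$: by \eqref{eqn:no bars} and \eqref{eqn:new series s}--\eqref{eqn:new series t}, the coefficient of $E_{ab} w^{\mp e}$ in $S^+, T^+, S^-, T^-$ is a Cartan-rescaling of $f_{[a;b+ne)}, \barf_{[a;b+ne)}, \barf_{-[a;b+ne)}, f_{-[a;b+ne)}$ respectively.

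The core computation is then the commutator of $p^{(1)}_{[i;j+nd)}$ with each such coefficient inside $\DD^0$. For \eqref{eqn:formula 1} and \eqref{eqn:formula 2}, this is handled by the defining relations \eqref{eqn:f zero simple} and \eqref{eqn:f zero imaginary}: one decomposes $f_{[a;b+ne)}$ (respectively $\barf_{[a;b+ne)}$) as an iterated nested commutator of the simple generators $p^{(0)}_{[s;s+1)}$ and the imaginary generators $p^{(0)}_{\ell\bde,1}$ via the PBW basis of $\DD^0$ (using \eqref{eqn:antipode} in the $\barf$ case), and applies these relations step by step, tracking the Cartan prefactors via \eqref{eqn:rel 1}. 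For \eqref{eqn:formula 3} and \eqref{eqn:formula 4}, the additional input is the cross-relation \eqref{eqn:e plus minus}, together with the analogous relations for $p^{(1)}_{[i;j+nd)}$ against $\barf_{-[a;b+ne)}$ and $f_{-[a;b+ne)}$.

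On the right-hand side, the explicit form \eqref{eqn:explicit r} of the $R$-matrix contributes: diagonal terms $(q - xq^{-1})/(1-x)$, whose geometric expansion in the appropriate direction accounts for the Cartan prefactors and the tower of index shifts $d \mapsto d \pm 1$ produced by repeated application of \eqref{eqn:f zero simple}; and off-diagonal terms $(q-q^{-1}) x^{\delta_{i<j}}/(1-x)$, which match the ``permutation'' contribution governed by the Kronecker deltas $\delta^i_{s+1}, \delta^j_s$ in \eqref{eqn:f zero simple}. The scalar $f(z/w)$ in \eqref{eqn:formula 1}--\eqref{eqn:formula 2} absorbs the contribution from the imaginary generators via \eqref{eqn:f zero imaginary}; it arises intrinsically because $R_{12}/f = R_{21}^{-1}$ by unitarity, converting the intrinsically two-sided commutator produced by the $\DD^0$-relations into the one-sided form in which $X^+_1(z)$ sits between factors of $R_{12}$ and $R_{21}$.

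The main obstacle will be the bookkeeping of the shift parameter $d$ and the central element $c$: in \eqref{eqn:formula 3}--\eqref{eqn:formula 4}, whether the $R$-matrix argument is $w c$, $w c \oq^2$, or $w/(zc)$ depends delicately on which Cartan strings $\psi_a/\psi_b$ appear when converting between the $s^\pm, t^\pm$ and the $f, \barf$ conventions, and on whether powers of $c$ come from iterated use of \eqref{eqn:e plus minus} or from the defining identities \eqref{eqn:new series s}--\eqref{eqn:new series t} for $S^-, T^+$. Matching these powers on the nose across both sides of the four formulas is the essential arithmetic verification underlying the proposition.
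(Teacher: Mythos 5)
Your plan identifies the right raw ingredients (linearity in $X^+$, the identification of the coefficients of $S^\pm,T^\pm$ with Cartan-rescaled root generators, and the defining relations \eqref{eqn:f zero imaginary}, \eqref{eqn:f zero simple}), but it stops exactly where the actual proof has to happen: you yourself describe the matching of $R$-matrix expansions, Cartan strings and powers of $c$ as ``the essential arithmetic verification underlying the proposition'' and leave it undone. More importantly, the route you propose --- expand each $f_{[a;b+ne)}$ as an iterated nested commutator of the primitive generators and apply \eqref{eqn:f zero simple}, \eqref{eqn:f zero imaginary} ``step by step'' --- does not by itself produce the claimed identities, because the right-hand sides of \eqref{eqn:formula 1}--\eqref{eqn:formula 4} are not commutators but a specific two-sided sandwich $S^+_2(w)\,\frac{R_{12}}{f}\,X^+_1(z)\,R_{21}$; to propagate the generator-level relations to arbitrary elements of the Borel one needs a statement of the form ``conjugation of $X^+_1(z)$ by $e$ is multiplicative in $e$, and so is the operation defined by pairing the coproduct of $e$ against $T_1(z)$ and $T_1(z\oq^2)^{-1}$.'' This is precisely the device the paper uses: it rewrites \eqref{eqn:formula 1} as \eqref{eqn:milhous}, introduces the twisted adjoint action $\spadesuit$ (via the antipode, using $A^{-1}(S(w)) = S(wc)^{-1}$) and the pairing-defined action $\clubsuit$ of \eqref{eqn:spade}--\eqref{eqn:club}, notes both are multiplicative, and therefore only has to check equality on the primitive generators $p^{(0)}_{[l\bde,1)}$ and $p^{(0)}_{[s;s+1)}$, where the check is literally \eqref{eqn:f zero imaginary} and \eqref{eqn:f zero simple} together with the pairing values coming from \eqref{eqn:imaginary alpha} and \eqref{eqn:def alpha}. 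Without this Hopf-theoretic reduction (or an equivalent substitute), your generator-by-generator expansion of $S^+(w)$ would force you to re-derive the coproduct/antipode bookkeeping by hand for every root vector, and nothing in your sketch explains how the global matrix structure $\frac{R_{12}}{f}\,X\,R_{21}$ would emerge from such an expansion; note also that explicit closed nested-commutator expressions for all $f_{[a;b+ne)}$ are not provided by the paper and are not needed in its argument.

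A second, smaller inaccuracy: for \eqref{eqn:formula 3}--\eqref{eqn:formula 4} the coefficients of $S^-(w)$ and $T^-(w)$ lie in the negative Borel $\uul \subset \DD^0$, so the relevant defining relations of $\DD$ are the mixed-sign ones \eqref{eqn:e zero imaginary} and \eqref{eqn:e zero simple}, not \eqref{eqn:e plus minus}; the latter governs the commutator of $p^{(1)}$ with $p^{(-1)}$ (it underlies \eqref{eqn:plus minus}) and plays no role in this proposition. So as it stands the proposal is a plausible but unexecuted computational plan, missing the key structural lemma (multiplicativity of the two actions in the Borel variable) that makes the verification on primitive generators sufficient, and misattributing which defining relations feed the two ``minus'' formulas.
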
 

\noindent We emphasize the fact that $X^+(z)$ does not denote a generating series, but is instead a placeholder for $E_{ji}z^d$ for all $i,j \in \{1,\dots,n\}$ and $d \in \BZ$. Thus, to understand the meaning of relations \eqref{eqn:formula 1}--\eqref{eqn:formula 4}, let us spell out the first of these. Letting $S^+(w) = \sum_{u,v}^{k \geq 0} s^+_{[u;v+nk)} \frac {E_{uv}}{w^{k}}$ and $X(z) = \frac {E_{ij}}{z^d}$, formula \eqref{eqn:formula 1} reads:
\begin{equation}
\label{eqn:explicit}
\sum_{1 \leq u,v \leq n}^{k \geq 0}  \frac {E_{ij}}{z^d} \otimes \frac {E_{uv}}{w^k} \cdot s_{[u;v+nk)}^+= \sum_{1 \leq u,v,\bullet,*,x,y \leq n}^{k,a,b \geq 0} s^+_{[u;\bullet+n(k-a-b))}
\end{equation}
$$
\left(1 \otimes \frac {E_{u \bullet}}{w^{k-a-b}} \right) \left(r_{xi,a}^{\bullet *} \cdot E_{xi} \otimes E_{\bullet *} \frac {z^a}{w^a} \right) \left( \frac {E_{ij}}{z^d} \otimes 1 \right) \left(\oo{r}_{jy,b}^{* v} \cdot E_{jy} \otimes E_{*v} \frac {z^b}{w^b} \right)
$$
where $r$ and $r'$ are the coefficients of the power series expansions:
\begin{align*} 
&R_{12} \left(\frac zw \right) \cdot f \left( \frac zw \right)^{-1} = \sum_{i,j,u,v}^{k \geq 0} r_{ij,k}^{uv} \cdot E_{ij} \otimes E_{uv} \frac {z^k}{w^k} \\
&R_{21} \left(\frac w{z\oq^2} \right) = \sum_{i,j,u,v}^{k \geq 0} \oo{r}_{ij,k}^{uv} \cdot E_{ij} \otimes E_{uv} \frac {z^k}{w^k}
\end{align*}
Equating the coefficients of $... \otimes \frac {E_{uv}}{w^k}$ in the two sides of \eqref{eqn:explicit} yields the identity:
$$
\frac {E_{ij}}{z^d} \cdot s_{[u;v+nk)}^+ = \sum^{a,b \geq 0}_{1 \leq \bullet, *, x, y \leq n} r_{xi,a}^{\bullet *} \oo{r}_{jy,b}^{* v} s^+_{[u;\bullet+n(k-a-b))} \cdot \frac {E_{xy}}{z^{d-a-b}}
$$
which is a relation in the algebra $\DD$, once we perform the substitution \eqref{eqn:correspondence plus}. \\

\begin{proof} We will only prove relation \eqref{eqn:formula 1}, and leave the analogous formulas \eqref{eqn:formula 2}--\eqref{eqn:formula 4} as exercises to the reader. Let us rewrite \eqref{eqn:formula 1} as:
\begin{equation}
\label{eqn:milhous}
S_2^+(w)^{-1} X^+_1(z) S_2^+(w) = R_{21}\left( \frac wz \right)^{-1}  X^+_1(z) R_{21} \left(\frac w{z\oq^2} \right) 
\end{equation}
(note that we are using \eqref{eqn:unitary r}). If we let $A:\CE \rightarrow \CE$ denote the antipode, then \eqref{eqn:cop rtt 1} implies $A^{-1}(S(w)) = S(w/c)^{-1}$. With this in mind, relation \eqref{eqn:milhous} reads:
\begin{equation}
\label{eqn:richard}
X^+_1(z) \ \spadesuit \ S_2^+(w) = X^+_1(z) \ \clubsuit \ S_2^+(w)
\end{equation}
where for any $e \in \uug$, we write:
\begin{align} 
&X^+(z) \ \spadesuit \ e = A^{-1}(e_2) X^+(z) e_1 \label{eqn:spade} \\ 
&X^+(z) \ \clubsuit \ e = \Big \langle e_2, T(z) \Big \rangle  X^+(z) \Big \langle e_1, T(z\oq^2)^{-1} \Big \rangle \label{eqn:club}
\end{align} 
Indeed, when $e = S(w)$, the right-hand sides of \eqref{eqn:spade} and \eqref{eqn:club} match the LHS and RHS of \eqref{eqn:milhous}, respectively, due to \eqref{eqn:pairing affine}. It is easy to see that the operations $\spadesuit$ and $\clubsuit$ are additive in $e$. Moreover, as a consequence of the properties of the antipode and bialgebra pairing (respectively), we have the following identities:
\begin{align*} 
&X^+(z) \ \spadesuit \ (ee') = (X^+(z) \ \spadesuit \ e) \ \spadesuit \ e' \\
&X^+(z) \ \clubsuit \ (ee') = (X^+(z) \ \clubsuit \ e) \ \clubsuit \ e'
\end{align*}
Since $\CE^+$ is generated by the $p_{l\bde, 1}^{(0)}$'s and $p_{[s;s+1)}^{(0)}$'s, formula \eqref{eqn:richard} is equivalent to: 
\begin{equation}
\label{eqn:club spade}
X^+(z) \ \spadesuit \ e = X^+(z) \ \clubsuit \ e \qquad \forall e \in \left\{p_{l\bde, 1}^{(0)}, p_{[s;s+1)}^{(0)} \right\}^{l \in \BN}_{s\in \BZ/n\BZ}
\end{equation}
When $e = p_{l\bde, 1}^{(0)}$ (whose coproduct is $\Delta(e) = e \otimes 1 + c^{-l} \otimes e$), relation \eqref{eqn:club spade} reads:
$$
X^+(z) p_{l\bde, 1}^{(0)} - p_{l\bde, 1}^{(0)} X^+(z) = X^+(z) \Big \langle p_{l\bde, 1}^{(0)}, T(z\oq^2)^{-1} \Big \rangle  + \Big \langle p_{l\bde, 1}^{(0)}, T(z) \Big \rangle X^+(z) 
$$
Formulas \eqref{eqn:imaginary alpha} and \eqref{eqn:def alpha}--\eqref{eqn:alpha renorm plus} imply that the pairings in the right-hand side of the formula above are equal to $z^l\oq^l$ and $-z^l \oq^{-l}$, respectively, hence we obtain:
$$
\left[ X^+(z), p_{l\bde, 1}^{(0)} \right] = z^l X^+(z) (\oq^l - \oq^{-l})
$$
If we plug $X^+(z) = E_{ji} z^d \oq^{\frac {2i}n}$ into the equation above and use the correspondence \eqref{eqn:correspondence plus}, the formula above reduces to \eqref{eqn:f zero imaginary} for $\pm = +$. Similarly, if we plug $e = p_{[s;s+1)}^{(0)}$ into \eqref{eqn:club spade}, then the resulting formula reduces to \eqref{eqn:f zero simple} for $\pm = +$.
	
\end{proof}

\noindent By analogy with Proposition \ref{prop:connection plus}, we have the following result (whose proof is quite close to the one above, hence left as an exercise to the reader): \\

\begin{proposition}
\label{prop:connection minus}
	
Under the substitution: 
\begin{equation}
\label{eqn:correspondence minus}
p_{[i,j+nd)}^{(-1)} \leadsto E_{ji} z^d \cdot \frac {\oq^{\frac {1-2j}n}}{q-q^{-1}}
\end{equation}
the following relations hold in $\DD$ (recall that $\cdot_\eop$ denotes the opposite product):
\begin{align}
&X^-_1(z) \cdot_{\eop} S_2^-(w) = S_2^-(w) \cdot_{\eop} \frac {R_{12} \left(\frac zw \right)}{f \left(\frac zw \right)} X^-_1(z) D_2 R_{21} \left(\frac {w \oq^2 q^{2n}}{z} \right) D_2^{-1} \label{eqn:formula 5} \\
&T_2^-(w) \cdot_{\eop} X^-_1(z) = D_1 R_{12} \left(\frac {z \oq^2 q^{2n}}{w} \right) D_1^{-1} X^-_1(z) \frac {R_{21} \left(\frac wz \right)}{f \left(\frac wz \right)} \cdot_{\eop} T_2^-(w)  \label{eqn:formula 6} \\
&S_2^+(w) \cdot_{\eop} X^-_1(z) = D_1 R_{12} \left(\frac {z \oq^2 q^{2n}}{wc} \right) D_1^{-1} X^-_1(z) R_{21} \left(\frac {wc}z \right) \cdot_{\eop} S_2^+(w)  \label{eqn:formula 7} \\
&X^-_1(z) \cdot_{\eop} T_2^+(w) = T_2^+(w) \cdot_{\eop} R_{12} \left(\frac {z}{w c} \right) X^-_1(z)  D_2 R_{21} \left(\frac {w c \oq^2 q^{2n}}{z} \right) D_2^{-1}  \label{eqn:formula 8}
\end{align}
for any $X^-(z) \in \eEnd(\BC^n)[z^{\pm 1}]$, where $D = \emph{diag}(q^2,...,q^{2n})$. \\ 
	
\end{proposition}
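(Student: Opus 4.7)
My plan is to mirror the template of Proposition \ref{prop:connection plus}, treating the four identities \eqref{eqn:formula 5}--\eqref{eqn:formula 8} in parallel and spelling out only \eqref{eqn:formula 5} in detail. First I would invert the auxiliary series that appears on the left-hand side, using \eqref{eqn:new series s}--\eqref{eqn:new series t} together with the antipode formulas that follow from \eqref{eqn:cop rtt 1}--\eqref{eqn:cop rtt 2}. This will let me rewrite each of \eqref{eqn:formula 5}--\eqref{eqn:formula 8} as an equality
$$
X^-_1(z) \ \spadesuit \ e \;=\; X^-_1(z) \ \clubsuit \ e
$$
where, for any $e \in \uu$, the operation $\spadesuit$ involves multiplying $X^-(z)$ by Sweedler halves of $\Delta(e)$ on the appropriate sides (with one factor pushed through the antipode), while $\clubsuit$ is built from pairings of the Sweedler halves of $\Delta(e)$ against the relevant matrix-coefficient series $S^\pm$ or $T^\pm$ evaluated at $z$ or at $z \oq^2 q^{2n}$. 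The extra shift by $q^{2n}$ in the latter argument is exactly the source of the conjugations by $D = \text{diag}(q^2,\dots,q^{2n})$ on the right-hand sides of \eqref{eqn:formula 5}--\eqref{eqn:formula 8}, and it arises from the partial transpositions $\dagger$ in the defining equations \eqref{eqn:new series s}--\eqref{eqn:new series t}.

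The key formal property I would verify is that both $\spadesuit$ and $\clubsuit$ are additive in $e$ and satisfy $X \spadesuit (ee') = (X \spadesuit e) \spadesuit e'$ and likewise for $\clubsuit$. Multiplicativity of $\spadesuit$ is automatic (the antipode is an antihomomorphism), while multiplicativity of $\clubsuit$ is a direct consequence of the bialgebra compatibility \eqref{eqn:bialg 1}--\eqref{eqn:bialg 2} of the pairing. Since $\uu \cong \CE$ is generated over its Cartan by the simple generators $p^{(0)}_{\pm[s;s+1)}$ and the imaginary generators $p^{(0)}_{\pm l\bde, 1}$ (see Subsection \ref{sub:pbw stuff}), matters thus reduce to checking
$$
X^-_1(z) \ \spadesuit \ e \;=\; X^-_1(z) \ \clubsuit \ e \qquad \forall\ e \in \Big\{ p^{(0)}_{\pm l\bde, 1},\ p^{(0)}_{\pm [s;s+1)} \Big\}^{l \in \BN}_{s \in \BZ/n\BZ}.
$$
Each of these generators is primitive, so the right-hand side collapses to a single commutator and the requisite pairings are scalars, read off from \eqref{eqn:simple alpha}--\eqref{eqn:imaginary alpha} via \eqref{eqn:def alpha} and the renormalizations \eqref{eqn:alpha renorm plus}--\eqref{eqn:alpha renorm minus}. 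After plugging in $X^-_1(z) = E_{ji} z^d \cdot \oq^{(1-2j)/n}/(q-q^{-1})$ from the substitution \eqref{eqn:correspondence minus}, each of the four resulting identities reduces precisely to one of the defining commutation relations of $\DD$: formulas \eqref{eqn:formula 5} and \eqref{eqn:formula 6} yield \eqref{eqn:f zero imaginary} and \eqref{eqn:f zero simple} for the $-$ sign, while \eqref{eqn:formula 7} and \eqref{eqn:formula 8} yield \eqref{eqn:e zero imaginary} and \eqref{eqn:e zero simple}.

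The main obstacle is bookkeeping rather than conceptual. One must carefully track where the diagonal matrix $D$ emerges after applying \eqref{eqn:new series t} and its partial transpose, in order to obtain exactly the twists $D_1 R_{12}(\cdots) D_1^{-1}$ and $D_2 R_{21}(\cdots) D_2^{-1}$ that appear in \eqref{eqn:formula 5}--\eqref{eqn:formula 8}, and to confirm that the spectral shift $\oq^2 q^{2n}$ is the one compatible with the inversion relation \eqref{eqn:new series t} rather than some other rescaling. Equally, the exponent $1-2j$ in \eqref{eqn:correspondence minus} (as against $2i-1$ in \eqref{eqn:correspondence plus}) must be matched against the $\oq^{-\frac{2j}n}$-type factors produced by the $\alpha^-$ convention \eqref{eqn:alpha renorm minus}, so that the pairings of $p^{(-1)}_{[i;j+nd)}$ with the coefficients of $S^+(x)$ and $T^-(x)$ deliver the asserted rational coefficients. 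Once these normalizations are pinned down once and for all on the generators, the multiplicativity argument above propagates the identities to all of $\DD^0$, finishing the proof.
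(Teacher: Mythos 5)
Your proposal is correct and follows essentially the same route as the paper: the paper itself leaves this proof as an exercise, declaring it ``quite close'' to that of Proposition \ref{prop:connection plus}, and your plan is precisely that analogue --- rewriting each of \eqref{eqn:formula 5}--\eqref{eqn:formula 8} as an equality of the $\spadesuit$ and $\clubsuit$ operations, using their multiplicativity to reduce to the primitive generators $p^{(0)}_{\pm l\bde,1}$, $p^{(0)}_{\pm[s;s+1)}$, evaluating the pairings via \eqref{eqn:def alpha} and \eqref{eqn:alpha renorm minus}, and recovering the defining relations \eqref{eqn:f zero imaginary}, \eqref{eqn:f zero simple} (same sign) and \eqref{eqn:e zero imaginary}, \eqref{eqn:e zero simple} (opposite sign) for $\pm = -$. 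Your identification of the $D$--conjugations and the $\oq^2 q^{2n}$ shift as coming from \eqref{eqn:new series t} (equivalently, from $\tR^-$ as in \eqref{eqn:r plus minus}) is also the correct bookkeeping point.
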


\noindent Finally, let us perform both substitutions \eqref{eqn:correspondence plus} and \eqref{eqn:correspondence minus} simultaneously:
\begin{align*} 
&p_{[i,j+nd)}^{(+1)} \leadsto \left(E_{ji} z^d\right)^+ \cdot \frac {\oq^{\frac {2i-1}n}}{1-q^2} \\
&p_{[i,j+nd)}^{(-1)} \leadsto \left(E_{ji} z^d\right)^- \cdot \frac {\oq^{\frac {1-2j}n}}{q-q^{-1}}
\end{align*} 
(for any $X \in \text{End}(\BC^n)[z^{\pm 1}]$, we use the notation $X^+$ and $X^-$ to differentiate among the matrices representing $p_{[i,j+nd)}^{(+  1)}$ and $p_{[i,j+nd)}^{(- 1)}$, respectively). Then we have:
\begin{equation}
\label{eqn:plus minus}
\left[ \left( \frac {E_{ij}}{z^d} \right)^+, \left( \frac {E_{i'j'}}{z^{d'}} \right)^- \right] =
\end{equation}
$$
=  (q^2-1) \sum_{k \in \BZ} \left( s^+_{[j';i+nk)} t^+_{[j;i'+n(-d-d'-k))} c^{-d'} \barc^{-1} -  t^-_{[i;j'+nk)} s^-_{[i';j+n(d+d'-k))} c^{-d} \barc \right)
$$
as an immediate consequence of formula \eqref{eqn:e plus minus} (we set $s^\pm_{[i;j)} = t^\pm_{[i;j)} = 0$ if $i>j$). \\

\subsection{}

We will now prove a useful Lemma about the structure of the algebra $\CE^+$ of Subsection \ref{sub:sub}. Let us write $\lhs_\bd$ for the quantity in the left-hand side of \eqref{eqn:f f}, when the signs are $\pm = \pm' = +$. Then we have:
$$
\CE^+ = \BQ(q) \Big \langle f_{[i;j)} \Big \rangle_{(i < j) \in \zzz} \Big / \left( \lhs_\bd \right)_{\bd \in \nn}
$$
In Section \ref{sec:extended}, we will find ourselves in the situation of having an algebra $\CB^+$ and wanting to construct an algebra isomorphism $\Upsilon : \CE^+ \cong \CB^+$. Of course, the straightforward way to do this is to construct elements:
$$
f_{[i;j)}' \in \CB^+, 
$$
declare that $\Upsilon(f_{[i;j)}) = f'_{[i;j)}$, and directly check that $\Upsilon(\lhs_\bd) = 0$, $\forall \bd \in \nn$. However, such a check is not handy in our situation, and we will instead rely on some additional structure. More precisely, we will prove the following. \\

\begin{lemma}
\label{lem:any}
	
Assume $\CB^+$ is a $\nn$--graded $\BQ(q)$--algebra, such that:
$$
\tCB^+ = \frac {\Big \langle \CB^+, \psi_s^{\pm 1}, c^{\pm 1}\Big \rangle_{s \in \BZ}}{\Big ( \psi_s x - q^{- \langle \bs^s, \deg x \rangle} x \psi_s, \ \psi_{s+n} - c \psi_s, \ c \text{ central} \Big)_{\forall x \in \CB^+, s \in \BZ}}
$$
\footnote{In the formula above, $\langle \cdot, \cdot \rangle$ is the bilinear form on $\BZ^n$ given by $\langle \bs^i, \bs^j \rangle = \delta_{i}^{j} - \delta_{i}^{j+1}$} is a bialgebra. Assume there exist elements $0 \neq f'_{[i;j)} \in \CB_{[i;j)}$ and linear maps: 
$$
\alpha'_{[i;j)} : \CB_{[i;j)} \rightarrow \BQ(q) \qquad \forall i \in \{1,...,n\} \text{ and } j>i
$$
such that the analogues of \eqref{eqn:cop 1}, \eqref{eqn:pseudo}, \eqref{eqn:alpha affine} hold. If:
\begin{equation}
\label{eqn:primitive prop}
\Big\{ x \text{ primitive and } \alpha'_{[i;j)}(x) = 0, \ \forall i,j \Big\} \quad \Rightarrow \quad x = 0
\end{equation}
then the map $\CE^+ \stackrel{\Upsilon}\rightarrow \CB^+$, $\Upsilon(f_{[i;j)}) = f'_{[i;j)}$ is an injective algebra homomorphism. \\
	
\end{lemma}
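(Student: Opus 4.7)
The plan is to prove both well-definedness and injectivity of $\Upsilon$ by running parallel inductions on the total degree $|\bd| = d_1 + \cdots + d_n$ of the grading component $\bd \in \BN^n$. For well-definedness, let $F$ be the free $\BQ(q)$-algebra on symbols $\{g_{[i;j)}\}$ and $\widetilde{F}$ its Cartan extension, modeled on the passage $\CE^+ \leadsto \CE^\geq$; one has a surjection $\pi : \widetilde{F} \twoheadrightarrow \CE^\geq$, and I would upgrade it to a bialgebra map by putting on $\widetilde{F}$ the coproduct $\Delta_F$ obtained by extending formula \eqref{eqn:cop 1} as an algebra map (the Cartan-commutation relations in $\widetilde{F}$ are preserved routinely). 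Its kernel $I$ is then a bi-ideal. Similarly, $\widetilde{\Upsilon} : \widetilde{F} \to \CB^\geq$ sending $g_{[i;j)} \mapsto f'_{[i;j)}$ is an algebra homomorphism intertwining $\Delta_F$ with $\Delta_\CB$, by the analogue of \eqref{eqn:cop 1} assumed for the $f'_{[i;j)}$. Well-definedness of $\Upsilon$ reduces to showing $\widetilde{\Upsilon}(I) = 0$.

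The core of the induction step: fix $z \in I \cap \widetilde{F}_\bd$. The coideal property yields $\Delta_F(z) \in I \otimes \widetilde{F} + \widetilde{F} \otimes I$. Decomposing by bidegree, the ``extreme'' contributions are Cartan twists of $z \otimes 1$ and $1 \otimes z$, while all remaining ``mixed'' terms live in bidegrees $(\bd_1,\bd_2)$ with $\bd_1, \bd_2 \neq 0$ summing to $\bd$; by the inductive hypothesis, $\widetilde{\Upsilon} \otimes \widetilde{\Upsilon}$ annihilates these mixed terms. Applying $\widetilde{\Upsilon} \otimes \widetilde{\Upsilon}$ to $\Delta_F(z)$ then shows that $\Delta_\CB(\widetilde{\Upsilon}(z))$ has only its extreme parts, i.e.\ that $\widetilde{\Upsilon}(z)$ is primitive in $\CB^\geq$. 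On the other hand, iterating \eqref{eqn:pseudo} together with the shared normalization \eqref{eqn:alpha affine} shows that, monomial by monomial, $\alpha'_{[i;j)}(f'_{[i_1;j_1)} \cdots f'_{[i_k;j_k)})$ and $\alpha_{[i;j)}(f_{[i_1;j_1)} \cdots f_{[i_k;j_k)})$ are computed by the same combinatorial recipe, hence $\alpha'_{[i;j)}(\widetilde{\Upsilon}(z)) = \alpha_{[i;j)}(\pi(z)) = 0$ for every $[i;j)$ of degree $\bd$. Hypothesis \eqref{eqn:primitive prop} then forces $\widetilde{\Upsilon}(z) = 0$, closing the induction.

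For injectivity, I would take $x \in \CE^+_\bd$ with $\Upsilon(x) = 0$ and run the analogous induction inside $\CE^+$: the same coproduct argument (using that $\Upsilon \otimes \Upsilon$ is injective in lower degrees) forces $x$ to be primitive in $\CE^+$. The identity $\alpha' \circ \Upsilon = \alpha$ (which holds on generators and propagates to products by the common multiplicativity \eqref{eqn:pseudo}) then yields $\alpha_{[i;j)}(x) = 0$ for every $[i;j)$, and the classification of primitive elements of $\CE^+$ recalled in Subsection 3.5 (originally from \cite{Tor}) forces $x = 0$.

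The main obstacle I expect is the bookkeeping in the bigraded decomposition of $\Delta_F(z)$: since the Cartan generators $\psi_s, c$ carry trivial $\BN^n$-degree, one has to carefully separate the leading tensor components $z \otimes (\text{Cartan})$ and $(\text{Cartan}) \otimes z$ from the genuinely mixed ones, and to verify that the coideal inclusion $\Delta_F(I) \subseteq I \otimes \widetilde{F} + \widetilde{F} \otimes I$ respects this bidegree decomposition. Once this accounting is clean, the remainder of the argument is driven entirely by the induction together with the normalization data \eqref{eqn:alpha affine} and the primitivity hypothesis \eqref{eqn:primitive prop}.
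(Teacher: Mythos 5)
Your argument is correct and is essentially the paper's own proof: an induction on the $\nn$--degree in which the middle coproduct terms of a relation (resp.\ of a minimal-degree kernel element) are killed by the inductive hypothesis, so its image is primitive, and then the functionals $\alpha'_{[i;j)}$ — which agree with $\alpha_{[i;j)}$ on monomials by \eqref{eqn:pseudo} and \eqref{eqn:alpha affine} — together with \eqref{eqn:primitive prop} (resp.\ the classification of primitive elements of $\CE^+$) force it to vanish. Your packaging via the free algebra $\widetilde{F}$, the bi-ideal $I = \ker \pi$, and the explicit bidegree bookkeeping is just a more formal rendition of the induction the paper runs directly on the quadratic relations \eqref{eqn:f f}, so there is no substantive difference in approach.
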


\begin{proof} Let us consider the left-hand side of \eqref{eqn:f f} with $f'_{[i;j)}$ instead of the $f_{[i;j)}$:
$$
\lhs_{\bd}' = \sum_{[i;j) + [i';j') = \bd} \text{coefficient} \cdot f_{[i;j)}' f_{[i';j')}' \in \CB_{\bd}
$$	
To show that $\Upsilon$ is an algebra homomorphism, we would need to show that $\lhs_\bd' = 0$, which we will prove by induction on $\bd \in \nn$. The base case $\bd = 0$ is trivial, so we will only prove the induction step. We have:
$$
\Delta(\lhs'_\bd) \in \langle \psi_s^{\pm 1} \rangle_{s \in \BZ} \otimes \lhs'_\bd + ... + \lhs'_\bd \otimes \langle \psi_s^{\pm 1} \rangle_{s \in \BZ}
$$
where the middle terms denoted by the ellipsis are equal to $\Upsilon \otimes \Upsilon$ applied to the middle terms of $\Delta(\lhs_\bd)$. Since the latter are 0 (by using $\lhs_\bd = 0$ in $\CE^+$ and the induction hypothesis), we conclude that $\lhs'_\bd$ is primitive. Moreover, the analogues of  \eqref{eqn:pseudo} and \eqref{eqn:alpha affine} imply that:
$$
\alpha'_{[i;j)}(\lhs'_\bd) = \alpha_{[i;j)}(\lhs_\bd) = 0 \qquad \forall \bd, \ i < j
$$
Therefore, the assumption \eqref{eqn:primitive prop} implies that $\lhs'_\bd = 0$ for all $\bd$, thus establishing the fact that $\Upsilon$ is a well-defined algebra homomorphism. To show that $\Upsilon$ is injective, assume that its kernel is non-empty. Since $\Upsilon$ preserves degrees, we may choose $0 \neq x \in \CE^+$ of minimal degree $\bd \in \nn$ such that $\Upsilon(x) = 0$. Since $\Upsilon$ preserves the coproduct and is injective in degrees $<\bd$ (by the minimality of $\bd$), we conclude that $x$ is primitive. However, since $\Upsilon$ intertwines the linear maps $\alpha_{[i;j)}$ with $\alpha'_{[i;j)}$, we conclude that $x$ is also annihilated by the linear maps $\alpha_{[i;j)}$. However, \cite[Lemma 3.11]{PBW} tells us that under these circumstances, we must have $x = 0$.
	
\end{proof}

\noindent Since an injective linear map of finite-dimensional vector spaces $\Phi : V_1 \hookrightarrow V_2$ is an isomorphism if $\dim V_2 \leq \dim V_1$ (as well as the similar statement in the graded case, if $V_1$ and $V_2$ have finite-dimensional graded pieces which are preserved by $\Phi$) we obtain the following: \\

\begin{corollary}
\label{cor:any} 
	
If the assumption of Lemma \ref{lem:any} holds, and moreover, if $\dim \CB_\bd \leq$ the RHS of \eqref{eqn:bound affine} for all $\bd \in \nn$, then $\Upsilon : \CE^+ \cong \CB^+$ is an isomorphism. \\
	
\end{corollary}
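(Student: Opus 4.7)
The plan is to deduce the corollary directly from Lemma \ref{lem:any} by a graded dimension count, using \eqref{eqn:bound affine} as the lower bound on $\dim \CE^+$ and the hypothesis as the matching upper bound on $\dim \CB^+$. The corollary is essentially a ``finite-dimensional vector space'' observation applied degree by degree, so no substantive new argument is needed beyond invoking the previous lemma and comparing dimensions.

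More precisely, I would first apply Lemma \ref{lem:any} to conclude that $\Upsilon : \CE^+ \to \CB^+$ is an injective algebra homomorphism. Since $\Upsilon$ is defined on generators by $\Upsilon(f_{[i;j)}) = f'_{[i;j)}$ with $f'_{[i;j)} \in \CB_{[i;j)}$, and since $\CE^+$ and $\CB^+$ are both $\nn$--graded with $\deg f_{[i;j)} = \deg f'_{[i;j)} = [i;j)$, the map $\Upsilon$ respects the $\nn$--grading. Hence for every $\bd \in \nn$, it restricts to an injective linear map $\Upsilon_\bd : \CE_\bd \hookrightarrow \CB_\bd$.

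Next I would invoke the PBW-type bound \eqref{eqn:bound affine}, which gives
\[
\dim \CE_\bd \;=\; \#\Big\{\bd = [i_1;j_1) + \cdots + [i_u;j_u)\Big\}^{u \in \BN}_{(i_a<j_a) \in \zzz},
\]
(the inequality is in fact an equality by the PBW theorem for $\CE^+$ recalled in Subsection \ref{sub:sub}). Combined with injectivity of $\Upsilon_\bd$ and the hypothesis on $\CB_\bd$, this yields
\[
\dim \CE_\bd \;\leq\; \dim \CB_\bd \;\leq\; \text{RHS of \eqref{eqn:bound affine}} \;=\; \dim \CE_\bd,
\]
so all inequalities are equalities. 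An injective linear map between finite-dimensional vector spaces of equal dimension is a bijection, so $\Upsilon_\bd$ is an isomorphism for each $\bd \in \nn$. Assembling these degree-by-degree isomorphisms gives $\Upsilon : \CE^+ \xrightarrow{\sim} \CB^+$.

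There is no real obstacle here; the only thing worth double-checking is that the graded pieces in question are finite-dimensional (which they are: \eqref{eqn:bound affine} counts finitely many partitions for fixed $\bd$, and $\CB_\bd$ is finite-dimensional by the assumed upper bound). The nontrivial content has all been absorbed into Lemma \ref{lem:any} and the PBW theorem underlying \eqref{eqn:bound affine}; the corollary itself is simply the observation that an injection between finite-dimensional spaces of matching dimension is an isomorphism.
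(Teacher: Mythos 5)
Your proof is correct and follows essentially the same route as the paper: Lemma \ref{lem:any} gives injectivity of the graded map $\Upsilon$, and then the hypothesis $\dim \CB_\bd \leq$ RHS of \eqref{eqn:bound affine} $= \dim \CE_\bd$ forces each graded piece $\Upsilon_\bd$ to be an isomorphism of finite-dimensional spaces. Nothing further is needed.
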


\begin{proposition} 
	\label{prop:gen}
	
The $\BQ(q,\oq^{\frac 1n})$--algebra $\DD^\pm$ is generated by the elements:
\begin{equation}
\label{eqn:degree one piece}
\Big \{ p_{\pm [i;j)}^{(\pm 1)} \Big \}_{(i,j) \in \zzz}
\end{equation}
	
\end{proposition}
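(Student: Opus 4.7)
The plan is to prove by induction on the vertical degree $k$ that every PBW generator $p^{(\pm k)}_{\pm[i;j)}$ with $k \geq 2$ and every imaginary generator $p^{(\pm k')}_{\pm l\bde, r}$ lies in the subalgebra $\Lambda := \langle p^{(\pm 1)}_{\pm[i;j)} \mid (i,j) \in \zzz \rangle \subset \DD^\pm$. The base case $k = 1$ is vacuous, so the task is to extract higher-$k$ generators from repeated commutators of the degree-one ones.

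For the inductive step on $p^{(\pm k)}_{\pm[i;j)}$ with $k \geq 2$, I would apply \eqref{eqn:rel 3 pbw} to a pair $p^{(\pm k_1)}_{\pm[i_1;j_1)}$ and $p^{(\pm k_2)}_{\pm[i_2;j_2)}$ with $k_1 + k_2 = k$, $(j_1-i_1) + (j_2-i_2) = j-i$, and $k_1, k_2 < k$, so that the slope $(j+j'-i-i')/(k+k') = (j-i)/k = \mu$ agrees with the target slope. The determinant hypothesis of \eqref{eqn:rel 3 pbw} becomes $k_1(j_2-i_2) - k_2(j_1-i_1) = \pm\gcd(k, j-i)$, which is solvable by a Bezout/Stern--Brocot argument: every reduced fraction $b/a$ arises as the mediant of two adjacent Farey fractions, which supplies the decomposition $(k_1, j_1-i_1) + (k_2, j_2-i_2) = (k, j-i)$ with the required determinantal condition. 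The right-hand side of \eqref{eqn:rel 3 pbw} then expresses the commutator as a $\BQ(q)$-linear combination of products $f^\mu_{\pm[t;j_1)}\barf^\mu_{\pm[i_1;s)}$ in $\CE^\pm_\mu$. Using the linear maps $\alpha_{\pm[i;j)}$ from Proposition~\ref{prop:endowed}, together with the coproduct formulas and the non-degeneracy \eqref{eqn:f alpha}--\eqref{eqn:barf alpha}, one isolates the primitive component, which is (a nonzero scalar multiple of) $p^{(\pm k)}_{\pm[i;j)}$; all other summands are products of strictly smaller-$k$ generators and hence already lie in $\Lambda$ by induction.

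For the imaginary generators $p^{(\pm k')}_{\pm l\bde, r}$, I would invert \eqref{eqn:rel 2 pbw}: having already placed $p^{(\pm k)}_{\pm[i;j)}$ and $p^{(\pm k \pm k')}_{\pm[i;j+ln)}$ in $\Lambda$ via the previous step, we compute their commutator, which by \eqref{eqn:rel 2 pbw} equals $p^{(\pm k')}_{\pm l\bde, r}$ times the scalar $\pm(\delta_{i\text{ mod }g}^r \oq_\pm^{d/n} - \delta_{j\text{ mod }g}^r \oq_\pm^{-d/n})$, provided $|d| = \gcd(k', nl)$. Choosing $i, j$ so that exactly one of the $\delta$'s is $1$ makes this scalar a nonzero element of $\fff$, yielding the imaginary generator inside $\Lambda$.

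The main obstacle is the combinatorial Bezout step: ensuring that for every $(i,j,k)$ with $k \geq 2$ there exists an admissible pair $(k_1, j_1-i_1), (k_2, j_2-i_2)$ with $k_1, k_2\geq 1$ summing to $(k, j-i)$ and satisfying the determinant condition $k_1(j_2-i_2)-k_2(j_1-i_1)=\pm\gcd(k,j-i)$, and that the $(i_1, j_1)$ and $(i_2, j_2)$ can be lifted to legitimate elements of $\zzz$ where the corresponding scalar coefficient on the right-hand side of \eqref{eqn:rel 3 pbw} does not vanish. Once this bookkeeping is made precise -- by writing $j-i = g\beta$, $k = g\alpha$ with $\gcd(\alpha,\beta)=1$ and invoking $\alpha\beta' - \alpha'\beta = \pm 1$ for adjacent Farey neighbours -- the induction closes and both families of generators are realized in $\Lambda$, completing the proof.
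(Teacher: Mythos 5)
Your treatment of the imaginary generators does not work. Relation \eqref{eqn:rel 2 pbw} expresses the bracket of a \emph{real} generator with an \emph{imaginary} one as a scalar multiple of a real generator; it says nothing about the bracket of the two real generators $p^{(\pm k)}_{\pm[i;j)}$ and $p^{(\pm k\pm k')}_{\pm[i;j+ln)}$, and indeed that bracket has vertical degree $2k+k'$, not $k'$, so it cannot be proportional to $p^{(\pm k')}_{\pm l\bde,r}$. Even setting degrees aside, knowing that $x\in\Lambda$ and $[x,y]\in\Lambda$ never forces $y\in\Lambda$, so ``inverting'' \eqref{eqn:rel 2 pbw} is not a legitimate move. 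The paper's route is genuinely different: it splits the vector $(nl,k')$ itself into two parts of smaller vertical degree, applies the consequence of \eqref{eqn:rel 3 pbw} recorded in Lemma 4.4 of \cite{PBW} (whose right-hand side is $\sum_{l_1+l_2=l} f^\mu_{[u+a+nl_1;u+a)}\barf^\mu_{[u;u+nl_2)}/(q^{-1}-q)$, involving the modified elements $\tilde{p}$), expands that right-hand side as $\sum_r \alpha^r_u\, p^\mu_{l\bde,r}$ plus products of at least two generators, and finally separates the $g$ individual imaginary generators by letting the starting point $u$ vary and using that the matrix $(\alpha^r_u)$ has full rank. None of this is replaceable by your shortcut, and without it the generators $p^{(\pm k')}_{\pm l\bde,r}$ are simply not reached.

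For the real generators your plan is closer to the paper's, but two points are glossed over. First, the mediant splitting of $(j-i,k)$ often has one part with horizontal degree divisible by $n$ (e.g.\ $(4,3)=(1,1)+(3,2)$ for $n=3$); that part is an imaginary generator, \eqref{eqn:rel 3 pbw} is not the relevant relation, and one must instead use \eqref{eqn:rel 2 pbw} with its different determinant condition and scalar --- this is exactly the paper's dichotomy $a\equiv j-i$ versus $a\not\equiv j-i$ mod $n$, which your proposal omits. (When the splitting is unimodular with both parts real, no ``isolation of the primitive component'' is needed: since $\gcd(k,j-i)=1$, the right-hand side of \eqref{eqn:rel 3 pbw} is the single term $\barf^{(k)}_{[i;j)}/(q^{-1}-q)$, a nonzero multiple of $p^{(k)}_{[i;j)}$ by \eqref{eqn:barf alpha}.) Second, the Bezout/Farey existence claim --- which you yourself single out as the main obstacle --- fails exactly when one Farey parent of $(j-i)/k$ is $0/1$, i.e.\ for targets $p^{(k)}_{[i;i\pm1)}$ with $k\geq 2$: every unimodular splitting then contains a piece of horizontal degree $0$, which is not a generator of $\DD^\pm$, and no choice of signs or of lifts to $\zzz$ repairs this. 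The paper's write-up passes over this degenerate case just as quickly, but your proposal explicitly asserts that the Farey argument closes the induction, and as stated it does not; an additional argument is needed for those degrees.
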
 

\begin{proof} Without loss of generality, let $\pm = +$. We will prove that $p_{[i;j)}^{(k)}$ (resp. $p_{l\bde,r}^{(k')}$) lies in the subalgebra generated by the elements \eqref{eqn:degree one piece} for all choices of indices $i,j,l,r$, by induction on $k$ (respectively $k'$). To this end, let us choose a lattice triangle of minimal area with the vector $(j-i,k)$ (respectively $(nl,k')$) as an edge:
	
	\begin{picture}(100,125)(-30,10)
	
	\put(0,40){\circle*{2}}\put(20,40){\circle*{2}}\put(40,40){\circle*{2}}\put(60,40){\circle*{2}}\put(80,40){\circle*{2}}\put(0,60){\circle*{2}}\put(20,60){\circle*{2}}\put(40,60){\circle*{2}}\put(60,60){\circle*{2}}\put(80,60){\circle*{2}}\put(0,80){\circle*{2}}\put(20,80){\circle*{2}}\put(40,80){\circle*{2}}\put(60,80){\circle*{2}}\put(80,80){\circle*{2}}\put(0,100){\circle*{2}}\put(20,100){\circle*{2}}\put(40,100){\circle*{2}}\put(60,100){\circle*{2}}\put(80,100){\circle*{2}}\put(0,120){\circle*{2}}\put(20,120){\circle*{2}}\put(40,120){\circle*{2}}\put(60,120){\circle*{2}}\put(80,120){\circle*{2}}
	
	\put(200,60){\line(1,1){20}}
	\put(200,60){\line(2,1){80}}
	\put(220,80){\line(3,1){60}}
	
	\put(180,57){\scriptsize{$(0,0)$}}
	\put(266,103){\scriptsize{$(nl,k')$}}
	\put(202,85){\scriptsize{$(a,b)$}}
	
	\put(115,75){respectively}
	
	\put(200,40){\circle*{2}}\put(220,40){\circle*{2}}\put(240,40){\circle*{2}}\put(260,40){\circle*{2}}\put(280,40){\circle*{2}}\put(200,60){\circle*{2}}\put(220,60){\circle*{2}}\put(240,60){\circle*{2}}\put(260,60){\circle*{2}}\put(280,60){\circle*{2}}\put(200,80){\circle*{2}}\put(220,80){\circle*{2}}\put(240,80){\circle*{2}}\put(260,80){\circle*{2}}\put(280,80){\circle*{2}}\put(200,100){\circle*{2}}\put(220,100){\circle*{2}}\put(240,100){\circle*{2}}\put(260,100){\circle*{2}}\put(280,100){\circle*{2}}\put(200,120){\circle*{2}}\put(220,120){\circle*{2}}\put(240,120){\circle*{2}}\put(260,120){\circle*{2}}\put(280,120){\circle*{2}}\put(200,120){\circle*{2}}\put(220,120){\circle*{2}}\put(240,120){\circle*{2}}\put(260,120){\circle*{2}}\put(280,120){\circle*{2}}
	
	\put(0,60){\line(1,1){20}}
	\put(0,60){\line(3,2){60}}
	\put(20,80){\line(2,1){40}}
	
	\put(-20,57){\scriptsize{$(0,0)$}}
	\put(46,105){\scriptsize{$(j-i,k)$}}
	\put(2,85){\scriptsize{$(a,b)$}}
	
	\end{picture}
	
\noindent In the case of the picture on the left, namely that of the element $p_{[i;j)}^{(k)}$, the minimality of the area of the triangle implies that:
\begin{equation}
\label{eqn:determinant}
\det \begin{pmatrix} b & k-b \\ a & j-i-a \end{pmatrix} = 1
\end{equation}
Recall that $i \not \equiv j$ mod $n$. If $a \equiv j-i$ modulo $n$, then relation \eqref{eqn:rel 2 pbw} gives us:
$$
\left [p_{[i;i+a)}^{(b)}, p_{\frac {j-i-a}n\bde,i}^{(k-b)} \right] = p_{[i;j)}^{(k)} \left(\oq^{\frac 1n} - \delta_a^0 \oq^{-\frac 1n} \right)
$$
while if $a \not \equiv j-i$ modulo $n$, then relation \eqref{eqn:rel 3 pbw} gives us:
$$
p_{[i;i+a)}^{(b)} p_{[i+a;j)}^{(k-b)} q^{- \delta_a^0}   - p_{[i+a;j)}^{(k-b)} p_{[i;i+a)}^{(b)} q^{-1}  = \frac {\barf^{(k)}_{[i;j)}}{q^{-1} - q} \stackrel{\eqref{eqn:barf alpha}, \eqref{eqn:simple alpha}}= - q^{-1} \oq^{-\frac 1n} p^{(k)}_{[i;j)}
$$
In either of the two formulas above, the induction hypothesis implies that the left-hand side lies in the algebra generated by the elements \eqref{eqn:degree one piece}. Therefore, so does the right-hand side, and the induction step is complete. \\
	
\noindent The case of the picture on the right, namely that of the element $p_{l\bde, r}^{(k')}$, is proved analogously. We will therefore only sketch the main idea, and leave the details to the interested reader. As shown in \cite[Lemma 4.2]{PBW}, relation \eqref{eqn:rel 3 pbw} implies for all $u$:
$$
p_{[u;u+a)}^{(b)} \tilde{p}_{[u+a;u+nl)}^{(k'-b)} q^{\delta_{a}^0 - 1} - \tilde{p}_{[u+a;u+nl)}^{(k'-b)} p_{[u;u+a)}^{(b)} q^{1 - \delta_a^0} = 
\sum_{l_1+l_2=l} \frac {f^\mu_{[u+a+nl_1;u+a)} \barf^\mu_{[u;u+nl_2)}}{q^{-1}-q}
$$
where $\mu = \frac {k'}{nl}$, and $\tilde{p}_{[v;v+x)}^{(y)} = \sum_{v' \in \BZ/n\BZ} p_{[v';v'+x)}^{(y)} \cdot \left[q^{-\delta_x^0} \delta_{v'}^v + \delta_x^0(q-q^{-1}) \frac {\oq^{\frac 2n \cdot \overline{y(v-v')}}}{\oq^2-1} \right]$. By the induction hypothesis, the left-hand side of the expression above lies in the subalgebra generated by the elements \eqref{eqn:degree one piece}, hence so does the right-hand side, which we henceforth denote $\rhs$. Clearly, $\rhs \in \CB_\mu^+$, and it can therefore be expressed as a sum of products of the simple and imaginary generators:
$$
\rhs = \sum_{r} \alpha_{u}^r \cdot p^\mu_{l\bde, r} + \text{sum of products of more than one generator of } \CB_\mu^+
$$
By the induction hypothesis, all products of more than one simple or imaginary generator lie in the subalgebra generated by the elements \eqref{eqn:degree one piece}. We conclude that $\sum_{r} \alpha_{u}^r \cdot p^\mu_{l\bde, r}$ also lies in the same algebra, for all $u \in \{1,...,n\}$. Since the matrix $\alpha^r_u$ has full rank (as shown in \cite{PBW}), we conclude that $p^\mu_{l\bde, r}$	 also lies in the subalgebra generated by the elements \eqref{eqn:degree one piece}, precisely what we needed to show. 

\end{proof} 

\section{The shuffle algebra with spectral parameters}
\label{sec:new}

\subsection{}

We will now generalize the construction of Section \ref{sec:old} by allowing the coefficients of matrices in $\End(V^{\otimes k})$ to be rational functions. We will recycle all the notations from Section \ref{sec:old}, so fix a basis of a $n$--dimensional vector space $V$, and let:
\begin{equation}
\label{eqn:r}
R(x) \in \End_{\BQ(q)}(V \otimes V)(x) 
\end{equation}
be given by formula \eqref{eqn:explicit r}. For a parameter $\oq$, we define:
\begin{equation}
\label{eqn:tr}
\tR (x) = R_{21} \left( \displaystyle \frac 1{x\oq^2} \right) \in \End_{\BQ(q,\oq^{\frac 1n})}(V \otimes V)(x) 
\end{equation}
$$
\tR(x) = \sum_{1\leq i,j \leq n} E_{ii} \otimes E_{jj} \left(\frac {q^{-1} - x q \oq^2}{1-x \oq^2} \right)^{\delta_i^j} - (q - q^{-1}) \sum_{1 \leq i \neq j \leq n} E_{ij} \otimes E_{ji} \frac {(x \oq^2)^{\delta_{i<j}}}{1-x \oq^2} 
$$
It is well-known that $R(x)$ satisfies the Yang-Baxter equation with parameter:
\begin{equation}
\label{eqn:ybe aff}
R_{12} \left( \frac {z_1}{z_2} \right) R_{13} \left( \frac {z_1}{z_3} \right) R_{23} \left( \frac {z_2}{z_3} \right) = R_{23} \left( \frac {z_2}{z_3} \right)  R_{13} \left( \frac {z_1}{z_3} \right) R_{12} \left( \frac {z_1}{z_2} \right) 
\end{equation}
and it is easy to show that $\tR(z)$ satisfies the following analogue of \eqref{eqn:quasi-ybe 1}--\eqref{eqn:quasi-ybe 2}:
\begin{equation}
\label{eqn:quasi-ybe 1 aff}
\tR_{21} \left( \frac {z_2}{z_1} \right) \tR_{31} \left( \frac {z_3}{z_1} \right) R_{23}  \left( \frac {z_2}{z_3} \right) = R_{23}  \left( \frac {z_2}{z_3} \right) \tR_{31} \left( \frac {z_3}{z_1} \right) \tR_{21} \left( \frac {z_2}{z_1} \right) 
\end{equation}
\begin{equation}
\label{eqn:quasi-ybe 2 aff}
R_{12} \left( \frac {z_1}{z_2} \right) \tR_{31} \left( \frac {z_3}{z_1} \right) \tR_{32}  \left( \frac {z_3}{z_2} \right) = \tR_{32}  \left( \frac {z_3}{z_2} \right) \tR_{31} \left( \frac {z_3}{z_1} \right) R_{12} \left( \frac {z_1}{z_2} \right)
\end{equation}
Finally, we note that the $R$--matrix \eqref{eqn:explicit r} is (almost) unitary, in the sense that:
\begin{equation}
\label{eqn:unitary r}
R_{12}(x) R_{21} \left( \frac 1x \right) = f(x) \cdot \text{Id}_{V \otimes V}
\end{equation}
where:
\begin{equation}
\label{eqn:def f}
f(x) = \frac {(1-xq^2)(1-xq^{-2})}{(1-x)^2}
\end{equation}
It is easy to see that ``half" of the rational function $f(x)$ could have been absorbed in the definition of $R(x)$, thus making $R(x)$ to be unitary on the nose, but we will prefer our current conventions. \\

\subsection{} 
\label{sub:braids aff} 

We will represent elements of $\End(V^{\otimes k})(z_1,...,z_k)$ as braids on $k$ strands. The only difference between the present setup and that of Section \ref{sec:old} is that each strand carries not only a label $i \in  \{1,...,k\}$ but also a variable $z_i$. With this in mind, we make the convention that the endomorphism corresponding to a positive crossing of strands labeled $a$ and $b$, endowed with variables $z_a$ and $z_b$ respectively, is:
$$
R_{ab} \left( \frac {z_a}{z_b} \right)
$$
Because of \eqref{eqn:tr}, we can represent both $R$ and $\tR$ as crossings of braids of the same kind (i.e. we do not need the dichotomy of straight strands versus squiggly strands, of Subsection \ref{sub:braids}) if we remember to change the variable on one of our strands. We will always write the variable next to every braid. For example, the braids:
\begin{figure}[ht]    
\centering
\includegraphics[scale=0.2]{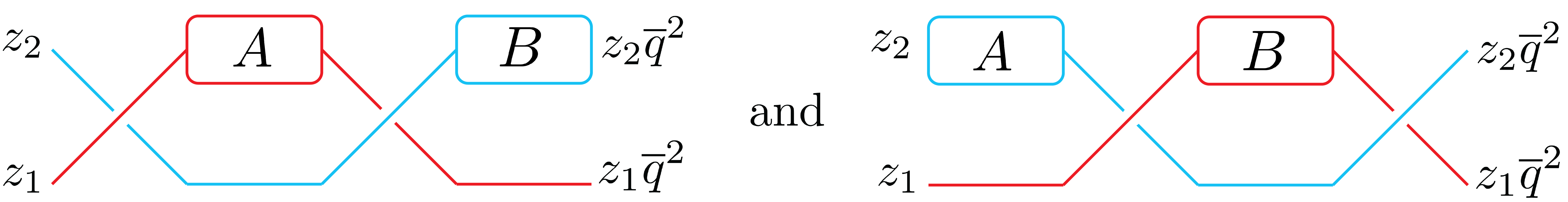}
\caption{Braids decorated with variables}
\end{figure}

\noindent represent the following compositions $\in \End(V^{\otimes 2}) (z_1,z_2)$: 
$$
R_{12} \left(\frac {z_1}{z_2} \right) A_1(z_1) \underbrace{\tR_{12} \left(\frac {z_1}{z_2} \right)}_{R_{21} \left(\frac {z_2}{z_1\oq^2} \right)} B_2(z_2) \quad \text{and} \quad A_2(z_2) \underbrace{\tR_{21} \left(\frac {z_2}{z_1} \right)}_{R_{12} \left(\frac {z_1}{z_2\oq^2} \right)} B_1(z_1) R_{21} \left(\frac {z_2}{z_1} \right) 
$$
respectively. The variable does not change along a strand, except at a box. \\

\subsection{} We make an unusual convention on residues of rational functions, by stipulating that $\frac {\alpha}{\alpha - x}$ has residue $1$ at $x=\alpha$ (instead of the more usual $-\alpha$). With this in mind, note that $R(x)$ has a pole at $x = 1$, with residue $(q - q^{-1}) \cdot (12)$. Thus:
\begin{equation}
\label{eqn:blob}
\underset{x=\oq^{-2}}{\text{Res}} \ \tR(x) = (q^{-1} - q) \cdot (12) \in \End_{\BQ(q,\oq)}(V \otimes V)
\end{equation}
where $(12)$ denotes the permutation operator of the two factors. Pictorially, the endomorphism \eqref{eqn:blob} will be represented by two black dots indicating a color change (recall that the color encodes the index $\in \{1,...,k\}$ of a strand) between two strands:

\begin{figure}[ht]    
\centering
\includegraphics[scale=0.32]{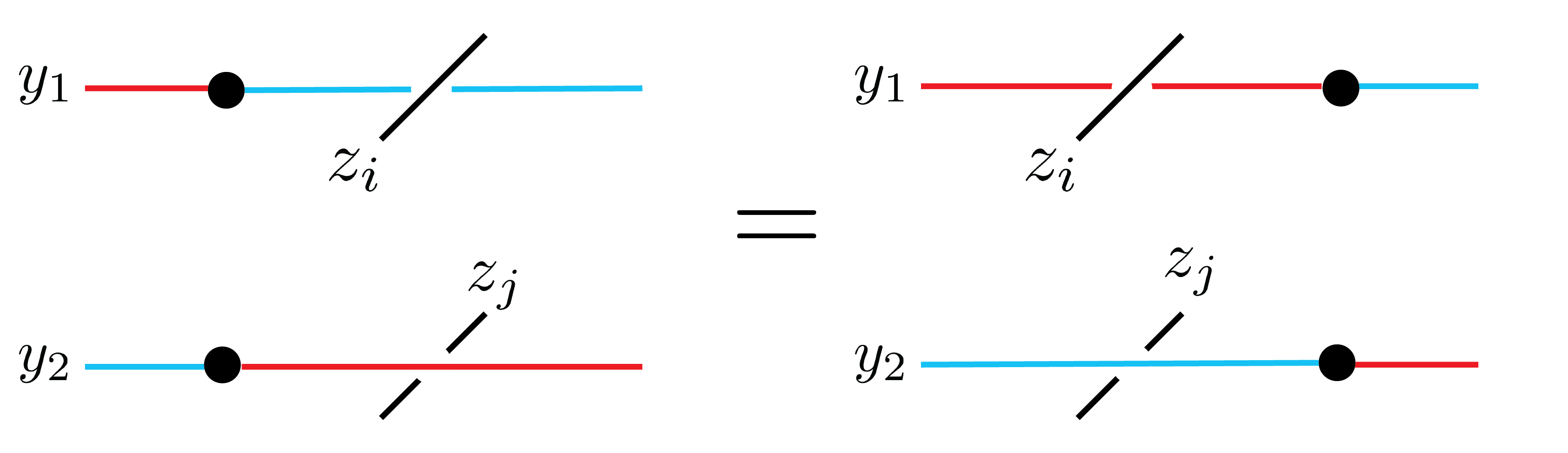}
\caption{Black dots can slide past arbitrary strands}
\end{figure}

\noindent The equality of braids depicted in Figure 12 means that one can move the black dots as far left or as far right as we wish, no matter how many other strands we pass over or under. Explicitly, the equality depicted above reads:
$$
(q^{-1} - q)(12) \cdot R_{i2} \left(\frac {z_i}{y_1} \right) R_{1j}^{-1} \left(\frac {y_2}{z_j} \right) =  R_{i1} \left(\frac {z_i}{y_1} \right) R_{2j}^{-1} \left(\frac {y_2}{z_j} \right) \cdot (q^{-1} - q)(12)
$$
which is a true identity. Finally, we note that due to formula \eqref{eqn:unitary r}, we can always change a crossing in a braid, at the cost of multiplying with the function \eqref{eqn:def f}:

\begin{figure}[ht]    
\centering
\includegraphics[scale=0.35]{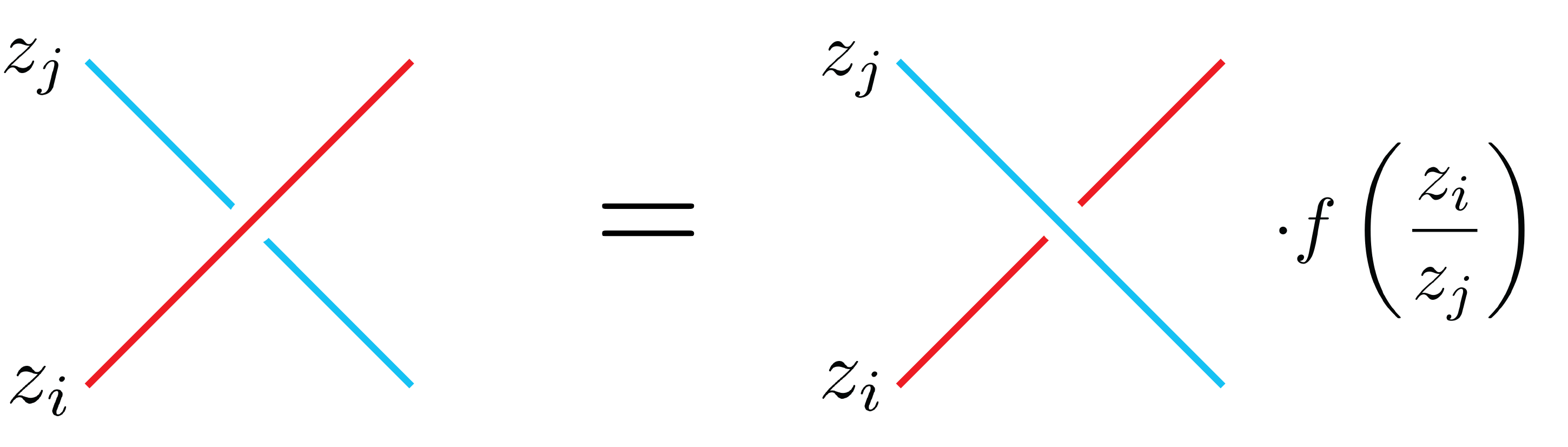}
\caption{Changing a crossing}
\end{figure}

\subsection{} The following is proved just like Proposition \ref{prop:shuf 1}, so we leave it as an exercise. \\

\begin{proposition}
\label{prop:shuf aff 1}
	
Let $A = A_{1...k}(z_1,...,z_k)$, $B = B_{1...l}(z_1,...,z_l)$. The assignment:
\begin{equation}
\label{eqn:shuf prod aff}
A * B = \sum^{a_1<...<a_k, \ b_1<...<b_l}_{\{1,...,k+l\} = \{a_1,...,a_k\} \sqcup \{b_1,...,b_l\}} \left[ \prod_{i=k}^{1} \prod_{j=1}^l \underbrace{R_{a_ib_j} \left( \frac {z_{a_i}}{z_{b_j}} \right)}_{\text{only if }a_i < b_j} \right]
\end{equation}
$$
A_{a_1...a_k}(z_{a_1},...,z_{a_k}) \left[ \prod_{i=1}^k \prod_{j=l}^{1} \tR_{a_ib_j}  \left( \frac {z_{a_i}}{z_{b_j}} \right) \right] B_{b_1...b_l}(z_{b_1},...,z_{b_l})  \left[  \prod_{i=k}^{1} \prod_{j=1}^l \underbrace{R_{a_ib_j} \left( \frac {z_{a_i}}{z_{b_j}} \right)}_{\text{only if }a_i > b_j} \right] 
$$
yields an associative algebra structure on the vector space:
$$
\bigoplus_{k=0}^\infty \eEnd_{\fff}(V^{\otimes k})(z_1,...,z_k)
$$
with unit $1 \in \eEnd_{\fff}(V^{\otimes 0})$. We call \eqref{eqn:shuf prod aff} the ``shuffle product". \\
	
\end{proposition}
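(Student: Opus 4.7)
The plan is to mirror the proof of Proposition \ref{prop:shuf 1}, since the only new feature here is that each strand of a braid carries a spectral parameter $z_i$, which travels along its strand without change (Subsection \ref{sub:braids aff}). The braid-theoretic interpretation of \eqref{eqn:shuf prod aff} is the direct analogue of Figure 5: the summand indexed by $\{a_1<\dots<a_k\}\sqcup\{b_1<\dots<b_l\}$ is the braid consisting of an upper block of $R$-crossings (between $a$- and $b$-strands with $a_i<b_j$), then the box $A$ acting on the $a$-strands, then a block of $\tR$-crossings between every $a$-strand and every $b$-strand, then the box $B$ acting on the $b$-strands, then a lower block of $R$-crossings (for $a_i>b_j$). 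The variable attached to a strand reaching position $c$ on either side of the product is always $z_c$, and each crossing between strands $i$ and $j$ is therefore automatically decorated with argument $z_i/z_j$.

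First, I would expand $(A*B)*C$ and $A*(B*C)$ by iterating \eqref{eqn:shuf prod aff}. Each becomes a sum over ordered partitions $\{1,\dots,k+l+m\} = \{a_1<\dots<a_k\}\sqcup\{b_1<\dots<b_l\}\sqcup\{c_1<\dots<c_m\}$, and the combinatorial type of summand is the same in both: three boxes $A$, $B$, $C$ acting on their respective strands, separated by appropriate blocks of $R$- and $\tR$-crossings. The difference between the two iterated products lies only in the relative positions of the crossings: in $(A*B)*C$ the $AB$-crossings are pushed to the interior before the $C$-strands are shuffled in, while in $A*(B*C)$ the $BC$-crossings form the inner block.

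Second, I would show these two braids are equal by a succession of Reidemeister III moves. Pulling an $R$-crossing past a composed $\tR$-crossing (or vice versa) is exactly a move of the shape of Figure 2, 3 or 4, which corresponds respectively to \eqref{eqn:ybe aff}, \eqref{eqn:quasi-ybe 1 aff}, \eqref{eqn:quasi-ybe 2 aff}. Since strands carry fixed variables, the spectral parameters on the three strands involved in any such move are automatically $(z_i,z_j,z_k)$ in the order required by these relations, so each move applies verbatim. The boxes $A$, $B$, $C$ are transparent to sliding $R$- and $\tR$-crossings on strand pairs that do not touch them, so the sequence of Reidemeister moves carries one iterated product into the other summand-by-summand. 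The unitarity \eqref{eqn:unitary r} (Figure 13) is only needed to interpret ambiguities in reading a crossing as $R_{ab}(z_a/z_b)$ versus $R_{ba}(z_b/z_a)^{-1}$; since these differ by the scalar $f(z_a/z_b)$, the ambiguity cancels when comparing the two iterated products.

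The only real obstacle is bookkeeping: one must check that each time a Reidemeister III move is applied in passing from the braid for $(A*B)*C$ to that for $A*(B*C)$, the three strands involved carry spectral parameters matching the arguments in the corresponding Yang--Baxter identity, and that in each step the box $A$, $B$, or $C$ is not itself crossed by the strand being moved. Both are immediate from our conventions: variables are strand-labels, and in formula \eqref{eqn:shuf prod aff} every $R$- or $\tR$-factor in the outer blocks connects strands belonging to distinct boxes, so these factors can indeed be slid freely through the intermediate $\tR$-blocks by the three Yang--Baxter identities. This completes the associativity proof; the statement that $1\in\End_{\fff}(V^{\otimes 0})$ is a unit is immediate from \eqref{eqn:shuf prod aff} since the empty shuffle contributes only the identity crossing pattern.
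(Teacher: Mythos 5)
Your argument is correct and is essentially the paper's own: the paper proves Proposition \ref{prop:shuf aff 1} by declaring it "proved just like Proposition \ref{prop:shuf 1}", i.e.\ by expanding $(A*B)*C$ and $A*(B*C)$ as braids and matching them via Reidemeister III moves, now justified by the spectral-parameter Yang--Baxter relations \eqref{eqn:ybe aff}, \eqref{eqn:quasi-ybe 1 aff}, \eqref{eqn:quasi-ybe 2 aff}, exactly as you do. No gaps to report.
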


\begin{proposition}
\label{prop:shuf aff 2}
	
The shuffle product above preserves the vector space:
$$
\CA^+_{\emph{big}} \subset \bigoplus_{k=0}^\infty \eEnd_{\fff}(V^{\otimes k})(z_1,...,z_k)
$$
consisting of tensors $X = X_{1...k}(z_1,...,z_k)$ which simultaneously satisfy: \\

\begin{itemize}[leftmargin=*]

\item $X = \displaystyle \frac {x(z_1,...,z_k)}{\prod_{1\leq i \neq j \leq k} (z_i - z_j \oq^2)}$ for some $x \in \eEnd_{\fff}(V^{\otimes k})[z_1^{\pm 1},...,z_k^{\pm 1}]$. \\ \\

\item $X$ is symmetric, in the sense that: 
\begin{equation}
\label{eqn:symm aff}
X = R_\sigma \cdot (\sigma X \sigma^{-1}) \cdot R_{\sigma}^{-1}
\end{equation}
$\forall \sigma \in S(k)$, where $R_\sigma = R_\sigma(z_1,...,z_k)$ is any braid lift of the permutation $\sigma$, and:
$$
\sigma X \sigma^{-1} = X_{\sigma(1)...\sigma(k)}(z_{\sigma(1)},...,z_{\sigma(k)})
$$

\end{itemize}

\end{proposition}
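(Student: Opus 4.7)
The plan is to adapt the proof of Proposition \ref{prop:shuf 2} to the setting with spectral parameters; the symmetry property translates essentially verbatim, while the pole/denominator condition is the genuinely new content and is where I expect the main obstacle.

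My first step is the spectral analogue of formula \eqref{eqn:shuf symm 2}, namely
\[
A*B \;=\; \frac{1}{k!\,l!}\cdot\esym\,\Phi,
\]
where $\Phi(z_1,\dots,z_{k+l})$ is the spectral version of \eqref{eqn:phi}, obtained by inserting the arguments $z_i/z_j$ into each $R$- and $\tR$-factor, and where the symmetrization runs over the full group $S(k+l)$ acting on tensor indices and spectral parameters jointly as in \eqref{eqn:symm aff}. The derivation parallels that of Proposition \ref{prop:shuf 2}: each summand of \eqref{eqn:shuf prod aff} indexed by a $(k,l)$-shuffle $\mu$ equals $R_\mu(\mu\Phi\mu^{-1})R_\mu^{-1}$, and the symmetry hypothesis on $A$ and $B$, together with the spectral Yang--Baxter relation \eqref{eqn:ybe aff} and its mixed cousins \eqref{eqn:quasi-ybe 1 aff}--\eqref{eqn:quasi-ybe 2 aff}, makes $\Phi$ invariant under the $S(k)\times S(l)$-action $\tau\mapsto R_\tau(\tau(\cdot)\tau^{-1})R_\tau^{-1}$. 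Writing each $\sigma\in S(k+l)$ uniquely as $\mu\tau$ then promotes the shuffle sum to $(k!\,l!)^{-1}\cdot\esym\,\Phi$, which is manifestly symmetric in the sense of \eqref{eqn:symm aff}. This handles the second bullet of the Proposition.

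For the denominator condition, I inspect the poles of $\Phi$: it has at most simple poles on (i) the hyperplanes $\{z_i=z_j\oq^2\}$ with both indices in $\{1,\dots,k\}$ (from $A$), (ii) the analogous locus inside $\{k+1,\dots,k+l\}$ (from $B$), (iii) $\{z_j=z_i\oq^2\}$ with $i\leq k<j$ (from the $\tR$-factors), and (iv) $\{z_i=z_j\}$ with $i\leq k<j$ (from the $R$-factors). Families (i)--(iii) all lie on the allowed locus $\bigcup_{a\neq b}\{z_a=z_b\oq^2\}$, and after symmetrization their $S(k+l)$-orbits fill out exactly this locus with simple poles, producing the required denominator $\prod_{a\neq b}(z_a-z_b\oq^2)$.

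The main obstacle is showing that the spurious family (iv), the poles at $z_a=z_b$, actually cancels in $\esym\,\Phi$. The approach is to pair each $\sigma\in S(k+l)$ in the symmetrization with $\sigma\cdot(ab)$, compute the residues $\textrm{Res}_{z_a=z_b}$ of the paired summands using the key identity $\textrm{Res}_{x=1}R(x)=(q-q^{-1})\cdot(12)$ (the bare transposition operator, as depicted in Figure~12), and show that the transposition $(ab)$ produced by the residue, once slid through the remaining $R$-factors via the Yang--Baxter relation \eqref{eqn:ybe aff} and almost-unitarity \eqref{eqn:unitary r}, cancels against the analogous contribution of the paired permutation. Once these spurious poles are removed, what remains is a Laurent polynomial in $z_1,\dots,z_{k+l}$ divided by $\prod_{a\neq b}(z_a-z_b\oq^2)$, as required by the first bullet.
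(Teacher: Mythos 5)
Your handling of the symmetry condition is exactly the paper's (the spectral analogue of \eqref{eqn:shuf symm 2} does carry over word for word), but on the pole condition the paper takes a genuinely different, and slicker, route than your residue-pairing. It never pairs summands: having established that $A*B$ satisfies \eqref{eqn:symm aff} and has at most simple poles along $z_a=z_b$, it proves the standalone fact that \emph{any} symmetric tensor with at most a simple pole at $z_1=z_k$ is automatically regular there. The argument is short: set $Y=(z_1-z_k)X$, write \eqref{eqn:symm aff} for $\sigma=(1k)$ -- the factor $(z_1-z_k)$ contributes the sign in \eqref{eqn:sam} -- choose the braid lift of $(1k)$ whose only crossing with a pole at $z_1=z_k$ is $R_{1k}(z_1/z_k)$, and take the residue there using $\Res_{x=1}R(x)=(q-q^{-1})\cdot(1k)$; after sliding the transposition to one end and cancelling the remaining $R$--factors against their inverses, one is left with $Y(x,\dots,x)=-Y(x,\dots,x)$, so the residue vanishes. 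In other words, the paper derives the pole condition as a formal consequence of the symmetry condition, while you re-prove the same sign cancellation term by term; both work, but the paper's version avoids all combinatorial bookkeeping and isolates a reusable statement about symmetric tensors.

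If you do pursue your pairing argument, one caveat: carry it out on the original sum \eqref{eqn:shuf prod aff} rather than on $\frac{1}{k!\,l!}\sym\,\Phi$. The individual summands $R_\sigma(\sigma\Phi\sigma^{-1})R_\sigma^{-1}$ contain the conjugators $R_\sigma^{\pm 1}$, which contribute their own poles and zeros at $z_a=z_b$ as well as spurious poles at $z_a=z_b q^{\pm 2}$ (since $R(x)^{-1}=R_{21}(1/x)/f(x)$ by \eqref{eqn:unitary r}); none of these appear in your four-family list, and the pairing $\sigma\leftrightarrow\sigma\cdot(ab)$ would have to account for them. On the unsymmetrized formula the correct pairing is between the two shuffles that differ only in whether the colliding indices $a,b$ sit in the $A$--block or the $B$--block: there each member of the pair has exactly one pole at $z_a=z_b$ (from $R_{ab}(z_a/z_b)$, respectively $R_{ba}(z_b/z_a)$), the residues differ by a sign, and the transposition produced by the residue is transported through the remaining factors by the dot-sliding move of Figure 12, so the cancellation you describe does go through.
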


\begin{proof} The fact that the shuffle product \eqref{eqn:shuf prod aff} preserves the vector space of symmetric tensors is proved word-for-word as in Proposition \ref{prop:shuf 2}. Therefore, it remains to prove that if $A$ and $B$ have no poles other than simple poles at $z_i = z_j \oq^2$, then $A * B$ has the same property. Besides simple poles at $z_i = z_j \oq^2$ (which we allow), the $R$-matrices in the right-hand side of \eqref{eqn:shuf prod aff} only produce simple poles at $z_i = z_j$, so it remains to show that the residues at the latter poles vanish. \\
	
\noindent We will show that any symmetric tensor $X = X_{1...k}(z_1,...,z_k)$ with at most a simple pole at $z_1 = z_k$ is actually regular there. Without loss of generality, we will prove the vanishing of the residue at $z_1 = z_k$. Since only the indices/variables 1 and $k$ will play an important role in the following, we will use ellipses ... for the indices/variables $2,...,k-1$. Let us consider \eqref{eqn:symm aff} in the particular case $\sigma = (1k)$:
\begin{equation}
\label{eqn:sam}
-Y_{12...k-1,k}(z_1,z_2,...,z_{k-1},z_k) \cdot R_\sigma = R_\sigma \cdot Y_{k2...k-1,1}(z_k,z_2,...,z_{k-1},z_1)
\end{equation}
where $Y(z_1,...,z_k) = X(z_1,...,z_k) \cdot (z_1-z_k)$ is regular at $z_1 = z_k$. We may choose:
$$
R_\sigma = R_{12} \left(\frac {z_1}{z_2} \right) ... R_{1,k-1} \left(\frac {z_1}{z_{k-1}} \right)R_{1k} \left(\frac {z_1}{z_k} \right) R_{k,k-1}^{-1} \left(\frac {z_k}{z_{k-1}} \right) ...  R_{k2}^{-1} \left(\frac {z_k}{z_2} \right)
$$
Since the residue of $R_{1k} \left( \frac {z_1}{z_k} \right)$ at $z_1 = z_k$ is $(q-q^{-1})\cdot (1k)$, the residue of \eqref{eqn:sam} is:
$$
-Y_{1...k}(x,...,x) \cdot R_{12}\left(\frac x{z_2}\right) ...  R_{1,k-1}\left(\frac x{z_{k-1}}\right) \cdot (1k) \cdot R_{k,k-1}^{-1} \left( \frac x{z_{k-1}} \right) ... R_{k2}^{-1} \left( \frac x{z_2} \right) = 
$$
$$
= R_{12}\left(\frac x{z_2}\right) ...  R_{1,k-1}\left(\frac x{z_{k-1}}\right) \cdot (1k) \cdot R_{k,k-1}^{-1} \left( \frac x{z_{k-1}} \right) ... R_{k2}^{-1} \left( \frac x{z_2} \right) \cdot Y_{k...1}(x,...,x)
$$
(above, we wrote $x = z_1 = z_k$ and canceled an overall scalar factor of $q-q^{-1}$). We may move the permutation $(1k)$ to the very right of the equations above, obtaining:
$$
-Y_{1...k}(x,...,x) \cdot R_{12}\left(\frac x{z_2}\right) ...  R_{1,k-1}\left(\frac x{z_{k-1}}\right) R_{1,k-1}^{-1} \left( \frac x{z_{k-1}} \right) ... R_{12}^{-1} \left( \frac x{z_2} \right) \cdot (1k) = 
$$
$$
= R_{12}\left(\frac x{z_2}\right) ...  R_{1,k-1}\left(\frac x{z_{k-1}}\right) R_{1,k-1}^{-1} \left( \frac x{z_{k-1}} \right) ... R_{12}^{-1} \left( \frac x{z_2} \right) \cdot Y_{1...k}(x,...,x) \cdot (1k)
$$
After canceling all the $R$ factors and the permutation operators $(1k)$, we are left with $Y_{1...k}(x,...,x) = 0$, which implies that $X$ was regular at $z_1 = z_k$ to begin with. 
	
\end{proof}

\subsection{} For a rational function $X(z_1,...,z_k)$ with at most simple poles, we let:
\begin{equation}
\label{eqn:def residue}
\underset{\{z_1 = y, z_2 = y \oq^2,..., z_i = y \oq^{2(i-1)}\}}{\Res} X
\end{equation}
be the rational function in $y,z_{i+1},...,z_k$ obtained by successively taking the residue at $z_2 = z_1 \oq^2$, then at $z_3 = z_1 \oq^4$,..., then at $z_i = z_1 \oq^{2(i-1)}$ and finally relabeling the variable $z_1 \leadsto y$. More generally, for any collection of natural numbers:
$$
1 = c_1 < c_2 < ... < c_u < c_{u+1} = k + 1
$$
we will write:
$$
\underset{\{z_{c_s} = y_s, z_{c_s+1} = y_s \oq^2, ..., z_{c_{s+1}-1} = y_s \oq^{2(c_{s+1}-c_s - 1)} \}_{\forall s \in \{1,...,u\}}}{\Res} X
$$
for the rational function in $y_1,..,y_u$ obtained by applying the iterated residue \eqref{eqn:def residue} construction for the collections of variables indexed by $\{c_1,c_1+1,...,c_2-1\}$, ..., $\{c_u,c_u+1,...,c_{u+1}-1\}$. Moreover, we write:
$$
\prod_{i=1}^k x_i = \prod_{1 \leq i \leq k} x_i = x_1 x_2 ... x_k \qquad \qquad \prod_{i=k}^1 x_i = \prod_{k \geq i \geq 1} x_i = x_k x_{k-1} ... x_1 
$$
for any collection of potentially non-commuting symbols $x_1,...,x_k$. \\ 

\begin{definition}
\label{def:shuf aff}
	
Let $\CA^+ \subset \CA^+_\ebig$ be the subspace of tensors $X = X_{1...k}(z_1,...,z_k)$ such that for any composition $k = \lambda_1+...+\lambda_u$ we have (let $\lambda_s = c_{s+1}-c_s$, $\forall s$):
\begin{equation}
\label{eqn:wheel}
\underset{\{z_{c_s} = y_s, z_{c_s+1} = y_s \oq^2, ..., z_{c_{s+1}-1} = y_s \oq^{2(\lambda_s - 1)} \}_{\forall s \in \{1,...,u\}}}{\eRes} X  = 
\end{equation}
$$
= (q^{-1} - q)^{k-u} \prod^{\text{unordered pairs } (s,d) \neq (t,e)}_{\text{with } 1 \leq s,t \leq u, 1 \leq d < \lambda_s, 1 \leq e < \lambda_t} f \left( \frac {y_s \oq^{2d}}{y_t \oq^{2e}} \right) 
$$
$$
\left[ \prod_{u \geq s \geq 1} \prod_{s \leq t \leq u} \prod_{1 \leq e < \lambda_t}^{(t,e) \neq (s,0)} R_{c_s,c_t+e} \left( \frac {y_s}{y_t \oq^{2e}}\right) \right] \cdot X^{(\lambda_1,...,\lambda_u)}_{c_1...c_u}(y_{1},...,y_{u}) \cdot
$$
$$
\left[ \prod_{1 \leq s \leq u} \prod_{u \geq t > s} \prod_{\lambda_t > e \geq 1} R_{c_t + e, c_s} \left( \frac {y_t \oq^{2e}}{y_s \oq^{2\lambda_s}} \right) \right] \left[ \prod_{s=1}^u \begin{pmatrix}
c_s  & ... & c_{s+1} - 2 & c_{s+1}-1  \\
c_s + 1 & ... & c_{s+1} - 1 & c_s
\end{pmatrix} \right]
$$
for some tensor $X^{(\lambda_1,...,\lambda_u)} \in \eEnd(V^{\otimes u})(y_1,...,y_u)$. \\

\end{definition}

\noindent Pictorially, the RHS of \eqref{eqn:wheel} may be represented as follows:

\begin{figure}[ht]    
\centering
\includegraphics[scale=0.3]{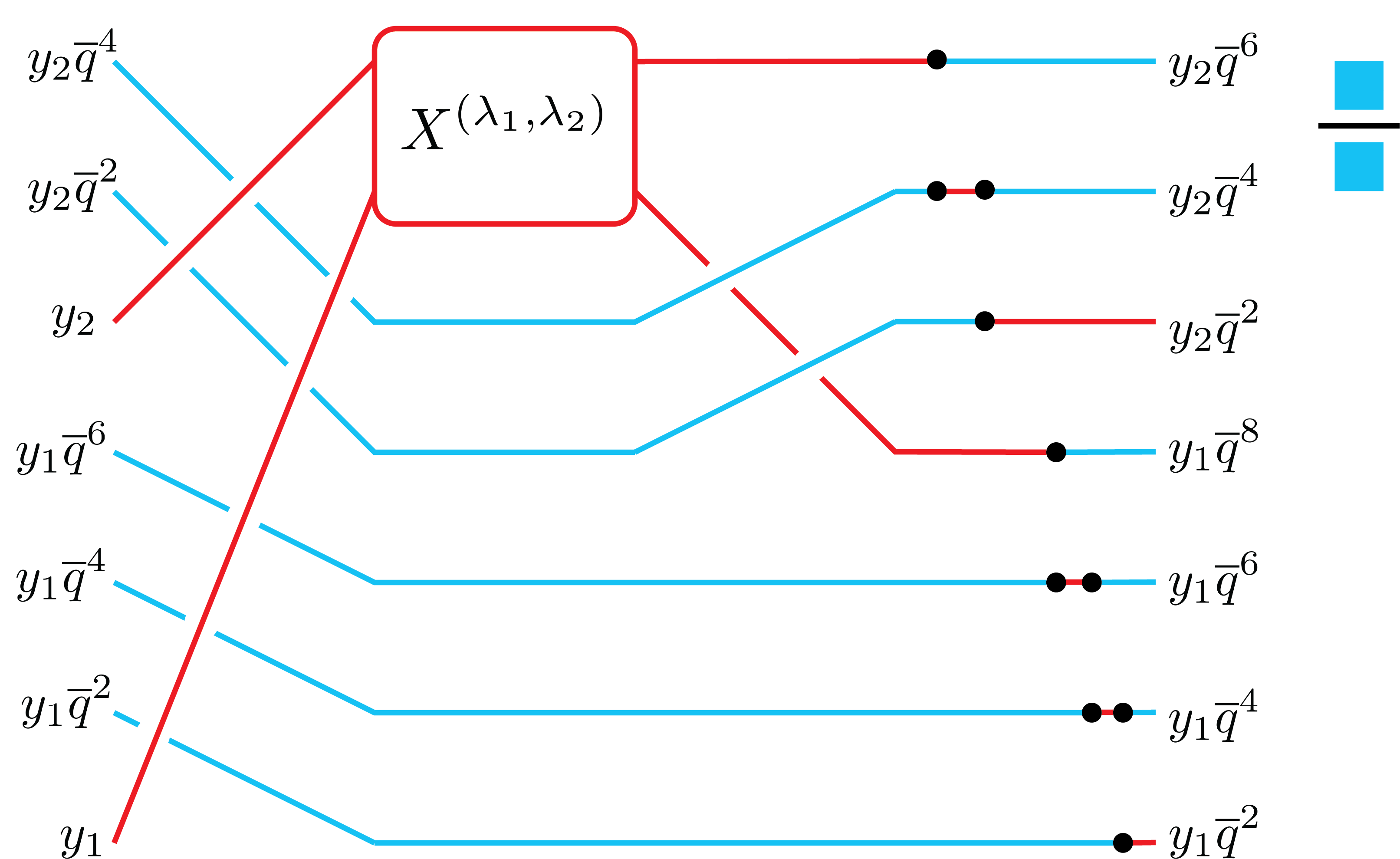}
\caption{The RHS of \eqref{eqn:wheel} for $u=2$, $\lambda_1 = 4$, $\lambda_2 = 3$}
\end{figure}

\noindent Note the symbol ``blue over blue" to the right of Figure 14. Given two colors $\gamma_1$ and $\gamma_2$, placing $\gamma_1$ over $\gamma_2$ is a prescription that indicates that the braid in question be multiplied by the product of $f(y/y')$, where $y$ (respectively $y'$) goes over all variables on strands whose left endpoint has color $\gamma_1$ (respectively $\gamma_2$), and the leftmost endpoint with variable $y$ is above the leftmost endpoint with variable $y'$. This is meant to account for the product of $f$'s on the second line of \eqref{eqn:wheel}. \\

\begin{remark} When $n=1$, we may identify $1 \times 1$ matrices with scalars, and so elements of the shuffle algebra are merely symmetric rational functions in $z_1,\dots,z_k$ with at most simple poles at $z_i = z_j \oq^2$. The shuffle product \eqref{eqn:shuf prod aff} takes the form
$$
A * B = \sum^{a_1<...<a_k, \ b_1<...<b_l}_{\{1,...,k+l\} = \{a_1,...,a_k\} \sqcup \{b_1,...,b_l\}}  A(z_{a_1},...,z_{a_k})  B(z_{b_1},...,z_{b_l}) \prod_{i=1}^k \prod_{j=1}^{l} \zeta \left( \frac {z_{a_i}}{z_{b_j}} \right)
$$
where
$$
\zeta(x) = \frac {(xq\oq^2-q^{-1})(xq^{-1}-q)}{(x\oq^2-1)(x-1)}
$$
Meanwhile, the wheel conditions \eqref{eqn:wheel} for the composition $1+...+1+2$ impose the following restriction on a rational function $X(z_1,\dots,z_k)$:
$$
\underset{z_k = z_{k-1} \oq^2}{\eRes} X  = \prod_{i=1}^{k-2} \Big[ (qz_{k-1} - z_i\oq^2q^{-1})(qz_i - z_{k-1}q^{-1}) \Big] \cdot \Big(\text{function of }z_1,...,z_{k-1} \Big)
$$
where the function in the round brackets can only have simple poles at $z_i-z_j\oq^{\pm 2}$ and $z_i - z_{k-1}\oq^{-2,0,2,4}$ for all $i < j \in \{1,\dots,k-1\}$ (see Proposition \ref{prop:symmetric}). Therefore, the residue above vanishes when we set $z_i = z_{k-1} q^{-2}$ or $z_i = z_{k-1} q^{2} \oq^{2}$, which in our notations and normalization is precisely the wheel condition of \cite{FHHSY}. \\

\end{remark}

\begin{proposition}
\label{prop:wheel preserved}
	
The vector subspace $\CA^+$ of Definition \ref{def:shuf aff} is preserved by the shuffle product (and will henceforth be called the ``shuffle algebra"). \\
	
\end{proposition}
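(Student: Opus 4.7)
The fact that $A*B$ lies in $\CA^+_{\text{big}}$ is already supplied by Proposition \ref{prop:shuf aff 2}, so the substance of the claim is that $A*B$ satisfies the wheel condition \eqref{eqn:wheel} for every composition $k+l=\lambda_1+\ldots+\lambda_u$. The plan is to compute the iterated residue of the sum \eqref{eqn:shuf prod aff} directly, shuffle by shuffle, and to match the outcome against the right-hand side of \eqref{eqn:wheel}.

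I first identify the shuffles that contribute non-zero residues. Fix a $(k,l)$-shuffle $\mu=(\{a_i\},\{b_j\})$ and a specialization block $[c_s,c_{s+1})$ of length $\lambda_s$. Within this block the $a$-indices become a specialization of the tensor $A$ at points of the form $y_s\oq^{2d_{i_1}},\ldots,y_s\oq^{2d_{i_t}}$ with $d_{i_1}<\ldots<d_{i_t}$. The wheel condition on $A$ produces a residue only when these exponents are consecutive, and the same holds for $B$; moreover, a direct inspection of the poles of $R(x)$ at $x=1$ and of $\tR(x)$ at $x=\oq^{-2}$ shows that cross-poles between the $A$-part and $B$-part of a block can occur only at the boundary of a consecutive-interval partition. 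Consequently a shuffle contributes a non-vanishing residue only when, inside each block $[c_s,c_{s+1})$, the $a$-indices form either a prefix and the $b$-indices the complementary suffix (``prefix-$a$''), or a suffix and the $b$-indices the complementary prefix (``prefix-$b$''). In the prefix-$a$ case with both parts non-empty, the boundary $\tR_{a_{p_s},b_{?}}(\oq^{-2})$ supplies a residue equal to $(q^{-1}-q)\cdot(a_{p_s},b_?)$ as in \eqref{eqn:blob}; in the prefix-$b$ case no cross-pole occurs and the boundary $R$-factors are evaluated at their regular values.

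For each contributing shuffle one now writes the residue explicitly, applying the wheel condition to $A$ on its block-specialized variables (with sub-composition $\mu_1,\ldots,\mu_u$, $\mu_s\in\{0,\ldots,\lambda_s\}$) and to $B$ (with sub-composition $\nu_1,\ldots,\nu_u$, $\nu_s=\lambda_s-\mu_s$). This collapses each block to a single ``thick'' strand carrying the variable $y_s$ and replaces $A$ and $B$ by $A^{(\mu_1,\ldots,\mu_u)}$ and $B^{(\nu_1,\ldots,\nu_u)}$. Summing the resulting braid diagrams over all compositions $(\mu_s,\nu_s=\lambda_s-\mu_s)$ and over both split types (prefix-$a$ and prefix-$b$) in each block gives the residue of $A*B$ as an ``effective shuffle product'' of the collapsed tensors:
\[
(A*B)^{(\lambda_1,\ldots,\lambda_u)}_{1\ldots u}(y_1,\ldots,y_u)
=\sum_{\mu+\nu=\lambda}\;A^{(\mu)}*_{\!R,\tR}\!B^{(\nu)}(y_1,\ldots,y_u),
\]
where $*_{R,\tR}$ is the shuffle product on $u$ strands carried by the same $R$ and $\tR$. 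The $f$-prefactors in \eqref{eqn:wheel} are produced by the unitarity identity \eqref{eqn:unitary r} each time a pair of crossings between different thick strands is straightened.

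The main obstacle in carrying this out is diagrammatic rather than conceptual: for a given split one must verify that the tangled product of $R$-factors coming from \eqref{eqn:shuf prod aff}, together with the factors coming from the wheel expansions of $A$ and $B$ and from the boundary residues, can be reorganized into exactly the three bracketed $R$-products appearing in \eqref{eqn:wheel}, all the while keeping track of the distinction between strands of color $c_s$ (above) and color $c_t$ (below) with $s<t$. This reorganization is accomplished by repeated application of the Yang--Baxter relation \eqref{eqn:ybe aff} and its hybrid forms \eqref{eqn:quasi-ybe 1 aff}--\eqref{eqn:quasi-ybe 2 aff}, i.e.\ by the Reidemeister~III moves of Figures 2--4, pulling all thick/thin crossings to the far left and far right of each diagram. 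Once this braid-straightening is done for one split-type in one block, the computation for every other split-type and every other block follows by the same moves, and assembling the pieces yields \eqref{eqn:wheel} for $A*B$ with $(A*B)^{(\lambda_1,\ldots,\lambda_u)}$ as above.
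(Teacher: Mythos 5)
Your overall strategy coincides with the paper's: fix a shuffle in \eqref{eqn:shuf prod aff}, take the iterated residue block by block, invoke the wheel conditions of $A$ and $B$ together with the boundary residue \eqref{eqn:blob} of $\tR$, and then straighten the resulting braid using \eqref{eqn:ybe aff}, \eqref{eqn:quasi-ybe 1 aff}--\eqref{eqn:quasi-ybe 2 aff} and \eqref{eqn:unitary r} to produce the $f$--prefactors. However, there is a concrete error in your identification of the contributing shuffles. In a fixed summand of \eqref{eqn:shuf prod aff}, the only pole loci mixing an $a$--variable with a $b$--variable are $z_{a_i} = z_{b_j}$ (from the two outer $R$--brackets) and $z_{b_j} = z_{a_i}\oq^2$ (from the middle $\tR$--bracket); there is no factor with a pole along $z_{a_i} = z_{b_j}\oq^2$. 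Consequently, in your ``prefix-$b$'' configuration the integrand is \emph{regular} at the specialization step where the first $a$--index of a block immediately follows the last $b$--index, and the residue at a regular point is zero: that summand contributes nothing. Saying that ``no cross-pole occurs and the boundary $R$--factors are evaluated at their regular values'' misreads the iterated-residue prescription \eqref{eqn:def residue} -- absence of the pole kills the term rather than leaving a finite contribution. Only the prefix-$a$/suffix-$b$ splits survive (with $r_s$ allowed at either extreme of the block), which is exactly the dichotomy the paper's proof isolates.

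This error propagates into your closing display: besides the spurious prefix-$b$ terms, the formula also omits the shifts of the $B$--arguments, since inside block $s$ the $B$--variables are specialized at $y_s\oq^{2\mu_s},\dots,y_s\oq^{2(\lambda_s-1)}$ rather than at $y_s$; compare the $u=1$ case \eqref{eqn:quasi 2}, where $(A*B)^{(k+l)}(y) = A^{(k)}(y)\,B^{(l)}(y\oq^{2k})$ is a single shifted product, not a sum over compositions of unshifted ones. Fortunately the proposition only asks that each surviving residue have the shape of the right-hand side of \eqref{eqn:wheel} for \emph{some} $X^{(\lambda_1,\dots,\lambda_u)}$, so no closed formula is required; but the braid-straightening you defer to ``the same moves'' -- sliding the residue permutations to the far right, cancelling the doubled crossings at the cost of $f\left(y/y'\right)$ factors, and checking that the terminal permutation agrees with the product of cycles in \eqref{eqn:wheel} -- is precisely where the paper's proof does its work, and it should be carried out at least once in detail rather than asserted.
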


\begin{proof} Assume that two matrix-valued rational functions $A$ and $B$ in $k$ and $l$ variables, respectively, satisfy the wheel condition \eqref{eqn:wheel}. To prove that their shuffle product $A*B$ also satisfies the wheel condition, we must take the iterated residue of the right-hand side of \eqref{eqn:shuf prod aff} at:
$$
z_{c_s} = y_s, z_{c_s+1} = y_s \oq^2 ,..., z_{c_{s+1}-1} = y_s \oq^{2(\lambda_s-1)}
$$
for any composition $k+l = \lambda_1+...+\lambda_u$. We will show that, at such a specialization, each summand in the RHS of \eqref{eqn:shuf prod aff} has the form predicated in the RHS of \eqref{eqn:wheel}, so we henceforth fix a shuffle $a_1<...<a_k$, $b_1<...<b_l$. Because $\tR(z)$ has a simple pole at $z = \oq^{-2}$, the only way such a shuffle can have a non-zero residue is if:
\begin{align*} 
&\{a_1,...,a_k\} = \bigsqcup_{s=1}^u \{c_s,c_s+1,...,r_s-2,r_s-1\} \\
&\{b_1,...,b_l\} = \bigsqcup_{s=1}^u \{r_s,r_s+1,...,c_{s+1}-2, c_{s+1}-1\}
\end{align*}
for some choice of $r_s \in \{c_s,...,c_{s+1}-1\}$ for all $s \in \{1,...,u\}$. We will indicate this choice by using the following colors for the strands of our braids:
\begin{align*}
&{\color{red} \text{red }} \text{for } c_s, \qquad \ {\color{cyan} \text{blue }} \text{for } c_s+1,...,r_s-1 \\
&{\color{Plum} \text{purple }} \text{for } r_s, \quad {\color{ForestGreen} \text{green }} \text{for } r_s+1,...,c_{s+1}-1
\end{align*}
With this in mind, the summand of \eqref{eqn:shuf prod aff} corresponding to our chosen shuffle is represented by the following braid (to keep the pictures reasonable, we will only depict the case $u=2$, but the modifications that lead to the general case are straightforward; although we only depict a single blue and green strand in each of the $u$ groups, the reader may obtain the general case by replacing each of them with any number of parallel blue and green strands, respectively):

\begin{figure}[ht]    
\centering
\includegraphics[scale=0.16]{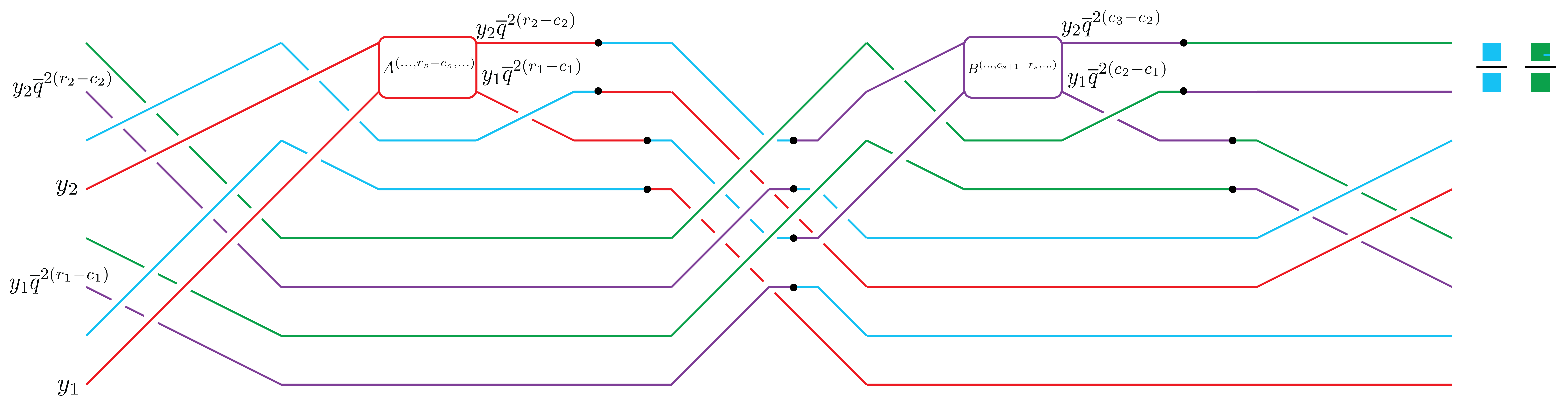}
\caption{}
\end{figure}

\noindent The black dots in the middle of the braid appear because the variables on the braids in question are set equal to each other in the iterated residue. By sliding the black dots as far to the right as possible (which is allowed, due to Figure 12), we obtain:
	
\begin{figure}[ht]    
\centering
\includegraphics[scale=0.16]{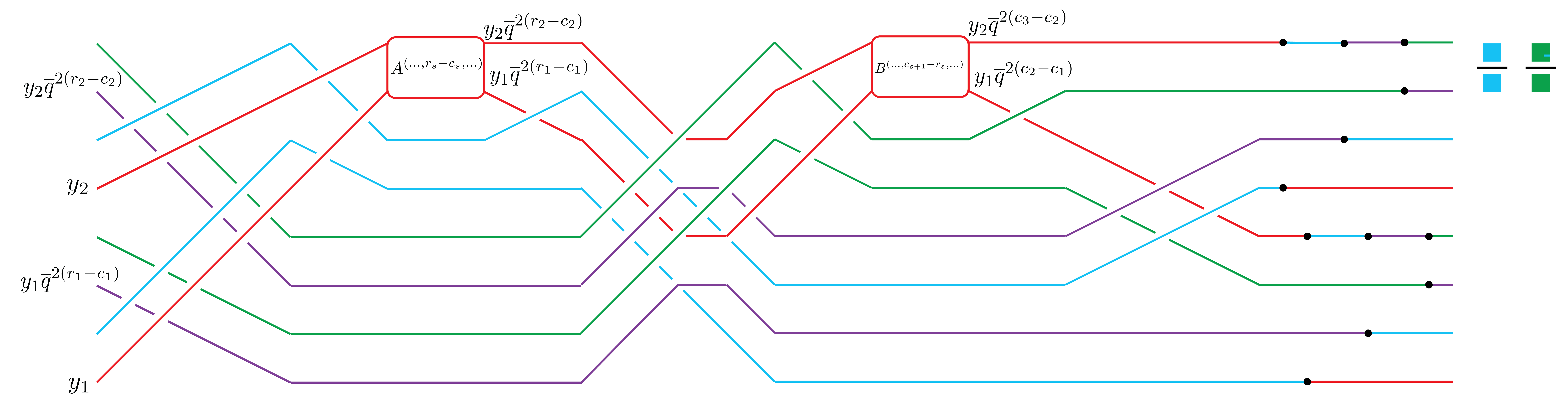}
\caption{}
\end{figure}

\noindent One readily notices that certain pairs of braids are twisted twice around each other, and these twists can be canceled up to a factor of $f(y/y')$ (due to the identity in Figure 13), where $y$ and $y'$ are the variables on the braids in question. Keeping in mind that the variables on the red strands are modified to the right of the red boxes, this yields the braid in Figure 17. Note that the black dots on the right side of Figure 17 yield the same permutation as the black dots on the right side of the braid in Figure 14, due to the following identity:
$$
\begin{pmatrix} 1 & ... & k-1 & k \\ 2 & ... & k & 1 \end{pmatrix} = (12)(23)...(k-1,k) = (1k)(1,k-1)...(12)
$$
in the symmetric group $S(k)$. Therefore, the braid in Figure 17 is precisely of the form predicated in the right-hand side of \eqref{eqn:wheel}, which concludes our proof. 

\begin{figure}
\centering
\includegraphics[scale=0.16]{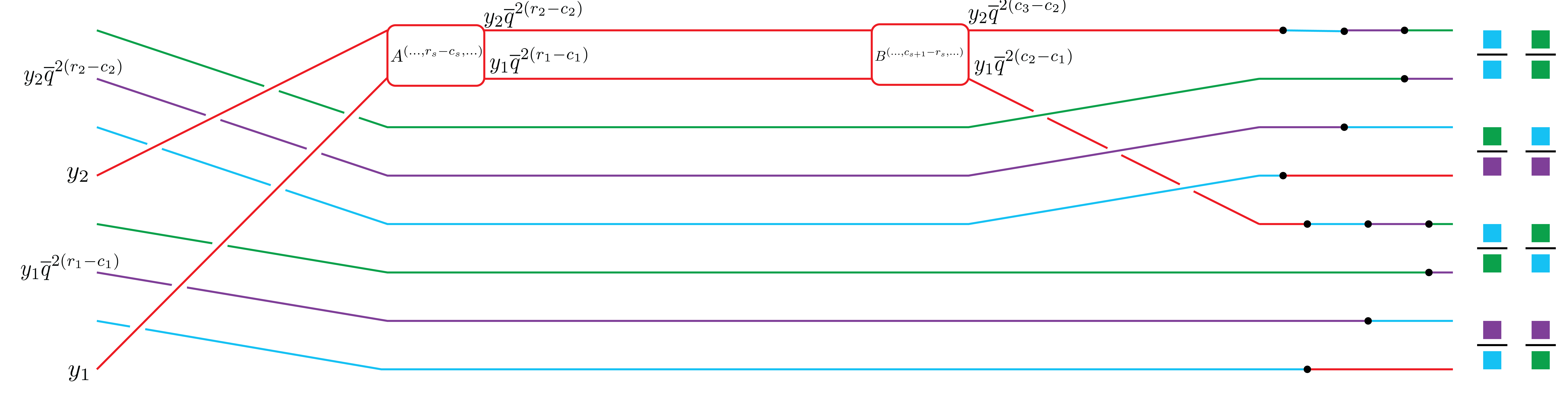}
\caption{}
\end{figure}

\end{proof} 

\begin{proposition}
\label{prop:symmetric}
	
For any $X \in \CA^+$ and any composition $k = \lambda_1 + ... + \lambda_u$, the tensor $Y = X^{(\lambda_1,...,\lambda_u)}$ that appears in \eqref{eqn:wheel} has at most simple poles at: 
\begin{equation}
\label{eqn:poles gen}
y_s \oq^{2d} - y_t \oq^{-2} \qquad \text{and} \qquad y_s \oq^{2d} - y_t \oq^{2\lambda_t}
\end{equation}
for all $1 \leq s < t \leq u$ and any $0 \leq d < \lambda_s$. Moreover, if $\lambda_s = \lambda_t$ then:
\begin{equation}
\label{eqn:symm y}
Y_{...,s,...t,...}(...,y_s,...,y_t,...) = R_{(st)} \cdot Y_{...,t,...s,...}(...,y_t,...,y_s,...) \cdot R_{(st)}^{-1}
\end{equation}
for any braid lift $R_{(st)} = R_{(st)}(y_1,...,y_u)$ of the transposition $(st)$ \footnote{Although we will not need it, formula \eqref{eqn:symm y} admits the following generalization for any $X \in \CA^+$ and any composition $\lambda = (\lambda_1,...,\lambda_u)$ of $k$:
$$
X^{\lambda}_{...,s,...t,...}(...,y_s,...,y_t,...) = R_{(st)} \cdot X^{\lambda'}_{...,t,...s,...}(...,y_t,...,y_s,...) \cdot R_{(st)}^{-1}
$$
where $\lambda'$ denotes the composition obtained from $\lambda$ by switching the $s$-th and the $t$-th entries.}. \\
	
\end{proposition}

\begin{proof} Let us first prove the statement about the poles of $Y$. In the course of this proof, all poles will be counted with multiplicities, in the sense that whenever we refer to a ``set of poles", the reader should assume this means ``multiset of poles". Because of the first bullet of Proposition \ref{prop:shuf aff 2}, which determines the allowable poles of $X \in \CA^+$, the left-hand side of \eqref{eqn:wheel} has a simple pole at:
\begin{equation}
\label{eqn:poles 1}
y_s \oq^{2d} - y_t \oq^{2e \pm 2} \quad \forall \ 1 \leq s < t \leq u, \ 0 \leq d < \lambda_s, \ 0 \leq e < \lambda_t 	
\end{equation}
On the other hand, the right-hand side of \eqref{eqn:wheel} has a double pole at:
\begin{equation}
\label{eqn:poles 2}
y_s \oq^{2d} - y_t \oq^{2e} \qquad \forall \ 1 \leq s < t \leq u, \ 1 \leq d < \lambda_s, \ 1 \leq e < \lambda_t 	
\end{equation}
because of the $f$ factors, and a simple pole at:
\begin{equation}
\label{eqn:poles 3}
\begin{cases} y_s - y_t \oq^{2e}, \text{ and} \\ y_s \oq^{2\lambda_s}  - y_t \oq^{2e} \end{cases} \qquad \forall \ 1 \leq s < t \leq u, \ 1 \leq e < \lambda_t
\end{equation}
because of the simple pole of $R(z)$ at $z=1$. Eliminating the multiset of poles in \eqref{eqn:poles 2} and \eqref{eqn:poles 3} from the multiset of poles in \eqref{eqn:poles 1} yields the allowable poles of $Y(y_1,...,y_u)$, and it is elementary to see that they are precisely of the form \eqref{eqn:poles gen}. \\

\noindent As for \eqref{eqn:symm y}, we will prove it pictorially. To keep the pictures legible, we will only show the case $u=2$ (hence $(s,t) = (1,2)$), but the reader may easily generalize the argument. Because of property \eqref{eqn:symm aff}, the tensor $X(y_s,y_s\oq^2,...,y_t,y_t\oq^2,...)$ (which is represented by a braid akin to Figure 14) is also equal to the following braid:
	
\begin{figure}[ht]    
\label{fig:symm 1}
\centering
\includegraphics[scale=0.3]{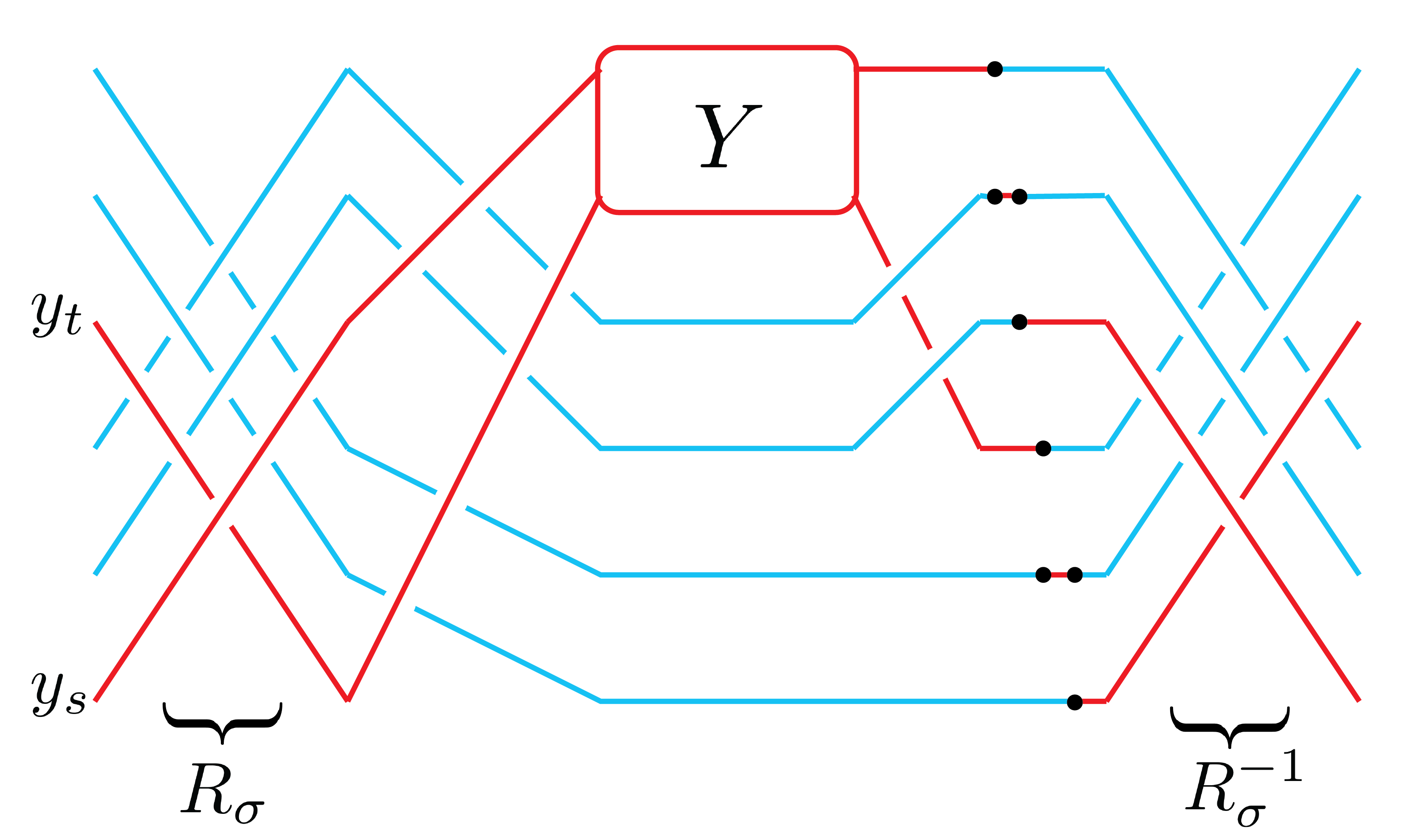}
\caption{}
\end{figure}
	
\noindent (we ignore the scalar-valed rational functions $f$ in the diagrams above, as they commute with all the braids involved). The braid called $R_\sigma$ interchanges the two collections of $\lambda_s = \lambda_t$ braids corresponding to the variables $y_s \oq^{2*}$ and $y_t \oq^{2*}$. Although we could choose the crossings of $R_\sigma$ arbitrarily, the choice we make above is that the two red strands cross above all other ones, then the two blue strands next to the red strands cross above all remaining ones, then the two blue strands next to the previous blue strands cross etc. In virtue of Figure 12, we may move the black dots to the very right of the picture above, obtaining the braid below:

\begin{figure}[ht]    
	\label{fig:symm 2}
\centering
\includegraphics[scale=0.3]{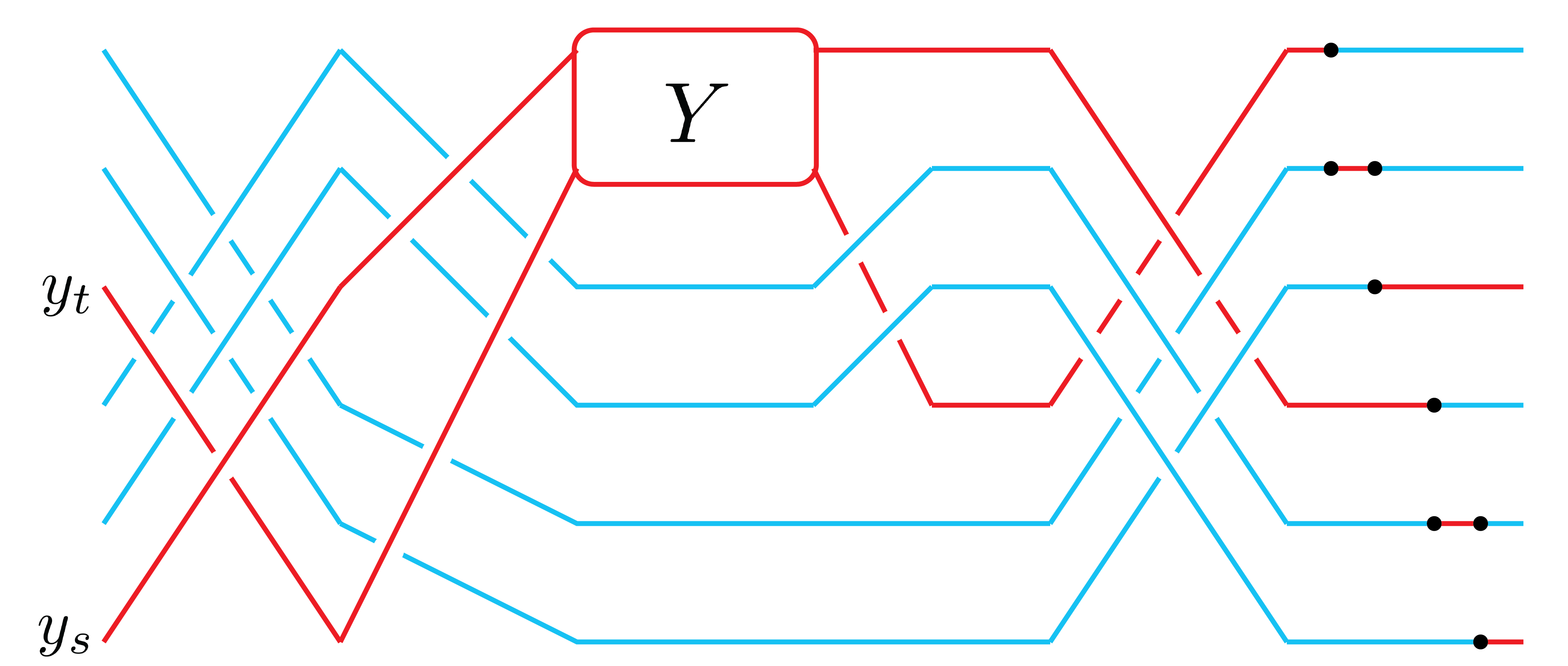}
\caption{}
\end{figure}

\noindent Then we pull the red strands as far up as possible, and notice that the blue strands are all unlinked, thus yielding the braid in Figure 20 below. The red strands in Figure 20 correspond to the endomorphism: 
$$
R_{(st)} \cdot (st)Y (st) \cdot R_{(st)}^{-1} \in \End(V^{\otimes 2})(y_s,y_t)
$$
which we may equate with $Y$ due to the braid equivalences described above. 

\begin{figure}  
	\label{fig:symm 3}
\centering
\includegraphics[scale=0.3]{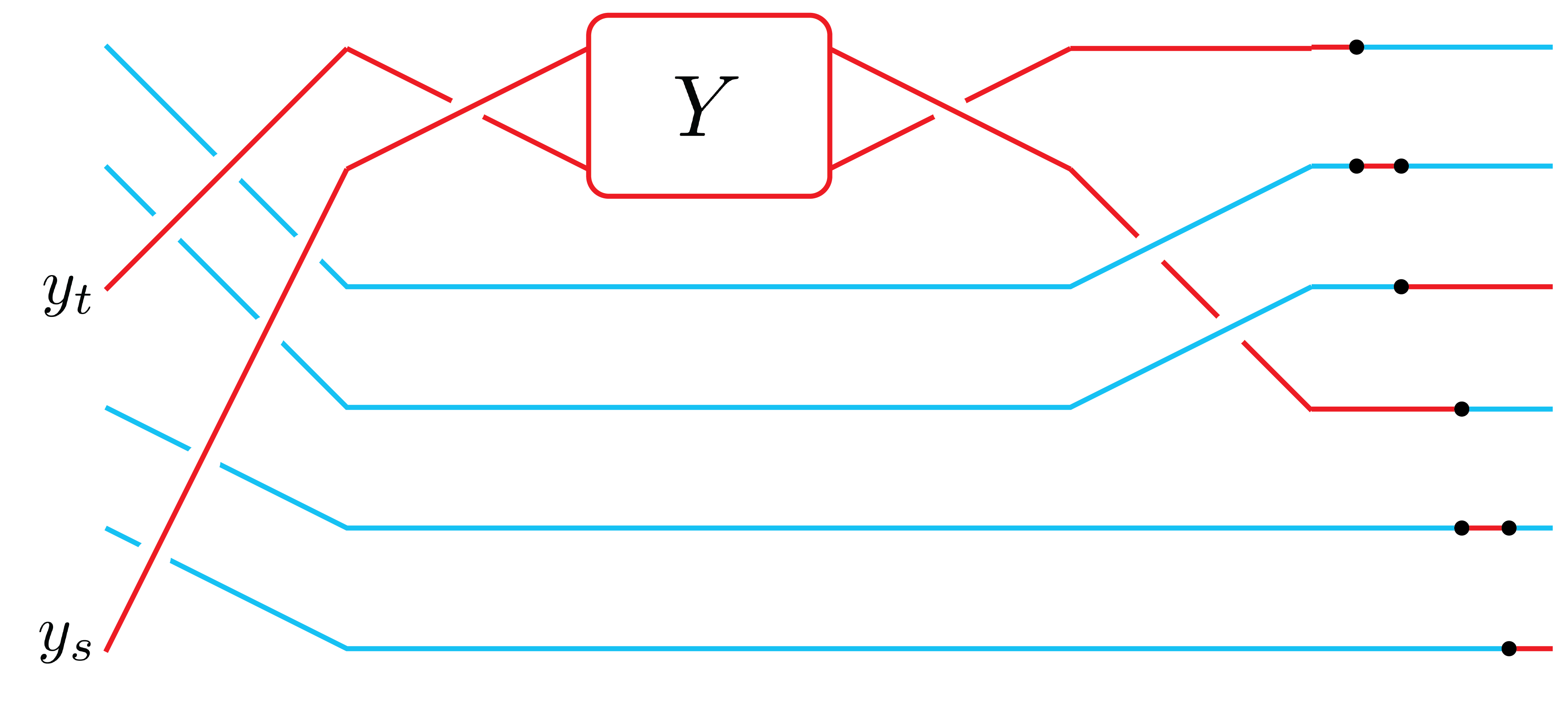}
\caption{}
\end{figure}
	
\end{proof}

\subsection{} 

The shuffle algebra $\CA^+$ has a ``vertical" and a ``horizontal" grading:
\begin{align}
& \BN \ni \vdeg f(z_1,...,z_k) E_{i_1j_1} \otimes ... \otimes E_{i_kj_k} = k \label{eqn:vertical} \\
& \zz \ni \hdeg f(z_1,...,z_k) E_{i_1j_1} \otimes ... \otimes E_{i_kj_k} = (\text{hom deg }f) \bde + \sum_{a=1}^k \deg E_{i_aj_a} \label{eqn:horizontal}
\end{align}
where $\bde = (1,...,1)$ and the grading on $\End(V)$ is defined by:
\begin{equation}
\label{eqn:degree} 
\deg E_{ij} = - [i;j) 
\end{equation}
We will find it convenient to extend the notation $E_{ij}$ to all $i,j \in \BZ$, according to:
\begin{equation}
\label{eqn:notation}
E_{ij} = E_{\bari \barj} z^{\left \lfloor \frac {i-1}n \right \rfloor - \left \lfloor \frac {j-1}n \right \rfloor} \in \End(V) [z^{\pm 1}]
\end{equation}
where $\bari$ denotes the residue class of $i$ in the set $\{1,...,n\}$.\footnote{More generally, we will extend the notation above to a $k$--fold tensor, by the rule:
\begin{equation}
\label{eqn:tensor convention}
E_{i_1j_1} \otimes ... \otimes E_{i_kj_k} = E_{\bari_1 \barj_1} \otimes ... \otimes E_{\bari_k \barj_k} z_1^{\left \lfloor \frac {i_1-1}n \right \rfloor - \left \lfloor \frac {j_1-1}n \right \rfloor} ... z_k^{\left \lfloor \frac {i_k-1}n \right \rfloor - \left \lfloor \frac {j_k-1}n \right \rfloor} 
\end{equation}
as elements of $\End(V^{\otimes k}) [z_1^{\pm 1},..., z_k^{\pm 1}]$} Then formulas \eqref{eqn:horizontal} and \eqref{eqn:degree} hold for arbitrary integer indices $i_1,j_1,...,i_k,j_k$ and $i,j$. We will denote the graded pieces of the shuffle algebra by:
$$
\CA^+ = \bigoplus_{k=0}^\infty \CA_k, \qquad \CA_k = \bigoplus_{\bd \in \zz} \CA_{\bd,k}
$$
and refer to $(\bd,k)$ as the degree of homogeneous elements. Finally, we write:
$$
|\bd| = d_1 + ... + d_n
$$
for any $\bd = (d_1,...,d_n) \in \BZ$ and refer to the number:
\begin{equation}
\label{eqn:slope}
\mu = \frac {|(\text{hom deg }f) \bde + \sum_{a=1}^k \deg E_{i_aj_a}|}k \in \BQ
\end{equation}
as the slope of the matrix-valued rational function $f(z_1,...,z_k) E_{i_1j_1} \otimes ... \otimes E_{i_kj_k}$. We will consider the partial ordering on $\BZ^n$ given by:
\begin{equation}
\label{eqn:partial ordering}
(d_1,...,d_n) \leq (d_1',...,d_n') \text{ if } \begin{cases} d_n < d_n' \text{ or} \\ d_n = d_n' \text{ and } \sum_{i=1}^{n-1} d_i \leq \sum_{i=1}^{n-1} d_i' \end{cases}
\end{equation}
$$$$
	
\section{The extended shuffle algebra with spectral parameter}
\label{sec:extended}

\subsection{} We will now replicate the construction of Subsection \ref{sub:extended} in the situation of the $R$--matrix with spectral parameter \eqref{eqn:explicit r}. \\ 

\begin{definition}
\label{def:extended aff}
	
Consider the extended shuffle algebra:
$$
\tCA^+ = \Big \langle \CA^+, s_{[i;j)} \Big \rangle^{i \leq j \in \BZ}_{1\leq i \leq n} \Big/ \text{relations \eqref{eqn:ext rel 1 aff} and \eqref{eqn:ext rel 4 aff}}
$$
In order to concisely state the relations, it makes sense to package the new generators $s_{[i;j)}$ into the following matrix-valued generating function:
$$
S(x) = \sum_{1 \leq i, j \leq n, \ d \geq 0}^{d = 0 \text{ only if } i \leq j} s_{[i;j+nd)} \otimes \frac {E_{ij}}{x^d} \in \tCA^+ \otimes \emph{End}(V) [[x^{-1}]]
$$
We impose the following analogues of relations \eqref{eqn:ext rel 1} and \eqref{eqn:ext rel 4}:
\begin{equation}
\label{eqn:ext rel 1 aff}
R\left(\frac xy \right) S_1(x) S_2(y) = S_2(y) S_1(x) R \left(\frac xy \right)
\end{equation}
\begin{equation}
\label{eqn:ext rel 4 aff}
X \cdot S_0(y) = S_0(y) \cdot \frac {R_{k0} \left(\frac {z_k}y \right) ... R_{10} \left(\frac {z_1}y \right)}{f \left(\frac {z_k}y \right) ... f \left(\frac {z_1}y \right)} X \tR_{10} \left( \frac {z_1}y \right) ... \tR_{k0} \left( \frac {z_k}y \right) 
\end{equation}
for any $X = X_{1...k}(z_1,...,z_k) \in \CA^+ \subset \tCA^+$. \\
	
\end{definition}

\noindent Note that the grading on $\CA^+$ extends to one on $\tCA^+$, by setting: 
$$
\deg s_{[i;j)} = ([i;j),0) \in \zz \times \BN
$$
The series $S(x)$ defined above is supposed to match the homonymous series in \eqref{eqn:series s}. \\

\subsection{} We may also consider the elements $t_{[i;j)} \in \tCA^+$ defined by \eqref{eqn:new series t}, where:
\begin{equation}
\label{eqn:s and t}
T(x) = \sum_{1 \leq i, j \leq n, \ d \geq 0}^{d = 0 \text{ only if } i \leq j} t_{[i;j+nd)} \otimes \frac {E_{ij}}{x^d}
\end{equation}
Then it is a straightforward computation (which we leave as an exercise to the interested reader) to see that \eqref{eqn:ext rel 1 aff}, \eqref{eqn:ext rel 4 aff} imply analogues of \eqref{eqn:ext rel 2}, \eqref{eqn:ext rel 3}, \eqref{eqn:ext rel 5}:
\begin{equation}
\label{eqn:ext rel 2 aff}
T_1(x) T_2(y) R\left(\frac xy \right) = R \left( \frac xy \right) T_2(y) T_1(x) 
\end{equation}
\begin{equation}
\label{eqn:ext rel 3 aff}
T_1(x) \tR\left(\frac xy \right) S_2(y)  =  S_2(y) \tR \left( \frac xy \right) T_1(x) 
\end{equation}
\begin{equation}
\label{eqn:ext rel 5 aff}
T_0(y) \cdot X = \tR_{0k} \left( \frac y{z_k} \right) ... \tR_{01} \left( \frac y{z_1} \right) X \frac {R_{01} \left( \frac y{z_1} \right) ... R_{0k} \left( \frac y{z_k} \right)}{f \left( \frac y{z_1} \right) ... f \left( \frac y{z_k} \right)} \cdot T_0(y)
\end{equation} 
Therefore, we conclude that the series $S(x)$ and $T(x)$ satisfy the same relations as in Definition \ref{def:extended} (modified in order to account for the variables $z_i$), even though the $s$'s and the $t$'s are not independent of each other anymore. We will write:
$$
s_{[i;i)}^{-1} = \psi_i = t_{[i;i)}
$$
for all $i \in \{1,\dots,n\}$ and note that formulas \eqref{eqn:ext rel 1 aff} imply that:
\begin{equation}
\label{eqn:psi commute}
\psi_i \psi_j = \psi_j \psi_i
\end{equation}
\begin{equation}
\label{eqn:psi x}
\psi_i X = q^{- \langle \bs^i, \hdeg X \rangle} X \psi_i
\end{equation}
$\forall X \in \tCA^+$, where $\langle \cdot, \cdot \rangle$ is the bilinear form on $\BZ^n$ given by $\langle \bs^i, \bs^j \rangle = \delta_{i}^{j} - \delta_{i}^{j+1}$. \\

\subsection{} Consider the following topological coproduct on the algebra $\tCA^+$, which is the natural analogue of the coproduct studied in Proposition \ref{prop:extended}:
\begin{equation}
\label{eqn:cop shuf aff 1}
\Delta(S(x)) = (1 \otimes S(x)) \cdot (S(x) \otimes 1) 
\end{equation}
(hence $\Delta(T(x)) = \left(T(x) \otimes 1 \right) \cdot (1 \otimes T(x))$ by \eqref{eqn:new series t}) and:
\begin{equation}
\label{eqn:cop shuf aff 3}
\Delta(X_{1...k}) = \sum_{i=0}^k (S_{k}(z_k)...S_{i+1}(z_{i+1}) \otimes 1) \cdot
\end{equation}
$$
\cdot \left[ \frac {X_{1...i} \left( z_1,...,z_i \right) \otimes X_{i+1...k} \left(z_{i+1}, ..., z_k \right)}{\prod_{1\leq u \leq i < v \leq k} f \left( \frac {z_u}{z_v} \right)} \right] \cdot (T_{i+1}(z_{i+1}) ... T_{k}(z_k) \otimes 1) 
$$
for all $X_{1...k}(z_1,...,z_k) \in \CA^+ \subset \tCA^+$. The fact that $\Delta$ defined above gives rise to a coasociative coalgebra structure which respects the algebra structure is proved by analogy with Proposition \ref{prop:extended}, and we leave the details to the interested reader. \\ 

\begin{remark} 
\label{rem:completed} 

Because $S(x)$ is a power series in $x$, the coproduct defined above takes values in a completion of $\tCA^+ \otimes \tCA^+$. Specifically, to make sense of the second line of \eqref{eqn:cop shuf aff 3}, we must expand the rational function:
$$
\frac {X_{1...k} \left( z_1,...,z_k \right)}{\prod_{1\leq u \leq i < v \leq k} f \left( \frac {z_u}{z_v} \right)}
$$
for $z_1,...,z_i \ll z_{i+1},...,z_k$, then collect all tensors of the form $X_{1...i}(z_1,...,z_i)$ to the left of $\otimes$, and all tensors of the form $X_{i+1,...,k}(z_{i+1},...,z_k)$ to the right of $\otimes$. See \cite[Equation (4.4)]{Shuf} for an analogous situation, which corresponds to our case $n=1$. \\
	
\end{remark} 

\subsection{} 
\label{sub:hinge}

Given $X \in \CA^+$, we will represent its degree $\deg X = (\bd,k)$ on a 2 dimensional lattice, via the projection $(\bd,k) \twoheadrightarrow (|\bd|,k)$, and hence assign to $X$ the lattice point $(|\bd|,k)$. Similarly, we will assign to the tensor $X_1 \otimes X_2$ the two-segment path:

\begin{picture}(100,130)(-110,-40)
\label{pic:1}

\put(0,0){\circle*{2}}\put(20,0){\circle*{2}}\put(40,0){\circle*{2}}\put(60,0){\circle*{2}}\put(80,0){\circle*{2}}\put(100,0){\circle*{2}}\put(120,0){\circle*{2}}\put(0,20){\circle*{2}}\put(20,20){\circle*{2}}\put(40,20){\circle*{2}}\put(60,20){\circle*{2}}\put(80,20){\circle*{2}}\put(100,20){\circle*{2}}\put(120,20){\circle*{2}}\put(0,40){\circle*{2}}\put(20,40){\circle*{2}}\put(40,40){\circle*{2}}\put(60,40){\circle*{2}}\put(80,40){\circle*{2}}\put(100,40){\circle*{2}}\put(120,40){\circle*{2}}\put(0,60){\circle*{2}}\put(20,60){\circle*{2}}\put(40,60){\circle*{2}}\put(60,60){\circle*{2}}\put(80,60){\circle*{2}}\put(100,60){\circle*{2}}\put(120,60){\circle*{2}}

\put(0,0){\vector(3,1){120}}
\put(120,40){\vector(-4,1){80}}

\put(-20,-3){\scriptsize{$(0,0)$}}
\put(5,68){\scriptsize{$(|\bd_1+\bd_2|,k_1+k_2)$}}
\put(123,38){\scriptsize{$(|\bd_2|,k_2)$}}

\put(-10,-25){\mbox{F{\scriptsize IGURE} 21. The hinge of a tensor}}

\end{picture}

\noindent The intersection of the arrows, namely $(|\bd_2|,k_2)$, is called the \textbf{hinge} of $X_1 \otimes X_2$. Recall the definition of slope in \eqref{eqn:slope}. \\

\begin{definition}
\label{def:leading order}
	
Let $\mu \in \BQ$. We let $\CA_{\leq \mu}^+ \subset \CA^+$ be the set of those $X$ such that:
\begin{equation}
\label{eqn:leading order}
\Delta(X) = \Delta_{\mu}(X) + (\text{anything}) \otimes (\slope < \mu)
\end{equation}
where $\Delta_\mu(X)$ consists only of summands $X_1 \otimes X_2$ with $\text{slope } X_2 = \mu$ (we also choose to include the summand $X \otimes 1$ in $\Delta_\mu(X)$). In terms of the pictorial definitions of hinges in Figure 21, $X \in \CA^+_{\leq \mu}$ if and only if all summands in $\Delta(X)$ have hinge at slope $|\bd|/k \leq \mu$ as measured from the origin. \\ 	
	
\end{definition}

\noindent We will say that an element of $\CA^+_{\leq \mu}$ has \textbf{slope} $\leq \mu$. Since having slope $\leq \mu$ is an additive property, $\CA^+_{\leq \mu}$ is a vector space. Let us define its graded pieces:
\begin{equation}
\label{eqn:pieces a}
\CA_{\leq \mu|k} = \CA^+_{\leq \mu} \cap \CA_k, \qquad \CA_{\leq \mu|\bd,k} = \CA^+_{\leq \mu} \cap \CA_{\bd,k}
\end{equation}
and note that $\CA_{\leq \mu|\bd,k} \neq 0$ only if $|\bd| \leq k \mu$. \\

\begin{proposition} 
\label{prop:multi} 

For any $\mu \in \BQ$, the subspace $\CA^+_{\leq \mu}$ is a subalgebra of $\CA^+$, and:
$$
\Delta_\mu(X * Y) = \Delta_\mu(X) * \Delta_\mu(Y)
$$
for all $X,Y \in \CA^+_{\leq \mu}$. \\
	
\end{proposition}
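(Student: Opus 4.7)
The strategy is to deduce slope multiplicativity from the (topological) bialgebra property of $\tCA^+$. Running the braid-diagrammatic computation of Proposition \ref{prop:extended} in the spectral setting---with scalar $R$-matrices replaced by their spectral-parameter versions $R(z/w)$ and $\tR(z/w)$---establishes the identity
$$\Delta(X * Y) = \Delta(X) * \Delta(Y)$$
in (a completion of) $\tCA^+ \otimes \tCA^+$, valid for all $X, Y \in \CA^+$. Crucially, the second tensor factor of any summand of $\Delta(X)$ lies purely in $\CA^+$ (the $S$'s and $T$'s of \eqref{eqn:cop shuf aff 3} all sit in the first tensor factor), so its slope is unambiguously defined via \eqref{eqn:slope}.

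Now decompose $\Delta(X) = \Delta_\mu(X) + R_X$ and $\Delta(Y) = \Delta_\mu(Y) + R_Y$, where by hypothesis all summands of $R_X, R_Y$ have second-factor hinge at slope strictly less than $\mu$. Expanding the product term by term yields
$$\Delta(X * Y) = \Delta_\mu(X) * \Delta_\mu(Y) + \Delta_\mu(X) * R_Y + R_X * \Delta_\mu(Y) + R_X * R_Y.$$
The decisive input is that the projection $(\bd, k) \mapsto (|\bd|, k)$ is a homomorphism of monoids $\zz \times \BN \to \BZ \times \BN$, so shuffle-multiplying elements of $\CA^+$ of slopes $\mu_1, \mu_2$ produces an element of slope
$$\frac{\mu_1 k_1 + \mu_2 k_2}{k_1 + k_2} \in [\min(\mu_1, \mu_2), \max(\mu_1, \mu_2)],$$
a convex combination equal to the upper bound only when $\mu_1 = \mu_2$. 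Applying this slope additivity componentwise in $\tCA^+ \otimes \tCA^+$ shows that every summand of $\Delta_\mu(X) * \Delta_\mu(Y)$ has second-factor slope exactly $\mu$, while every summand of the three cross terms has second-factor slope strictly less than $\mu$. Collecting by slope then yields simultaneously $X * Y \in \CA^+_{\leq \mu}$ and the identity $\Delta_\mu(X * Y) = \Delta_\mu(X) * \Delta_\mu(Y)$.

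The main obstacle is the treatment of the degenerate summands in \eqref{eqn:cop shuf aff 3}, namely those with $k_2 = 0$ (so the second tensor factor is the scalar $1$): here the naive ratio $|\bd_2|/k_2$ is the indeterminate form $0/0$. The cleanest resolution is to declare $1$ to carry the slope of its companion, consistent with the identity $1 * Z = Z$ which preserves slopes exactly; this places $X \otimes 1$ into $\Delta_\mu(X)$ precisely when $X$ itself has slope $\mu$, which is what makes the convex-combination argument handle every summand uniformly. A secondary technicality is that $\Delta(X) * \Delta(Y)$ a priori lies in a completion of $\tCA^+ \otimes \tCA^+$; one verifies that the product is well defined because in each fixed total degree only finitely many summands of $\Delta(X)$ and $\Delta(Y)$ can combine.
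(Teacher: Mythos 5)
Your main line of argument is the same as the paper's: the coproduct \eqref{eqn:cop shuf aff 3} is an algebra homomorphism (as the paper notes, by analogy with Proposition \ref{prop:extended}), degree in $\zz \times \BN$ is additive under the shuffle product, and the slope of a product is the weighted average $\frac{\mu_1 k_1 + \mu_2 k_2}{k_1+k_2}$, so second factors of slope $\leq \mu$ are stable under multiplication and the boundary value $\mu$ is attained only when both factors attain it. The paper's proof is exactly this observation, compressed into a few lines.

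However, your resolution of the degenerate summands (second tensor factor of degree $(0,0)$) is the wrong one, and it invalidates your key assertion that all three cross terms have second-factor slope strictly less than $\mu$. Under your convention, for $X \in \CA^+_{\leq \mu}$ whose own slope is $< \mu$, the summand $X \otimes 1$ of $\Delta(X)$ is placed in the remainder $R_X$; multiplying it by a summand $Y_1 \otimes Y_2$ of $\Delta_\mu(Y)$ with $\vdeg Y_2 > 0$ (take $Y \in \CB^+_\mu$, whose coproduct contains such terms, e.g. the term with second factor $Y$ itself) produces $X Y_1 \otimes Y_2$, whose second factor has slope exactly $\mu$. This is a cross term contributing to $\Delta_\mu(X*Y)$ but not to $\Delta_\mu(X) * \Delta_\mu(Y)$; indeed, if all summands of $\Delta(X)$ have second-factor slope strictly below $\mu$, your convention gives $\Delta_\mu(X) = 0$ while $\Delta_\mu(X*Y) \neq 0$ in general, so the displayed identity fails as you have set it up. The convention actually used in the paper --- implicit in the hinge formulation of Definition \ref{def:leading order} and explicit in \eqref{eqn:coproduct p}, where the $s=i$ term $F^\mu_{[i;j)} \otimes 1$ is part of $\Delta_\mu$ --- is the opposite one: a second factor of degree $(0,0)$ always counts as lying on the slope-$\mu$ line (the origin lies on every line through the origin), so $X \otimes 1$ belongs to $\Delta_\mu(X)$ for every $X$. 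With that convention the convexity argument applies uniformly: a sum of two points of $\{(\bd,k) : k \geq 0,\ |\bd| \leq \mu k\}$ lies on the boundary ray if and only if both summands do, and your proof then coincides with the paper's.
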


\begin{proof} Note that degree is multiplicative, i.e. (assume the LHS is non-zero):
$$
\deg \Big( f(z_1,...,z_k)E_{i_1j_1} \otimes ... \otimes E_{i_kj_k} \Big) \Big( f'(z_1,...,z_k)E_{i'_1j'_1} \otimes ... \otimes E_{i'_kj'_k} \Big) =
$$
$$
= \deg f(z_1,...,z_k)E_{i_1j_1} \otimes ... \otimes E_{i_kj_k} + \deg f'(z_1,...,z_k)E_{i'_1j'_1} \otimes ... \otimes E_{i'_kj'_k} 
$$
Therefore, if $\Delta(X) = X_1 \otimes X_2$ and $\Delta(Y) = Y_1 \otimes Y_2$ with $\slope \ X_2, \slope \ Y_2 \leq \mu$, then $\slope \ X_2 Y_2 \leq \mu$. Since $\Delta(XY) = X_1 Y_1 \otimes X_2 Y_2$, this implies the conclusion. 
	
\end{proof}

\subsection{}

Our reason for introducing slopes and the algebras $\CA^+_{\leq \mu}$ is that $\{\CA_{\leq \mu| \bd,k}\}_{\mu \in \BQ}$ yield a filtration of $\CA_{\bd,k}$ by finite-dimensional vector spaces. \\

\begin{lemma}
\label{lem:magic}
	
The dimension of $\CA_{\leq \mu|\bd,k}$ as a vector space over $\fff$ is at most the number of unordered collections:
\begin{equation}
\label{eqn:unordered collections}
(i_1,j_1,\lambda_1),...,(i_u,j_u,\lambda_u) 
\end{equation}
where: \\
	
\begin{itemize}
		
\item $\lambda_s \in \BN$ with $\sum_{s=1}^u \lambda_s = k$ \\
		
\item $(i_s,j_s) \in \zzz$ with $\sum_{s=1}^u [i_s;j_s) = \bd$ \\
		
\item $j_s - i_s \leq \mu \lambda_s$ for all $s \in \{1,...,u\}$ \\
		
\end{itemize} 
	
\end{lemma}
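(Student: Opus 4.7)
My plan is to exhibit an explicit spanning set for $\CA_{\leq\mu|\bd,k}$ that is in bijection with the unordered collections $(i_s,j_s,\lambda_s)_{s=1}^u$ described in the statement; the dimension upper bound then follows by counting.

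First, for each triple $(i,j,\lambda)$ with $(i,j) \in \zzz$, $\lambda \in \BN$, and $j - i \leq \mu\lambda$, I would construct an explicit shuffle element $E^{(\lambda)}_{[i;j)} \in \CA^+$ in degree $([i;j),\lambda)$. The construction should parallel the scalar formulas \eqref{eqn:shuf 1}--\eqref{eqn:shuf 2}, upgraded to the matrix-valued setting with spectral parameter: a carefully chosen symmetrization involving the prescribed pole pattern $\prod(z_a\oq^2 - z_b)^{-1}$ from Definition \ref{def:shuf aff}, a distribution of monomials in the $z_a$'s encoding the slope, and a choice of matrix coefficient $E_{**}$ at each variable slot. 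The crucial check is that $E^{(\lambda)}_{[i;j)} \in \CA^+_{\leq\mu}$, which one performs by expanding the coproduct \eqref{eqn:cop shuf aff 3} and verifying that each summand $X_1 \otimes X_2$ in $\Delta(E^{(\lambda)}_{[i;j)})$ has $\slope X_2 \leq \mu$; the inequality $j-i \leq \mu\lambda$ is exactly what ensures this.

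Second, since $\CA^+_{\leq\mu}$ is a subalgebra by Proposition \ref{prop:multi}, any shuffle product of the $E^{(\lambda_s)}_{[i_s;j_s)}$ with the slope bounds satisfied remains in $\CA^+_{\leq\mu}$. Fixing once and for all a total order on triples $(i,j,\lambda)$, I would then argue that the ordered products with respect to this total order span $\CA_{\leq\mu|\bd,k}$. The argument proceeds by induction on a complexity measure (for instance, the hinge slope in the sense of Figure 21 together with a secondary lexicographic datum on iterated residues). Given $X \in \CA_{\leq\mu|\bd,k}$, one applies the iterated residue \eqref{eqn:wheel} for a composition $(\lambda_1,\ldots,\lambda_u)$ maximizing the slope of the output; by Proposition \ref{prop:symmetric} and the slope constraint, this residue $X^{(\lambda_1,\ldots,\lambda_u)}$ is a rational function of bounded polynomial degree in each $y_s$, matching the residue of a uniquely determined linear combination of ordered products of the $E^{(\lambda_s)}_{[i_s;j_s)}$; after subtraction, the complexity strictly drops and the induction proceeds.

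The main obstacle will be making this induction rigorous: one must show that the residues of the chosen ordered products are linearly independent and exhaust the allowable ``leading terms'' in the relevant finite-dimensional space. This is the analogue in the matrix-valued setting of the PBW-type results of Subsection \ref{sub:pbw stuff} for the algebras $\CE_\mu$, and its validity is underpinned by the multiplicativity $\Delta_\mu(X*Y) = \Delta_\mu(X) * \Delta_\mu(Y)$ of Proposition \ref{prop:multi}, which makes the wheel-residue operator compatible with the slope filtration. Once the spanning statement is established, counting ordered products amounts exactly to counting multisets of triples $(i_s,j_s,\lambda_s)$ with $j_s-i_s \leq \mu\lambda_s$, which is the count appearing in the lemma.
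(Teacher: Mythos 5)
Your strategy inverts the logical order of the paper and, as written, has a genuine gap at its core. Lemma \ref{lem:magic} is only an upper bound, and the paper proves it ``softly'': for each partition $\lambda$ of $k$, taken in dominance order, it considers the iterated-residue map $\ph_\lambda : X \mapsto X^{(\lambda_1,...,\lambda_u)}$ of \eqref{eqn:wheel}, restricts it to $\bigcap_{\lambda' > \lambda} \ker \ph_{\lambda'}$, and observes that on this subspace the image is a Laurent polynomial (the potential poles listed in Proposition \ref{prop:symmetric} have residues which are themselves higher iterated residues, hence vanish by assumption); the number of free coefficients is then bounded using the horizontal degree, the slope inequality, and the symmetry statement of Proposition \ref{prop:symmetric}, which yields exactly the count of unordered triples. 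No explicit shuffle elements are needed. You instead propose to construct elements $E^{(\lambda)}_{[i;j)}$ and show that their ordered products span $\CA_{\leq \mu|\bd,k}$. Spanning would indeed imply the bound, but the step where your induction does all the work --- that the leading residue of $X$ ``matches the residue of a uniquely determined linear combination of ordered products'' --- is precisely the statement that the residues of your ordered products exhaust (and are triangular with respect to) the space of allowed leading terms. You flag this as ``the main obstacle'' and appeal to Proposition \ref{prop:multi}, but multiplicativity of $\Delta_\mu$ does not provide it: one must compute the iterated residues of all ordered products and prove a nondegeneracy statement about them. In the paper this exhaustion is exactly the lower-bound direction \eqref{eqn:geq leq}, established only later in the proof of Theorem \ref{thm:iso} via Lemma \ref{lem:min}, Proposition \ref{prop:root}, the functionals $\alpha_{[i;j)}$ and the PBW results of \cite{PBW}; moreover it is combined there with Lemma \ref{lem:magic} itself to conclude that the products \eqref{eqn:product} form a basis. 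So your plan either presupposes (a strengthening of) what is to be proved, or requires developing the entire PBW machinery before this Lemma --- a far heavier route than the pole-and-degree counting argument, and one your sketch does not substantiate.

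A secondary issue: even your first step is not routine. Constructing $E^{(\lambda)}_{[i;j)}$ of degree $([i;j),\lambda)$ for every $j-i \leq \mu\lambda$, verifying the wheel conditions, and proving membership in $\CA^+_{\leq \mu}$ requires an argument of minimal-initial-degree type; compare the length of the proof of Proposition \ref{prop:root}, which treats only the boundary case $j-i = \mu k$. The assertion that ``the inequality $j-i \leq \mu\lambda$ is exactly what ensures this'' is the conclusion of such an analysis, not a substitute for it.
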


\noindent In \eqref{eqn:geq leq}, we will show that the dimension of $\CA_{\leq \mu|\bd,k}$ is in fact equal to the number of unordered collections \eqref{eqn:unordered collections}. The argument below follows that of \cite{FHHSY, Shuf, Tor}. \\

\begin{proof} To any partition $\lambda = (\lambda_1 \leq ... \leq \lambda_u)$ of $k \in \BN$, we associate the linear map:
$$
\CA_{\leq \mu|\bd,k} \stackrel{\ph_\lambda}\longrightarrow \End(V^{\otimes u})(y_1,...,y_u)
$$
$$
X \leadsto X^{(\lambda_1,...,\lambda_u)} \text{ of \eqref{eqn:wheel}}
$$
Consider the dominance partial ordering $\lambda' > \lambda$ on partitions \footnote{Recall that $\lambda = (\lambda_1 \leq ... \leq \lambda_u) \leq \lambda' = (\lambda'_1 \leq ... \leq \lambda'_v)$ if $\lambda_u+...+\lambda_{u-s+1} \leq \lambda'_v+... +\lambda'_{v-s+1}$ for all $s$, but $\lambda_u+...+\lambda_1 = \lambda'_v+...+\lambda'_1$.}, and define:
$$
\CA_{\leq \mu|\bd,k}^{\lambda} = \bigcap_{\lambda' > \lambda} \text{Ker } \ph_{\lambda'}
$$
Since $\CA_{\leq \mu|\bd,k}^{(k)} = \CA_{\leq \mu|\bd,k}$, then the desired bound on $\dim \CA_{\leq \mu|\bd,k}$ would follow from:
\begin{equation}
\label{eqn:claim}
\dim \ph_\lambda \left( \CA_{\leq \mu|\bd,k}^{\lambda} \right) \leq \# \left \{ \text{quasi-ordered } (i_1,j_1),...,(i_u,j_u) \in \frac {\BZ^2}{(n,n)\BZ}, \right. 
\end{equation}
$$
\left. \text{such that } \sum_{s=1}^u [i_s;j_s) = \bd \text{ and } j_s - i_s \leq \mu \lambda_s \text{ for all }s \in \{1,...,u\}\right\}
$$
for any $\lambda = (\lambda_1 \leq ... \leq \lambda_u)$. Above, ``quasi-ordered" means that collections $\{(i_1,j_1),...,(i_u,j_u)\}$ are considered only up to arbitrarily swapping any $(i_a,j_a)$ and $(i_b,j_b)$ for which $\lambda_a=\lambda_b$. By Proposition \ref{prop:symmetric}, any $Y \in \text{Im } \ph_\lambda$ is of the form:
$$
Y(y_1,...,y_u) \in \frac {\End(V^{\otimes u}) [y_1^{\pm 1},...,y_u^{\pm 1}]}{\prod_{1\leq s < t \leq u} \prod_{d=0}^{\lambda_s - 1} (y_s \oq^{2d} - y_t \oq^{-2})(y_s \oq^{2d} - y_t \oq^{2\lambda_t})}
$$
However, if $Y = \ph_\lambda(X)$ for some $X \in \CA_{\leq \mu|\bd,k}^{\lambda}$, we claim that $Y$ is a Laurent polynomial. Indeed, let us show that $Y$ is regular at $y_s \oq^{2d} - y_t \oq^{-2}$. We have:
\begin{multline} 
\underset{y_s \oq^{2d} = y_t \oq^{-2}}{\Res} \left( \underset{\{z_{c_r} = y_r, ..., z_{c_{r+1}-1} = y_r \oq^{2(\lambda_r - 1)} \}_{\forall r \in \{1,...,u\}}}{\Res} X \right) = \\
= \underset{y_t \oq^{2d} = y_s \oq^{-2}}{\Res} \left( \underset{\{z_{c_r} = y_r, ..., z_{c_{r+1}-1} = y_r \oq^{2(\lambda'_r - 1)} \}_{\forall r \in \{1,...,u\}}}{\Res} X \right)
\end{multline} 
where $\lambda'$ is obtained from $\lambda$ by replacing $\lambda_s$ and $\lambda_t$ by $\lambda_s-d-1$ and $\lambda_t+d+1$. The right-hand side of the expression above vanishes because $X \in \CA_{\leq \mu|\bd,k}^{\lambda}$ and $\lambda'>\lambda$. Similarly, one can show that the residue of $Y$ vanishes at $y_s \oq^{2d} - y_t \oq^{2\lambda_t}$ for any $s<t$ and $0 \leq d < \lambda_s$. This precisely implies that $Y$ is a Laurent polynomial:
\begin{equation}
\label{eqn:clog}
Y(y_1,...,y_u) = \sum_{1 \leq \alpha_1,\beta_1,...,\alpha_u,\beta_u \leq n}^{h_1,...,h_u \in \BZ}  \text{coefficient } \cdot y_1^{h_1}... y_u^{h_u} E_{\alpha_1\beta_1} \otimes ... \otimes E_{\alpha_u \beta_u}
\end{equation}
Since the matrices $R$ and $\tR$ have total degree 0, the horizontal degree of $Y$ is equal to that of $X$, namely $\bd$, so we conclude that the only summands with non-zero coefficient in \eqref{eqn:clog} satisfy:
$$
(h_1+...+h_u)\bde + \deg E_{\alpha_1 \beta_1} + ... + \deg E_{\alpha_u \beta_u} = \bd
$$
Finally, the slope condition on $X$ implies an analogous slope condition on $Y$: in each variable $y_s$, we have:
$$
n h_s + \alpha_s - \beta_s \leq \mu \lambda_s 
$$
Therefore, the number of coefficients that one gets to choose in \eqref{eqn:clog} is at most the number of collections $(j_s = \alpha_s + n h_s , i_s = \beta_s)$ satisfying the conditions in the right-hand side of \eqref{eqn:claim} (we only take quasi-ordered collections because of \eqref{eqn:symm y}).
	
\end{proof}

\subsection{} Definition \ref{def:shuf aff} of the shuffle algebra allows us to associate to any $X = X_{1...k}(z_1,...,z_k) \in \CA^+$ the matrix-valued power series $X^{(k)}(y) \in \End(V)[y^{\pm 1}]$ (namely the object arising in \eqref{eqn:wheel} for the composition $(k)$ of $k$). \\

\begin{proposition}
\label{prop:quasi}

For any $A \in \CA_{\bd,k}$ and $B \in \CA_{\be,l}$, we have:	
\begin{equation}
\label{eqn:quasi 1}
(A*B)^{(k+l)}(y) = A^{(k)}(y) B^{(l)}(y) \oq^{2k e_n}
\end{equation}
where $e_n$ is the last component of the vector $\be = (e_1,...,e_n) \in \zz$. \\
	
\end{proposition}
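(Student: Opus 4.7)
The plan is to evaluate $(A*B)^{(k+l)}(y)$ by taking the iterated residue at $z_c = y\oq^{2(c-1)}$ of the shuffle product \eqref{eqn:shuf prod aff} and matching the result against the single-block wheel condition \eqref{eqn:wheel} applied to $A*B$. The first step is a pole-counting argument showing that only one shuffle contributes. For a shuffle $a_1<\ldots<a_k$ and $b_1<\ldots<b_l$, the iterated residue needs $k+l-1$ simple poles, one for each consecutive equality $z_{c+1}=z_c\oq^2$. The sources of poles at this specialization are: $A$ contributes a pole whenever a consecutive pair of positions is both among the $a_i$'s (poles at $z_{a_j}=z_{a_{j-1}}\oq^2$); $B$ contributes similarly on consecutive $b$-positions; the factor $\tR_{a_i,b_j}$ contributes whenever an $a$-position is immediately followed by a $b$-position (since $\tR(x)$'s only pole is at $x=\oq^{-2}$); and the $R$-factors, being regular away from $x=1$, contribute nothing. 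A $(b,a)$-pattern in consecutive positions thus gives no pole, so only the unshuffled choice $(a_1,\ldots,a_k)=(1,\ldots,k)$, $(b_1,\ldots,b_l)=(k+1,\ldots,k+l)$ survives.

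For this shuffle the ``only if $a_i>b_j$'' product is empty, reducing $A*B$ to
\[
\Phi = \prod_{i=k}^1 \prod_{j=1}^l R_{i,k+j}\!\left(\tfrac{z_i}{z_{k+j}}\right) \cdot A(z_1,\ldots,z_k) \cdot \prod_{i=1}^k \prod_{j=l}^1 \tR_{i,k+j}\!\left(\tfrac{z_i}{z_{k+j}}\right) \cdot B(z_{k+1},\ldots,z_{k+l}).
\]
At the specialization, the only non-regular factor among the crossings is $\tR_{k,k+1}$, whose residue at $z_{k+1}=y\oq^{2k}$ is $(q^{-1}-q)$ times the transposition by \eqref{eqn:blob}. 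The iterated residue thus splits into three independent pieces: residues in $z_2,\ldots,z_k$ acting on $A$; the middle residue at $z_{k+1}=z_k\oq^2$; and residues in $z_{k+2},\ldots,z_{k+l}$ acting on $B$. Applying \eqref{eqn:wheel} with $u=1$ separately to $A$ (parameter $y$, size $k$) and to $B$ (parameter $y\oq^{2k}$, size $l$) produces $A^{(k)}(y)$ and $B^{(l)}(y\oq^{2k})$ bracketed by their respective $R$-matrix wrappings and cyclic permutations.

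To conclude, I compare this with \eqref{eqn:wheel} applied directly to $A*B$ (with $u=1$, $\lambda_1=k+l$), which yields $(A*B)^{(k+l)}(y)$ bracketed by analogous wrappings. Using the Yang-Baxter relation \eqref{eqn:ybe aff}, the quasi-YBE relations \eqref{eqn:quasi-ybe 1 aff}--\eqref{eqn:quasi-ybe 2 aff}, and unitarity \eqref{eqn:unitary r}, the two systems of $R$-matrix wrappings are equated, giving $(A*B)^{(k+l)}(y) = A^{(k)}(y)\, B^{(l)}(y\oq^{2k})$. Finally, since $B$ has horizontal degree $\be$, each monomial $c\, y^h\, E_{\alpha\beta}$ of $B^{(l)}(y)$ with $\alpha,\beta\in\{1,\ldots,n\}$ satisfies $h\bde+\deg E_{\alpha\beta}=\be$; taking the $n$-th component and using that $\deg E_{\alpha\beta}$ has zero $n$-th component in this range forces $h=e_n$. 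Hence $B^{(l)}(y)$ is $y$-homogeneous of degree $e_n$, so $B^{(l)}(y\oq^{2k})=B^{(l)}(y)\oq^{2ke_n}$, completing \eqref{eqn:quasi 1}. The main technical obstacle is the $R$-matrix bookkeeping in the comparison: carefully verifying that the forests of $R$-matrices produced by the wheel-condition expansions of $A$, $B$, and $A*B$ assemble correctly under braid manipulations. The diagrammatic language of Section \ref{sec:new} should reduce this to a topological equivalence of braids checkable by Reidemeister-III moves analogous to those in Figures 15--17.
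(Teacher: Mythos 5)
Your proposal is correct and is essentially the paper's own argument: the paper proves \eqref{eqn:quasi 1} by invoking the $u=1$ case of the residue/braid computation in the proof of Proposition \ref{prop:wheel preserved} (which is exactly your pole-counting isolating the unshuffled term and the residue of $\tR_{k,k+1}$, yielding $(A*B)^{(k+l)}(y)=A^{(k)}(y)B^{(l)}(y\oq^{2k})$), followed by the observation that $B\in\CA_{\be,l}$ forces $B^{(l)}$ to be homogeneous of degree $e_n$ in $y$, so $B^{(l)}(y\oq^{2k})=\oq^{2ke_n}B^{(l)}(y)$. Your degree bookkeeping for the homogeneity step and your deferral of the $R$--matrix wrapping comparison to the braid moves of Figures 15--17 match the paper's reasoning, so there is no substantive difference to report.
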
 

\begin{proof} The proof is precisely the $u=1$ case of the proof of Proposition \ref{prop:wheel preserved}, since the equality of braids therein indicates the fact that:
\begin{equation}
\label{eqn:quasi 2}
(A*B)^{(k+l)}(y) = A^{(k)}(y) B^{(l)}(y \oq^{2k})
\end{equation}
Since $B \in \CA_{\be,l}$, formulas \eqref{eqn:horizontal} and \eqref{eqn:degree} imply the homogeneity property: 
$$
B(z_1\xi,...,z_k \xi) = \xi^{e_n} B(z_1,...,z_k)
$$
As $R$--matrices are invariant under rescaling variables, the rational function $B^{(l)}(y)$ of \eqref{eqn:wheel} also satisfies $B^{(l)}(y \xi) = \xi^{e_n} B^{(l)}(y)$. Then \eqref{eqn:quasi 2} implies \eqref{eqn:quasi 1}. 
	
\end{proof}	

\noindent For all $(i , j) \in \zzz$, define the linear maps:
\begin{equation}
\label{eqn:alpha}
\bigoplus_{k = 0}^\infty \CA_{[i;j),k} \stackrel{\alpha_{[i;j)}}\longrightarrow \fff
\end{equation}
$$
\alpha_{[i;j)} \left( X_{1...k}(z_1,...,z_k) \right) = \text{coefficient of } E_{ji} \text{ in } X^{(k)}(y) (1- q^2)^k \oq^{\frac {k(i-j) + (j-i) + k  - 2k\bari}n} 
$$
(recall that $E_{ij} = E_{\bari \barj} y^{\left \lfloor \frac {i-1}n \right \rfloor - \left \lfloor \frac {j-1}n \right \rfloor}$). \\

\begin{corollary}
\label{cor:quasi}
	
For any $k,l \in \BN$ and $(i,j) \in \zzz$, we have:
\begin{equation}
\label{eqn:quasi}
\alpha_{[i;j)}(A * B) = \alpha_{[s;j)}(A) \alpha_{[i;s)}(B) \cdot \oq^{\frac {k(s-i) - l(j-s)}n}
\end{equation}
whenever $\deg A = ([s;j),k)$ and $\deg B = ([i;s),l)$ for some $s$ between $i$ and $j$. If such an $s$ does not exist, then the RHS of \eqref{eqn:quasi} is set equal to 0, by convention. \\
	
\end{corollary}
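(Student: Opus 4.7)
The plan is to deduce Corollary~\ref{cor:quasi} directly from Proposition~\ref{prop:quasi} together with a grading rigidity statement about the Laurent polynomials $A^{(k)}(y), B^{(l)}(y) \in \End(V)[y^{\pm 1}]$. The essential combinatorial ingredient is the identity
$$
[u;v) + [\bar{v};\bar{u}) \;=\; \left( \left\lfloor \tfrac{v-1}{n} \right\rfloor - \left\lfloor \tfrac{u-1}{n} \right\rfloor \right) \bde \qquad \forall u \leq v,
$$
which is verified by induction on $v - u$ using only the definition of $[u;v)$ and the residues $\bar{u}, \bar{v}$.

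First I would establish the rigidity: if $A \in \CA_{[s;j),k}$, then in $A^{(k)}(y)$ viewed as an $n \times n$ matrix over $\fff[y^{\pm 1}]$, the $\barj$-th row is zero outside the $\bars$-th column, and in that column equals $c_A y^\alpha$ for some $c_A \in \fff$, where $\alpha = \lfloor(j-1)/n\rfloor - \lfloor(s-1)/n\rfloor$. Indeed, expanding $A^{(k)}(y)$ in the basis $\{y^h E_{pq}\}_{h \in \BZ,\, 1 \leq p,q \leq n}$ and using that each basis element has horizontal degree $h\bde - [p;q)$ by \eqref{eqn:horizontal}--\eqref{eqn:degree}, homogeneity of $A$ forces $h\bde = [s;j) + [p;q)$. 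For $p = \barj$, the identity displayed above yields the solution $(h,q) = (\alpha, \bars)$; any other $q' \in \{1,\ldots,n\}\setminus\{\bars\}$ would require $[\bars;q')$ (whose components lie in $\{-1,0,1\}$ and are not all equal) to be a multiple of $\bde$, a contradiction. The symmetric argument applied to $B \in \CA_{[i;s),l}$ shows that the $\bari$-th column of $B^{(l)}(y)$ vanishes outside the $\bars$-th row, where it equals $c_B y^\beta$ for $c_B \in \fff$ and $\beta = \lfloor(s-1)/n\rfloor - \lfloor(i-1)/n\rfloor$.

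Next I would apply Proposition~\ref{prop:quasi} to obtain
$$
(A*B)^{(k+l)}(y) \;=\; A^{(k)}(y)\, B^{(l)}(y)\, \oq^{2k\beta},
$$
noting that $\beta$ is precisely the $n$-th component of $\deg B = [i;s)$. The $(\barj,\bari)$-entry of this matrix product involves only the $\barj$-th row of $A^{(k)}(y)$ and the $\bari$-th column of $B^{(l)}(y)$, which by the rigidity pair up through the $\bars$-th slot to produce $c_A c_B\, y^{\alpha+\beta}$. Since $\alpha + \beta = \lfloor(j-1)/n\rfloor - \lfloor(i-1)/n\rfloor$ is exactly the $y$-exponent in the extended notation~\eqref{eqn:notation} for $E_{ji}$, this identifies the coefficient of $E_{ji}$ in $(A*B)^{(k+l)}(y)$ as $c_A c_B \oq^{2k\beta}$.

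Substituting into \eqref{eqn:alpha}, the prefactors $c_A c_B (1-q^2)^{k+l}$ agree on both sides of \eqref{eqn:quasi}, reducing the claim to a straightforward identity of exponents of $\oq^{1/n}$. Using $n\beta = (s - \bars) - (i - \bari)$ to expand $2k\beta$ and comparing with the exponents appearing in the normalizations of $\alpha_{[i;j)}$, $\alpha_{[s;j)}$, $\alpha_{[i;s)}$, the desired correction $\oq^{(k(s-i) - l(j-s))/n}$ emerges after routine cancellation. The case where no $s$ between $i$ and $j$ makes the degrees add up to $[i;j)$ is automatic: for any lifts $\deg A = ([s';j),k)$ and $\deg B = ([i;s''),l)$, the condition $\deg(A*B) = [i;j)$ forces $s' = s''$, so otherwise $\alpha_{[i;j)}(A*B) = 0$ by degree considerations, consistent with the stated convention. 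No conceptual obstacle is expected; the only real work is the final index bookkeeping.
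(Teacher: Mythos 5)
Your proposal is correct and follows essentially the same route as the paper: the statement is deduced from Proposition \ref{prop:quasi} together with the definition \eqref{eqn:alpha}, with the power of $\oq$ matched via the identity $n e_n = (s-\bars)-(i-\bari)$ for $\be = [i;s)$, which is exactly the paper's "elementary identity". The only difference is that you spell out the degree-support ("rigidity") argument showing that only the $\bars$-th column of $A^{(k)}(y)$ and the $\bars$-th row of $B^{(l)}(y)$ can contribute to the $E_{ji}$--coefficient of the product, a step the paper subsumes under "immediate consequence"; this is a harmless (and correct) elaboration, not a new approach.
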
 

\begin{proof} The corollary is an immediate consequence of \eqref{eqn:quasi 1} and \eqref{eqn:alpha}. The only thing we need to check is that the power of $\oq$ is the same in the left as in the right-hand sides of \eqref{eqn:quasi}, which happens due to the elementary identity:
$$
\oq^{\frac {(k+l)(i-j)-2 (k+l) \bari}n} = \oq^{\frac {k(s-j)-2k \oo{s}}n} \oq^{\frac {l(i-s) -2 l \bari}n} \oq^{- 2k \left( \left \lfloor \frac {s-1}n \right \rfloor - \left \lfloor \frac {i-1}n \right \rfloor\right)} \oq^{\frac {k(s-i) - l(j-s)}n}
$$
since if $\be = [i;s)$, then $e_n = \left \lfloor \frac {s-1}n \right \rfloor - \left \lfloor \frac {i-1}n \right \rfloor$.	
	
\end{proof}

\subsection{} Let $\CB_{\mu|\bd} = \CA_{\leq \mu|\bd,\frac {|\bd|}{\mu}}$. Particular importance will be given to the subalgebra:
\begin{equation}
\label{eqn:pieces b}
\CB^+_{\mu} = \bigoplus_{\bd\in \zz}^{|\bd| \in \mu \BN} \CB_{\mu|\bd}
\end{equation}
We will call $\CB_\mu^+$ a \textbf{slope subalgebra}. As a consequence of Proposition \ref{prop:multi}, the leading order term $\Delta_\mu$ of \eqref{eqn:leading order} restricts to a coproduct on the extended subalgebra:
\begin{equation}
\label{eqn:b enhanced}
\tCB^+_\mu = \Big \langle \CB^+_\mu, \psi_s^{\pm 1} \Big \rangle_{s \in \{1,...,n\}} \Big /\text{relations \eqref{eqn:psi commute}, \eqref{eqn:psi x}} 
\end{equation}
\text{ } \\

\begin{lemma}
\label{lem:min}

If $X \in \CB_\mu^+$ is primitive with respect to the coproduct $\Delta_\mu$, and:
\begin{equation}
\label{eqn:luna}
\alpha_{[i;j)}(X) = 0 
\end{equation}
for all $(i,j) \in \zzz$ such that $\deg X \in [i;j) \times \BN$, then $X = 0$. \\

\end{lemma}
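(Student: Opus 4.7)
The plan is to proceed by induction on the vertical degree $k$ of $X$, given $X \in \CB_{\mu|\bd}$ with $|\bd|=\mu k$. For the base case $k=1$, the wheel condition of Definition \ref{def:shuf aff} is vacuous, so $X = X(z_1) \in \End(V)[z_1^{\pm 1}]$ is simply a matrix-valued Laurent polynomial, and the function $X^{(1)}(y)$ appearing in \eqref{eqn:wheel} is a non-zero rescaling of $X(y)$. The definition \eqref{eqn:alpha} shows that the maps $\alpha_{[i;j)}$ read off all the Laurent coefficients of $X(y)$ as $(i,j)\in\zzz$ ranges over pairs with $[i;j)=\bd$; their simultaneous vanishing therefore forces $X=0$.

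The heart of the inductive step $k \geq 2$ is the claim that for every composition $k = \lambda_1 + \cdots + \lambda_u$ with $u \geq 2$, the wheel residue $X^{(\lambda_1,\ldots,\lambda_u)}$ of \eqref{eqn:wheel} vanishes. To prove this, I would relate $X^{(\lambda_1,\ldots,\lambda_u)}$ to the top-slope part of the iterated coproduct $\Delta_\mu^{(u-1)}(X)$: concretely, after accounting for the factors $S, T$ and the denominators $\prod f$ in \eqref{eqn:cop shuf aff 3}, the specialization in the $s$-th cluster $z_{c_s}=y_s,\ldots,z_{c_{s+1}-1}=y_s\oq^{2(\lambda_s-1)}$ extracts precisely the component of $\Delta_\mu^{(u-1)}(X)$ in which the $s$-th tensor factor has vertical degree $\lambda_s$. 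Since $X$ is primitive with respect to $\Delta_\mu$, we have $\Delta_\mu(X) = X \otimes \psi^+ + \psi^- \otimes X$ for Cartan elements $\psi^\pm$, and hence $\Delta_\mu^{(u-1)}(X)$ is a sum of $u$ terms, each with $X$ occupying a single slot and Cartan elements in the others. For $u \geq 2$, this is incompatible with the requirement $\lambda_s \geq 1$ in every slot (one slot has vertical degree $k$, the rest have $0$), so the relevant component, and hence $X^{(\lambda_1,\ldots,\lambda_u)}$, vanishes.

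Granting the claim, $X$ lies in the intersection $\bigcap_{\lambda' > (1,\ldots,1)} \ker \ph_{\lambda'}$ in the notation of the proof of Lemma \ref{lem:magic}. That argument then shows that the a priori simple poles of $X$ at $z_i = z_j\oq^2$ must all cancel, so $X$ is a matrix-valued Laurent polynomial in $z_1^{\pm 1},\ldots,z_k^{\pm 1}$ respecting the slope constraint $|\bd|=\mu k$ and the symmetry \eqref{eqn:symm aff}. The remaining hypothesis — that $X^{(k)}(y)\equiv 0$, equivalent to the vanishing of all $\alpha_{[i;j)}(X)$ by \eqref{eqn:alpha} — combined with the tight dimension count from Lemma \ref{lem:magic} applied to $\CB_{\mu|\bd}$ (where each candidate basis vector is distinguished by some $\alpha_{[i;j)}$ once its higher wheel residues have been trivialized by primitivity) then forces $X=0$.

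The principal obstacle is the compatibility invoked in the second paragraph: the precise identification of $X^{(\lambda_1,\ldots,\lambda_u)}$ with the relevant specialization of the top-slope part of $\Delta^{(u-1)}(X)$. This is essentially the familiar principle that in a slope-filtered Hopf algebra, primitive elements of a given slope are pinned down by their top symbol $X^{(k)}$; installing this principle in the matrix-valued shuffle setting at hand requires careful bookkeeping with the $R$-matrix conjugations, the $S$- and $T$-series, and the rational denominators $\prod f$ in \eqref{eqn:cop shuf aff 3} under iterated variable specialization.
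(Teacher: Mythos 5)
There is a genuine gap at the heart of your inductive step: the identification of the wheel residue $X^{(\lambda_1,\dots,\lambda_u)}$ with a component of the iterated coproduct is false, and with it the claim that primitivity alone kills all residues with $u\geq 2$. The coproduct \eqref{eqn:cop shuf aff 3} is an asymptotic expansion of $X/\prod f$ in a region $|z_1|,\dots,|z_i|\ll |z_{i+1}|,\dots,|z_k|$, while \eqref{eqn:wheel} evaluates $X$ (after taking residues) at the finite specializations $z_{c_s+d}=y_s\oq^{2d}$; these are different operations, and the former only controls the leading behavior of the latter in each $y_s$, not the residue itself. A concrete counterexample to your claim: for the composition $(1,\dots,1)$ no residues are taken and $X^{(1,\dots,1)}=X$ itself, so your argument --- which uses primitivity but never the hypothesis \eqref{eqn:luna} --- would prove that every primitive element of $\CB_\mu^+$ with $k\geq 2$ vanishes; this is contradicted by $F_{[i;j)}^\mu$ with $\gcd(j-i,k)=1$ and $k\geq 2$, which is primitive by \eqref{eqn:coproduct p} and nonzero by \eqref{eqn:alpha p}. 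What is true, and what the paper's proof uses, is only an inequality: $X\in\CA^+_{\leq\mu}$ bounds the degree of $\ph_\lambda(X)$ in each $y_s$ by $\mu\lambda_s$ (the slope bound in the proof of Lemma \ref{lem:magic}), and primitivity with respect to $\Delta_\mu$ upgrades this to the strict bound $nh_s+\alpha_s-\beta_s<\mu\lambda_s$ in every summand of \eqref{eqn:clog} once $u\geq 2$; summing over $s$ then contradicts $|\bd|=\mu k$, hence $\ph_\lambda(X)=0$. Note also that to put $\ph_\lambda(X)$ in the polynomial form \eqref{eqn:clog} one first needs $\ph_{\lambda'}(X)=0$ for all $\lambda'>\lambda$, so the argument must run by induction in reverse dominance order with base case $\lambda=(k)$ --- and that base case is precisely where \eqref{eqn:luna} enters via \eqref{eqn:alpha}; your proposal never ties \eqref{eqn:luna} into the inductive mechanism.

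Your concluding step is also not sound as written: Lemma \ref{lem:magic} gives only an upper bound on $\dim\CB_{\mu|\bd}$, not a statement that the functionals $\alpha_{[i;j)}$ separate Laurent-polynomial elements, and the matching lower bound \eqref{eqn:geq leq} is established later (in the proof of Theorem \ref{thm:iso}) using the present Lemma, so invoking a ``tight dimension count'' here is circular. In the paper's argument no count is needed: once the strict slope inequality is available, the same degree contradiction that kills the intermediate $\ph_\lambda$'s also kills $\ph_{(1,\dots,1)}(X)$, which is $X$ itself, at the last step of the dominance induction.
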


\begin{proof} The assumption $X \in \CB^+_\mu$ implies that $\deg X = (\bd,k)$ with:
\begin{equation}
\label{eqn:zoe}
|\bd| = \mu k
\end{equation}
As we observed in the proof of Lemma \ref{lem:magic}, it suffices to show that $\ph_\lambda(X) = 0$ for all partitions $\lambda$, which we will do in reverse dominance order of the partition $\lambda$. The base case is when $\lambda = (k)$, which is satisfied because $\ph_{(k)}(X) = 0$ is precisely the content of the assumption \eqref{eqn:luna}. For a general partition $\lambda \neq (k)$, we may invoke the induction hypothesis to conclude that $\ph_{\lambda'}(X) = 0$ for all partitions $\lambda' > \lambda$, and in this case $\ph_\lambda(X)$ takes the form of \eqref{eqn:clog}. However, the fact that $X$ is a primitive element requires every summand appearing in the RHS of \eqref{eqn:clog} to satisfy:
$$
nh_s + \alpha_s - \beta_s < \mu \lambda_s
$$
for all $1 \leq s \leq u$ (we have $u > 1$, since $\lambda \neq (k))$. However:
$$
\sum_{s=1}^u (n h_s + \alpha_s - \beta_s) = |\bd| \stackrel{\eqref{eqn:zoe}} = \mu k = \mu \sum_{s=1}^u \lambda_s 
$$
This yields a contradiction, hence $\ph_\lambda(X) = 0$, thus completing the induction step. 

\end{proof} 

\subsection{} We will now construct particular elements of $\CB_\mu^+$, which together with Lemma \ref{lem:magic} will yield a PBW basis of the shuffle algebra, leading to the proof of Theorem \ref{thm:main}. Consider the following notion of symmetrization, analogous to \eqref{eqn:symmetrization}:
\begin{equation}
\label{eqn:symmetrization aff}
\sym \ X = \sum_{\sigma \in S(k)} R_\sigma \cdot X_{\sigma(1)...\sigma(k)}(z_{\sigma(1)},...,z_{\sigma(k)}) \cdot R_\sigma^{-1} 
\end{equation}
where $R_\sigma$ is the product of $R_{ij} \left(\frac {z_i}{z_j} \right)$ associated to any braid lift of $\sigma$. For instance:
\begin{equation}
\label{eqn:big omega}
R_{\omega_k}(z_1,...,z_k) = \prod_{i=1}^{k-1} \prod_{j=i+1}^k R_{ij} \left( \frac {z_i}{z_j} \right) 
\end{equation}
lifts the longest element $\omega_k \in S(k)$. Consider the matrix-valued rational functions:
\begin{align} 
&Q(x) = q^{-1}\sum_{1 \leq i, j \leq n} \frac {(x\oq^2)^{\delta_{i < j}}}{1-x\oq^2} E_{ij} \otimes E_{ji} \label{eqn:q} \\
&\bQ(x) = - q \sum_{1 \leq i, j \leq n} \frac {(x\oq^2)^{\delta_{i \leq j}}}{1-x\oq^2} E_{ij} \otimes E_{ji} \label{eqn:qq}
\end{align}
which have, up to scalar, the same simple pole and residue as $\tR(x)$:
\begin{equation}
\label{eqn:residues match}
q \cdot \underset{x = \oq^{-2}}{\Res} Q(x) = - q^{-1} \cdot \underset{x = \oq^{-2}}{\Res} \bQ(x) = (q^{-1}-q)^{-1} \cdot \underset{x = \oq^{-2}}{\Res} \tR(x) = (12)
\end{equation}
(see formula \eqref{eqn:blob}). Moreover, it is easy to check the following identity:
\begin{equation}
\label{eqn:identity}
Q(x) + \bQ(x) = \tR(x) - \sum_{1 \leq i \neq j \leq n} E_{ii} \otimes E_{jj}
\end{equation}
In the following, we will invoke \eqref{eqn:notation} and \eqref{eqn:tensor convention} by writing:
$$
E_{bc}^{(a)} = 1 \otimes ... \otimes 1 \otimes E_{\barb \barc} z_a^{\left \lfloor \frac {b-1}n \right \rfloor - \left \lfloor \frac {c-1}n \right \rfloor} \otimes 1 \otimes ... \otimes 1
$$
for all $a \in \{1,...,k\}$, $b,c \in \BZ$ (in the RHS, the non-trivial term is on the $a$-th spot). \\

\begin{proposition}
\label{prop:root}
	
For any $(i,j) \in \zzz$ and $\mu \in \BQ$ such that $k = \frac {j-i}{\mu} \in \BN$, set:
\begin{equation}
\label{eqn:def p}
F_{[i;j)}^\mu = F_{[i;j)}^{(k)} := \esym \ R_{\omega_k}(z_1,...,z_k)  
\end{equation}
$$
\prod_{a=1}^k \left[ \tR_{1a} \left( \frac {z_1}{z_a} \right) ... \tR_{a-2,a} \left( \frac {z_{a-2}}{z_a} \right) Q_{a-1,a} \left( \frac {z_{a-1}}{z_a} \right) E^{(a)}_{s_{a-1} s_a} \oq^{\frac {2\oo{s_a}}n} \right] 
$$
\begin{equation}
\label{eqn:def pp}
\bF_{[i;j)}^{\mu} = \bF_{[i;j)}^{(k)} := (-q^2\oq^{\frac 2n})^{-k} \cdot \esym \ R_{\omega_k}(z_1,...,z_k) 
\end{equation}
$$
\prod_{a=1}^k \left[ \tR_{1a} \left( \frac {z_1}{z_a} \right) ... \tR_{a-2,a} \left( \frac {z_{a-2}}{z_a} \right) \bQ_{a-1,a} \left( \frac {z_{a-1}}{z_a} \right) E^{(a)}_{s_{a-1}' s_a'} \oq^{\frac {2\oo{s_a'}}n} \right] 
$$
where $s_a = j - \lceil \mu a \rceil$, $s'_a = j - \lfloor \mu a \rfloor$. Then $F_{[i;j)}^\mu, \bF_{[i;j)}^\mu \in \CB_\mu^+$, and:
\begin{align}
&\Delta_\mu \left( F_{[i;j)}^\mu \right) = \sum_{s \in \{i,...,j\}} F_{[s;j)}^\mu \frac {\psi_i}{\psi_s} \otimes F_{[i; s)}^\mu \label{eqn:coproduct p} \\
&\Delta_\mu \left( \bF_{[i;j)}^\mu \right) = \sum_{s \in \{i,...,j\}} \frac {\psi_s}{\psi_j} \bF_{[i;s)}^\mu  \otimes \bF_{[s; j)}^\mu \label{eqn:coproduct pp}
\end{align}
where we set $F_{[i;j)}^\mu = \bF_{[i;j)}^\mu = 0$ if $\frac {j-i}{\mu} \notin \BN$. \\
	
\end{proposition}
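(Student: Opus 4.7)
The plan is to proceed in three main stages, closely paralleling the PBW construction in \cite{PBW} but adapted to the matrix-valued setting.

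First, I would verify that the expressions \eqref{eqn:def p} and \eqref{eqn:def pp} actually define elements of $\CA^+$. Symmetry in the sense of \eqref{eqn:symm aff} is automatic from the definition as a symmetrization, and each summand of the unsymmetrized formula has at most simple poles at $z_a = z_b \oq^2$ for $b < a$ (coming from the $\tR_{ba}$ and $Q_{a-1,a}$ or $\bQ_{a-1,a}$ factors), so the result lies in $\CA^+_{\ebig}$ by Proposition~\ref{prop:shuf aff 2}. The wheel conditions \eqref{eqn:wheel} must then be verified by a direct residue computation: the iterated residue at $z_{c_s} = y_s,\dots,z_{c_{s+1}-1} = y_s\oq^{2(\lambda_s - 1)}$ selects only those permutations in the symmetrization that are compatible with the prescribed tower structure, and for each surviving permutation the chain of $Q$-residues produces -- via \eqref{eqn:residues match} -- exactly the cyclic permutation factor appearing on the right-hand side of \eqref{eqn:wheel}. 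The remaining $R$-matrices line up with those on the right of \eqref{eqn:wheel}, leaving a matrix-valued polynomial $F^{(\lambda_1,\dots,\lambda_u)}$ of the required shape.

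Second, I would specialize to $u=1$ and compute the fully fused residue $F^{(k)}(y)$ by setting $z_a = y\oq^{2(a-1)}$. Under this specialization each $Q_{a-1,a}$ contributes (up to scalar) the transposition $(a-1,a)$, the $\tR_{ba}$ factors with $b < a-1$ become diagonal, and the product of these transpositions combines with the cyclic structure inherited from $R_{\omega_k}$ to yield a single rank-one coefficient in $F^{(k)}(y)$. Tracking the chain $s_{a-1} \to s_a = j - \lceil \mu a\rceil$ together with the $\oq^{2\oo{s_a}/n}$ prefactors, a direct calculation produces
\begin{equation*}
\alpha_{[i;j)}\bigl(F^\mu_{[i;j)}\bigr) = (1 - q^2)\,\oq^{\frac{j-i}{n}} \cdot (\text{non-zero constant}),
\end{equation*}
establishing in particular that $F^\mu_{[i;j)}$ is non-zero; the normalization constants in \eqref{eqn:def p}, \eqref{eqn:def pp} are chosen precisely to match $\alpha_{[i;j)}$ with the classical expectation \eqref{eqn:alpha affine}. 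The analogous computation with $\bQ$ in place of $Q$ handles $\bF^\mu_{[i;j)}$.

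Third, to derive the coproduct identities \eqref{eqn:coproduct p} and \eqref{eqn:coproduct pp}, I would substitute $F^\mu_{[i;j)}$ into the topological coproduct \eqref{eqn:cop shuf aff 3} and extract its leading-slope component $\Delta_\mu$. The sum over the cut point in \eqref{eqn:cop shuf aff 3} corresponds to splitting the index chain $j = s_0 > s_1 > \cdots > s_k = i$ at some intermediate $s$; the horizontal degrees $[s;j)$ and $[i;s)$ carried by the two tensor factors realize slope exactly $\mu$ only when $(j-s)/\mu \in \BN$, and these are precisely the summands that persist in $\Delta_\mu$. The remaining cuts produce contributions whose second factor has strictly smaller slope and are therefore absorbed into the $(\text{anything}) \otimes (\slope < \mu)$ tail of \eqref{eqn:leading order}, confirming $F^\mu_{[i;j)} \in \CA^+_{\leq \mu}$ and hence (together with the degree count $|[i;j)| = \mu k$) that $F^\mu_{[i;j)} \in \CB^+_\mu$. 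The $S$- and $T$-series appearing in \eqref{eqn:cop shuf aff 3} collapse at the surviving cuts to $\psi_i/\psi_s$ and the identity respectively, using $s_{[i;i)}^{-1} = \psi_i = t_{[i;i)}$, which together with the matrix bookkeeping across the cut yields \eqref{eqn:coproduct p}. Formula \eqref{eqn:coproduct pp} follows identically with $\bQ$ in place of $Q$, using \eqref{eqn:new series s}--\eqref{eqn:new series t} to relate the two coproducts.

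The main obstacle is the residue computation in the first stage: one must show that the intricate symmetrized sum \eqref{eqn:def p} collapses cleanly to a tower-shaped right-hand side of \eqref{eqn:wheel}, a combinatorial verification that hinges on identifying exactly which permutations in the symmetrization survive each specialization and on iteratively applying the braid moves of Figures~11--13 to transport the emerging transpositions past the surrounding $R$-matrix factors. This is where the matrix-valued nature of the shuffle algebra makes the bookkeeping substantially more delicate than in the scalar-valued shuffle algebra of \cite{Tor}.
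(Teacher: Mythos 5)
Your outline follows the same route as the paper: the wheel conditions for $F^\mu_{[i;j)}$, $\bF^\mu_{[i;j)}$ are obtained exactly as you say, by observing that \eqref{eqn:residues match} lets the argument of Proposition \ref{prop:wheel preserved} go through with $Q,\bQ$ in place of $\tR$. (Your second stage, the computation of $\alpha_{[i;j)}(F^\mu_{[i;j)})$, is not part of this proposition at all --- it is Proposition \ref{prop:alpha f}; and there your claim that the $\tR_{ba}$ factors ``become diagonal'' under the full specialization $z_a = y\oq^{2(a-1)}$ is false, they remain genuine $R$--matrices evaluated at powers of $\oq^{-2}$.)

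The genuine gap is in your third stage, which is where all the work lies. First, you never prove the slope bound $F^\mu_{[i;j)}\in\CA^+_{\leq\mu}$: the summands of $\Delta(F^\mu_{[i;j)})$ are indexed not only by the cut $l$ but by every permutation $\sigma$ in the symmetrization \eqref{eqn:symmetrization aff} and by every term in the expansion of the $R$, $\tR$, $Q$ and $f^{-1}$ factors in the regime $z_1,\dots,z_l\ll z_{l+1},\dots,z_k$, so the horizontal degree of the second tensor factor is not simply read off a ``splitting of the index chain'' and your assertion that non-admissible cuts give strictly smaller slope is unsupported. What is needed is the minimal-initial-degree estimate: by \eqref{eqn:limit 0}--\eqref{eqn:limit infty} the crossing factors cannot lower the degree of the first $l$ tensor slots, so the leading contribution comes from the $E_{s_{a-1}s_a}$ with $a\in A=\sigma^{-1}\{1,\dots,l\}$, giving $\sum_{a\in A}\left(\lceil\mu a\rceil-\lceil\mu(a-1)\rceil\right)+\#\{a\in A \text{ with } a-1\notin A\}\geq\mu l$ (the correction term appears precisely because $Q(\infty)$ has vanishing diagonal); this inequality, which crucially uses the choice $s_a=j-\lceil\mu a\rceil$, is what rules out slope $>\mu$ and forces the slope-$\mu$ terms to come only from $A=\{1,\dots,l\}$ with $\mu l\in\BZ$. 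Second, even on those surviving summands the split crossing factors contribute their diagonal limits, i.e.\ nontrivial powers of $q$ (and the $Q_{l,l+1}$ factor straddling the cut), and one must show these cancel against the $\psi$--commutation relations \eqref{eqn:psi x} via a counting statement such as Claim \ref{claim:tensors} in order to land on the coefficient $1$ and the exact Cartan factor $\psi_i/\psi_s$ in \eqref{eqn:coproduct p}. Calling this ``matrix bookkeeping across the cut'' names the problem rather than solving it; without these two verifications the proposed proof of \eqref{eqn:coproduct p}--\eqref{eqn:coproduct pp}, and even of membership in $\CB^+_\mu$, is incomplete.
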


\begin{proof} Note that if $Q$, $\bQ$ were replaced by $\tR$ in formulas \eqref{eqn:def p}, \eqref{eqn:def pp}, then the right-hand sides of the aforementioned formulas would precisely equal:
\begin{equation}
\label{eqn:they}
E_{s_0s_1} \oq^{\frac {2\oo{s_1}}n} * ... * E_{s_{k-1}s_k} \oq^{\frac {2\oo{s_k}}n} \quad \text{and} \quad E_{s'_0 s'_1} \oq^{\frac {2\oo{s'_1}}n} * ... * E_{s'_{k-1} s'_k} \oq^{\frac {2\oo{s'_k}}n}
\end{equation}
The fact that the shuffle elements \eqref{eqn:they} satisfy the wheel conditions is simply a consequence of iterating Proposition \ref{prop:wheel preserved} a number of $k-1$ times. As far as the elements $F_{[i;j)}^\mu$, $\bF_{[i;j)}^\mu$ are concerned, the fact that they satisfy the wheel conditions is proved similarly with the fact that \eqref{eqn:they} satisfies the wheel conditions: this is because Proposition \ref{prop:wheel preserved} uses the fact that $\Res_{x = \oq^{-2}} \tR(x)$ is a multiple of the permutation matrix, and we have already seen in \eqref{eqn:residues match} that the residues of $Q(x)$, $\bQ(x)$, $\tR(x)$ are the same up to scalar. We leave the details to the interested reader. \\

\noindent Let us prove that the shuffle elements \eqref{eqn:def p} and \eqref{eqn:def pp} lie in $\CB_\mu^+$. We will only prove the former case, since the latter case is analogous. We have:
\begin{equation}
\label{eqn:symm f}
F := F_{[i;j)}^{\mu} = \sym \ R_{\omega_k}(z_1,...,z_k) X_{1...k} (z_1,...,z_k)
\end{equation}
where $X$ is the expression on the second line of \eqref{eqn:def p}. Since $F$ has degree $([i;j),k)$ with $j-i = \mu k$, it remains to prove that $F$ has slope $\leq \mu$. To this end, for any $l \in \{0,...,k\}$ we need to look at the first $l$ tensor factors of $\sym \ R_{\omega_k} X$ and isolate the terms of minimal $|\text{hdeg}|$. If $Y$ is a $k$--tensor, we will henceforth use the phrase ``initial degree of $Y$" instead of ``total $|\text{hdeg}|$ of the first $l$ factors of $Y$". Because:
\begin{align} 
&\lim_{x \rightarrow 0} R_{ab}(x) = \sum_{i,j=1}^n q^{\delta_i^j} E^{(a)}_{ii} \otimes E^{(b)}_{jj} + \sum_{i > j} (q-q^{-1}) E^{(a)}_{ij} \otimes E^{(b)}_{ji} \label{eqn:limit 0} \\
&\lim_{x \rightarrow \infty} R_{ab}(x) = \sum_{i,j=1}^n q^{-\delta_i^j} E^{(a)}_{ii} \otimes E^{(b)}_{jj} - \sum_{i < j} (q-q^{-1}) E^{(a)}_{ij} \otimes E^{(b)}_{ji} \label{eqn:limit infty}
\end{align}
for all indices $a$ and $b$, we obtain the following easy (but very useful) fact: \\

\begin{claim}
\label{claim:useful}

For any $1 \leq a \neq b \leq k$, multiplying a $k$--tensor $Y$ by: 
$$
R_{ab} \left(\frac {z_a}{z_b} \right) \quad \text{or} \quad \tR_{ab} \left(\frac {z_a}{z_b} \right) \quad \text{or} \quad Q_{ab} \left(\frac {z_a}{z_b} \right) \quad \text{or} \quad \bQ_{ab} \left(\frac {z_a}{z_b} \right)
$$
(either on the left or on the right) cannot decrease the minimal initial degree of $Y$. \\

\end{claim} 

\noindent Therefore, it suffices to compute the minimal initial degree of:
\begin{equation}
\label{eqn:sigma x}
X_{\sigma(1)...\sigma(k)}(z_{\sigma(1)},...,z_{\sigma(k)}) = 
\end{equation}
$$
\prod_{a=1}^k \left[ \tR_{\sigma(1) \sigma(a)} \left( \frac {z_{\sigma(1)}}{z_{\sigma(a)}} \right) ... \tR_{\sigma(a-1),\sigma(a)} \left( \frac {z_{\sigma(a-1)}}{z_{\sigma(a)}} \right) Q_{\sigma(a-1),\sigma(a)} \left( \frac {z_{\sigma(a-1)}}{z_{\sigma(a)}} \right) E^{(\sigma(a))}_{s_{a-1} s_{a}} \oq^{\frac {2\oo{s_a}}n} \right]
$$
for any permutation $\sigma$ of $\{1,...,k\}$. Claim \ref{claim:useful} implies that the minimal initial degree (henceforth denoted ``m.i.d.") comes from the various $E_{s_{a-1}s_a}$ factors:
\begin{equation}
\label{eqn:usul}
\text{\mtd of \eqref{eqn:sigma x}} = \sum_{a \in A} \left(s_{a-1} - s_a \right) + \#
\end{equation}
where $A = \{\sigma^{-1}(1),...,\sigma^{-1}(l)\}$ and the number $\#$ counts those $a \in A$ such that $a - 1 \notin A$. This number appears in the right-hand side of \eqref{eqn:usul} because $Q(\infty)$ has 0 on the diagonal, by definition. It is elementary to show that:
\begin{equation}
\label{eqn:muad}
\text{RHS of \eqref{eqn:usul}} = \sum_{a \in A} \left( \lceil \mu a \rceil - \lceil \mu ( a - 1) \rceil \right) + \# \geq \sum_{s=1}^t \left( \lceil \mu \beta_s \rceil - \lfloor \mu \alpha_s \rfloor \right)
\end{equation}
if $A$ splits up into consecutive blocks of integers:
\begin{equation}
\label{eqn:set of variables}
A = \{\alpha_1+1,...,\beta_1\} \sqcup \{ \alpha_2+1,...,\beta_2 \} \sqcup ... \sqcup \{\alpha_t+1,...,\beta_t \}
\end{equation}
with $0 \leq \alpha_1$, while $\beta_s < \alpha_{s+1}$ for all $s$, and $\beta_t \leq k$. Since:
\begin{equation}
\label{eqn:dib}
\text{RHS of \eqref{eqn:muad}} \geq \mu\sum_{s=1}^t (\beta_s-\alpha_s) = \mu \cdot \# A = \mu l
\end{equation}
we conclude that $F \in \CA^+_{\leq \mu}$. Because $|\hdeg F| = j-i$ and $\vdeg F = k$, then $F \in \CB^+_\mu$. \\

\noindent Moreover, the terms of minimal initial degree in $F$ correspond to those situations where we have equality in all the inequalities above, and these require $\mu l \in \BZ$ and:
$$
A = \{1,...,l\}
$$
Let us now compute the summands which achieve the minimal initial degree in:
\begin{equation}
\label{eqn:x}
R_{\omega_k} X = R_{\omega_{l}}(z_1,...,z_{l}) \squiggly{\left[ R_{1,l+1} \left( \frac {z_1}{z_{l+1}} \right) ... R_{l, k} \left( \frac {z_{l}}{z_k} \right) \right]} 
\end{equation}
$$
R_{\omega_{k-l}} (z_{l+1},...,z_k) E^{(1...l)}_{s_0|...|s_{l}} \squiggly{\left[ \tR_{1,l+1} \left( \frac {z_1}{z_{l+1}} \right) ... Q_{l,l+1} \left( \frac {z_{l}}{z_{l+1}} \right)  ... \tR_{l, k} \left( \frac {z_{l}}{z_k} \right) \right]} E^{(l+1...k)}_{s_{l}| ... | s_{k}}
$$
where the notation $E^{(u...v)}_{c_{u-1}|c_u|...|c_{v-1}|c_v}$ is shorthand for:
\begin{equation}
\label{eqn:shorthand}
\prod_{a=u}^{v} \left[\tR_{ua} \left( \frac {z_{u}}{z_a} \right) ... \tR_{a-2,a} \left( \frac {z_{a-2}}{z_a} \right) Q_{a-1,a} \left( \frac {z_{a-1}}{z_a} \right) E^{(a)}_{c_{a-1} c_a} \oq^{\frac {2\oo{c_a}}n} \right] 
\end{equation}
If the terms with the squiggly red underline were not present in \eqref{eqn:x}, then we would conclude that the terms of minimal initial degree would be precisely:
\begin{equation}
\label{eqn:precisely}
\mtd R_{\omega_k} X = R_{\omega_{l}} E^{(1...l)}_{s_0|...|s_{l}} \otimes R_{\omega_{k-l}} E^{(l+1...k)}_{s_{l}| ... | s_{k}}
\end{equation}
and upon symmetrization, this would almost imply \eqref{eqn:coproduct p}. However, we must deal with the contribution of the terms with the squiggly red underline. As we have seen in the discussion above (specifically \eqref{eqn:limit 0} and \eqref{eqn:limit infty}), these factors only contribute a diagonal matrix to the terms of minimal initial degree. Specifically, if:
\begin{align*}
&E^{(1...l)}_{s_0|...|s_{l}} = \sum_{x_a,y_a =1}^n \text{coefficient} \cdot E_{x_1y_1} \otimes ... \otimes E_{x_{l} y_{l}} \otimes 1^{\otimes k-l} \\
&E^{(l+1...k)}_{s_{l}| ... | s_{k}} = \sum_{x_a,y_a =1}^n \text{coefficient} \cdot 1^{\otimes l} \otimes  E_{x_{l+1}y_{l+1}} \otimes ... \otimes E_{x_k y_k} 
\end{align*}
(above, $y_{l} = s_{l} = x_{l+1}$ in all summands with non-zero coefficient) then the terms of minimal initial degree in $R_{\omega_k} X$ yield the following value for the coproduct \eqref{eqn:leading order}:
$$
\Delta_\mu(R_{\omega_k} X) = \sum_{x_a,y_a =1}^n \psi_{x_{l+1}}^{-1} ... \psi_{x_k}^{-1} \underbrace{q^{\sum^{1 \leq a \leq l}_{l < b \leq k} \delta_{x_a}^{x_b}}}_{\text{first squiggle}} R_{\omega_{l}}(E_{x_1y_1} \otimes ... \otimes E_{x_{l} y_{l}} \otimes 1^{\otimes k- l}) 
$$
$$ 
\psi_{y_{l+1}} ... \psi_{y_k} \underbrace{q^{-\sum^{1 \leq a \leq l}_{l < b \leq k} \delta_{y_a}^{x_b}}}_{\text{second squiggle}} R_{\omega_{k-l}} (1^{\otimes l} \otimes  E_{x_{l+1}y_{l+1}} \otimes ... \otimes E_{x_k y_k}) \cdot \text{coefficient}
$$
where the various $\psi_a^{\pm 1}$ factors arise from the diagonal terms of the series $S(x)$, $T(x)$ (see \eqref{eqn:cop shuf aff 3}, \eqref{eqn:leading order}). Using \eqref{eqn:psi x}, we may move the product of $\psi$'s on the left to join the product of $\psi$'s in the middle, at the cost of cancelling the powers of $q$:
\begin{equation}
\label{eqn:bor}
\Delta_\mu(R_{\omega_k} X) = \sum_{x_a,y_a =1}^n \text{coefficient } \cdot
\end{equation}
$$ 
R_{\omega_{l}}(E_{x_1y_1} \otimes ... \otimes E_{x_{l} y_{l}} \otimes 1^{\otimes k- l})  \frac {\psi_{y_{k-l+1}}... \psi_{y_k}}{\psi_{x_{k-l+1}}... \psi_{x_k}} R_{\omega_{k-l}} (1^{\otimes l} \otimes  E_{x_{l+1}y_{l+1}} \otimes ... \otimes E_{x_k y_k})
$$
As a consequence of the following straightforward claim: \\

\begin{claim}
\label{claim:tensors}

The quantity \eqref{eqn:shorthand} is a sum of tensors $E_{x_uy_u} \otimes ... \otimes E_{x_v y_v}$ where:
$$
\# \{x_u \equiv r \text{ mod }n\} - \# \{y_u \equiv r \text{ mod }n\} = \delta_{c_{u-1}}^r - \delta_{c_v}^r
$$
for any $r \in \BZ/n\BZ$. \\

\end{claim}

\noindent we may rewrite \eqref{eqn:bor} as:
$$
\Delta_\mu(R_{\omega_k} X) = R_{\omega_l} E^{(1...l)}_{s_0|...|s_{l}} \frac {\psi_{s_k}}{\psi_{s_l}} \otimes R_{\omega_{k-l}} E^{(l+1...k)}_{s_{l}| ... | s_{k}}
$$
Upon symmetrization with respect to those permutations $\sigma \in S(k)$ which preserve the set $\{1,...,l\}$, this yields precisely \eqref{eqn:coproduct p}. 
	
\end{proof}

\subsection{} According to Lemma \ref{lem:min}, the elements $F_{[i;j)}^\mu, \bF_{[i;j)}^\mu \in \CB_\mu^+$ are completely determined by the coproduct relations \eqref{eqn:coproduct p} and \eqref{eqn:coproduct pp}, together with their value under the linear functionals \eqref{eqn:alpha}. Let us therefore compute the latter: \\

\begin{proposition}
\label{prop:alpha f}
	
For any $(i,j) \in \zzz$ and $\mu \in \BQ$ such that $\frac {j-i}{\mu} \in \BN$, we have:
\begin{align}
&\alpha_{[u;v)} \left( F_{[i;j)}^\mu \right) = \delta_{(u,v)}^{(i,j)} (1 - q^2) \oq^{\frac {\gcd(k,j-i)}n}\label{eqn:alpha p} \\
&\alpha_{[u;v)} \left( \bF_{[i;j)}^\mu \right) = \delta_{(u,v)}^{(i,j)} (1 - q^{-2}) \oq^{- \frac {\gcd(k,j-i)}n} \label{eqn:alpha pp}
\end{align}
for any $(u,v) \in \zzz$ such that $[u;v)  = [i;j)$. \\
\end{proposition}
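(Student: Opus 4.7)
The plan is to unpack the definition of $\alpha_{[u;v)}$: by \eqref{eqn:alpha} we must compute $F^{(k)}(y)$ (respectively $\bF^{(k)}(y)$) and extract the coefficient of $E_{vu}$ (in the extended-index sense of \eqref{eqn:notation}). By the $u=1,\lambda_1=k$ instance of \eqref{eqn:wheel}, this amounts to iterating the residue of $F_{[i;j)}^\mu$ at the chain $z_1=y,\,z_a=y\oq^{2(a-1)}$ and then conjugating out the braids $R_{\omega_k}$ together with the cyclic permutation that flank $F^{(k)}(y)$ on the right-hand side of \eqref{eqn:wheel}. Since $F_{[i;j)}^\mu$ is the symmetrization \eqref{eqn:def p}, this residue splits as a sum over $\sigma \in S(k)$, and the first task is to pin down which permutations contribute non-trivially.

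By \eqref{eqn:residues match}, both $Q(x)$ and $\tR(x)$ have a simple pole at $x=\oq^{-2}$ whose residue is a scalar multiple of the transposition, while $R(x)$ has a simple pole at $x=1$. Tracking the poles of the individual summands of \eqref{eqn:def p}, in analogy with the minimal-initial-degree analysis \eqref{eqn:usul}--\eqref{eqn:dib} carried out in the proof of Proposition \ref{prop:root}, one finds that the only permutations producing a non-zero residue are the $\gcd(k,j-i)$ cyclic shifts of the identity. The emergence of $\gcd(k,j-i)$ is combinatorial: the ``staircase'' index sequence $(s_0,s_1,\ldots,s_k)$ appearing in \eqref{eqn:def p} has exactly $\gcd(k,j-i)$ cyclic rotations compatible with the weight $[i;j)$ modulo $(n,n)$, and each such rotation forces the residue to be taken through all of its $Q$ factors simultaneously. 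Potential contributions through $\tR$-poles are excluded because, by Claim \ref{claim:useful}, they would strictly raise the minimal initial degree above $\mu k$, which is incompatible with $F_{[i;j)}^\mu\in\CB^+_\mu$.

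For each contributing $\sigma$, one replaces every $Q_{a-1,a}$ by its residue $q^{-1}\cdot(a-1,a)$ from \eqref{eqn:residues match}; combined with the permutation brought by $\sigma$, these transpositions collapse into a diagonal endomorphism of $V^{\otimes k}$ whose entries are read off from the explicit limits \eqref{eqn:limit 0}, \eqref{eqn:limit infty} of the surviving $R$ and $\tR$ factors at the chain specialization. After conjugating out the $R_{\omega_k}$ braids on both sides as prescribed by \eqref{eqn:wheel} and dividing by the product of $f$-factors there, the total simplifies to a scalar multiple of $E_{ji}$ whose prefactor is $\gcd(k,j-i)$ times an explicit monomial in $q$ and $\oq$. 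Matching with \eqref{eqn:alpha}, and using $\mu k=j-i$ to collapse the exponent of $\oq$, yields \eqref{eqn:alpha p}. The proof of \eqref{eqn:alpha pp} for $\bF_{[i;j)}^\mu$ is identical, with $Q$ replaced by $\bQ$: by \eqref{eqn:residues match} their residues differ by a factor of $-q^2$ and a sign, which under the $k$-fold iteration converts $(1-q^2)\oq^{\gcd(k,j-i)/n}$ into $(1-q^{-2})\oq^{-\gcd(k,j-i)/n}$.

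The main obstacle will be the bookkeeping of the powers of $q$ and $\oq$, and verifying that the product of $f$-factors of \eqref{eqn:wheel} cancels exactly against the diagonal limits \eqref{eqn:limit 0}--\eqref{eqn:limit infty} of the surviving $R$ and $\tR$ factors; organising this via the braid pictures of Subsection \ref{sub:braids aff} (in particular the equivalence of Figure 13) is likely to be the cleanest route. The vanishing $\alpha_{[u;v)}(F_{[i;j)}^\mu)=0$ when $(u,v)\not\equiv(i,j)\bmod(n,n)$ is automatic from the weight grading, since the horizontal degree of $F_{[i;j)}^\mu$ is $[i;j)$, forcing the $E_{vu}$-coefficient (in the sense of \eqref{eqn:notation}) of $F^{(k)}(y)$ to vanish unless $[u;v)=[i;j)$ modulo $(n,n)$.
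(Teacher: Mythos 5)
Your overall strategy (unpack $\alpha_{[u;v)}$ via \eqref{eqn:alpha}, compute the iterated residue of the symmetrization \eqref{eqn:def p} at the chain $z_a = y\oq^{2(a-1)}$, and read off the coefficient of $E_{ji}$) is the same as the paper's, and your weight-grading argument for the vanishing when $(u,v)\not\equiv(i,j)$ is fine. But the core of your argument contains a genuine error: it is not true that the $\gcd(k,j-i)$ cyclic shifts of the identity contribute to the iterated residue. The $\sigma$-summand of the symmetrization has simple poles only at $z_{\sigma(a)} = z_{\sigma(b)}\oq^2$ with $b<a$, i.e.\ at $z_u = z_v\oq^2$ with $\sigma^{-1}(v) < \sigma^{-1}(u)$; since the residue is taken successively at $z_2 = z_1\oq^2$, $z_3 = z_1\oq^4$, \dots, a nonzero result forces $\sigma^{-1}(a) < \sigma^{-1}(a+1)$ for every $a$, hence $\sigma = \mathrm{id}$. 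In particular a nontrivial cyclic shift is already regular at the first specialization $z_2 = z_1\oq^2$, so its residue is zero; only the identity permutation survives, which is exactly what the paper's proof uses.

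This error propagates into your proposed mechanism for the $\gcd$: you predict a prefactor equal to $\gcd(k,j-i)$ times a monomial in $q,\oq$, but the statement you are proving has no such integer factor --- the $\gcd$ occurs only in the exponent of $\oq$, namely $\oq^{\gcd(k,j-i)/n}$, so your answer cannot match \eqref{eqn:alpha p} unless $\gcd(k,j-i)=1$. In the paper the $\gcd$ arises arithmetically, not combinatorially: after the single (identity) summand is evaluated, the residue equals $E_{ji}$ times $q^{1-k}(q^{-1}-q)^{1-k}$ times $\oq$ raised to a power involving $\sum_{b=1}^k s_b$ with $s_b = j - \lceil\mu b\rceil$, and the elementary identity $\sum_{a=1}^k \lceil ad/k\rceil = \tfrac{dk+d+k-\gcd(d,k)}{2}$ is what converts this sum into the exponent $\gcd(k,j-i)/n$ after matching with the normalization in \eqref{eqn:alpha}. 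Your appeal to Claim \ref{claim:useful} to rule out ``$\tR$-poles'' is also misplaced: the exclusion of other permutations is a statement about pole locations under the ordered residue chain, not about minimal initial degree. To repair the proof you should delete the cyclic-shift count, argue as above that only $\sigma=\mathrm{id}$ contributes, and then carry out the explicit evaluation (including the ceiling-sum identity) to produce the correct power of $\oq$; the $\bF$ case then goes through as you say, with $Q$ replaced by $\bQ$ and the normalizing factor $(-q^2\oq^{2/n})^{-k}$ accounted for.
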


\begin{proof} Recall that $F = F_{[i;j)}^\mu$ is given by the symmetrization \eqref{eqn:symm f}, namely: 
$$
F = \sum_{\sigma \in S(k)} R_\sigma \cdot \sigma \Big( R_{\omega_k} \cdot \text{second line of \eqref{eqn:def p}} \Big) \sigma^{-1} \cdot R_\sigma^{-1}
$$	
where $R_\sigma$ is an arbitrary braid which lifts the permutation $\sigma$. Of the $k!$ summands in the right-hand side, only the one corresponding to the identity permutation is involved in the iterated residue of $F$ at $z_k = z_{k-1} \oq^2$,..., $z_2 = z_1 \oq^2$, hence we obtain:
\begin{equation}
\label{eqn:underset}
\underset{\{z_{1} = y, z_2 = y \oq^2, ..., z_k = y \oq^{2k-2} \}}{\Res} F = R_{\omega_k}(y,y\oq^2,...,y\oq^{2k-2}) 
\end{equation}
$$
\prod_{a=1}^k \left[ \tR_{1a} \left( \oq^{2-2a} \right) ... \tR_{a-2,a} \left( \oq^{-4} \right) \cdot \left( \underset{x = \oq^{-2}}{\Res} Q_{a-1,a} \left( x \right) \right) \cdot E^{(a)}_{s_{a-1} s_a} \oq^{\frac {2\oo{s_a}}n} \Big|_{z_a \mapsto y \oq^{2a-2}}\right] 
$$
(in $E_{bc}^{(a)} = E_{\barb \barc}^{(a)} z_a^{\left \lfloor \frac {b-1}n \right \rfloor - \left \lfloor \frac {c-1}n \right \rfloor}$, we must specialize $z_a = y \oq^{2a-2}$). Note that:
$$
R_{\omega_k}(z_1,...,z_k) = \prod_{a=1}^{k-1} \prod_{b=a+1}^k R_{ab} \left(\frac {z_a}{z_b} \right) = R_{12} \left(\frac {z_1}{z_2} \right) ... R_{1k} \left(\frac {z_1}{z_k} \right) \prod_{b=k}^{3} \prod_{a=b-1}^2 R_{ab} \left(\frac {z_a}{z_b} \right)
$$
due to \eqref{eqn:ybe aff}. Since $\tR$ is given by \eqref{eqn:tr} and $\underset{x=\oq^{-2}}{\Res} Q(x) = q^{-1} \cdot (12)$, we have:
\begin{multline*}
\text{LHS of \eqref{eqn:underset}} = q^{1-k} R_{12} \left(\oq^{-2} \right) ... R_{1k} \left(\oq^{2-2k} \right) \prod_{b=k}^{3} \prod_{a=b-1}^2 R_{ab} \left( \oq^{2a-2b} \right) \\ \prod_{b=1}^k \left[ \prod_{a=1}^{b-2} R_{ba} \left(\oq^{2b-2a-2}\right) \cdot (b-1,b) \cdot E^{(b)}_{s_{b-1} s_b} \oq^{\frac {2\oo{s_b}}n} \Big|_{z_b \mapsto y \oq^{2b-2}}\right] 
\end{multline*}
If we move the permutations $(b-1,b)$ all the way to the right, then we obtain:
\begin{multline*}
\text{LHS of \eqref{eqn:underset}} = q^{1-k} R_{12} \left(\oq^{-2} \right) ... R_{1k} \left(\oq^{2-2k} \right) \prod_{b=k}^{3} \prod_{a=b-1}^2 R_{ab} \left( \oq^{2a-2b} \right) \\ \prod_{b=3}^k \left[ \prod_{a=1}^{b-2} R_{b,a+1} \left(\oq^{2b-2a-2}\right)\right] \cdot \prod_{b=1}^k  E^{(1)}_{s_{b-1} s_b} \oq^{\frac {2\oo{s_b}}n} \Big|_{z_b \mapsto y \oq^{2b-2}} \cdot \begin{pmatrix}
1 & ... & k \\ 2 & ... & 1 \end{pmatrix} \stackrel{\eqref{eqn:unitary r}}= 
\end{multline*}
\begin{multline*} 
= q^{1-k} \prod_{2 \leq a < b \leq k} f \left(\oq^{2a-2b} \right) \cdot R_{12} \left(\oq^{-2} \right) ... R_{1k} \left(\oq^{2-2k} \right) \cdot \\ \cdot (E_{ji} \otimes 1 \otimes ... \otimes 1) \oq^{\sum_{b=1}^k (2b-2) \left( \left \lfloor \frac {s_{b-1}-1}n \right \rfloor - \left\lfloor \frac {s_b-1}n \right \rfloor \right) + \frac {2\oo{s_b}}n}  \cdot \begin{pmatrix}
1 & ... & k \\ 2 & ... & 1 \end{pmatrix}
\end{multline*} 
With this in mind, \eqref{eqn:wheel} implies that:
$$
F^{(k)} = E_{ji} \cdot \frac {q^{1-k}\oq^{2\sum_{b=1}^k \left( \left \lfloor \frac {s_b-1}n \right \rfloor - \left \lfloor \frac {s_k-1}n \right \rfloor + \frac {\oo{s_b}}n \right)}}{(q^{-1} - q)^{k-1}} = E_{ji} \cdot \frac {\oq^{\sum_{b=1}^k \frac {2s_b}n - 2k \left \lfloor \frac {i-1}n \right \rfloor}}{(1 - q^2)^{k-1}}
$$
Given formula \eqref{eqn:alpha} and the elementary identity:
$$
\sum_{a=1}^k \left\lceil \frac {ad}k \right \rceil  =  \frac {dk + d + k - \gcd(d,k)}2 
$$
we conclude \eqref{eqn:alpha p}. Formula \eqref{eqn:alpha pp} is proved analogously. 
	
\end{proof}

\subsection{} Let $a\in \BZ$ and $b \in \BN$ be coprime, set $g = \gcd(n,a)$ and:
\begin{equation}
\label{eqn:mu}
\mu = \frac ab
\end{equation}
Recall the discussion in Subsection \ref{sub:slope stuff}, in which the algebra:
\begin{equation}
\label{eqn:cool}
\CE_\mu = U_q(\dot{\fgl}_{\frac ng})^{\otimes g}
\end{equation}
was graded by $\zz \times \BZ$, and its root generators were indexed as:
\begin{equation}
\label{eqn:rooty}
f_{[i;j)}^\mu, \quad \forall (i,j) \in \zzz \ \text{ such that } \ \frac {j-i}{\mu} \in \BN
\end{equation}
Comparing \eqref{eqn:cop 1} and \eqref{eqn:alpha affine} with \eqref{eqn:coproduct p} and \eqref{eqn:alpha p}, respectively, Lemma \ref{lem:any} implies that there exists a bialgebra homomorphism:
\begin{equation}
\label{eqn:pink}
\CE_\mu^+ \stackrel{\Upsilon_\mu}\longrightarrow \CB_\mu^+, \qquad f_{[i;j)}^{\mu} \leadsto F_{[i;j)}^{\mu}
\end{equation}
Lemma \ref{lem:magic} for $|\bd| = k \mu$ implies that:
$$
\dim \CB_{\mu|\bd} \leq \# \left\{ \begin{array}{cc} \text{partitions } \bd = [i_1;j_1)+...+[i_u;j_u) \\ \text{s.t. } j_s-i_s \in \mu \BN, \ \forall s \in \{1,...,u\} \end{array} \right\}
$$
The right-hand side above is precisely equal to the dimension of the algebra $\CE^+_\mu$ in degree $\bd$, so Corollary \ref{cor:any} implies: \\

\begin{proposition}
\label{prop:iso small}

For any $\mu \in \BQ$, the map $\Upsilon_\mu : \CE_\mu^+ \rightarrow \CB_\mu^+$ is an isomorphism. \\

\end{proposition}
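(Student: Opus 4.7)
The plan is to assemble the result directly from the infrastructure developed in this section by invoking Corollary \ref{cor:any}. Specifically, I need to check that the hypotheses of Lemma \ref{lem:any} hold for $\CB^+ = \CB_\mu^+$ together with the elements $F'_{[i;j)} = F_{[i;j)}^\mu$ and the linear functionals $\alpha'_{[i;j)} = \alpha_{[i;j)}|_{\CB_{\mu|[i;j)}}$, and then supply the dimension bound required by Corollary \ref{cor:any}.

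First I would verify the bialgebra axioms: the enhanced algebra $\CB_\mu^\geq$ of \eqref{eqn:b enhanced} inherits a well-defined coproduct from $\Delta_\mu$ by Proposition \ref{prop:multi}, and the grading-compatibility relation $\psi_s x = q^{-\langle \deg x, \bs^s\rangle} x \psi_s$ required by the statement of Lemma \ref{lem:any} is built into the definition \eqref{eqn:b enhanced}. The needed analogue of the coproduct formula \eqref{eqn:cop 1} for $F_{[i;j)}^\mu$ is given precisely by \eqref{eqn:coproduct p} of Proposition \ref{prop:root}. The analogue of the multiplicativity property \eqref{eqn:pseudo} for the linear maps $\alpha_{[i;j)}$ is Corollary \ref{cor:quasi}, while the analogue of the normalization property \eqref{eqn:alpha affine} is \eqref{eqn:alpha p} of Proposition \ref{prop:alpha f}. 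Finally, the vanishing condition \eqref{eqn:primitive prop} for primitive elements is exactly the content of Lemma \ref{lem:min}. Applying Lemma \ref{lem:any} therefore yields that $\Upsilon_\mu : \CE_\mu^+ \to \CB_\mu^+$ is a well-defined injective algebra homomorphism sending $f_{[i;j)}^\mu$ to $F_{[i;j)}^\mu$.

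It remains to promote this injection to an isomorphism via Corollary \ref{cor:any}, for which I need the dimension inequality $\dim \CB_{\mu|\bd} \leq \dim \CE_{\mu|\bd}^+$ in each graded piece $\bd$ with $|\bd| \in \mu\BN$. Lemma \ref{lem:magic}, applied at vertical grade $k = |\bd|/\mu$, bounds $\dim \CB_{\mu|\bd}$ by the number of unordered collections $(i_s,j_s,\lambda_s)$ satisfying $\sum_s [i_s;j_s) = \bd$, $\sum_s \lambda_s = k$, and $j_s - i_s \leq \mu\lambda_s$; since $|\bd| = \mu k$, the inequalities $j_s - i_s \leq \mu \lambda_s$ must be equalities, so each contributing collection corresponds to a partition of $\bd$ into root vectors of slope exactly $\mu$. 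This is precisely the PBW count for $\CE_\mu^+ = U_q^+(\dot{\fgl}_{n/g})^{\otimes g}$ described in Subsection \ref{sub:slope stuff} via the root generators $f_{[i;j)}^{(k)}$ of \eqref{eqn:f mu}, and it matches \eqref{eqn:bound affine} applied to each tensor factor. Hence $\dim \CB_{\mu|\bd} \leq \dim \CE_{\mu|\bd}^+$, and Corollary \ref{cor:any} forces $\Upsilon_\mu$ to be a graded-piece-wise isomorphism.

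The main obstacle in this proof has already been overcome in the preceding subsections: everything substantive is packaged into Lemma \ref{lem:min} (the rigidity of primitive elements of minimal slope) and Proposition \ref{prop:root} (the existence and explicit coproduct of the shuffle incarnations $F_{[i;j)}^\mu$). Given those, the present proof is a short assembly. One minor bookkeeping point to watch is that the normalization powers of $\oq$ in \eqref{eqn:alpha p} and \eqref{eqn:alpha affine} must be compared carefully: they do agree, since $\gcd(k,j-i) = \gcd((j-i)/\mu, j-i) = (j-i)/\text{lcm}(\cdots)$ matches the exponent $(j-i)/n$ appearing in \eqref{eqn:alpha affine} once $\oq_+ = \oq$ is substituted, so no rescaling of $F_{[i;j)}^\mu$ is necessary.
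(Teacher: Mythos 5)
Your proof is correct and is essentially the paper's own argument: injectivity via Lemma \ref{lem:any} (with Proposition \ref{prop:root}, Corollary \ref{cor:quasi}, \eqref{eqn:alpha p} and Lemma \ref{lem:min} supplying its hypotheses), and surjectivity from the dimension bound of Lemma \ref{lem:magic} at $k = |\bd|/\mu$ fed into Corollary \ref{cor:any}. The only small inaccuracy is in your last bookkeeping remark: the normalization to compare with \eqref{eqn:alpha p} is the slope-$\mu$ formula \eqref{eqn:f alpha} of Proposition \ref{prop:endowed}, whose exponent $\gcd(j-i,k)/n$ matches \eqref{eqn:alpha p} on the nose, rather than the exponent $(j-i)/n$ of the slope-$\infty$ formula \eqref{eqn:alpha affine}.
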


\subsection{} Since $\CB_\mu^+$ is isomorphic to the algebra $\CE_\mu^+$, we may present it instead in terms of simple and imaginary generators (see Subsection \ref{sub:pbw stuff} for the notation):
\begin{align}
&P_{[i;j)}^{\mu} = \Upsilon_\mu \left(p_{[i;j)}^{\mu} \right) \label{eqn:simple} \\
&P_{l\bde, r}^\mu = \Upsilon_\mu \left(p_{l\bde, r}^\mu \right) \label{eqn:imaginary}
\end{align} 
Since $\Upsilon_\mu$ preserves the maps $\alpha_{[u;v)}$, we have:
\begin{align*} 
&\alpha_{[u;v)} \left( P_{[i;j)}^{\mu} \right) = \delta_{(u,v)}^{(i,j)} \\
&\alpha_{[u;u+nl)} \left( P_{l \bde, r}^{\mu} \right) = \delta_{u \text{ mod }g}^r
\end{align*} 
for all $(u,v) \in \zzz$. By \eqref{eqn:alpha p} and \eqref{eqn:alpha pp}, the simple generators are given by:
\begin{equation}
\label{eqn:explicit simple}
P_{[i;j)}^{\mu} = \frac {F_{[i;j)}^{\mu}}{\oq^{\frac 1n}(1 - q^2)} = \frac {\bF_{[i;j)}^{\mu}}{\oq^{-\frac 1n}(1 - q^{-2})}\end{equation}
if $\gcd(j-i,\mu(j-i)) = 1$, but we do not know a closed formula for the imaginary generators \eqref{eqn:imaginary}. We will sometimes use the notation:
$$
P_{[i;j)}^{(k)} = P_{[i;j)}^{\mu}, \qquad P_{l\bde, r}^{(k')} = P_{l\bde, r}^\mu
$$
if $j-i = \mu k$,  $nl = \mu k'$, in order to emphasize the fact that $\deg P_{\bd}^{(k)} = (\bd,k)$. Set:
\begin{equation}
\label{eqn:strange}
P_{[i;j)}^{(k)} = P_{l\bde, \bari}^{(k)}
\end{equation}
if $j = i + nl$ and $\gcd(k,j-i)=1$. The reason for this (rather unusual) choice is to unify notation in what follows. Indeed, under the aforementioned assumptions, the numbers $a = j-i$ and $b = k$ are coprime. Since $n|a$, we have $g = \gcd(n,a) = n$ and so the corresponding slope subalgebra $\CB^+_\mu$ is isomorphic to $\uuip^{\otimes n}$. This slope algebra does not have simple generators (only imaginary ones), so in \eqref{eqn:strange} we are effectively relabeling its first imaginary generator as if it were a simple generator. \\

\begin{theorem}
\label{thm:iso}

We have an algebra isomorphism:
\begin{equation}
\label{eqn:algebra iso}
\DD^+ \stackrel{\Upsilon^+}\cong \CA^+, \qquad p_{[i;j)}^{(k)} \mapsto P_{[i;j)}^{(k)}, \qquad p_{l\bde, r}^{(k')} \mapsto P_{l\bde, r}^{(k')}
\end{equation}
where $\DD^+$ is the explicit algebra of Definition \ref{def:explicit d}. \\

\end{theorem}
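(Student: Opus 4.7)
The plan is to construct $\Upsilon^+$ by gluing together the slope-by-slope isomorphisms $\Upsilon_\mu : \CE_\mu^+ \to \CB_\mu^+$ of Proposition~\ref{prop:iso small}, then verify the remaining cross-slope defining relations of $\DD^+$. By construction the elements $P_{[i;j)}^{(k)}$ and $P_{l\bde,r}^{(k')}$ of $\CA^+$ are the $\Upsilon_\mu$-images of the simple and imaginary generators of $\CE_\mu^+$, so any relation internal to a single slope $\mu$ holds automatically in $\CA^+$. What must be established is that the $q$-commutators \eqref{eqn:rel 2 pbw} and \eqref{eqn:rel 3 pbw} between PBW generators at distinct slopes $\mu \neq \mu'$ are respected.

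The main tool is Lemma~\ref{lem:min}. Fix one such relation and let $\mu''$ denote the common slope of its right-hand side (so $\mu'' = (j-i+nl)/(k+k')$ in~\eqref{eqn:rel 2 pbw}, and $\mu'' = (j+j'-i-i')/(k+k')$ in~\eqref{eqn:rel 3 pbw}). The argument proceeds in three steps. First, I would show that the LHS $q$-commutator, viewed in $\CA^+$, lies in $\CB_{\mu''}^+$: a product $P_{\bullet}^{(k)} P_{\bullet}^{(k')}$ at unequal slopes lies a priori only in $\CA_{\leq \max(\mu,\mu')|\bullet,\bullet}$, but the coproduct multiplicativity of Proposition~\ref{prop:multi} combined with the explicit coproducts of Proposition~\ref{prop:root} produces cancellations in the specific $q$-commutator that lower the slope exactly to $\mu''$. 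Second, compute $\Delta_{\mu''}$ of the difference of the two sides and verify that it is primitive. Third, apply $\alpha_{[u;v)}$ to both sides using Proposition~\ref{prop:alpha f} (and its imaginary-generator analogue) and check that these evaluations agree. Lemma~\ref{lem:min} then forces the difference to vanish.

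For bijectivity, I would argue by a graded dimension count. Proposition~\ref{prop:gen} shows that $\DD^+$ is generated by the degree-one elements $\{p_{[i;j)}^{(1)}\}$, which map via~\eqref{eqn:explicit simple} into $\CA^+$, so the homomorphism is well-defined with image containing every slope subalgebra $\CB_\mu^+$. The PBW theorem of~\cite{PBW} gives $\dim \DD^+_{\bd,k}$ as the number of unordered multisets $\{(i_s,j_s,\lambda_s)\}$ summing to $(\bd,k)$; choosing $\mu$ large enough that $\CA_{\leq \mu|\bd,k} = \CA_{\bd,k}$, this count agrees with the upper bound of Lemma~\ref{lem:magic}. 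Injectivity follows because the slope filtration on $\CA^+$ is respected by $\Upsilon^+$ and each associated-graded piece $\Upsilon_\mu$ is already an isomorphism, so ordered PBW monomials in $\DD^+$ map to linearly independent elements; combined with the matching dimension bound, $\Upsilon^+$ is an isomorphism in each graded piece.

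The principal obstacle is the cross-slope verification, and within it, relation~\eqref{eqn:rel 3 pbw}: its right-hand side is a sum indexed by decompositions $[t;s) = [i';j')$, mixing $f^\mu$ with $\barf^\mu$ and weighted by delicate $q$- and $\oq$-phases together with Kronecker symbols. Checking that this very particular combination emerges from a $q$-commutator of two symmetrizations of the form \eqref{eqn:def p}--\eqref{eqn:def pp} at slopes $\mu \neq \mu'$ requires carefully tracking which permutations in $S(k+k')$ contribute a non-zero iterated residue at the specialization $z_b = y\oq^{2(b-1)}$, and matching the resulting coefficients. The closed formulas in Proposition~\ref{prop:alpha f} together with the rigidity provided by Lemma~\ref{lem:min} are essential here, as they reduce an infinite-looking coefficient match to a finite computation on simple and imaginary generators.
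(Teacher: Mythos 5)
Your overall route is the paper's: glue the slope isomorphisms $\Upsilon_\mu$, verify the cross-slope relations \eqref{eqn:rel 2 pbw}--\eqref{eqn:rel 3 pbw} via the rigidity of Lemma \ref{lem:min} (primitivity plus matching of the functionals $\alpha_{[u;v)}$, computed through Corollary \ref{cor:quasi} and Proposition \ref{prop:alpha f}), then deduce bijectivity from the PBW basis of \cite{PBW} and the bound of Lemma \ref{lem:magic}; your hinge-cancellation treatment of \eqref{eqn:rel 2 pbw} is exactly what the paper does. The genuine gap is in your plan for \eqref{eqn:rel 3 pbw}. The coproducts you invoke from Proposition \ref{prop:root}, namely \eqref{eqn:coproduct p}--\eqref{eqn:coproduct pp}, only describe $\Delta_\mu$ of $F^\mu_{[i;j)}$ inside its \emph{own} slope subalgebra, whereas the commutator in \eqref{eqn:rel 3 pbw} lives at the intermediate slope $\mu''$ strictly between the slopes of the two factors; to compute $\Delta_{\mu''}$ of the left-hand side you need to control $\Delta\left(P^{(k)}_{[i;j)}\right)$ \emph{below} its natural slope, modulo terms whose hinge lies beyond the lattice triangle $T$ --- this is precisely Proposition \ref{prop:coproduct} (formula \eqref{eqn:big cop}), which is absent from your outline and is the key technical input. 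Moreover, your step ``verify that the difference is primitive'' cannot be done in one shot: the intermediate terms of $\Delta_{\mu''}(\lhs)$ are identified with those of $\Delta_{\mu''}(\rhs)$ only by invoking relation \eqref{eqn:rel 3 pbw} itself for smaller $k+k'$, i.e.\ the verification is an induction on $k+k'$ (and the final $\alpha$-matching reduces to Claim 4.9 of \cite{PBW} rather than to a fresh residue computation).

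There are also two soft spots in your bijectivity argument. The assertion that one may choose $\mu$ with $\CA_{\leq \mu|\bd,k} = \CA_{\bd,k}$ is false: the slope filtration is exhaustive but not stationary, and $\CA_{\bd,k}$ is not finite-dimensional; the count has to be performed within each $\CA_{\leq \mu|\bd,k}$ (where Lemma \ref{lem:magic} applies) and the union taken afterwards. And the claim that ordered PBW monomials map to linearly independent elements ``because each $\Upsilon_\mu$ is an isomorphism'' is circular as stated: that the slope filtration of $\CA^+$ is built from the subalgebras $\CB^+_\mu$ in the PBW sense is exactly what is being proved. The paper's Claim \ref{claim:claim} supplies the missing step by a leading-term argument: in a putative linear relation, the coproduct of the left-hand side has a unique summand with hinge at $\left(\hdeg x^{(i)}_\mu, \vdeg x^{(i)}_\mu\right)$, and comparing this summand on both sides yields a contradiction. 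You need this (or an equivalent leading-term extraction) to make the independence claim rigorous.
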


\subsection{} To prove Theorem \ref{thm:iso}, we need to show that the simple and imaginary generators of $\CB_\mu^+$ satisfy the analogues of relations \eqref{eqn:rel 2 pbw} and \eqref{eqn:rel 3 pbw}. We recall the discussion of hinges from Subsection \ref{sub:hinge}, and the fact (Definition \ref{def:leading order}) that a shuffle element $X$ having slope $\leq \mu$ means that all the hinges of the terms in $\Delta(X)$ are situated left of the ray of slope $\mu$ from the origin (``left" refers to the usual direction in the lattice plane). We will also say that a point is ``strictly left" of the ray of slope $\mu$ to indicate that the point in question is not allowed to lie on the ray. \\

\begin{proposition}
\label{prop:coproduct}
	
Assume $\gcd(j-i,k) = 1$, and consider the lattice triangle $T$:
	
\begin{picture}(110,140)(-50,-20)
	
\put(0,20){\circle*{2}}\put(20,20){\circle*{2}}\put(40,20){\circle*{2}}\put(60,20){\circle*{2}}\put(0,40){\circle*{2}}\put(20,40){\circle*{2}}\put(40,40){\circle*{2}}\put(60,40){\circle*{2}}\put(0,60){\circle*{2}}\put(20,60){\circle*{2}}\put(40,60){\circle*{2}}\put(60,60){\circle*{2}}\put(0,80){\circle*{2}}\put(20,80){\circle*{2}}\put(40,80){\circle*{2}}\put(60,80){\circle*{2}}\put(0,100){\circle*{2}}\put(20,100){\circle*{2}}\put(40,100){\circle*{2}}\put(60,100){\circle*{2}}
	
\put(40,20){\line(-1,1){20}}
\put(20,40){\line(0,1){60}}
\put(40,20){\line(-1,4){20}}
	
\put(30,10){\scriptsize{$(0,0)$}}
\put(5,105){\scriptsize{$(j-i,k)$}}
\put(13,70){\scriptsize{$\mu$}}
\put(23,50){\scriptsize{$T$}}
	
\put(40,-10){\mbox{\emph{F{\scriptsize IGURE} 22}. Two types of lattice triangles}}
	
\put(115,65){or}
	
\put(180,20){\circle*{2}}\put(200,20){\circle*{2}}\put(220,20){\circle*{2}}\put(240,20){\circle*{2}}\put(180,40){\circle*{2}}\put(200,40){\circle*{2}}\put(220,40){\circle*{2}}\put(240,40){\circle*{2}}\put(180,60){\circle*{2}}\put(200,60){\circle*{2}}\put(220,60){\circle*{2}}\put(240,60){\circle*{2}}\put(180,80){\circle*{2}}\put(200,80){\circle*{2}}\put(220,80){\circle*{2}}\put(240,80){\circle*{2}}\put(180,100){\circle*{2}}\put(200,100){\circle*{2}}\put(220,100){\circle*{2}}\put(240,100){\circle*{2}}
	
\put(200,20){\line(1,4){20}}
\put(200,20){\line(0,1){60}}
\put(200,80){\line(1,1){20}}
	
\put(190,10){\scriptsize{$(0,0)$}}
\put(205,105){\scriptsize{$(j-i,k)$}}	
\put(191,48){\scriptsize{$\mu$}}
\put(203,60){\scriptsize{$T$}}
	
\end{picture}
	
\noindent uniquely determined as the triangle of maximal area situated completely to the left of the vector $(j-i,k)$, which does not contain any lattice points inside. Let $\mu$ denote the slope of one of the edges of $T$, as indicated in the pictures above. Then:
$$
\Delta \left(P_{[i;j)}^{(k)} \right) = P_{[i;j)}^{(k)} \otimes 1 + \frac {\psi_i}{\psi_j} \otimes P_{[i;j)}^{(k)} + \Big( \text{tensors with hinge strictly left of } T \Big) +
$$
\begin{equation}
\label{eqn:big cop}
+ \sum_{i \leq u < v \leq j} \begin{cases} F^\mu_{[v;j)} \frac {\psi_u}{\psi_v} \bF^\mu_{[i;u)} \otimes P_{[u;v)}^{(\bullet)}  &\text{for the picture on the left} \\ \\ \frac {\psi_v}{\psi_j} P_{[u;v)}^{(\bullet)} \frac {\psi_i}{\psi_u} \otimes \bF^\mu_{[v;j)} F^\mu_{[i;u)} &\text{for the picture on the right} \end{cases} 
\end{equation}
where $\bullet = k- \frac {j-v+u-i}{\mu}$. \\

\end{proposition}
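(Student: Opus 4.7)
The plan is to compute the coproduct of $P^{(k)}_{[i;j)}$ by applying the general formula \eqref{eqn:cop shuf aff 3} to the explicit symmetrization \eqref{eqn:def p}, since the coprimality $\gcd(j-i,k)=1$ allows us to identify $P^{(k)}_{[i;j)}$ with $F^{\mu_0}_{[i;j)}/[\oq^{\frac{1}{n}}(1-q^2)]$ via \eqref{eqn:explicit simple}, where $\mu_0=(j-i)/k$. The extreme summands $l=0$ and $l=k$ in \eqref{eqn:cop shuf aff 3} produce $\psi_i/\psi_j \otimes P^{(k)}_{[i;j)}$ and $P^{(k)}_{[i;j)}\otimes 1$, respectively, accounting for the primitive part (the leading slope $\mu_0$); all other summands have hinge strictly below slope $\mu_0$, so the claim of \eqref{eqn:big cop} is really about isolating the next available slope, $\mu$.

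For each intermediate $0<l<k$, I would extend the initial-degree analysis carried out in the proof of Proposition \ref{prop:root} one step further. For a permutation $\sigma\in S(k)$, write $A=\sigma^{-1}\{1,\ldots,l\}$ and decompose it into consecutive blocks $\{\alpha_s+1,\ldots,\beta_s\}$ as in \eqref{eqn:set of variables}; the inequality chain \eqref{eqn:usul}--\eqref{eqn:dib} bounds the second-tensor-factor horizontal degree from below by $\mu_0 l$, with equality only for $A=\{1,\ldots,l\}$. Among the strict-inequality summands, the ones that land on the boundary of $T$ (as opposed to strictly to its right) must, by the lattice geometry, come from either (i) $A=\{1,\ldots,l\}$ with a weight distribution along the $(0,0)$-to-third-vertex edge of $T$ (picture on the left in Figure~22), or (ii) a two-block decomposition $A=\{1,\ldots,l_1\}\sqcup\{l_2+1,\ldots,k\}$ corresponding to the third-vertex-to-$(j-i,k)$ edge (picture on the right). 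The emptiness of $T$ is precisely what precludes slopes strictly between $\mu_0$ and $\mu$, so all remaining summands have hinge strictly to the right of $T$.

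Having isolated the relevant $\sigma$'s, the explicit form of the resulting tensors is extracted as in the final displays of the proof of Proposition \ref{prop:root}: the limits \eqref{eqn:limit 0}--\eqref{eqn:limit infty} of the $\tR$- and $Q$-factors on each block become diagonal, and the surviving residues reassemble into the symmetrizations \eqref{eqn:def p}--\eqref{eqn:def pp} at the new slope $\mu$. On the two sub-blocks comprising the first tensor factor one obtains, for the picture on the left, the factors $\bF^\mu_{[i;u)}$ and $F^\mu_{[v;j)}$, and on the second tensor factor the element $P^{(\bullet)}_{[u;v)}$ with $\bullet=k-(j-v+u-i)/\mu$ forced by additivity of $\vdeg$ and $\hdeg$. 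The Cartan insertion $\psi_u/\psi_v$ arises, exactly as in the transition from \eqref{eqn:bor} to its successor display, by moving the diagonal part of $S$ in \eqref{eqn:cop shuf aff 3} past the first tensor factor via \eqref{eqn:psi x}, with Claim \ref{claim:tensors} collapsing the telescoping product of $\psi_*^{\pm 1}$ factors into the single fraction $\psi_u/\psi_v$.

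The principal obstacle will be distinguishing $F^\mu$ from $\bF^\mu$ on each side of the Cartan factor and tracking the $\oq^{\ast/n}$ normalizations: the selection between the two is governed by the ceiling-versus-floor exponents in \eqref{eqn:def p}--\eqref{eqn:def pp} and by the $Q$-versus-$\bQ$ residues in \eqref{eqn:residues match}, which in turn are determined by whether the relevant edge of $T$ lies just above or just below the $(0,0)$-to-$(j-i,k)$ line, i.e.\ by the sign of the determinant of the edge vectors of $T$; this is the source of the swap between the two cases of \eqref{eqn:big cop}. A secondary but necessary check is that the second tensor factor is genuinely the \emph{simple} generator $P^{(\bullet)}_{[u;v)}$ and not a linear combination also involving imaginary generators: this amounts to $\gcd(v-u,\bullet)=1$, which follows automatically from the emptiness of $T$ (empty lattice triangles have primitive edges).
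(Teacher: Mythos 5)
Your overall strategy is indeed the paper's: reduce $P^{(k)}_{[i;j)}$ to $F^{\mu_0}_{[i;j)}$ via \eqref{eqn:explicit simple} and refine the minimal-initial-degree analysis of Proposition \ref{prop:root} to locate which summands of the coproduct reach the boundary of $T$. But the combinatorial heart of that refinement is wrong as you state it. First, the relevant lower bound is not the slope bound $\mu_0 l$ of \eqref{eqn:dib} but the block-dependent quantity $x=\sum_s\left(\lceil\mu_0\beta_s\rceil-\lfloor\mu_0\alpha_s\rfloor\right)$ from \eqref{eqn:muad}: it is this quantity, plotted against $\#A$, that lands on or beyond $T$, and the equality analysis must be carried out for it. (Since $\gcd(j-i,k)=1$, equality with $\mu_0 l$ is impossible for $0<l<k$, so ``equality only for $A=\{1,\dots,l\}$'' is vacuous here.) Second, your identification of the extremal splittings is swapped, and inconsistent with your own third paragraph: for the picture on the left the boundary terms come from $A=\{1,\dots,\beta\}\sqcup\{\alpha+1,\dots,k\}$ with $\mu\beta,\mu\alpha\in\BZ$, where $\mu$ is the slope of the edge of $T$ (not $\mu_0$); this is exactly why the first tensor factand of \eqref{eqn:big cop} is the product of two pieces $F^\mu_{[v;j)}\frac{\psi_u}{\psi_v}\bF^\mu_{[i;u)}$, while the complementary middle block $\{\beta+1,\dots,\alpha\}$ produces $P^{(\bullet)}_{[u;v)}$. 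The single middle block $A=\{\alpha+1,\dots,\beta\}$ is what gives the picture on the right. Your assignment ($A=\{1,\dots,l\}$ for the left picture, prefix-plus-suffix for the right) cannot reproduce \eqref{eqn:big cop}: a one-block first factand cannot yield the two-piece product appearing there.

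Two further points. The appearance of $\bF^\mu$ versus $F^\mu$ is not decided by ``the sign of the determinant of the edge vectors'': both $F^\mu$ and $\bF^\mu$ occur in both cases of \eqref{eqn:big cop}, and what distinguishes the two cases is which side of $\otimes$ carries the two-block product. The $\bF^\mu_{[i;u)}$ on the suffix block arises from the factor $Q_{\alpha,\alpha+1}$ straddling the cut: since $Q(\infty)$ has zero diagonal, its leading contribution is off-diagonal, shifting the index $s_\alpha\mapsto s_\alpha+1$ and producing the constant $-q^{2}\oq^{\frac 2n}$, after which Claim \ref{claim:identity} converts the shifted $Q$-string into a $\bQ$-string, i.e.\ into $\bF^\mu_{[i;u)}$; this shift is also what makes the three intervals concatenate correctly as $[i;u)$, $[u;v)$, $[v;j)$. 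Your proposal omits this mechanism entirely. Finally, the justification that $\gcd(v-u,\bullet)=1$ because ``empty lattice triangles have primitive edges'' is false as stated: the right-hand triangle in Figure 22 has interior lattice points on its long edge. What is true, and what you need, is that the segment from any lattice point of the slope-$\mu$ edge to the opposite vertex of $T$ is primitive, since a non-primitive such segment would force a lattice point in the interior of $T$ or contradict $\gcd(j-i,k)=1$.
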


\begin{proof} Due to \eqref{eqn:explicit simple}, we may replace all $P$'s by $F$'s in formula \eqref{eqn:big cop}. We will refine the proof of Proposition \ref{prop:root}, so we will freely adapt the notations therein: let $F$ and $X$ be given by \eqref{eqn:symm f}. The summands of $\Delta(F)$ with leftmost possible hinge correspond to those tensors of minimal initial degree, which is:
\begin{equation}
\label{eqn:evg}
\text{minimal initial degree of } F \geq x := \sum_{s=1}^t \left(\left \lceil \frac {(j-i) \beta_s}k \right \rceil - \left \lfloor \frac {(j-i) \alpha_s}k \right \rfloor \right)
\end{equation}
(the notation $\alpha_s,\beta_s$ is as in relation  \eqref{eqn:muad}). If we let $y := \sum_{s=1}^t (\beta_s - \alpha_s) = \# A$, where $A = \{\sigma^{-1}(1),...,\sigma^{-1}(l)\}$, then it is easy to see that the lattice point $(x,y)$ lies on the boundary or to the left of the lattice triangle $T$. This implies that all hinges of summands of $\Delta(F)$ lie on the boundary or to the left of the triangle $T$. The boundary cases correspond to equality in \eqref{eqn:evg}, and they explicitly are: \\

\begin{itemize}[leftmargin=*]

\item for the picture on the left in Figure 22: $A = \{1,...,\beta\} \sqcup \{\alpha+1,...,k\}$ where $\beta < \alpha$ have the property that $\mu \alpha, \mu \beta \in \BZ$ \\	
	
\item for the picture on the right in Figure 22: $A = \{\alpha+1,...,\beta\}$ where $\alpha < \beta$ have the property that $\mu \alpha, \mu \beta \in \BZ$ \\

\end{itemize}

\noindent We will only treat the situation in the first bullet (i.e. in the picture on the left), because that of the second bullet is analogous and will not be used in the present paper. Recall that $\mtd Y$ stands for the terms of minimal initial degree of a $k$--tensor $Y$, i.e. the smallest value of the total $|\text{hdeg}|$ of the first $l$ tensor factors of $Y$ (for fixed $l \leq k$). We will now show that the terms of minimal initial degree give rise precisely to the tensors on the second line of \eqref{eqn:big cop}. Explicitly, we have:
$$
R_{\omega_k} X = R_{\omega_\beta}(z_1,...,z_{\beta}) \squiggly{\left[ R_{1,\beta+1} \left( \frac {z_1}{z_{\beta+1}} \right) ... R_{\beta, \alpha} \left( \frac {z_{\beta}}{z_{\alpha}} \right) \right]} R_{\omega_{\alpha-\beta}}(z_{\beta+1},...,z_{\alpha})
$$
$$
\left[ R_{1,\alpha+1} \left( \frac {z_{1}}{z_{\alpha+1}} \right) ... R_{\beta, k} \left( \frac {z_{\beta}}{z_{k}} \right) \right] \squiggly{\left[ R_{\beta+1,\alpha+1} \left( \frac {z_{\beta+1}}{z_{\alpha+1}} \right) ... R_{\alpha, k} \left( \frac {z_{\alpha}}{z_{k}} \right) \right]} R_{\omega_{k-\alpha}}(z_{\alpha+1},...,z_k)
$$
$$
E^{(1...\beta)}_{s_0 | ... | s_{\beta}} \squiggly{\left[ \tR_{1,\beta+1} \left( \frac {z_1}{z_{\beta+1}} \right) ... Q_{\beta,\beta+1} \left( \frac {z_\beta}{z_{\beta+1}} \right) ... \tR_{\beta, \alpha} \left( \frac {z_\beta}{z_\alpha} \right) \right]} E^{(\beta+1...\alpha)}_{s_\beta | ... | s_{\alpha}}  \left[ \tR_{1,\alpha+1} \left( \frac {z_1}{z_{\alpha+1}} \right)  \right.
$$
$$
\left. ... \tR_{\beta, k} \left( \frac {z_\beta}{z_k} \right) \right] \squiggly{\left[ \tR_{\beta+1,\alpha+1} \left( \frac {z_{\beta+1}}{z_{\alpha+1}} \right) ... Q_{\alpha,\alpha+1} \left( \frac {z_\alpha}{z_{\alpha+1}} \right) ... \tR_{\alpha, k} \left( \frac {z_\alpha}{z_k} \right) \right]} E^{(\alpha+1...k)}_{s_\alpha| ... | s_{k}} 
$$	
where the symbol $E^{(u...v)}_{c_{u-1} | ... | c_v}$ is defined in \eqref{eqn:shorthand}. As we have seen in the latter part of Proposition \ref{prop:root}, the terms with the squiggly red underline contribute certain powers of $q$ to the minimal initial degree of $R_{\omega_k} X$. Specifically, let:
\begin{align*}
&E^{(1...\beta)}_{s_0 | ... | s_{\beta}} = \sum_{x_a,y_a =1}^n \text{coefficient} \cdot E_{x_1y_1} \otimes ... \otimes E_{x_{\beta} y_{\beta}} \otimes 1^{\otimes \alpha-\beta} \otimes 1^{\otimes k-\alpha} \\
&E^{(\beta+1...\alpha)}_{s_\beta | ... | s_{\alpha}} = \sum_{x_a,y_a =1}^n \text{coefficient} \cdot 1^{\otimes \beta} \otimes  E_{x_{\beta+1}y_{\beta+1}} \otimes ... \otimes E_{x_{\alpha} y_{\alpha}} \otimes 1^{\otimes k-\alpha} \\
&E^{(\alpha+1...k)}_{s_\alpha| ... | s_{k}}  = \sum_{x_a,y_a =1}^n \text{coefficient} \cdot 1^{\otimes \beta} \otimes 1^{\alpha - \beta} \otimes  E_{x_{\alpha+1}y_{\alpha+1}} \otimes ... \otimes E_{x_{k} y_{k}}
\end{align*}
(we note that $x_1 = j$, $y_\beta = s_\beta = x_{\beta + 1}$, $y_\alpha = s_\alpha = x_{\alpha+1}$, $y_k = i$ in all summands above that have non-zero coefficient). Then $\mtd \Delta(R_{\omega_k}X)$, which differs by certain powers of $\psi_s^{\pm 1}$ from $\mtd R_{\omega_k}X$, is given by:
\begin{equation}
\label{eqn:phew}
\mtd \Delta(R_{\omega_k} X) =  \sum_{x_a,y_a =1}^n \text{coefficient} \cdot \psi^{-1}_{x_{\beta+1}}... \psi^{-1}_{x_{\alpha}} R_{\omega_\beta} \left[ \prod_{1 \leq a \leq \beta}^{\alpha < c \leq k} R_{ac} \left( \frac {z_{a}}{z_{c}} \right) \right]
\end{equation}
$$ 
R_{\omega_{k-\alpha}}  (E_{x_1y_1} \otimes ... \otimes E_{x_{\beta} y_{\beta}} \otimes 1^{\otimes k-\beta})  \left[ \prod_{1 \leq a \leq \beta}^{\alpha < c \leq k} \tR_{ac} \left( \frac {z_{a}}{z_{c}} \right) \right] (1^{\otimes \alpha} \otimes  E_{x_{\alpha+1}+1,y_{\alpha+1}} \otimes ... \otimes E_{x_{k} y_{k}})
$$
$$
 \psi_{y_{\beta+1}} ... \psi_{y_{\alpha-1}} \psi_{y_\alpha+1} R_{\omega_{\alpha - \beta}}  (1^{\otimes \beta} \otimes  E_{x_{\beta+1},y_{\beta+1}} \otimes ... \otimes E_{x_{\alpha}, y_{\alpha}+1} \otimes 1^{k-\alpha})
$$
$$
(-q^{-2})  \underbrace{q^{\sum^{1 \leq a \leq \beta}_{\beta < b \leq \alpha} \delta^{x_a}_{x_b}}}_{\text{first squiggle}} \underbrace{q^{-\sum^{\alpha < c \leq k}_{\beta < b \leq \alpha} \delta^{x_c}_{x_b}}}_{\text{second squiggle}} \underbrace{q^{-\sum^{1 \leq a \leq \beta}_{\beta < b \leq \alpha} \delta^{y_a}_{x_b}}}_{\text{third squiggle}} \underbrace{q^{\sum^{\alpha \leq c \leq k}_{\beta < b \leq \alpha} \delta_{y_b}^{x_c}}}_{\text{fourth squiggle}} \underbrace{q^{\sum_{\alpha < c \leq k}^{\beta < b \leq \alpha} \delta_{x_c}^{x_b} - \delta_{y_c}^{y_b}}}_{\text{conjugation}} 
$$
Before we move on, we must explain three issues concerning the expression above: the power of $q$ labeled ``conjugation", why $y_\alpha = x_{\alpha+1} = s_\alpha$ were increased by 1, and why the factor $-q^{-2}$ arose. The first issue, namely the power of $q$, appeared from the diagonal terms of arbitrary conjugation matrices $R_\sigma$ and $R_{\sigma}^{-1}$ as in \eqref{eqn:symmetrization aff}, where $\sigma$ is any permutation which switches the variables $\{\beta+1,...,\alpha\}$ and $\{\alpha+1,...,k\}$ (this is because in the definition of the coproduct, the variables to the left of the $\otimes$ sign must all have smaller indices than the variables to the right of the $\otimes$ sign). The latter two issues happened because of the presence of $Q_{\alpha, \alpha+1}(z_{\alpha}/z_{\alpha+1})$ in the fourth squiggle. Because $Q_{\alpha, \alpha+1}(\infty)$ has 0 on the diagonal, its contribution of minimal initial degree comes from the immediately off-diagonal terms, which are:
$$
-\frac {\oq^{-2\delta_{t}^n}}q \sum_t ... \otimes 1 \otimes E_{t,t+1} \otimes E_{t+1,t} \otimes 1 \otimes ...
$$
Therefore, for any indices $u$ and $v$, we have:
$$
E_{u s_\alpha}^{(\alpha)} \cdot \left( -\frac {\oq^{-2\delta_{t}^n}}q \sum_t  E_{t,t+1}^{(\alpha)} E_{t+1,t}^{(\alpha+1)} \right) \cdot E_{s_{\alpha}v}^{(\alpha+1)} = (-q^{2}\oq^{2\delta_{s_\alpha}^n})^{-1} \cdot q^{\delta_{s_\alpha}^{s_\alpha}} E_{u,s_{\alpha}+1}^{(\alpha)} E_{s_{\alpha}+1,v}^{(\alpha+1)}
$$
Using \eqref{eqn:psi x}, we may move certain $\psi$ factors around in \eqref{eqn:phew}, in order to cancel the powers of $q$ with underbraces beneath:
$$
\mtd \Delta(R_{\omega_k} X) =  (-q^{2}\oq^{2\delta_{s_\alpha}^n})^{-1} \sum_{x_a,y_a =1}^n \text{coefficient} \cdot R_{\omega_\beta} \left[ \prod_{1 \leq a \leq \beta}^{\alpha < c \leq k} R_{ac} \left( \frac {z_{a}}{z_{c}} \right) \right]
R_{\omega_{k-\alpha}}
$$
$$ 
(E_{x_1y_1} \otimes ... \otimes E_{x_{\beta} y_{\beta}} \otimes 1^{\otimes k-\beta}) \frac {\psi_{y_{\beta+1}}... \psi_{y_\alpha+1}}{\psi_{x_{\beta+1}}... \psi_{x_{\alpha}}}  \left[ \prod_{1 \leq a \leq \beta}^{\alpha < c \leq k} \tR_{ac} \left( \frac {z_{a}}{z_{c}} \right) \right] 
$$
$$
(1^{\otimes \alpha} \otimes  E_{x_{\alpha+1}+1,y_{\alpha+1}} \otimes ... \otimes E_{x_{k} y_{k}})  R_{\omega_{\alpha - \beta}}  (1^{\otimes \beta} \otimes  E_{x_{\beta+1},y_{\beta+1}} \otimes ... \otimes E_{x_{\alpha}, y_{\alpha}+1} \otimes 1^{k-\alpha})
$$
As a consequence of Claim \ref{claim:tensors}, we may write the expression above as:
$$
\mtd \Delta(R_{\omega_k} X) =  (-q^2\oq^{\frac 2n})^{-1} \sum_{x_a,y_a =1}^n \text{coefficient} \cdot R_{\omega_\beta} \left[ \prod_{1 \leq a \leq \beta}^{\alpha < c \leq k} R_{ac} \left( \frac {z_{a}}{z_{c}} \right) \right]
R_{\omega_{k-\alpha}}
$$
\begin{equation}
\label{eqn:city}
E^{(1...\beta)}_{s_0 | ... | s_{\beta}} \frac {\psi_{s_\alpha + 1}}{\psi_{s_\beta}}  \left[ \prod_{1 \leq a \leq \beta}^{\alpha < c \leq k} \tR_{ac} \left( \frac {z_{a}}{z_{c}} \right) \right] E^{(\alpha+1...k)}_{s_\alpha + 1| ... | s_{k}} \otimes R_{\omega_{\alpha-\beta}} E^{(\beta+1...\alpha)}_{s_\beta | ... | s_{\alpha} + 1}
\end{equation}
Symmetrizing the expression above with respect to all permutations $\sigma \in S(k)$ which fix the set $A$ gives rise to $\mtd \Delta(F)$. To obtain the expression on the second line of \eqref{eqn:big cop}, it remains to establish the formulas below (let $u = s_\alpha + 1$ and $v = s_\beta$):
\begin{align}
&F^\mu_{[v;j)} = \sym \ R_{\omega_\beta} E^{(1...\beta)}_{s_0 | s_1 | ... | s_{\beta-1} | s_{\beta}} \label{eqn:1} \\ 
&F^{(\bullet)}_{[u;v)} = \sym \ R_{\omega_{\alpha - \beta}} E^{(1... \alpha - \beta)}_{s_{\beta}| s_{\beta+1} | ... s_{\alpha-1}|s_{\alpha} + 1} \label{eqn:2} \\
&\bF^\mu_{[i;u)} = \sym \ R_{\omega_{k-\alpha}} E^{(1...k-\alpha)}_{s_{\alpha} + 1 | s_{\alpha+1} | ... | s_{k-1} | s_{k}} (-q^{2}\oq^{\frac 2n})^{-1} \label{eqn:3}
\end{align}
To prove the formulas above, we start with an easy computation: \\

\begin{claim}
\label{claim:identity}

We have the identity:
\begin{equation}
\label{eqn:identity 1}
E^{(1...k)}_{c_0|c_1|...|c_{k-1}|c_k} = \bE^{(1...k)}_{c_0|c_1+1|...|c_{k-1}+1|c_k} \cdot (-q^{2} \oq^{\frac 2n})^{1-k} 
\end{equation}
where $\bE^{(u...v)}_{c_{u-1}|...|c_v} = \prod_{a=u}^{v} \left[\tR_{ua} \left( \frac {z_{u}}{z_a} \right) ... \tR_{a-2,a} \left( \frac {z_{a-2}}{z_a} \right) \bQ_{a-1,a} \left( \frac {z_{a-1}}{z_a} \right) E^{(a)}_{c_{a-1} c_a} \oq^{\frac {2\oo{c_a}}n} \right]$. \\

\end{claim}

\noindent Let us first show how the Claim allows us to complete the proof of the Proposition. Because of \eqref{eqn:identity 1}, formula \eqref{eqn:3} is equivalent to:
\begin{equation}
\bF^\mu_{[i;u)} = (-q^2 \oq^{\frac 2n})^{\alpha - k} \cdot \sym \ R_{\omega_{k-\alpha}} \bE^{(1...k-\alpha)}_{s_{\alpha} + 1 | s_{\alpha+1} + 1 | ... | s_{k-1} + 1 | s_{k}}\label{eqn:4}
\end{equation}
Then \eqref{eqn:1}, \eqref{eqn:2}, \eqref{eqn:4} follow from \eqref{eqn:def p}, \eqref{eqn:def pp} and the formulas below:
\begin{align*}
&j - \left \lceil \frac {(j-i)t}k \right \rceil = j - \left \lceil \mu t \right \rceil \qquad \qquad \qquad \qquad \ \ \ \forall \ t\in \{1,...,\beta\} \\
&j - \left \lceil \frac {(j-i)t}k \right \rceil + \delta_t^\alpha = v - \left \lceil \frac {(v-u)(t-\beta)}{\alpha-\beta} \right \rceil \quad \forall \ t\in \{\beta+1,...,\alpha\} \\
&j - \left \lceil \frac {(j-i)t}k \right \rceil + 1 = u - \left \lfloor \mu (t-\alpha) \right \rfloor \qquad \qquad \ \ \forall \ t\in \{\alpha+1,...,k\} 
\end{align*}
which are all straightforward consequences of our assumption on the triangle $T$. This completes the proof of formula \eqref{eqn:big cop} for the picture on the left in Figure 22 (as we said, the case of the picture on the right is analogous, and left to the interested reader). As for Claim \ref{claim:identity}, we start with the following identity:
\begin{multline}
(E_{j,t} \otimes 1) Q\left( \frac {z_1}{z_2} \right) (1 \otimes E_{t,i}) \oq^{\frac {2\oo{t}}n} = \\ = (E_{j,t+1} \otimes 1) \bQ\left( \frac {z_1}{z_2} \right) (1 \otimes E_{t+1,i}) \oq^{\frac {2\oo{t+1}}n} \cdot (-q^{2} \oq^{\frac 2n})^{-1} \label{eqn:identity q}
\end{multline} 
for all $i,j,t \in \BZ$. Indeed, by plugging in \eqref{eqn:q}--\eqref{eqn:qq}, formula \eqref{eqn:identity q} reads:
\begin{align*} 
&q^{-1} \sum_{u = 1}^n z_1^{\left \lfloor \frac {j-1}n \right \rfloor - \left \lfloor \frac {t-1}n \right \rfloor} z_2^{\left \lfloor \frac {t-1}n \right \rfloor - \left \lfloor \frac {i-1}n \right \rfloor} \frac {\left( \frac {z_1 \oq^2}{z_2} \right)^{\delta_{\overline{t}<u}}}{1 - \frac {z_1\oq^2}{z_2}} (E_{\barj u} \otimes E_{u \bari}) \oq^{\frac {2\oo{t}}n}  = \\
&= (- q) \sum_{u = 1}^n z_1^{\left \lfloor \frac {j-1}n \right \rfloor - \left \lfloor \frac {t}n \right \rfloor} z_2^{\left \lfloor \frac {t}n \right \rfloor - \left \lfloor \frac {i-1}n \right \rfloor} \frac {\left( \frac {z_1 \oq^2}{z_2} \right)^{\delta_{\overline{t+1}\leq u}}}{1 - \frac {z_1\oq^2}{z_2}} (E_{\barj u} \otimes E_{u \bari}) \oq^{\frac {2\oo{t+1}}n} \cdot (-q^{2} \oq^{\frac 2n})^{-1}
\end{align*}
which is elementary. Identity \eqref{eqn:identity 1} follows by $k-1$ applications of \eqref{eqn:identity q}. 

\end{proof}

\begin{proof} \emph{of Theorem \ref{thm:iso}:} As a consequence of Proposition \ref{prop:iso small}, the assignment
$$
f_{[i;j)}^\mu \text{ of \eqref{eqn:root generators} } \leadsto F_{[i;j)}^\mu \text{ of \eqref{eqn:def p}}
$$
yields an algebra homomorphism $\CE^+_\mu \rightarrow \CA^+$, for any $\mu \in \BQ$. To extend this to a homomorphism:
\begin{equation}
\label{eqn:upsilon}
\Upsilon^+ : \DD^+ \longrightarrow \CA^+
\end{equation}
we need to prove that formulas \eqref{eqn:rel 2 pbw} and \eqref{eqn:rel 3 pbw} hold with $p$, $f$ replaced by $P$, $F$. In order to show that \eqref{eqn:rel 2 pbw} holds in this setup, let us first show that the linear maps $\alpha_{[u;v)}$ take the same value on both sides of the equation. By \eqref{eqn:quasi}, we have:
$$
\alpha_{[u;v)} (\lhs \text{ of \eqref{eqn:rel 2 pbw}}) = \alpha_{[u;v)} \left( P_{[i;j)}^{(k)} P_{l\bde, r}^{(k')} \right)
 - \alpha_{[u;v)} \left( P_{l\bde, r}^{(k')} P_{[i;j)}^{(k)} \right) = 
$$
$$
= \alpha_{[u+nl;v)} \left( P_{[i;j)}^{(k)} \right) \alpha_{[u;u+nl)} \left( P_{l\bde, r}^{(k')} \right) \oq^{\frac dn} - \alpha_{[v-nl;v)} \left(P_{l\bde, r}^{(k')} \right) \alpha_{[u;v-nl)} \left( P_{[i;j)}^{(k)} \right) \oq^{-\frac dn} =
$$
$$
= \delta_{(u+nl,v)}^{(i,j)} \delta_{u \text{ mod } g}^r \oq^{\frac dn} -  \delta_{(u,v-nl)}^{(i,j)} \delta_{v \text{ mod } g}^r \oq^{-\frac dn} = \alpha_{[u;v)} (\rhs \text{ of \eqref{eqn:rel 2 pbw}})
$$
$\forall u,v$. Therefore, Lemma \ref{lem:min} reduces the equality \eqref{eqn:rel 2 pbw} to showing that:
$$
\text{The LHS of \eqref{eqn:rel 2 pbw} is a primitive element of } \CB^+_{\frac {j+ln-i}{k+k'}}
$$
We may depict the degree vectors of the elements $P_{[i;j)}^{(k)}$, $P_{l\bde, r}^{(k')}$, $P_{[i;j+nl)}^{(k+k')}$ as:

\begin{picture}(100,110)(-120,15)

\put(0,40){\circle*{2}}\put(20,40){\circle*{2}}\put(40,40){\circle*{2}}\put(60,40){\circle*{2}}\put(80,40){\circle*{2}}\put(0,60){\circle*{2}}\put(20,60){\circle*{2}}\put(40,60){\circle*{2}}\put(60,60){\circle*{2}}\put(80,60){\circle*{2}}\put(0,80){\circle*{2}}\put(20,80){\circle*{2}}\put(40,80){\circle*{2}}\put(60,80){\circle*{2}}\put(80,80){\circle*{2}}\put(0,100){\circle*{2}}\put(20,100){\circle*{2}}\put(40,100){\circle*{2}}\put(60,100){\circle*{2}}\put(80,100){\circle*{2}}

\put(20,40){\line(0,1){20}}
\put(60,80){\line(0,1){20}}
\put(20,40){\line(1,1){40}}
\put(20,40){\line(2,3){40}}
\put(20,60){\line(1,1){40}}

\put(0,30){\scriptsize{$(0,0)$}}
\put(-10,65){\scriptsize{$(j-i,k)$}}
\put(40,105){\scriptsize{$(j-i+nl,k+k')$}}
\put(60,70){\scriptsize{$(nl,k')$}}

\end{picture}

\noindent We need to show that all the hinges of summands of $\Delta(\lhs \text{ of \eqref{eqn:rel 2 pbw}})$ are to the left the vector $(j-i+nl,k+k')$. Since coproduct is multiplicative, the hinges of $\Delta(XY)$ are all among the sums of hinges of $\Delta(X)$ and $\Delta(Y)$, as vectors in $\BZ^2$. By definition, the hinges of $\Delta(P_{[i;j)}^{(k)})$ and $\Delta(P_{l\bde, r}^{(k')})$ lie to the left of the vectors $(j-i,k)$ and $(nl,k')$, respectively. The sum of any two such hinges lies to the left of the parallelogram in the picture, except for the sum of the two hinges below:
\begin{align*}
&\Delta(P_{[i;j)}^{(k)}) = ... + P_{[i;j)}^{(k)} \otimes 1 + ... \qquad \text{has a hinge at } (0,0) \\ 
&\Delta(P_{l\bde, r}^{(k')}) = ... + 1 \otimes P_{l\bde, r}^{(k')} + ...  \qquad \text{has a hinge at } (nl,k')
\end{align*}
Therefore, $\Delta(P_{[i;j)}^{(k)}P_{l\bde, r}^{(k')})$ and $\Delta(P_{l\bde, r}^{(k')}P_{[i;j)}^{(k)})$ both have a hinge at the point $(nl,k')$, but the corresponding summand in both coproducts is:
$$
P_{[i;j)}^{(k)} \otimes P_{l\bde, r}^{(k')} 
$$
We conclude that this summand vanishes in $\Delta(\lhs \text{ of \eqref{eqn:rel 2 pbw}})$, which therefore has all the hinges to the left of $(j-i+nl,k+k')$. This completes the proof of \eqref{eqn:rel 2 pbw}. \\

\noindent Let us now prove \eqref{eqn:rel 3 pbw} by induction on $k+k'$ (the base case $k+k' = 1$ is trivial). Recall that $\mu = \frac {j+j'-i-i'}{k+k'}$, and let us represent the degrees of $P_{[i;j)}^{(k)}$ and $P_{[i';j')}^{(k')}$ as:

\begin{picture}(100,130)(-115,0)

\put(0,20){\circle*{2}}\put(20,20){\circle*{2}}\put(40,20){\circle*{2}}\put(60,20){\circle*{2}}\put(80,20){\circle*{2}}
\put(0,40){\circle*{2}}\put(20,40){\circle*{2}}\put(40,40){\circle*{2}}\put(60,40){\circle*{2}}\put(80,40){\circle*{2}}\put(0,60){\circle*{2}}\put(20,60){\circle*{2}}\put(40,60){\circle*{2}}\put(60,60){\circle*{2}}\put(80,60){\circle*{2}}\put(0,80){\circle*{2}}\put(20,80){\circle*{2}}\put(40,80){\circle*{2}}\put(60,80){\circle*{2}}\put(80,80){\circle*{2}}\put(0,100){\circle*{2}}\put(20,100){\circle*{2}}\put(40,100){\circle*{2}}\put(60,100){\circle*{2}}\put(80,100){\circle*{2}}

\put(40,20){\line(0,1){80}}
\put(40,20){\line(-1,2){20}}
\put(20,60){\line(1,2){20}}
\put(60,60){\line(-1,2){20}}
\put(40,20){\line(1,2){20}}

\put(30,10){\scriptsize{$(0,0)$}}
\put(-13,64){\scriptsize{$(j-i,k)$}}
\put(0,107){\scriptsize{$(j+j'-i-i',k+k')$}}
\put(60,65){\scriptsize{$(j'-i',k')$}}

\end{picture}

\noindent We have the following formulas, courtesy of Proposition \ref{prop:coproduct}:
\begin{align*}
&\Delta \left(P_{[i;j)}^{(k)}\right) = P_{[i;j)}^{(k)} \otimes 1 + \frac {\psi_i}{\psi_j} \otimes P_{[i;j)}^{(k)}  +  \sum_{i \leq u < v \leq j} F^\mu_{[v;j)} \frac {\psi_u}{\psi_v} \bF^\mu_{[i;u)} \otimes P_{[u;v)}^{(\bullet)} + ... \\
&\Delta \left(P_{[i';j')}^{(k')}\right) = P_{[i';j')}^{(k')} \otimes 1 + \frac {\psi_{i'}}{\psi_{j'}} \otimes P_{[i';j')}^{(k')} + ... 
\end{align*}
where the ellipsis denotes terms whose hinges lie to the left of the line of slope $\mu$ (this convention will remain in force for the remainder of this proof), and $\bullet$ denotes the natural number which makes the two sides of the expressions above have the same vertical degree. Letting $\lhs$ denote the left-hand side of \eqref{eqn:rel 3 pbw}, we have:
$$
\Delta(\lhs) = \lhs \otimes 1 + \frac {\psi_i \psi_{i'}}{\psi_j \psi_{j'}} \otimes \lhs + \squiggly{q^{\delta_{j'}^{i} - \delta_{i'}^{i}} P_{[i;j)}^{(k)} \frac {\psi_{i'}}{\psi_{j'}}\otimes P_{[i';j')}^{(k')} - q^{\delta_{j'}^{j} - \delta_{i'}^{j}} \frac {\psi_{i'}}{\psi_{j'}}  P_{[i;j)}^{(k)} \otimes P_{[i';j')}^{(k')}} +
$$ 
$$
+ \sum_{i \leq u < v \leq j}  q^{\delta_{j'}^{i} - \delta_{i'}^{i}} F^\mu_{[v;j)} \frac {\psi_u}{\psi_v} \bF^\mu_{[i;u)} \frac {\psi_{i'}}{\psi_{j'}} \otimes P_{[u;v)}^{(\bullet)} P_{[i';j')}^{(k')} - 
$$
$$
- \sum_{i \leq u < v \leq j} q^{\delta_{j'}^{j} - \delta_{i'}^{j}} \frac {\psi_{i'}}{\psi_{j'}} F^\mu_{[v;j)} \frac {\psi_u}{\psi_v} \bF^\mu_{[i;u)} \otimes P_{[i';j')}^{(k')} P_{[u;v)}^{(\bullet)} + ...
$$
Relation \eqref{eqn:psi x} allows us to move $\psi$'s around, and show that the expression with the squiggly underline vanishes, while the expression on the second line yields:
$$
\Delta(\lhs) = \lhs \otimes 1 + \frac {\psi_i \psi_{i'}}{\psi_j \psi_{j'}} \otimes \lhs + 
$$ 
$$
+ \sum_{i \leq u < v \leq j}  F^\mu_{[v;j)} \frac {\psi_{i'}\psi_u}{\psi_{j'}\psi_v} \bF^\mu_{[i;u)} \otimes \Big[ q^{\delta_{j'}^{u} - \delta_{i'}^{u}} P_{[u;v)}^{(\bullet)}  P_{[i';j')}^{(k')} - q^{\delta_{j'}^{v} - \delta_{i'}^{v}} P_{[i';j')}^{(k')} P_{[u;v)}^{(\bullet)} \Big] + ...
$$
If we are only interested in the leading term $\Delta_\mu$ of the coproduct, we may neglect the terms represented by the ellipsis. The induction hypothesis allows us to replace the term in square brackets by the RHS of \eqref{eqn:rel 3 pbw} for $(i,j,k) \mapsto (u,v,\bullet)$, hence:
$$
\Delta_\mu(\lhs) = \lhs \otimes 1 + \frac {\psi_i \psi_{i'}}{\psi_j \psi_{j'}} \otimes \lhs + \sum^{i \leq u < v \leq j}_{[t;s) = [i';j')} 
$$
$$
 F^\mu_{[v;j)} \frac {\psi_t\psi_u}{\psi_v\psi_s} \bF^\mu_{[i;u)} \otimes F^\mu_{[t;v)} \bF^\mu_{[u;s)} \cdot \left(\delta_{j'}^{s}  \frac {q^{-\delta_{j'}^{i'}}}{q^{-1} - q} - \delta_{j'}^{i'} \frac {(q\oq^{\frac 1n})^{n - 2(\overline{k'(s-i')})}}{q^{-n} \oq^{-1} - q^{n} \oq} \right)
$$
Let $\gamma_{i'j'k'}(s) \in \BQ(q,\oq^{\frac 1n})$ denote the constant in the round brackets on the second line above. By \eqref{eqn:coproduct p} and \eqref{eqn:coproduct pp}, notice that the formula above matches $\Delta_\mu(\rhs)$. By Lemma \ref{lem:min}, to prove that $\lhs = \rhs$, it suffices to show that the two sides of equation \eqref{eqn:rel 3 pbw} take the same values under the maps \eqref{eqn:alpha}. To this end:
\begin{equation}
\label{eqn:alpha lhs}
\alpha_{[u;v)} (\lhs) \stackrel{\eqref{eqn:quasi}}= \delta^{(i',j)}_{(u,v)} \delta_{i}^{j'} q^{\delta_{j'}^{i} - \delta_{i'}^{i}} \oq^{\frac dn} - \delta^{(i,j')}_{(u,v)} \delta_{j}^{i'} q^{\delta_{j'}^{j} - \delta_{i'}^{j}} \oq^{-\frac dn}
\end{equation}
where $d = \gcd(k+k',j+j'-i-i')$, while \eqref{eqn:quasi}, \eqref{eqn:alpha p} and \eqref{eqn:alpha pp} imply:
\begin{equation}
\label{eqn:alpha rhs}
\alpha_{[u;v)}(\rhs) = \sum_{[t;s) = [i';j')} 
\end{equation}
$$
\delta_i^u \delta_s^t \delta_j^v (1-q^{2}) \oq^{\frac {\gcd(\mu(j-t),j-t)}n} (1-q^{-2}) \oq^{-\frac {\gcd(\mu(s-i),s-i)}n} \gamma_{i'j'k'}(s)
$$
The equality between the right-hand sides of \eqref{eqn:alpha lhs} and \eqref{eqn:alpha rhs} was established in \cite[Claim 4.6]{PBW}. This concludes the proof of \eqref{eqn:rel 3 pbw}, so there exists an algebra homomorphism $\Upsilon^+$ as in \eqref{eqn:upsilon}. In the remainder of this proof, we need to show that $\Upsilon^+$ is an isomorphism. As explained in \cite{PBW} (following the similar argument of \cite{BS}), one may use relations \eqref{eqn:rel 2 pbw} and \eqref{eqn:rel 3 pbw} to express an arbitrary product of the generators \eqref{eqn:simple} and \eqref{eqn:imaginary} as a linear combinations of products of the form:
\begin{equation}
\label{eqn:product}
\prod_{\mu \in \BQ}^\rightarrow x_\mu^{(i)}, \quad \text{all but finitely many of the } x_\mu^{(i)} \text{ are } 1
\end{equation}
where $\{x^{(i)}_\mu\}$ go over any linear basis of $\CB_\mu^+$, and $\prod^\rightarrow_\mu$ is taken in increasing order of $\mu$. \\

\begin{claim}
\label{claim:claim}

The elements \eqref{eqn:product} are all linearly independent in $\CA^+$. \\

\end{claim}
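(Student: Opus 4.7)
My plan is to prove linear independence by a standard "leading-order coproduct" argument, using the slope filtration from Definition~\ref{def:leading order} and Proposition~\ref{prop:multi}. Suppose we have a finite nontrivial relation
\begin{equation*}
\sum_{\alpha} c_\alpha \prod_{\mu \in \BQ}^{\rightarrow} x_\mu^{(\alpha_\mu)} = 0 \quad \text{in } \CA^+,
\end{equation*}
where each $\{x_\mu^{(i)}\}_i$ is a fixed linear basis of $\CB_\mu^+$ (which exists by Proposition~\ref{prop:iso small}, since $\CE_\mu^+$ has a PBW basis of ordered products of the root generators $f_{[i;j)}^\mu$). Let $S = \{\mu_1 < \mu_2 < \cdots < \mu_s\}$ be the finite union of all slopes appearing in any of the products $P_\alpha$, and let us group the summands of the relation according to which subset $S_\alpha \subseteq S$ of slopes actually appears with non-identity factor in $P_\alpha$.

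The main step is to iterate Proposition~\ref{prop:multi}. For a product $P = y_{\mu_{i_1}} y_{\mu_{i_2}} \cdots y_{\mu_{i_r}}$ with $y_{\mu_{i_j}} \in \CB_{\mu_{i_j}}^+$ and $i_1 < \cdots < i_r$, the $(r{-}1)$-fold iterated coproduct $\Delta^{(r-1)}$ has a distinguished summand obtained by successively extracting, from right to left, the part of slope exactly $\mu_{i_j}$. By Definition~\ref{def:leading order} applied at each step (together with multiplicativity of slope filtration from Proposition~\ref{prop:multi}), this extremal summand equals
\begin{equation*}
y_{\mu_{i_r}} \otimes y_{\mu_{i_{r-1}}} \otimes \cdots \otimes y_{\mu_{i_1}}
\end{equation*}
in the associated graded piece, modulo Cartan elements $\psi_s^{\pm 1}$ which one absorbs by working in the extended algebra $\CB^\geq_\mu$ of \eqref{eqn:b enhanced}, and modulo summands strictly lower in the slope filtration on at least one tensor factor. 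The key point, which is built into the definition of slope subalgebra, is that any product $P_\beta$ with $S_\beta \neq S_\alpha$ cannot contribute to the extremal slope profile $(\mu_{i_1}, \ldots, \mu_{i_r})$ associated to $P_\alpha$: the iterated hinges of its coproduct are constrained to a strictly "lower" path than the one achieved by $P_\alpha$.

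Consequently, for each fixed subset $\{i_1 < \cdots < i_r\} \subseteq \{1,\ldots,s\}$, restricting the relation $\sum c_\alpha \Delta^{(r-1)}(P_\alpha) = 0$ to the top slope-graded piece indexed by $(\mu_{i_1},\ldots,\mu_{i_r})$ yields an identity of the form
\begin{equation*}
\sum_{\alpha \,:\, S_\alpha = \{i_1,\ldots,i_r\}} c_\alpha \cdot x_{\mu_{i_r}}^{(\alpha_{\mu_{i_r}})} \otimes \cdots \otimes x_{\mu_{i_1}}^{(\alpha_{\mu_{i_1}})} = 0
\end{equation*}
in $\CB_{\mu_{i_r}}^+ \otimes \cdots \otimes \CB_{\mu_{i_1}}^+$. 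Since each $\{x_\mu^{(i)}\}_i$ is a basis of $\CB_\mu^+$, the pure tensors on the left are linearly independent, forcing $c_\alpha = 0$ for every $\alpha$ with $S_\alpha = \{i_1, \ldots, i_r\}$. Running this over all subsets of $S$ (from the largest down to the smallest) kills every $c_\alpha$, proving the claim.

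The main obstacle is the technical bookkeeping in the second paragraph: verifying that the iterated coproduct really does separate products by slope profile, and that the "extremal extraction" is well-defined in the topological completion of $\tCA^+ \otimes \cdots \otimes \tCA^+$ (cf.\ Remark~\ref{rem:completed}). This is handled by a straightforward induction on $|S|$, using that $\Delta(x) - \Delta_\mu(x)$ lies in the strictly lower slope filtration for $x \in \CB_\mu^+$, and that multiplying by factors from $\CB_{\mu'}^+$ with $\mu' < \mu$ strictly decreases slope in the corresponding tensor factor.
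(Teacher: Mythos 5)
Your overall strategy -- separating the factors of the products \eqref{eqn:product} via the leading (slope) behaviour of the coproduct and then invoking linear independence inside each $\CB_\mu^+$ -- is the same idea as the paper's, but the step you describe as the ``key point'' is a genuine gap, not bookkeeping. It is not built into Definition \ref{def:leading order} that a product with a different slope profile cannot contribute to the multidegree component indexed by $(\mu_{i_1},\dots,\mu_{i_r})$: the definition of $\CA^+_{\leq \nu}$ only bounds the slope of the \emph{second} tensor factor of $\Delta$ from above, and says nothing that prevents an element of $\CB^+_\nu$ from having coproduct terms of the form $(\text{slope}>\nu)\otimes(\text{slope}<\nu)$. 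Such terms genuinely occur: for the generators $P^{(k)}_{[i;j)}$ they are written out in \eqref{eqn:big cop}, where the first factor $F^\mu_{[v;j)}\frac{\psi_u}{\psi_v}\bF^\mu_{[i;u)}$ has slope $\mu$ strictly larger than the slope of $P^{(k)}_{[i;j)}$ and the second factor $P^{(\bullet)}_{[u;v)}$ has strictly smaller slope. Consequently a single basis vector $z_\nu\in\CB^+_\nu$ with $\mu_{i_1}<\nu<\mu_{i_2}$ and total degree $\deg y_{\mu_{i_1}}+\deg y_{\mu_{i_2}}$ can a priori contribute to exactly the bidegree you extract for a two-factor product, so the displayed identity (a sum of pure tensors running only over products with matching profile) is not what restricting $\sum_\alpha c_\alpha\Delta^{(r-1)}(P_\alpha)=0$ yields. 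Processing the subsets of $S$ from the largest down to the smallest does not cure this, because the contamination goes the wrong way: it is the \emph{shorter} products that can pollute the extraction of the \emph{longer} profiles. Quantitatively, iterating \eqref{eqn:leading order} for $z_{\nu_1}\cdots z_{\nu_t}$ only bounds each point of the hinge path by a weighted sum $\sum_s \nu_s t_s$ of the slopes, which is much weaker than forcing the path below the convex path of the factors; so the ``straightforward induction'' you appeal to does not exist as stated, and the same issue already affects your claim that the extremal component of $\Delta^{(r-1)}(P_\alpha)$ is the pure tensor of its own factors.

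This is precisely why the paper does not attempt a wholesale separation by slope profile. Its proof orders the basis of each $\CB^+_\mu$ by $|\hdeg|$ as in \eqref{eqn:ordering}, isolates a product whose lowest-slope factor $x_\mu^{(i)}$ is extremal for that order, applies $\Delta$ only once, and compares the summands of \eqref{eqn:hypothetical} having hinge at the single lattice point $(\hdeg x_\mu^{(i)},\vdeg x_\mu^{(i)})$, as in \eqref{eqn:left}--\eqref{eqn:right}; the separation is extracted from this extremal configuration (minimal slope, maximal index, one specific hinge), not from the slope filtration alone. To make your version work you would have to supply an analogous extremality mechanism showing that, at the particular multidegree you single out, all cross terms of the type exhibited in \eqref{eqn:big cop} are excluded -- and that is the actual content of the claim, not a routine verification.
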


\noindent Let us first show how the Claim above completes the proof of the Theorem. The linear independence of the elements \eqref{eqn:product} implies that:
\begin{equation}
\label{eqn:geq leq}
\dim \CA_{\leq \mu|\bd,k} \geq \# \Big\{ \text{unordered collections \eqref{eqn:unordered collections}} \Big\}
\end{equation}
for all $\mu$, $k$, $\bd$ (indeed, \eqref{eqn:bound affine} implies that the number of products $\prod_{\rho \leq \mu} x_\rho^{(i)}$ is precisely equal to the number in the RHS of \eqref{eqn:geq leq}). Combining \eqref{eqn:geq leq} with Lemma \ref{lem:magic}, we conclude that the products \eqref{eqn:product} actually form a linear basis of $\CA^+$. This implies that $\Upsilon^+$ is an isomorphism, since Definition \ref{def:explicit d} says that the same products also form a linear basis of $\DD^+$. \\

\noindent Let us now prove Claim \ref{claim:claim}. For any $\mu$, we assume that the basis vectors $x_\mu^{(i)}$ of $\CB_\mu^+$ are ordered in non-decreasing order of $|\text{hdeg}|$, i.e.:
\begin{equation}
\label{eqn:ordering}
i \geq i' \quad \Rightarrow \quad |\hdeg x_\mu^{(i)}| \geq |\hdeg x_\mu^{(i')}|
\end{equation}
Now suppose we have a non-trivial linear relation among the various products \eqref{eqn:product}. We may rewrite this hypothetical relation as:
\begin{equation}
\label{eqn:hypothetical}
\prod^j_{\nu < \mu} x_\nu^{(j)} \cdot x_\mu^{(i)} = \sum \text{coefficient}  \prod^{j'}_{\nu < \mu} x_\nu^{(j')} \cdot x_\mu^{(i')}
\end{equation}
where all terms in the RHS have $i' < i$. Since the coproduct is multiplicative, then all the hinges of $\Delta(XY)$ are sums of hinges of $\Delta(X)$ and hinges of $\Delta(Y)$, as vectors in $\BZ^2$. Therefore, $\Delta(\lhs \text{ of \eqref{eqn:hypothetical}})$ has a single summand with hinge at the lattice point:
\begin{equation}
\label{eqn:hinge}
\left(|\hdeg x_\mu^{(i)}|, \vdeg x_\mu^{(i)} \right)
\end{equation}
and the corresponding summand is precisely:
\begin{equation}
\label{eqn:left}
\Delta(\lhs \text{ of \eqref{eqn:hypothetical}}) = ... + \psi   \prod^j_{\nu < \mu} x_\nu^{(j)} \otimes x_\mu^{(i)} + ...
\end{equation}
where $\psi$ stands for a certain (unimportant) product of $\psi_a^{\pm 1}$'s. Meanwhile, the coproduct of the RHS of \eqref{eqn:hypothetical} can only have a hinge at a lattice point \eqref{eqn:hinge} if $|\hdeg x_\mu^{(i)}| = |\hdeg x_{\mu}^{(i')}|$. The corresponding summand in the coproduct is:
\begin{equation}
\label{eqn:right}
\Delta(\rhs \text{ of \eqref{eqn:hypothetical}}) = ... + \sum \text{coefficient } \cdot \psi \prod^{j'}_{\nu < \mu} x_\nu^{(j')} \otimes x_\mu^{(i')} + ... 
\end{equation}
Since $x_\mu^{(i)}$ cannot be expressed as a linear combination of $x_\mu^{(i')}$ with $i' < i$, the right-hand sides of expressions \eqref{eqn:left} and \eqref{eqn:right} cannot be equal. This contradiction implies that there can be no relation \eqref{eqn:hypothetical}, which proves Claim \ref{claim:claim}. 
	
\end{proof} 

\begin{corollary} 
\label{cor:gen}

The algebra $\CA^+$ is generated by the $\vdeg = 1$ elements:
\begin{equation}
\label{eqn:degree one piece bis}
\Big \{ E_{ij} \Big \}_{(i,j) \in \zzz}
\end{equation}

\end{corollary}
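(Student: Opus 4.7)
The strategy is to combine Proposition \ref{prop:gen} with Theorem \ref{thm:iso}, and then identify the images under the isomorphism $\Upsilon^+$ of the $k=1$ PBW generators $p_{[i;j)}^{(1)} \in \DD^+$ with (scalar multiples of) the elementary matrices $E_{ij}$ in $\CA^+$.

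By Proposition \ref{prop:gen}, the algebra $\DD^+$ is generated over $\fff$ by the elements $\{p_{[i;j)}^{(1)}\}_{(i,j)\in \zzz}$. Theorem \ref{thm:iso} provides an algebra isomorphism $\Upsilon^+\colon \DD^+ \to \CA^+$ sending $p_{[i;j)}^{(1)} \mapsto P_{[i;j)}^{(1)}$, so it suffices to show that each $P_{[i;j)}^{(1)}$ is a nonzero scalar multiple of $E_{ji}$ (interpreted with the convention \eqref{eqn:notation}), after which varying $(i,j)$ over $\zzz$ produces precisely the set $\{E_{ij}\}_{(i,j)\in \zzz}$ up to scalars.

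To identify $P_{[i;j)}^{(1)}$, I would specialize formula \eqref{eqn:def p} to $k=1$, in which case $\mu = j-i$, the indices are $s_0 = j$ and $s_1 = j - \lceil \mu \rceil = i$, the symmetrization is trivial, the braid $R_{\omega_1}$ equals the identity, and there are no $\tR$ or $Q$ factors in the single ($a=1$) factor of the product. What remains is
\[
F_{[i;j)}^{(1)} \;=\; E_{ji}\,\oq^{\tfrac{2\bar{\imath}}{n}},
\]
so by \eqref{eqn:explicit simple} one has
\[
P_{[i;j)}^{(1)} \;=\; \frac{E_{ji}\,\oq^{\tfrac{2\bar{\imath}}{n}}}{\oq^{\tfrac{1}{n}}(1-q^{2})} \;\in\; \CA_{[i;j),1},
\]
which is a nonzero scalar multiple of $E_{ji}$. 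Since $(i,j) \mapsto (j,i)$ is a bijection of $\zzz$, these span the same $\fff$-subspace as $\{E_{ij}\}_{(i,j)\in\zzz}$, and hence generate $\CA^+$ as an algebra.

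There is no real obstacle here: once Theorem \ref{thm:iso} is in place (together with Proposition \ref{prop:gen}, which was already established by the triangle-area induction argument in Section \ref{sec:quantum}), the corollary is a matter of unwinding the definition of $F_{[i;j)}^{(1)}$. The only minor subtlety is bookkeeping the $\oq^{1/n}$ and $1-q^2$ prefactors, and recalling that the symbol $E_{ji}$ for $(j,i) \in \BZ^2$ already encodes a variable $z^{\lfloor (j-1)/n\rfloor - \lfloor (i-1)/n\rfloor}$ via \eqref{eqn:notation}, so that as $(i,j)$ ranges over $\zzz$ we obtain all monomials $E_{i'j'}\,z^k$ with $1\leq i',j'\leq n$ and $k \in \BZ$, exactly the degree $1$ part of the shuffle algebra.
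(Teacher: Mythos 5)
Your argument is correct and is essentially the paper's own proof, which simply cites Proposition \ref{prop:gen} and Theorem \ref{thm:iso}; your $k=1$ computation $F^{(1)}_{[i;j)} = E_{ji}\,\oq^{2\bar\imath/n}$ just makes the identification of the images of the $p^{(1)}_{[i;j)}$ explicit. One cosmetic remark: when $j \equiv i \pmod n$ the element $P^{(1)}_{[i;j)}$ is an imaginary (not simple) generator, so \eqref{eqn:explicit simple} does not literally cover that case as stated in the paper (though it does hold there, since the relevant graded piece of $\CB^+_\mu$ in vertical degree $1$ is pinned down by the $\alpha$-normalization); in any event the corollary only needs that the images $P^{(1)}_{[i;j)}$ have $\vdeg = 1$ and hence lie in the span of the $E_{ij}$, so nothing is lost.
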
 

\noindent The corollary is an immediate consequence of Proposition \ref{prop:gen} and Theorem \ref{thm:iso}. In \cite[Theorem 1.2 and Definition 2.3]{Rectangular}, we will construct explicit quadratic relations satisfied by the generators $E_{ij} \in \CA^+$, in the language of formal series. 

\section{The double shuffle algebra with spectral parameters}
\label{sec:double}

\noindent In the previous Section, we constructed the extended shuffle algebra corresponding to the $R$--matrix with spectral parameters \eqref{eqn:explicit r}. We will now take two such extended shuffle algebras and construct their double, as was done in Subsections \ref{sub:pairing} and \ref{sub:double} for $R$--matrices without spectral parameters. This will conclude the proof of Theorem \ref{thm:main}. \\

\subsection{} Let $\oq_+ = \oq$ and $\oq_- = q^{-n} \oq^{-1}$. If $\tR^+(x) = \tR(x)$ is given by \eqref{eqn:tr}, then:
\begin{equation}
\label{eqn:definition r minus}
\tR^-(x) = \left[ \tR^{\dagger_1}\left(\frac 1x \right)^{-1} \right]^{\dagger_1}_{21} \in \End(V \otimes V)(x)
\end{equation}
is given by:
$$
\tR^-(x) = \sum_{1\leq i,j \leq n} E_{ii} \otimes E_{jj} \left(\frac {q^{-1} - x q \oq_-^2}{1-x \oq_-^2} \right)^{\delta_i^j} - (q - q^{-1}) \sum_{1 \leq i \neq j \leq n} E_{ij} \otimes E_{ji} q^{2(j-i)} \frac {(x \oq_-^2)^{\delta_{i<j}}}{1-x \oq_-^2} 
$$
Note that we have the equality:
\begin{equation}
\label{eqn:r plus minus}
\tR^-(x) = D_2 \tR^+(x) D_2^{-1} \Big|_{\oq_+ \mapsto \oq_-} 
\end{equation}
where $D = \text{diag}(q^2,...,q^{2n}) \in \End(V)$. \\

\begin{definition}
\label{def:minus algebra}

The shuffle algebra $\CA^-$ is defined just like in Definition \ref{def:shuf aff}, using $\oq_-$ instead of $\oq$, and the multiplication \eqref{eqn:shuf prod aff} uses $\tR^-$ instead of $\tR$. \\

\end{definition}

\noindent Because of \eqref{eqn:r plus minus}, the map:
\begin{equation}
\label{eqn:a plus minus}
\CA^+ \stackrel{\Phi}\longrightarrow \CA^-, \qquad X_{1...k}(z_1,...,z_k) \mapsto D_1...D_k X_{1...k}(z_1,...,z_k)\Big|_{\oq_+ \mapsto \oq_-}
\end{equation}
is a $\BQ(q)$--linear algebra isomorphism. The following elements of $\CA^-$ are the images of the elements \eqref{eqn:def p}--\eqref{eqn:def pp} under $\Phi$, times a factor of $\oq_-^{\frac {2\barj-2\bari}n}$:

\begin{equation}
\label{eqn:def f minus}
F_{[i;j)}^{(-k)} = \sym \ R_{\omega_k}(z_1,...,z_k) 
\end{equation}
$$
\prod_{a=1}^k \left[ \tR^-_{1a} \left( \frac {z_1}{z_a} \right) ... \tR^-_{a-2,a} \left( \frac {z_{a-2}}{z_a} \right) Q^-_{a-1,a} \left( \frac {z_{a-1}}{z_a} \right) E^{(a)}_{s_{a-1} s_a} \oq^{-\frac {2\oo{s_{a-1}}}n} \right] 
$$
\begin{equation}
\label{eqn:def ff minus}
\bF_{[i;j)}^{(-k)} = (-\oq^{\frac 2n})^{k} \cdot \sym \ R_{\omega_k}(z_1,...,z_k)  
\end{equation}
$$
\prod_{a=1}^k \left[ \tR^-_{1a} \left( \frac {z_1}{z_a} \right) ... \tR^-_{a-2,a} \left( \frac {z_{a-2}}{z_a} \right) \bQ^-_{a-1,a} \left( \frac {z_{a-1}}{z_a} \right) E^{(a)}_{s'_{a-1} s'_a} \oq^{-\frac {2\oo{s'_{a-1}}}n} \right] 
$$
where $s_a = j - \lceil \mu a \rceil$, $s'_a = j - \lfloor \mu a \rfloor$, $Q^- = D_2 Q D_2^{-1} |_{\oq \mapsto \oq_-}$, $\bQ^- = D_2 \bQ D_2^{-1}|_{\oq \mapsto \oq_-}$. \\

\subsection{} In Definition \ref{def:extended aff}, we defined the extended shuffle algebra by introducing new generators. We will now add two more central elements $c$ and $\barc$, and define instead:
\begin{equation} 
\label{eqn:extended aff pm}
\tCA^\pm = \frac {\Big \langle \CA^\pm, s_{[i;j)}^\pm, c^{\pm 1}, \barc^{\pm 1} \Big \rangle^{i \leq j}_{1  \leq i \leq n}}{c, \barc \text{ central and relations \eqref{eqn:ext rel 1 aff new} and  \eqref{eqn:ext rel 4 aff new}}} 
\end{equation} 
where:
\begin{equation}
\label{eqn:ext rel 1 aff new}
R\left(\frac xy \right) S^\pm_1(x) S^\pm_2(y) = S^\pm_2(y) S^\pm_1(x) R \left(\frac xy \right)
\end{equation}
\begin{equation}
\label{eqn:ext rel 4 aff new}
X^\pm \cdot  S^\pm_0(y) = S^\pm_0(y) \cdot \frac {R_{k0} \left(\frac {z_k}y \right) ... R_{10} \left(\frac {z_1}y \right)}{f \left(\frac {z_k}y \right) ... f \left(\frac {z_1}y \right)} X^\pm \tR^\pm_{10} \left( \frac {z_1}y \right) ... \tR^\pm_{k0} \left( \frac {z_k}y \right) 
\end{equation}
for any $X^\pm = X^\pm_{1...k}(z_1,...,z_k) \in \CA^\pm \subset \tCA^\pm$, where:
$$
S^\pm(x) = \sum_{1 \leq i, j \leq n, \ d \geq 0}^{d = 0 \text{ only if } i \leq j} s^\pm_{[i;j+nd)} \otimes \begin{cases} E_{ij} x^{-d} &\text{if } \pm = + \\ \\ E_{ji} x^d &\text{if } \pm = - \end{cases} 
$$
If we define the series $T^\pm(x)$ by \eqref{eqn:new series s} and \eqref{eqn:new series t}, then \eqref{eqn:ext rel 4 aff new} is equivalent to:
\begin{equation}
\label{eqn:ext rel 5 aff new}
T^\pm_0(y) \cdot X^\pm = \tR^\pm_{0k} \left( \frac y{z_k} \right) ... \tR^\pm_{01} \left( \frac y{z_1} \right) X^\pm \frac {R_{01} \left( \frac y{z_1} \right) ... R_{0k} \left( \frac y{z_k} \right)}{f \left( \frac y{z_1} \right) ... f \left( \frac y{z_k} \right)} \cdot T^\pm_0(y)
\end{equation} 

\subsection{} 

The algebras $\tCA^\pm$ are graded by $\zz \times \BZ$, with:
\begin{align*}
&\deg X^\pm_{1...k}(z_1,...,z_k) = (\bd, \pm k), \quad \ \forall  X^\pm \in \CA^\pm \\
&\deg s_{[i;j)}^\pm = (\pm [i;j),0), \qquad \qquad \quad \forall i \leq j
\end{align*} 
where $\bd \in \zz$ is defined according to \eqref{eqn:horizontal}. We write $\deg X = (\hdeg X, \vdeg X)$ to specify the components of the degree vector in $\zz$ and $\BZ$, respectively. The reason why we introduced central elements $c$ and $\barc$ to the algebras \eqref{eqn:extended aff pm} is to twist the coproduct. Specifically, let $\Delta_{\text{old}}$ be the coproduct of \eqref{eqn:cop shuf aff 1}--\eqref{eqn:cop shuf aff 3}, and define: 
\begin{equation}
\label{eqn:twisted coproduct}
\Delta : \tCA^\pm \longrightarrow \tCA^\pm \woo \tCA^\pm
\end{equation}
by the formulas $\Delta(c) = c \otimes c$, $\Delta(\barc) = \barc \otimes \barc$, as well as:
\begin{equation}
\label{eqn:twisted coproduct explicit}
\text{if } \Delta_{\text{old}}(X) = X_1 \otimes X_2 \quad \text{then} \quad \Delta(X) = X_1 c^{-(\hdeg X_2)_n} \barc^{- \vdeg X_2} \otimes X_2
\end{equation}
Since $\deg X$ is multiplicative in $X$, the fact that $\Delta_{\text{old}}$ is coassociative and an algebra homomorphism implies the analogous statements for the coproduct $\Delta$. \\

\subsection{} We must now prove an analogue of Proposition \ref{prop:bialg pair}, but the main difficulty in doing so is \eqref{eqn:bialg pair 4}: given $X, Y \in \End(V^{\otimes k})(z_1,...,z_k)$, we can still define the trace of $XY$, but the answer will be a rational function in $z_1,...,z_k$. To obtain a number, one must integrate out the variables $z_1,...,z_k$, and the choice of contours will be crucial. \\

\noindent Let us consider the following expressions, for any $\sigma \in S(k)$:
$$
R_{\sigma} = \prod^{1 \leq i < j \leq k}_{\sigma^{-1}(i) > \sigma^{-1}(j)} R_{ij} \left( \frac {z_i}{z_j} \right), \qquad \oR_{\sigma} = \prod^{1 \leq i < j \leq k}_{\sigma^{-1}(i) > \sigma^{-1}(j)} R_{ji} \left( \frac {z_j}{z_i} \right)
$$
Explicitly, the product in $R_\sigma$ is taken by following the crossings in the positive braid lifting the permutation $\sigma$, while $\oR_\sigma$ is defined as $\sigma R_{\sigma^{-1}} \sigma^{-1}$. It is elementary to prove the following equation for all $\sigma \in S(k)$:
$$
\oR_\sigma R_{\sigma \omega_k} = \sigma R_{\omega_k} \sigma^{-1}
$$
where $\omega_k$ denotes the longest permutation. We may use $\oR_\sigma^{-1}$ instead of $R_\sigma$ in \eqref{eqn:symmetrization aff} because they both lift the permutation $\sigma$, and thus we obtain:
\begin{equation}
\label{eqn:shuf plus minus}
I_1 * ... * I_k = \sum_{\sigma \in S(k)} R_{\sigma \omega_k} \prod_{a=1}^k \left[ I_a^{(\sigma(a))}(z_{\sigma(a)}) \prod_{b=a+1}^k \tR^\pm_{\sigma(a)\sigma(b)} \left(\frac {z_{\sigma(a)}}{z_{\sigma(b)}} \right) \right] \oR_\sigma
\end{equation}
for all $I_1,..., I_k \in \End(V)[z^{\pm 1}] \subset \CA^\pm$. By Corollary \ref{cor:gen}, any element of $\CA^\pm$ is a linear combination of the shuffle elements \eqref{eqn:shuf plus minus}, for various $I_1,...,I_k$. \\

\begin{proposition}
\label{prop:intro}

There is a pairing (of vector spaces):
\begin{equation}
\label{eqn:pairing aff}
\CA^+ \otimes \CA^- \stackrel{\langle \cdot , \cdot \rangle}\longrightarrow \fff
\end{equation}
given by:
\begin{equation}
\label{eqn:symm pair 1} 
\Big \langle I^+_1 * ... * I^+_k, X^-_{1...k}(z_1,...,z_k) \Big \rangle = (q^2-1)^k \int_{|z_1| \ll ... \ll |z_k|} 
\end{equation}
$$
\emph{Tr} \left( R_{\omega_k} \prod_{a=1}^k \left[ I_a^{(a)}(z_a) \prod_{b=a+1}^k \tR^+_{ab} \left(\frac {z_a}{z_b} \right) \right] \frac {X_{1...k}(z_1,...,z_k)}{\prod_{1 \leq i < j \leq k} f\left(\frac {z_i}{z_j} \right)} \right)
$$
\begin{equation}
\label{eqn:symm pair 2} 
\Big \langle X^+_{1...k}(z_1,...,z_k), J^-_1 * ... * J^-_k \Big \rangle = (q^2-1)^k \int_{|z_1| \ll ... \ll |z_k|} 
\end{equation}  
$$
\emph{Tr} \left( R_{\omega_k} \prod_{a=1}^k \left[ J_a^{(a)}(z_a) \prod_{b=a+1}^k \tR^-_{ab} \left(\frac {z_a}{z_b} \right) \right] \frac {X_{1...k}(z_1,...,z_k)}{\prod_{1 \leq i < j \leq k} f\left(\frac {z_i}{z_j} \right)} \right)
$$
for all $I_1,...,I_k, J_1,...,J_k \in \eEnd(V)[z^{\pm 1}]$ and all $X^\pm \in \CA^\pm$. The notation: 
$$
\int_{|z_1| \ll ... \ll |z_k|} F(z_1,...,z_k)
$$ 
refers to the iterated residue of $F$ at $0$: first in $z_1$, then in $z_2$,..., finally in $z_k$. \\

\end{proposition}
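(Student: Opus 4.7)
The plan is to recast both formulas \eqref{eqn:symm pair 1} and \eqref{eqn:symm pair 2} in a common, manifestly intrinsic symmetric form, from which well-definedness and consistency follow at once. The content of the proposition lies in showing that \eqref{eqn:symm pair 1} depends on $I^+_1,\ldots,I^+_k$ only through the shuffle product $I^+_1 * \cdots * I^+_k$ (which by Corollary \ref{cor:gen} exhausts $\CA^+_k$ but with non-unique decompositions), and similarly for \eqref{eqn:symm pair 2}, and that the two formulas agree whenever both are applicable.

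First I would use \eqref{eqn:shuf plus minus} to expand $I^+_1 * \cdots * I^+_k$ as a sum over $\sigma \in S(k)$, so that the $\sigma = \text{id}$ summand is precisely the integrand appearing in \eqref{eqn:symm pair 1}. Combining the symmetry \eqref{eqn:symm aff} of $X^- \in \CA^-$ (which gives $R_\sigma^{-1} X^- R_\sigma = \sigma X^- \sigma^{-1}$), the cyclicity of the trace, and the symmetry of $\prod_{a<b} f(z_a/z_b)$ (which follows from $f(x) = f(1/x)$), the $\sigma$-summand transforms under the substitution $w_a = z_{\sigma(a)}$ into the $\sigma = \text{id}$ summand integrated over the permuted contour $|w_{\sigma^{-1}(1)}|\ll\cdots\ll|w_{\sigma^{-1}(k)}|$. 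Summing over $\sigma$ and dividing by $k!$ yields the symmetric reformulation
\begin{equation*}
\langle X^+, X^- \rangle = \frac{(q^2-1)^k}{k!}\oint_{\Gamma_{\text{sym}}} \text{Tr}\bigl(X^+_{1\ldots k}(z)\cdot X^-_{1\ldots k}(z)\bigr) \prod_{a<b} \frac{1}{f(z_a/z_b)} \prod_{a=1}^k \frac{dz_a}{2\pi i z_a},
\end{equation*}
where $\Gamma_{\text{sym}}$ is the symmetrized iterated-residue contour around $z_a = 0$. The right-hand side depends only on $X^+ \in \CA^+_k$ and $X^- \in \CA^-_k$, so \eqref{eqn:symm pair 1} descends to a well-defined bilinear pairing. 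The identical argument applied to \eqref{eqn:symm pair 2} (now using the symmetry of $X^+$ and the analogues of \eqref{eqn:quasi-ybe 1 aff}--\eqref{eqn:quasi-ybe 2 aff} for $\tR^-$) yields the same symmetric formula, proving agreement of the two definitions; the pairing is then extended to all of $\CA^+ \otimes \CA^-$ by declaring it to vanish between components of distinct $\vdeg$.

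The principal obstacle is the contour-symmetrization step: one must verify that the iterated residue of the $\sigma = \text{id}$ summand over $|z_1|\ll\cdots\ll|z_k|$ equals, up to the factor $1/k!$, the symmetric contour integral of the symmetrized integrand $\text{Tr}(X^+ X^-/\prod f)$. The required input is a careful analysis of the pole structure of the integrand. The zeros of $\prod_{a<b} f(z_a/z_b)$ at $z_a = z_b$ cancel the simple $R$-matrix poles arising from $R_{ab}(z_a/z_b)$ (via the near-unitarity \eqref{eqn:unitary r}), while the allowed pole loci of $X^\pm$ at $z_a = z_b \oq^{\pm 2}$ coming from the first bullet of Proposition \ref{prop:shuf aff 2} confine the remaining singularities away from the coordinate hyperplanes $z_a = 0$. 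With the integrand essentially holomorphic away from $z_a = 0$, the iterated residue at the origin becomes independent of the ordering of the $k!$ nested contours, so $\Gamma_{\text{sym}}$ reduces to $k!$ copies of the single iterated-residue contour and the desired symmetric formula holds.
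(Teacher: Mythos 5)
Your overall strategy (expand $I_1^+ * \cdots * I_k^+$ via \eqref{eqn:shuf plus minus}, use the symmetry \eqref{eqn:symm aff} of $X^\mp$, cyclicity of the trace, and a change of variables to reduce everything to reordering nested contours) is the same skeleton as the paper's proof, but the step on which everything hinges is wrong as stated. The integrand is \emph{not} ``essentially holomorphic away from $z_a=0$'': it has genuine poles on the loci $z_i \oq_+^2 = z_j$ (coming from the $\tR^+$ factors and from the permitted simple poles of $X^+\in\CA^+$ itself) and on $z_i \oq_-^2 = z_j$ (from $\tR^-$ and the poles of $X^-$), in addition to the poles of $1/f$ at $z_i = z_j q^{\pm 2}$ --- the zero of $1/f$ at $z_i=z_j$ only kills the pole of a single $R$ factor. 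Poles on loci of the form $z_i = \mathrm{const}\cdot z_j$ are exactly what make an iterated residue at the origin depend on the order of the variables: deforming $|z_i|\ll|z_j|$ into $|z_j|\ll|z_i|$ picks up the residues at these hyperplanes. So your ``symmetrized contour'' $\Gamma_{\mathrm{sym}}$ is not well-defined without further argument, and the claim that the wheel conditions confine singularities ``away from the coordinate hyperplanes'' is a non sequitur --- the obstruction was never at $z_a=0$.

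What is actually needed, and what constitutes the bulk of the paper's proof, is the vanishing of the residues picked up at $z_i\oq_+^2=z_j$ and $z_i\oq_-^2=z_j$ when the contours are permuted (the $q^{\pm2}$ poles of $1/f$ turn out not to be crossed, but only after the index bookkeeping $i<j$, $\sigma^{-1}(i)>\sigma^{-1}(j)$ that you omit). This vanishing is not formal: it rests on the identity $\tR^{+,\dagger_1}_{21}(z)\,\tR^{-,\dagger_1}_{12}(1/z)=\mathrm{Id}$, i.e. on the precise definition \eqref{eqn:definition r minus} of $\tR^-$ and the relation $\oq_-=q^{-n}\oq^{-1}$, which yields the trace identity \eqref{eqn:trace identity}, and then on a braid manipulation to put each crossed-pole residue into the form covered by that identity. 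Without this input your symmetric formula cannot be established (and indeed, the analogous classical pairing \eqref{eqn:pairshuf1} requires the careful contour prescription of Remark \ref{rem:contour} rather than a symmetric torus, which is a warning sign that contour order cannot be waved away here). To repair the argument you would have to supply exactly this residue-vanishing step, at which point you are reproducing the paper's proof rather than giving a genuinely different one.
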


\noindent Note that the pairing $\langle X^+, Y^- \rangle$ is only non-zero for pairs of elements of opposite degrees, i.e. $X^+ \in \CA_{\bd,k}$ and $Y^- \in \CA_{-\bd,-k}$ for various $(\bd,k) \in \zz \times \BN$. \\

\begin{proof} Formula \eqref{eqn:symm pair 1} is well-defined as a linear functional in the second argument, while \eqref{eqn:symm pair 2} is well-defined as a linear functional in the first argument. Therefore, to show that \eqref{eqn:pairing aff} is well-defined as a linear functional in both arguments, we only need to show that \eqref{eqn:symm pair 1} and \eqref{eqn:symm pair 2} produce the same result when $X^\pm$ is of the form \eqref{eqn:shuf plus minus} (this statement implicitly uses Corollary \ref{cor:gen}, which states that any element in $\CA^\pm$ is a linear combination of the elements \eqref{eqn:shuf plus minus}). To this end, we have:
$$
\frac 1{(q^2-1)^k} \Big \langle I^+_1 * ... * I^+_k, J^-_1 * ... * J^-_k \Big \rangle \text{ according to \eqref{eqn:symm pair 1}} = \int_{|z_1| \ll ... \ll |z_k|} \sum_{\sigma \in S(k)}
$$
\begin{multline}
\label{eqn:former} 
\Tr \left(\prod_{1 \leq i < j \leq k} \frac {1}{f\left(\frac {z_i}{z_j} \right)} \cdot R_{\omega_k} \prod_{a=1}^k \left[ I_a^{(a)}(z_a) \prod_{b=a+1}^k \tR^+_{ab} \left(\frac {z_a}{z_b} \right) \right] \right. \\ \left. R_{\sigma \omega_k} \prod_{a=1}^k  \left[ J_a^{(\sigma(a))}(z_{\sigma(a)}) \prod_{b=a+1}^k \tR^-_{\sigma(a)\sigma(b)} \left(\frac {z_{\sigma(a)}}{z_{\sigma(b)}} \right) \right] \oR_\sigma \right) 
\end{multline} 
and:
$$
\frac 1{(q^2-1)^k} \Big \langle I^+_1 * ... * I^+_k, J^-_1 * ... * J^-_k \Big \rangle \text{ according to \eqref{eqn:symm pair 2}} = \int_{|z_1| \ll ... \ll |z_k|} \sum_{\sigma \in S(k)}
$$
\begin{multline}
\label{eqn:latter}
\Tr \left(\prod_{1 \leq i < j \leq k} \frac {1}{f\left(\frac {z_i}{z_j} \right)} \cdot R_{\omega_k} \prod_{a=1}^k \left[ J_a^{(a)}(z_a) \prod_{b=a+1}^k \tR^-_{ab} \left(\frac {z_a}{z_b} \right) \right] \right. \\ \left. R_{\sigma \omega_k}  \prod_{a=1}^k \left[  I_a^{(\sigma(a))}(z_{\sigma(a)}) \prod_{b=a+1}^k \tR^+_{\sigma(a)\sigma(b)} \left(\frac {z_{\sigma(a)}}{z_{\sigma(b)}} \right) \right] \oR_\sigma \right) 
\end{multline}
By using the cyclic property of the trace and the straightforward identity:
\begin{equation}
\label{eqn:r identity r}
\oR_\sigma R_{\omega_k} = \sigma R_{\sigma^{-1} \omega_k} \sigma^{-1} \prod^{1 \leq i < j \leq k}_{\sigma^{-1}(i) > \sigma^{-1}(j)} f \left( \frac {z_i}{z_j} \right)
\end{equation}
(which uses \eqref{eqn:unitary r}) we may rewrite the formulas above as:
\begin{equation} 
\label{eqn:former 2}
\text{RHS of \eqref{eqn:former}} = \sum_{\sigma \in S(k)} \int_{|z_1| \ll ... \ll |z_k|} \prod^{1 \leq i < j \leq k}_{\sigma^{-1}(i) < \sigma^{-1}(j)} \frac {1}{f\left(\frac {z_i}{z_j} \right)} \cdot 
\end{equation}
$$
\Tr \left(  \prod_{a=1}^k \left[ I_a^{(a)}(z_a) \prod_{b=a+1}^k \tR^+_{ab} \left(\frac {z_a}{z_b} \right) \right] R_{\sigma \omega_k} \sigma \prod_{a=1}^k  \left[ J_a^{(a)}(z_a) \prod_{b=a+1}^k \tR^-_{ab} \left(\frac {z_a}{z_b} \right) \right] R_{\sigma^{-1} \omega_k} \sigma^{-1} \right)
$$
and:
$$
\text{RHS of \eqref{eqn:latter}} = \sum_{\sigma \in S(k)} \int_{|z_1| \ll ... \ll |z_k|} \prod^{1 \leq i < j \leq k}_{\sigma^{-1}(i) < \sigma^{-1}(j)} \frac {1}{f\left(\frac {z_i}{z_j} \right)} \cdot
$$
$$
\Tr \left(\sigma \prod_{a=1}^k \left[ I_a^{(a)}(z_a) \prod_{b=a+1}^k \tR^+_{ab} \left(\frac {z_a}{z_b} \right) \right] R_{\sigma^{-1} \omega_k} \sigma^{-1} \prod_{a=1}^k  \left[ J_a^{(a)}(z_{a}) \prod_{b=a+1}^k \tR^-_{ab} \left(\frac {z_{a}}{z_{b}} \right) \right] R_{\sigma \omega_k} \right)
$$
If we replace $\sigma \mapsto \sigma^{-1}$ and replace $z_a \mapsto z_{\sigma(a)}$ in the latter formula, then it precisely matches \eqref{eqn:former 2} (this uses the fact that $\Tr(Y) = \Tr(\sigma Y \sigma^{-1})$ for any tensor $Y$), up to the fact that the order of the contours changes:
\begin{equation} 
\label{eqn:latter 2}
\text{RHS of \eqref{eqn:latter}} = \sum_{\sigma \in S(k)} \int_{|z_{\sigma(1)}| \ll ... \ll |z_{\sigma(k)}|} \prod^{1 \leq i < j \leq k}_{\sigma^{-1}(i) < \sigma^{-1}(j)} \frac {1}{f\left(\frac {z_i}{z_j} \right)} \cdot
\end{equation}  
$$
\Tr \left(\prod_{a=1}^k \left[ I_a^{(a)}(z_a) \prod_{b=a+1}^k \tR^+_{ab} \left(\frac {z_a}{z_b} \right) \right] R_{\sigma \omega_k} \sigma \prod_{a=1}^k  \left[ J_a^{(a)}(z_a) \prod_{b=a+1}^k \tR^-_{ab} \left(\frac {z_a}{z_b} \right) \right] R_{\sigma^{-1} \omega_k}  \sigma^{-1} \right)
$$
Therefore, we may conclude that \eqref{eqn:former} equals \eqref{eqn:latter} (which is what we need to prove), once we show that we may change the contours of the integral \eqref{eqn:former 2}:
\begin{equation}
\label{eqn:contours may be moved}
\text{from } \int_{|z_1| \ll ... \ll |z_k|} \text{ to } \int_{|z_{\sigma(1)}| \ll ... \ll |z_{\sigma(k)}|}
\end{equation}
The integrand of \eqref{eqn:former 2} has three kinds of poles: \\

\begin{itemize}[leftmargin=*]

\item $z_i = z_j q^{\pm 2}$ if $i<j$ and $\sigma^{-1}(i) < \sigma^{-1}(j)$, which arise from the zeroes of $f(x)$ \\

\item $z_i \oq_+^2 = z_j$ if $i<j$, which arise from the poles of $\tR^+(x)$ \\

\item $z_i \oq_-^2 = z_j$ if $\sigma^{-1}(i) < \sigma^{-1}(j)$, which arise from the poles of $\tR^-(x)$ \\

\end{itemize}

\noindent (note that the terms $R_{\sigma \omega_k}$ and $R_{\sigma^{-1}\omega_k}$ do not produce poles at $z_i=z_j$ in the integrand \eqref{eqn:former 2}, because such poles are canceled by the denominator of $f$). As we move the contours as in \eqref{eqn:contours may be moved}, the only poles encountered involve $z_i$ and $z_j$ for $i<j$ and $\sigma^{-1}(i) > \sigma^{-1}(j)$, so already the poles in the first bullet do not come up. Meanwhile, the poles in the second bullet may come up, and in the remainder of this proof, we will show that the corresponding residue is 0 (the situation of the poles in the third bullet is analogous, so we skip it). To this end, recall that:
$$
\tR_{12}^{+} (z)^{\dagger_1} \tR_{21}^{-} \left(\frac 1z \right)^{\dagger_1} = \text{Id}_{V \otimes V}
$$
by the definition of $\tR^-$ in \eqref{eqn:definition r minus}. This implies the identity:
$$
\Tr_{V \otimes V} \left( \tR_{21}^{+} (z) A_2 \tR_{12}^{-} \left(\frac 1z \right)  B_1 \right) = \Tr(A) \Tr(B)
$$
for any $A, B \in \End(V)$. Taking the residue at $z = \oq^{-2}$, we obtain:
\begin{equation}
\label{eqn:vv}
\Tr_{V \otimes V} \left( (12) A_2 \tR_{12}^{-}(\oq^{2}) B_1 \right) = 0
\end{equation}
We may generalize the formula above to:
\begin{equation}
\label{eqn:trace identity}
\Tr_{V^{\otimes k}} \left( (ij) A_{1...\widehat{i}...k}  \tR_{ij}^-(\oq^{2})  B_{1... \widehat{j}....k} \right)= 0
\end{equation} 
$\forall i \neq j$ and $A, B \in \End(V^{\otimes k-1})$. Formula \eqref{eqn:vv} implies \eqref{eqn:trace identity} because none the indices, other than the $i$--th and $j$--th, play any role in the vanishing of the trace. 

\begin{figure}[h]    
	\centering
	\includegraphics[scale=0.31]{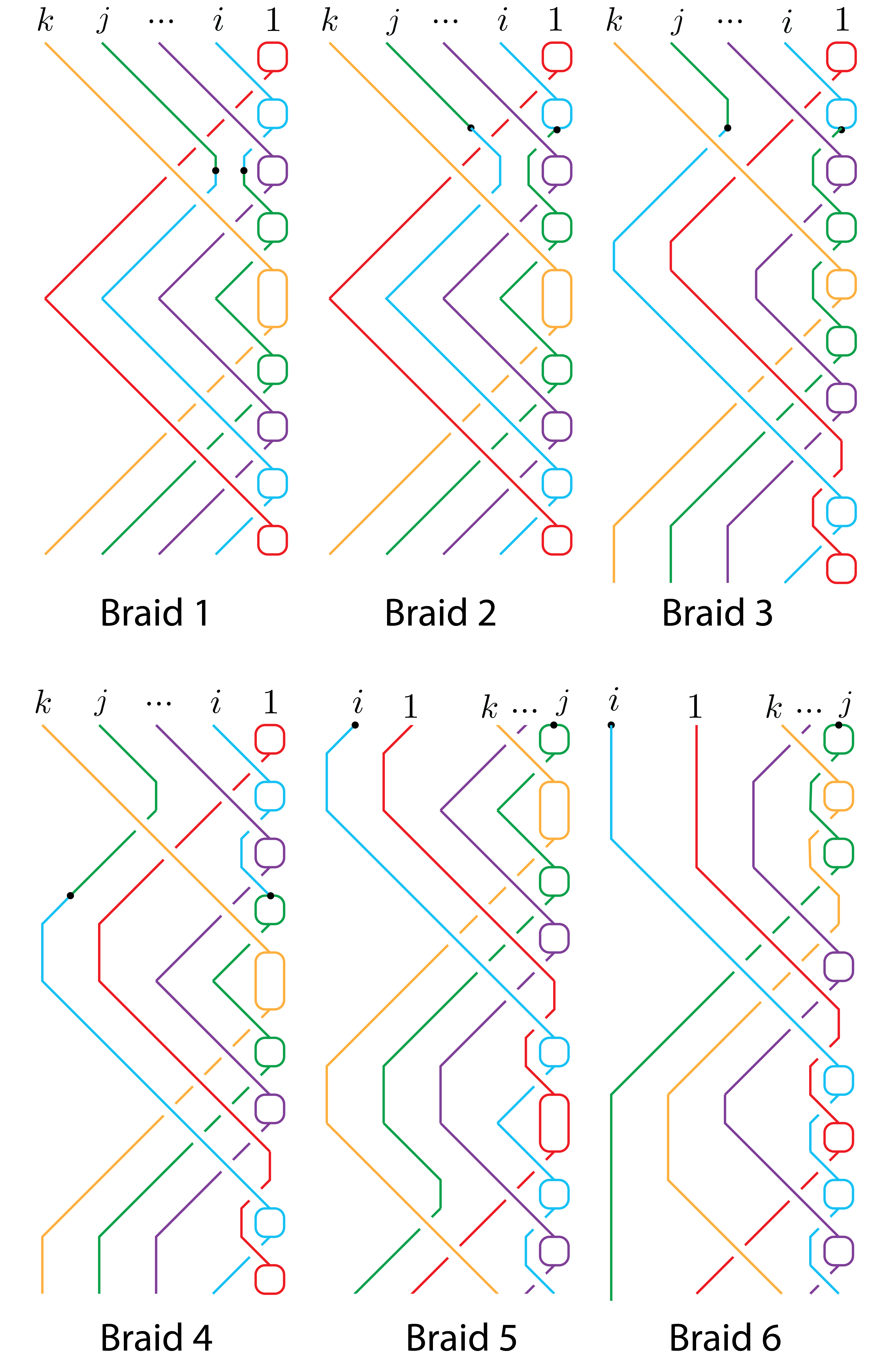}
\end{figure}

\noindent We will use \eqref{eqn:trace identity} to prove that the residue of the integrand of \eqref{eqn:former 2} at $z_i \oq_+^2 = z_j$ vanishes. Let us consider the expression on the second line of \eqref{eqn:former 2} for $\sigma = \omega_k$ and take its residue at $z_i \oq_+^2 = z_j$. The corresponding quantity is precisely represented by Braid 1 on the previous page (we draw braids top-to-bottom instead of left-to-right, for better legibility), if we make the convention that the variable on each strand is multiplied by $\oq_+^2$ and $\oq_-^2$ (respectively) as soon as it reaches the first and the second (respectively) box on the strand in question. Braids 1,2,3 and 4 are equivalent due to Reidemeister moves and the move in Figure 12. Braids 4 and 5 do not represent equal endomorphisms of $V^{\otimes k}$, but they are equal upon taking the trace (since $\Tr(AB) = \Tr(BA)$). Finally, Braids 5 and 6 are equal due to Reidemeister moves. The crossing between the blue and green strands in Braid 6 represents $\tR^-_{ij}(\oq^2)$, and we will call the portion of the braid above (respectively below) this crossing $A$ (respectively $B$). Then identity \eqref{eqn:trace identity} applies, thus showing that Braid 6 has zero trace, precisely what we needed to prove. \\

\noindent Strictly speaking, the argument just given covers the case $i=2$, $j=4$ and $k=5$, but it is obvious that we may replace the strands labeled 1,3,5 by any number of parallel strands, thus yielding the situation of arbitrary $i,j,k$. More crucial is the fact that we have only shown the vanishing of the residue at $z_i \oq_+^2 = z_j$ of the $\sigma = \omega_k$ summand of \eqref{eqn:former 2}. The case of general $\sigma$ would require one to insert the positive braid representing the permutation $\sigma \omega_k$ at the middle of the braids above and the positive braid representing the permutation $\sigma^{-1}\omega_k$ at the bottom of the braids above. The braid moves involved are analogous to the ones just performed, with the idea being to move the crossing between the blue and green strands to the very left of all other crossings. We leave the visual depiction of this fact to the interested reader, but we stress the fact that we only need to check the vanishing of the residue for those $i<j$ such that $\sigma^{-1}(i) > \sigma^{-1}(j)$. This implies that the green and blue strands do not cross except at the two points already depicted in the braids above, and this is what allows the argument to carry through.

\end{proof}

\begin{proposition}
\label{prop:pair aff}

There exists a bialgebra pairing:
\begin{equation}
\label{eqn:pairing aff ext}
\tCA^+ \otimes \tCA^- \stackrel{\langle \cdot, \cdot \rangle }\longrightarrow \fff
\end{equation}
generated by \eqref{eqn:pairing aff} and:
\begin{align} 
&\left\langle S_2^+ (y), S_1^- (x) \right\rangle = \tR^+ \left( \frac xy \right) & &\left \langle T_2^+ (y), T_1^- (x) \right \rangle = \tR^- \left( \frac xy \right) \label{eqn:bialg pair aff 1} \\
&\left \langle S_2^+ (y), T_1^- (x) \right \rangle = R \left(\frac xy \right) f^{-1} \left( \frac xy \right)  & &\left \langle T_2^+ (y), S_1^- (x) \right \rangle = R \left(\frac xy \right) \label{eqn:bialg pair aff 2}
\end{align}
(all rational functions above are expanded as power series in the region $|x| \ll |y|$). \\

\end{proposition}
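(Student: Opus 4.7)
The proof will follow the same architecture as Proposition \ref{prop:bialg pair} in the non-spectral setting, with the additional technical overhead of handling the contour integrals appearing in \eqref{eqn:symm pair 1}--\eqref{eqn:symm pair 2}. I would organize the argument in three steps: first, verify that the Cartan-Cartan pairings \eqref{eqn:bialg pair aff 1}--\eqref{eqn:bialg pair aff 2} are consistent with the RTT-type relation \eqref{eqn:ext rel 1 aff new} and its analogues for $T^\pm$; second, verify that the trace pairing \eqref{eqn:symm pair 1}--\eqref{eqn:symm pair 2} is consistent with the mixed relations \eqref{eqn:ext rel 4 aff new} and \eqref{eqn:ext rel 5 aff new}; and third, verify the bialgebra properties \eqref{eqn:bialg 1}--\eqref{eqn:bialg 2} for the shuffle part of the algebra.

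For the first step, pairing $R(x/y) S_1^+(x) S_2^+(y)$ against $S_3^-(w)$ via the coproduct formula \eqref{eqn:cop shuf aff 1} applied to $S^-$ reduces to the identity
$$R_{12}\!\left(\tfrac{x}{y}\right) \tR^+_{31}\!\left(\tfrac{x}{w}\right) \tR^+_{32}\!\left(\tfrac{y}{w}\right) = \tR^+_{32}\!\left(\tfrac{y}{w}\right) \tR^+_{31}\!\left(\tfrac{x}{w}\right) R_{12}\!\left(\tfrac{x}{y}\right),$$
which is \eqref{eqn:quasi-ybe 2 aff}; the other Cartan-Cartan consistency checks follow analogously from \eqref{eqn:ybe aff} and \eqref{eqn:quasi-ybe 1 aff}. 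For the second step, taking $X^+$ of the form $I^+_1 * \cdots * I^+_k$ and computing $\langle X^+ \cdot S_0^+(y), Y^- \rangle$ by applying the coproduct \eqref{eqn:cop shuf aff 3} to $Y^-$ and retaining only the summand that keeps $Y^-$ entirely on one side of the $\otimes$ sign produces a spectral-parameter analogue of \eqref{eqn:temu}; its equality with the pairing of the RHS of \eqref{eqn:ext rel 4 aff new} against $Y^-$ follows from cyclic invariance of trace. Relation \eqref{eqn:ext rel 5 aff new} is handled symmetrically.

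The main obstacle is the third step, which amounts to showing
$$\langle A^+ * B^+, Y^- \rangle = \langle B^+ \otimes A^+, \Delta(Y^-) \rangle$$
for $A^+, B^+ \in \CA^+$ and $Y^- \in \CA^-$, together with the mirror identity. The LHS is a single contour integral over $|z_1| \ll \cdots \ll |z_{k+l}|$ built from the shuffle formula \eqref{eqn:shuf prod aff}, while the RHS, once $\Delta(Y^-)$ is expanded via \eqref{eqn:cop shuf aff 3} and the $S^-, T^-$ insertions are paired against $A^+, B^+$ using \eqref{eqn:bialg pair aff 1}--\eqref{eqn:bialg pair aff 2}, becomes an infinite sum of contour integrals on fewer variables carrying $R$--matrix factors. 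The strategy is to collapse the RHS by contour deformation: moving all contours to the common order $|z_1| \ll \cdots \ll |z_{k+l}|$ picks up residues precisely at $z_a \oq_+^2 = z_b$, which are in bijection with the $S^-, T^-$ contributions in the coproduct expansion of $Y^-$. The central twist \eqref{eqn:twisted coproduct} absorbs the $\barc$--shifts that appear naturally in the Cartan coproduct \eqref{eqn:coproduct 4}, ensuring well-definedness on the full extended algebra.

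The hardest technical point will be tracking the matrix and contour structures simultaneously: each residue produced by a contour deformation must combine correctly with the $R$--matrix factors appearing in the shuffle product \eqref{eqn:shuf prod aff}. This is enabled by the Reidemeister-type braid equivalences used in the proof of Proposition \ref{prop:wheel preserved}, together with the braid identity \eqref{eqn:r identity r} and the trace vanishing \eqref{eqn:trace identity} already exploited in the proof of Proposition \ref{prop:intro}. These ensure that no spurious residues appear when interpolating between the iterated-residue contour of the LHS and the product of contours on the RHS, and that the matrix coefficients match term by term.
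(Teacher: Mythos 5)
Your Steps 1 and 2 (checking the Cartan--Cartan pairings against the RTT-type relations via \eqref{eqn:ybe aff}--\eqref{eqn:quasi-ybe 2 aff}, and checking the mixed relations \eqref{eqn:ext rel 4 aff new}--\eqref{eqn:ext rel 5 aff new} via a spectral analogue of \eqref{eqn:temu} and cyclicity of the trace) are essentially the paper's argument, which indeed runs these cases ``just like Proposition \ref{prop:bialg pair}'', with the only caveat that $x$ must be expanded near $\infty^{\pm 1}$, closer to $\infty^{\pm1}$ than all of $z_1,\dots,z_k$.

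The gap is in your Step 3, where your proposed mechanism is wrong. You claim that after expanding $\Delta(Y^-)$ one should deform all contours to a common ordering, and that the residues picked up at $z_a \oq_+^2 = z_b$ are ``in bijection with the $S^-,T^-$ contributions'' in the coproduct of $Y^-$. No such bijection exists: the residues at $z_a\oq_+^2=z_b$ are exactly the ones shown to \emph{vanish} via \eqref{eqn:trace identity} in the proof of Proposition \ref{prop:intro} -- that vanishing is what makes the pairing well-defined in the first place, and it produces nothing that could match coproduct terms. The $S^-,T^-$ dressing of $\Delta(Y^-)$ enters the computation in a completely different way: iterating \eqref{eqn:bialg 2} on $\Delta^{(2)}$ of the positive arguments converts those series, via \eqref{eqn:bialg pair aff 1}--\eqref{eqn:bialg pair aff 2}, into $\tR^+$ and $R$ matrix insertions in the trace integrand (exactly as in \eqref{eqn:big want 1}), not into residues. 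The actual proof avoids contour deformation in this step altogether: one reduces via Corollary \ref{cor:gen} to $A^+ = I_1 * \cdots * I_k$, $B^+ = I_{k+1}*\cdots*I_{k+l}$, rewrites \eqref{eqn:symm pair 1} with the reversed contour ordering (using the symmetry \eqref{eqn:symm aff}, giving \eqref{eqn:symm pair 3}), and expands $\Delta(Y^-)$ precisely in the region dictated by that same contour ordering (first group of variables much smaller than the second). Then both sides of \eqref{eqn:toto} become integrals over the \emph{same} contours, and their integrands coincide after moving $\oR_{\omega_k}$ around the trace -- no poles are crossed and no residues appear. As written, your contour-deformation step would fail (the deformation either crosses no relevant poles or crosses poles whose residues vanish), so the claimed matching of terms has no content; you need the expansion-region/contour-compatibility argument instead.
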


\begin{proof} The proof follows that of Proposition \ref{prop:bialg pair} very closely, so we will only sketch the main ideas and leave the details as an exercise to the reader. Take any:
\begin{align*} 
&a,b \in \{X^+, S^+(x), T^+(x), \text{ for } X \in \CA^+\} \\ 
&c \in \{X^-, S^-(x), T^-(x), \text{ for } X \in \CA^-\}
\end{align*}
and define $\langle ab, c \rangle$ to be the RHS of \eqref{eqn:bialg 1}. Then if $\sum_{i} a_i b_i = 0$ holds in $\tCA^+$, we must show that the pairing:
$$
\left \langle \sum_{i} a_i b_i, c \right \rangle
$$
thus defined is 0. If at least one of $a,b,c$ is a coefficient of either $S^\pm(x)$ or $T^\pm(x)$, then the statement in question is proved just like in Proposition \ref{prop:bialg pair}, if one is careful to expand $x$ around $\infty^{\pm 1}$ (since \eqref{eqn:symm pair 1}--\eqref{eqn:symm pair 2} also involve integrals, we need to stipulate that $x$ should be closer to $\infty^{\pm 1}$ than any of the variables $z_1,..,z_k$). \\

\noindent The remaining case is when $a,b,c$ are all in $\CA^\pm$, and we must prove that:
\begin{equation}
\label{eqn:toto}
\Big \langle A^+ * B^+, Y^- \Big \rangle = \Big \langle B^+ \otimes A^+, \Delta(Y^-) \Big \rangle 
\end{equation}
for all $A_{1...k}(z_1,...,z_k), B_{1...l}(z_1,...,z_l) \in \CA^+$ and $Y_{1...k+l}(z_1,...,z_{k+l}) \in \CA^-$. By Corollary \ref{cor:gen}, it suffices to consider $A = I_1 * ... * I_k$ and $B = I_{k+1} * ... * I_{k+l}$ for various $I_1,...,I_{k+l} \in \End(V)[z^{\pm 1}]$. In this case, we may rewrite \eqref{eqn:symm pair 1} as:
\begin{equation}
\label{eqn:symm pair 3} 
\Big \langle I^+_1 * ... * I^+_k, X^-_{1...k}(z_1,...,z_k) \Big \rangle = (q^2-1)^k \int_{|z_1| \gg ... \gg |z_k|} 
\end{equation}
$$
\Tr \left( \prod_{b=k}^1 \left[ I^{(b)}_{k+1-b}(z_b) \prod_{a=b-1}^1 \tR^+_{ba} \left( \frac {z_b}{z_a} \right) \right] \oR_{\omega_k} \frac {X_{1...k}(z_1,...,z_k)}{\prod_{1 \leq i < j \leq k} f\left(\frac {z_i}{z_j} \right)} \right) 
$$
by reversing the order of the tensor factors of $V^{\otimes k}$ and relabeling the variables $z_a \mapsto z_{k+1-a}$, as well as using the symmetry property \eqref{eqn:symm aff} of $X$. Then we have:
\begin{multline}
\label{eqn:africa} 
\text{LHS of \eqref{eqn:toto}} = (q^2-1)^{k+l} \int_{|z_1| \gg ... \gg |z_{k+l}|} \\ \Tr \left(  \prod_{b=k+l}^{1} \left[ I_{k+l+1-b}^{(b)}(z_b) \prod_{a=b-1}^{1} \tR^+_{ba} \left(\frac {z_b}{z_a} \right) \right] \oR_{\omega_{k+l}} \frac {Y_{1...k+l}(z_1,...,z_{k+l})}{\prod_{1 \leq i < j \leq k+l} f\left(\frac {z_i}{z_j} \right)} \right) 
\end{multline}
Meanwhile, the right-hand side of \eqref{eqn:toto} is computed just like the right-hand side of \eqref{eqn:big want 1}, with the specification that $\Delta(Y)$ is expanded in the region when the first $k$ tensor factors are much smaller than the last $l$ tensor factors. Thus, we obtain: 
$$
\text{RHS of \eqref{eqn:toto}} = (q^2-1)^{k+l} \int_{|z_1| \gg ... \gg |z_{k+l}|} \Tr \left( \frac {Y_{1...k+l}(z_1,...,z_{k+l})}{\prod_{1 \leq i < j \leq k+l} f\left(\frac {z_i}{z_j} \right)} \right.
$$ 
$$
 \prod_{b=k+l}^{l+1} \left[ I_{k+l+1-b}^{(b)}(z_b) \prod_{a=b-1}^{l+1} \tR^+_{ba} \left(\frac {z_b}{z_a} \right) \right] \oR_{\omega_{k}}(z_{l+1},...,z_{l+k}) \left[ \tR^+_{l+1,l} \left( \frac {z_{l+1}}{z_l} \right) ... \tR^+_{l+k,1} \left( \frac {z_{l+k}}{z_1} \right) \right]
$$
$$
\left. \prod_{b=l}^{1} \left[ I_{k+l+1-b}^{(b)}(z_b) \prod_{a=b-1}^{1} \tR^+_{ba} \left(\frac {z_b}{z_a} \right) \right] \oR_{\omega_{l}}(z_1,...,z_l)  \left[ R_{l+k,1} \left( \frac {z_{l+k}}{z_1} \right) ... R_{l+1,l} \left( \frac {z_{l+1}}{z_l} \right) \right] \right) 
$$
We may move $\oR_{\omega_k}$ to the very right of the expression above, and obtain precisely \eqref{eqn:africa}. This completes the proof of the fact that the pairing \eqref{eqn:pairing aff ext} respects \eqref{eqn:bialg 1}. The situation of \eqref{eqn:bialg 2} is analogous, so we leave it as an exercise to the reader. 

\end{proof}

\subsection{} Having proved Proposition \ref{prop:pair aff}, we may construct the Drinfeld double:
\begin{equation}
\label{eqn:double aff}
\CA = \tCA^+ \otimes \tCA^{-,\op,\coop} \Big/ \left( s^+_{[i;i)} s^-_{[i;i)} - 1\right)
\end{equation}
We will often use the notation $\psi_i = (s^+_{[i;i)})^{-1}$.  \\

\begin{proposition}
\label{prop:commutators}

We have the following commutation relations in the algebra $\CA$:
\begin{equation}
\label{eqn:formula 1 final} 
S_0^\mp(w) \cdot_\pm X^\pm = 
\end{equation} 
$$
= \tR^\pm_{0k} \left(\frac {w c}{z_k} \right) ... \tR^\pm_{01} \left(\frac {w c}{z_1} \right) X^\pm R_{01} \left(\frac {w c}{z_1} \right) ... R_{0k} \left(\frac {w c}{z_k} \right) \cdot_\pm S_0^\mp(w) 
$$
\begin{equation}
\label{eqn:formula 2 final}
X^\pm \cdot_\pm T_0^\mp(w) =
\end{equation}
$$
=  T_0^\mp(w) \cdot R_{k0} \left(\frac {z_k}{w c} \right) ... R_{10} \left(\frac {z_1}{w c} \right) X^\pm \tR^\pm_{10} \left(\frac {z_1}{w c} \right) ... \tR^\pm_{k0} \left(\frac {z_k}{w c} \right) 
$$
if $X^\pm = X_{1...k}^\pm(z_1,...,z_k) \in \CA^\pm$, where we recall that $\cdot_+ = \cdot$ and $\cdot_- = \cdot^{\eop}$. Finally:
\begin{equation}
\label{eqn:e plus minus final} 
\left[ \left( \frac {E_{ij}}{z^d} \right) ^+, \left( \frac {E_{i'j'}}{z^{d'}} \right)^- \right] =
\end{equation}
$$
=  (q^2-1) \sum_{k \in \BZ} \left( s^+_{[j';i+nk)} t^+_{[j;i'+n(-d-d'-k))} c^{-d'} \barc^{-1} -  t^-_{[i;j'+nk)} s^-_{[i';j+n(d+d'-k))} c^{-d} \barc \right)
$$
(we set $s^\pm_{[i;j)} = t^\pm_{[i;j)} = 0$ if $i>j$). \\

\end{proposition}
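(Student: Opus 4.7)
The strategy is to parallel the proof of Proposition \ref{prop:double} as closely as possible. All three identities should follow from the Drinfeld double axiom \eqref{eqn:dd} applied to well-chosen pairs of generators, together with the coproduct formulas \eqref{eqn:cop shuf aff 1}, \eqref{eqn:cop shuf aff 3} (modified by the twist \eqref{eqn:twisted coproduct explicit}) and the pairings of Proposition \ref{prop:pair aff}. Compared to the non-spectral Proposition \ref{prop:double}, the two novelties are the presence of spectral parameters $z_1,\dots,z_k, w$ and the $c, \bar c$ factors inserted by the twist; it is precisely the latter that will account for the multiplicative $c$ appearing in the arguments of $R$ and $\tR^\pm$ in \eqref{eqn:formula 1 final} and \eqref{eqn:formula 2 final}, exactly as the absence of the twist in Section \ref{sec:old} produced the unadorned arguments in \eqref{eqn:drinfeld 1} and \eqref{eqn:drinfeld 2}.

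For \eqref{eqn:formula 1 final}, my plan is to apply \eqref{eqn:dd} with $a = X^\pm_{1\dots k}$ and $b = S_0^\mp(w)$. The twisted coproduct \eqref{eqn:cop shuf aff 3} of $X^\pm$ is an outer sum over $i=0,\dots,k$, but every summand with $1\leq i \leq k-1$ places factors of $X$ on both sides of the $\otimes$ sign and hence pairs trivially against $\Delta(S_0^\mp(w)) = (1\otimes S_0^\mp(w))(S_0^\mp(w) \otimes 1)$, since $\langle X^\pm_{1\dots k}, S_0^\mp(w)\rangle = 0$ as soon as $k \geq 1$. Only the extreme terms $i=0$ and $i=k$ contribute, and inserting \eqref{eqn:bialg pair aff 1}--\eqref{eqn:bialg pair aff 2} then yields exactly \eqref{eqn:formula 1 final}: the $c$ in the arguments of the $\tR^\pm$, $R$ matrices is produced by the power of $c$ carried by the left tensor factor of $\Delta(X^\pm)$, while the factor $f$ in the denominator of \eqref{eqn:bialg pair aff 2} cancels against the $f$'s in \eqref{eqn:ext rel 4 aff new}. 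Formula \eqref{eqn:formula 2 final} is then obtained by the same argument with $b = T_0^\mp(w)$ in place of $b = S_0^\mp(w)$, swapping the roles of $S^\pm$ and $T^\pm$.

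For \eqref{eqn:e plus minus final}, I would specialize to the $\vdeg = \pm 1$ elements $a = (E_{ij} z^{-d})^+ \in \CA^+$ and $b = (E_{i'j'} z^{-d'})^-\in \CA^-$. For $k=1$, the coproduct \eqref{eqn:cop shuf aff 3} reduces to the sum of the primitive term $X \otimes 1$ and one ``mixed'' term of the form $(S^+(z)\otimes 1)(1\otimes X)(T^+(z)\otimes 1)$, modified by the twist \eqref{eqn:twisted coproduct explicit}; an analogous formula holds in $\tCA^{-,\op,\coop}$ for $(E_{i'j'} z^{-d'})^-$. Substituting these into \eqref{eqn:dd} and keeping only the nontrivial pairings from \eqref{eqn:bialg pair aff 1}--\eqref{eqn:bialg pair aff 2}, one obtains a sum of monomials $s^+_{[\cdot;\cdot)} t^+_{[\cdot;\cdot)} c^\bullet \bar c^\bullet$ minus analogous $t^- s^-$ monomials. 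Expanding the relevant matrix coefficients of the series $S^\pm, T^\pm$ and repackaging the exponents of $z$ into the summation index $k \in \BZ$ should reproduce the RHS of \eqref{eqn:e plus minus final} verbatim, in complete parallel with the derivation of \eqref{eqn:plus minus} from \eqref{eqn:e plus minus}.

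The principal technical obstacle in each step is bookkeeping: one must carefully track the exponents of $c$ and $\bar c$ produced by the twist \eqref{eqn:twisted coproduct explicit}, which depend on the component $(\hdeg X_2)_n$ and on $\vdeg X_2$, and then match the resulting indices of $s^\pm_{[i;j+nd)}$ and $t^\pm_{[i;j+nd)}$ against the shifts appearing in \eqref{eqn:e plus minus final}. This is a direct, if lengthy, index-chasing exercise with no conceptual difficulty beyond what was already faced in Proposition \ref{prop:double}; the formulas \eqref{eqn:remarkable 1}--\eqref{eqn:remarkable 6} will be useful for rearranging the resulting expressions into the desired normal form.
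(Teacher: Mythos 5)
Your proposal is correct and takes essentially the same route as the paper: \eqref{eqn:formula 1 final}--\eqref{eqn:formula 2 final} are obtained by repeating the proofs of \eqref{eqn:drinfeld 1}--\eqref{eqn:drinfeld 2} with the twist \eqref{eqn:twisted coproduct explicit} supplying the $c$, $\barc$ factors, and \eqref{eqn:e plus minus final} follows by applying \eqref{eqn:dd} to the twisted coproducts of the $\vdeg = \pm 1$ generators, exactly as the paper does via \eqref{eqn:delta up final}--\eqref{eqn:delta down final}. One small correction: in that last computation the nonzero pairing is the $k=1$ case of \eqref{eqn:pairing aff} (the source of the prefactor $q^2-1$), together with counit evaluations, rather than \eqref{eqn:bialg pair aff 1}--\eqref{eqn:bialg pair aff 2}.
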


\begin{proof} Formulas \eqref{eqn:formula 1 final} and \eqref{eqn:formula 2 final} are proved just like \eqref{eqn:drinfeld 1} and \eqref{eqn:drinfeld 2} (the presence of $c$ is due to the twist in the coproduct \eqref{eqn:twisted coproduct}), and so we leave them as exercises to the interested reader. As far as \eqref{eqn:e plus minus final} is concerned, we note the definition \eqref{eqn:twisted coproduct explicit} of the coproduct implies the following analogue of formula \eqref{eqn:delta up basic}:
\begin{equation}
\label{eqn:delta up final}
\Delta \left( \frac {E_{ij}}{z^d} \right)  = \frac {E_{ij}}{z^d}  \otimes 1 + \sum_{1 \leq x, y \leq n}^{a,b \geq 0} s^+_{[x;i+na)} t^+_{[j;y+nb)} c^{d+a+b} \barc^{-1} \otimes \frac {E_{xy}}{z^{d+a+b}} 
\end{equation}
in $\tCA^+$, as well as the following analogue of \eqref{eqn:delta down basic}:
\begin{equation}
\label{eqn:delta down final}
\Delta \left( \frac {E_{i'j'}}{z^{d'}} \right) = \sum_{1 \leq x, y \leq n}^{a,b \geq 0} \frac {E_{xy}}{z^{d'-a-b}} \otimes  t^-_{[y;j'+nb)} s^-_{[i';x+na)} c^{d'-a-b} \barc + 1 \otimes \frac {E_{i'j'}}{z^{d'}}
\end{equation}
in $\tCA^{-,\op,\coop}$. Then \eqref{eqn:e plus minus final} is simply an application of \eqref{eqn:dd}.

\end{proof}

\begin{proof} \emph{of Theorem \ref{thm:main} (in the formulation of Subsection \ref{sub:formulation}):} Let us write:
$$
\CA^0 \subset \CA
$$
for the subalgebra generated by the coefficients of the series $S^\pm(x)$ and $T^\pm(x)$. Then $\CA^0$ is isomorphic to the subalgebra $\DD^0$ of Subsection \ref{sub:explicit}, because they are both isomorphic to the algebra $\CE$ of Definition \ref{def:two realizations 2}. Moreover, Theorem \ref{thm:iso} (and its analogue when $\CA^+$ is replaced by $\CA^-$) give rise to algebra isomorphisms:
$$
\Upsilon^\pm : \DD^\pm \rightarrow \CA^\pm 
$$
Putting the preceding remarks together yields an isomorphism of vector spaces:
\begin{equation}
\label{eqn:iso big}
\DD \cong \DD^+ \otimes \DD^0 \otimes \DD^- \stackrel{\Upsilon}\rightarrow \CA^+ \otimes \CA^0 \otimes \CA^- \cong \CA
\end{equation}
where the first isomorphism holds by definition, and the last isomorphism follows from \eqref{eqn:double aff}. To show that $\Upsilon$ is an algebra isomorphism, one needs to show that: 
\begin{equation}
\label{eqn:upsilon is morphism}
\Upsilon(ab) = \Upsilon(a) \Upsilon(b)
\end{equation}
for all $a,b \in \DD$. By Proposition \ref{prop:gen}, we may assume that:
$$
a = x_1...x_k \alpha y_1... y_l \quad \text{and} \quad b = x_1'...x_{k'}' \beta y_1'...y_{l'}'
$$
for various $x_i,x_i' \in \DD^+$ of $\vdeg = 1$, $y_i,y_i' \in \DD^-$ of $\vdeg = -1$, and $\alpha,\beta \in \DD^0$. To compute the left-hand side of \eqref{eqn:upsilon is morphism}, one takes:
$$
ab = x_1...x_k \alpha y_1... y_l x_1'...x_{k'}' \beta y_1'...y_{l'}'
$$
and uses relations \squiggly{\eqref{eqn:formula 1}--\eqref{eqn:formula 4}, \eqref{eqn:formula 5}--\eqref{eqn:formula 8} and \eqref{eqn:plus minus}} to write it as:
\begin{equation}
\label{eqn:ab}
ab = \sum \text{coefficient} \cdot x_1''...x_u'' \gamma y_1''...y_v''
\end{equation}
for various $x_i'' \in \DD^+$ of $\vdeg = 1$, $y_i'' \in \DD^-$ of $\vdeg = -1$, and $\gamma \in \DD^0$. Similarly:
$$
\Upsilon(a)\Upsilon(b) = \Upsilon(x_1)...\Upsilon(x_k) \Upsilon(\alpha) \Upsilon(y_1)... \Upsilon(y_l) \Upsilon(x_1')...\Upsilon(x_{k'}') \Upsilon(\beta) \Upsilon(y_1')...\Upsilon(y_{l'}')
$$
can be expressed using relations \squiggly{\eqref{eqn:ext rel 4 aff new}--\eqref{eqn:ext rel 5 aff new}, \eqref{eqn:formula 1 final}--\eqref{eqn:formula 2 final} and \eqref{eqn:e plus minus final}} as:
\begin{equation}
\label{eqn:ups a ups b}
\Upsilon(a)\Upsilon(b) = \sum \text{coefficient} \cdot \Upsilon(x_1'')...\Upsilon(x_u'') \Upsilon(\gamma) \Upsilon(y_1'')...\Upsilon(y_v'')
\end{equation}
where the coefficients and the various $x_i'',y_i'',\gamma$ are the same ones as in \eqref{eqn:ab}. The reason for the latter fact is that the squiggly underlined relations above match each other pairwise. By its very definition in \eqref{eqn:iso big}, $\Upsilon$ applied to the right-hand side of \eqref{eqn:ab} is precisely the right hand side of \eqref{eqn:ups a ups b}, thus completing the proof. 

\end{proof}

\subsection{} We will now study the bialgebra pairing between $\tCA^+$ and $\tCA^-$ in more detail, with the goal of proving certain formulas that will be used in \cite{Rectangular}. Let us consider the restriction of the pairing \eqref{eqn:pairing aff ext} to the following subalgebras:
\begin{equation}
\label{eqn:pairing aff restrict}
\tCB_\mu^+ \otimes \tCB_\mu^- \xrightarrow{\langle \cdot , \cdot \rangle} \fff 
\end{equation}
for all $\mu \in \BQ \sqcup \infty$. \\

\begin{proposition}
\label{prop:restrict}

For all $\mu \in \BQ \sqcup \infty$, the pairing \eqref{eqn:pairing aff restrict} is a bialgebra pairing, i.e. it intertwines the product with the coproduct $\Delta_\mu$, in the sense of \eqref{eqn:bialg 1}--\eqref{eqn:bialg 2}. \\

\end{proposition}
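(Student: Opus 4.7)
The plan is to deduce this from the full bialgebra pairing established in Proposition \ref{prop:pair aff} by showing that replacing $\Delta$ by its leading-order part $\Delta_\mu$ introduces no error when both arguments are restricted to slope-$\mu$ subalgebras. The key ingredient is a degree-counting argument based on the homogeneity of the pairing.

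First I would record that the pairing on $\tCA^+ \otimes \tCA^-$ is homogeneous for the $\zz \times \BZ$ grading: inspection of \eqref{eqn:symm pair 1}--\eqref{eqn:symm pair 2} and \eqref{eqn:bialg pair aff 1}--\eqref{eqn:bialg pair aff 2} shows that $\langle X, Y \rangle = 0$ unless $\deg X + \deg Y = 0$. By construction of $\CB_\mu^\geq$ and $\CB_\mu^\leq$, every homogeneous element $a$ of either has degree $(\bd,k)$ with $|\bd| = \mu k$ (including the degenerate Cartan case $k=0$, $\bd=0$). Since $|\cdot|$ is additive on $\BZ^n$, any homogeneous element $X \in \tCA^-$ satisfying $\langle a, X\rangle \neq 0$ for some $a \in \CB_\mu^\geq$ must itself satisfy $|\hdeg X| = \mu \cdot \vdeg X$, i.e.\ lie in slope $\mu$ (or be of degree zero).

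Now fix $a, b \in \CB_\mu^\geq$ and $c \in \CB_\mu^\leq$. The definition \eqref{eqn:leading order} (and its mirror on the negative side) provides a decomposition
\[
\Delta(c) \;=\; \Delta_\mu(c) + R_\mu(c),
\]
in which $\Delta_\mu(c)$ collects precisely those summands $c_1 \otimes c_2$ where $c_2$ has slope $\mu$ (or vanishing vertical degree), while every summand of $R_\mu(c)$ has second tensor factor of slope strictly different from $\mu$ and $\vdeg \neq 0$. By the previous paragraph, $\langle a, c_2 \rangle = 0$ for each such $c_2$, so $\langle b \otimes a, R_\mu(c)\rangle = 0$. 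Combining this with \eqref{eqn:bialg 1} applied to the full pairing yields
\[
\langle ab, c \rangle \;=\; \langle b \otimes a, \Delta(c)\rangle \;=\; \langle b \otimes a, \Delta_\mu(c) \rangle,
\]
which is the required identity. Additivity $|\bd_1| + |\bd_2| = |\bd_1 + \bd_2|$ together with $|\bd_1 + \bd_2| = \mu(k_1+k_2)$ forces both factors of any summand of $\Delta_\mu(c)$ to have slope $\mu$, so this computation takes place inside $\CB_\mu^\leq \woo \CB_\mu^\leq$. The mirror identity \eqref{eqn:bialg 2} is obtained by the symmetric argument, using the leading-order decomposition of $\Delta$ on $\CB_\mu^\geq$.

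The main obstacle is giving a precise meaning to ``slope-$\mu$ part of $\Delta$'' on the extended algebras $\tCA^\pm$, since the coproduct \eqref{eqn:cop shuf aff 3} decorates its summands by products $\prod S_a(z_a)$ and $\prod T_a(z_a)$ of vertical degree zero but nontrivial horizontal degree, which strictly speaking have undefined slope. The resolution lies in the careful choice of Cartan extension: $\CB_\mu^\geq$ is obtained from $\CB_\mu^+$ by adjoining only the degree $(0,0)$ elements $\psi_s^{\pm 1}$. Explicit coproduct formulas such as \eqref{eqn:coproduct p}--\eqref{eqn:coproduct pp} show that the Cartan-valued decorations appearing in $\Delta_\mu$ belong to this restricted Cartan, and that the degenerate boundary terms $X \otimes 1$ and $(\psi_i/\psi_j) \otimes X$ are part of $\Delta_\mu$ rather than of the remainder $R_\mu$. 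This ensures the decomposition $\Delta = \Delta_\mu + R_\mu$ is unambiguous on $\CB_\mu^\leq$, and the argument above is well-defined on the restricted pairing.
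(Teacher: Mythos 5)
Your proposal is correct and follows essentially the same route as the paper: both reduce to the bialgebra identity for the full pairing already established in Proposition \ref{prop:pair aff} (formula \eqref{eqn:toto}), decompose $\Delta$ on a slope-$\mu$ element into $\Delta_\mu$ plus a remainder whose second tensor factor has slope different from $\mu$, and kill the remainder by homogeneity of the pairing (the paper phrases this simply as ``for degree reasons''). Your extra care about the degenerate Cartan terms and the reflection of slope on the negative side is a fair elaboration of points the paper leaves implicit, not a different argument.
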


\begin{proof} Let us prove \eqref{eqn:bialg 1}, and leave \eqref{eqn:bialg 2} as an exercise to the interested reader. Take formula \eqref{eqn:toto}, which we already proved in the course of Proposition \ref{prop:pair aff}:
\begin{equation}
\label{eqn:tina}
\Big \langle A^+ * B^+, Y^- \Big \rangle = \Big \langle B^+ \otimes A^+, \Delta(Y^-) \Big \rangle 
\end{equation}
If we let $A^+,B^+ \in \CB_\mu^+$ and $Y^- \in \CB_\mu^-$, then our task is equivalent to showing that the formula above holds with $\Delta$ replaced by $\Delta_\mu$. Since $Y^- \in \CB_\mu^-$, we have:
\begin{equation}
\label{eqn:david}
\Delta(Y^-) = \Delta_\mu(Y^-) + (\text{slope}< \mu) \otimes (\text{slope} > \mu)
\end{equation}
(the reason why the formula above differs from \eqref{eqn:leading order} is that slope lines are reflected across the horizontal line when going from $\CA^+$ to $\CA^-$, due to opposite vertical grading conventions). All the summands in the right-hand side of \eqref{eqn:david} other than $\Delta_\mu(Y^-)$ pair trivially with $B^+ \otimes A^+$, for degree reasons. This implies that \eqref{eqn:tina} holds with $\Delta$ replaced by $\Delta_\mu$.

\end{proof}

\noindent As a consequence of Proposition \ref{prop:restrict}, the Drinfeld double:
$$
\CB_\mu = \tCB_\mu^+ \otimes \tCB_\mu^{-,\op,\coop} \Big/ \Big( \text{identify } \psi_i, c \text{ from the two tensor factors}\Big)
$$
defined with respect to the coproduct $\Delta_\mu$, will be a subalgebra of the algebra $\CA$ of \eqref{eqn:double aff}. Moreover, the following diagram commutes, for all $\mu \in \BQ \sqcup \infty$:
\begin{equation}
\label{eqn:vertical diagram}
\xymatrix{
\CE_\mu \ar[d]_{\simeq} \ar@{^{(}->}[r] & \DD \ar[d]^{\simeq} \\
\CB_\mu \ar@{^{(}->}[r] & \CA}
\end{equation}
where the vertical map on the left is the natural double of the isomorphism $\Upsilon_\mu$ of Proposition \ref{prop:iso small}, and the vertical map on the right is the isomorphism $\Upsilon$ of \eqref{eqn:iso big}.  \\

\subsection{} Let us consider any $\mu \in \BQ \sqcup \infty$. As we have seen in Subsection \ref{sub:slope stuff}, the subalgebra $\CE_\mu \subset \DD$ is generated by the elements:
$$
\Big\{ f_{\pm [i;j)}^{(\pm k)}, \barf_{\pm [i;j)}^{(\pm k)} \Big\}_{(i,j) \in \zzz, k \in \BN}^{\mu = \frac {j-i}k} \quad \text{of \eqref{eqn:f mu}}
$$
which satisfy the coproduct relations of Proposition \ref{prop:endowed}, as well as formulas \eqref{eqn:f alpha}--\eqref{eqn:barf alpha} for their images under the maps $\alpha_{\pm [i;j)}$. When we pass the elements above through the vertical isomorphisms of diagram \eqref{eqn:vertical diagram}, we obtain:
$$
\Big\{ F_{\pm [i;j)}^{(\pm k)}, \bF_{\pm [i;j)}^{(\pm k)} \Big\}_{(i,j) \in \zzz, k \in \BN}^{\mu = \frac {j-i}k} \in \CB_\mu
$$
defined in \eqref{eqn:def p}--\eqref{eqn:def pp} when the sign is $+$, and in \eqref{eqn:def f minus}--\eqref{eqn:def ff minus} when the sign is $-$. As proved for the case $\pm = +$ in Proposition \ref{prop:root} (the case $\pm = -$ is analogous, and we leave it as an exercise to the interested reader), the elements $F,\bF$ satisfy the analogous coproduct relations as $f,\barf$. Similarly, we have the following formulas:
\begin{align}
&\alpha_{\pm [i;j)}(F^{(\pm k)}_{\pm [i';j')}) = \delta_{(i',j')}^{(i,j)} (1-q^2) \oq_{\pm}^{\frac {\gcd(j-i,k)}n} \label{eqn:corbu} \\
&\alpha_{\pm [i;j)}(\bF_{\pm [i';j')}^{(k)}) = \delta_{(i',j')}^{(i,j)} (1-q^{-2}) \oq_{\pm}^{-\frac {\gcd(j-i,k)}n} \label{eqn:vadu}
\end{align} 
where the linear maps:
\begin{equation}
\label{eqn:alpha maps}
\bigoplus_{k=0}^\infty \CA_{\pm [i;j),\pm k} \xrightarrow{\alpha_{\pm [i;j)}}\fff
\end{equation}
are given by:
\begin{align*}
&X_{1...k}(z_1,...,z_k) \stackrel{\alpha_{+[i;j)}}\longrightarrow \text{coefficient of } E_{ji} \text{ in } X^{(k)}(y) (1- q^2)^k \oq_+^{\frac {k(i-j) + (j-i) + k  - 2k\bari}n} \\
&Y_{1...k}(z_1,...,z_k) \stackrel{\alpha_{-[i;j)}}\longrightarrow \text{coefficient of } E_{ij} \text{ in } Y^{(k)}(y) (1- q^2)^k \oq_-^{\frac {k(j-i) + (i-j) + k  - 2(k-1)\barj-2\bari}n}
\end{align*}
Recall that $X^{(k)}(y) \in \text{End}(V)[y^{\pm 1}]$ was defined by Figure 14, for any $X \in \CA_k$. Meanwhile, for any $Y \in \CA_{-k}$, the symbol $Y^{(k)}(y)$ is defined analogously, but with $\oq$ replaced by $\oq_-$, and the operators $D_1...D_k$ placed in front of the braid in Figure 14. Formulas \eqref{eqn:corbu}--\eqref{eqn:vadu} were proved when $\pm = +$ in Proposition \ref{prop:root}, and the case when $\pm = -$ is an analogous exercise that we leave to the interested reader. \\

\subsection{}

As we have seen in Subsection \ref{sub:pbw stuff}, the subalgebra $\CE_\mu$ is also generated by the primitive elements:
$$
\Big\{ p_{\pm [i;i+a)}^{(\pm b)}, p_{\pm l\bde,r}^{\left(\pm \frac {ln}{\mu} \right)} \Big\}_{i \in \BZ/n\BZ}^{l \in \BN \frac ag, r \in \BZ/g\BZ} \quad \text{of \eqref{eqn:primitive elements}}
$$
where $\mu = \frac ab$ with $\gcd(a,b) = 1$, $b \geq 0$ and $g = \gcd(n,a)$. These primitive elements satisfy formulas \eqref{eqn:simple alpha}--\eqref{eqn:imaginary alpha}. When we pass the elements above through the vertical isomorphisms of diagram \eqref{eqn:vertical diagram}, we obtain:
\begin{equation}
\label{eqn:primitive in b}
\Big\{ P_{\pm [i;i+a)}^{(\pm b)}, P_{\pm l\bde,r}^{\left(\pm \frac {ln}{\mu} \right)} \Big\}_{i \in \BZ/n\BZ}^{l \in \BN \frac ag, r \in \BZ/g\BZ} \in \CB_\mu 
\end{equation}
The elements \eqref{eqn:primitive in b} are primitive for the coproduct $\Delta_\mu$, and moreover satisfy the following analogues of formulas \eqref{eqn:simple alpha} and \eqref{eqn:imaginary alpha}:
\begin{align}
&\alpha_{\pm [u;v)} \left(P_{\pm [i;i+a)}^\mu\right) = \pm \delta_{(u,v)}^{(i,i+a)} \label{eqn:simple alpha in b} \\
&\alpha_{\pm [s;s+ln)} \left(P_{\pm l\bde,r}^\mu\right) = \pm \delta_{s \text{ mod }g}^r \label{eqn:imaginary alpha in b}
\end{align}

\subsection{} Proposition \ref{prop:endowed} gives us formulas for the coproducts of the elements \eqref{eqn:f mu}. Meanwhile, the elements \eqref{eqn:primitive elements} are primitive, so there are no intermediate terms in their coproduct. Therefore, we may apply relation \eqref{eqn:dd} between the two halves of the bialgebra $\CB_\mu$, and obtain the following formulas: 
\begin{align*}
&\Big [ P_{\pm [i;j)}^\mu, F_{\mp [i';j')}^\mu \Big ] = \Big \langle P_{\pm [i;j)}^\mu, F_{\mp [i';j')}^\mu \Big \rangle \left[ \left( \frac {\psi_i}{\psi_j} \barc^{\frac {i-j}{\mu}} \right)^{\pm 1} - \left( \frac {\psi_{j'}}{\psi_{i'}} \barc^{\frac {j'-i'}{\mu}} \right)^{\pm 1} \right] \\
&\Big [ P_{\pm [i;j)}^\mu, \bF_{\mp [i';j')}^\mu \Big ] = \Big \langle P_{\pm [i;j)}^\mu, \bF_{\mp [i';j')}^\mu \Big \rangle \left[ \left( \frac {\psi_i}{\psi_j} \barc^{\frac {i-j}{\mu}} \right)^{\pm 1} - \left( \frac {\psi_{j'}}{\psi_{i'}} \barc^{\frac {j'-i'}{\mu}} \right)^{\pm 1} \right] 
\end{align*}
for all $(i,j),(i',j') \in \zzz$ such that $j-i=j'-i' \in \BN \mu$. Similarly, we have:
\begin{align*}
&\Big [ P_{\pm l\bde,r}^\mu, F_{\mp [i';j')}^\mu \Big ] = \Big \langle P_{\pm l\bde, r}^\mu, F_{\mp [i';j')}^\mu \Big \rangle \left[ c^{\mp l} \barc^{\mp \frac {nl}{\mu}}  - c^{\pm l} \barc^{\pm \frac {nl}{\mu}} \right] \\
&\Big [ P_{\pm l\bde,r}^\mu, \bF_{\mp [i';j')}^\mu \Big ] = \Big \langle P_{\pm l\bde, r}^\mu, \bF_{\mp [i';j')}^\mu \Big \rangle \left[ c^{\mp l} \barc^{\mp \frac {nl}{\mu}}  - c^{\pm l} \barc^{\pm \frac {nl}{\mu}} \right] 
\end{align*}
for all $l \in \BZ$ and $(i',j') \in \zzz$ such that $nl = j'-i' \in \BN \mu$, and $r \in \BZ/g\BZ$ where $g = \gcd(n,\text{numerator } \mu)$. Similar formulas were worked out in \cite{PBW} between the $p$ and $f,\barf$ generators in $\CE_\mu$, but with explicit numbers instead of the pairings in the right-hand side. Therefore, the fact that $\Upsilon_\mu$ is an isomorphism implies the following explicit formulas for the pairings above:
\begin{align}
&\Big \langle P_{\pm [i;j)}^\mu, F_{\mp [i';j')}^\mu \Big \rangle = \mp \delta^{(i,j)}_{(i',j')} \cdot \oq_\pm^{-\frac {\gcd(k,j-i)}n} \label{eqn:main pair 1} \\
&\Big \langle P_{\pm [i;j)}^\mu, \bF_{\mp [i';j')}^\mu \Big \rangle = \pm \delta^{(i,j)}_{(i',j')} \cdot \oq_\pm^{\frac {\gcd(k,j-i)}n} \label{eqn:main pair 2} \\
&\Big \langle P_{\pm l\bde, r}^\mu, F_{\mp [i';j')}^\mu \Big \rangle = \mp \delta_{i' \text{ mod }g}^r \cdot \oq_\pm^{-\frac {\gcd(k,nl)}n} \label{eqn:main pair 3} \\
&\Big \langle P_{\pm l\bde, r}^\mu, \bF_{\mp [i';j')}^\mu \Big \rangle = \pm \delta_{i' \text{ mod }g}^r \cdot \oq_\pm^{\frac {\gcd(k,nl)}n} \label{eqn:main pair 4} 
\end{align}
for all applicable indices. Comparing the formulas above with \eqref{eqn:simple alpha in b}--\eqref{eqn:imaginary alpha in b} yields: \\

\begin{proposition}
\label{prop:pair f}

For any $(i,j) \in \zzz$ and $k \in \BN$ such that $\mu = \frac {j-i}k$, we have:
\begin{align}
&\left \langle X, \bF_{-[i;j)}^{(-k)} \right \rangle = \alpha_{[i;j)}(X) \cdot \oq_+^{\frac {\gcd(j-i,k)}n} \label{eqn:pair f 1} \\
&\left \langle \bF_{[i;j)}^{(k)}, Y \right \rangle = \alpha_{-[i;j)}(Y) \cdot \oq_-^{\frac {\gcd(j-i,k)}n} \label{eqn:pair f 2}
\end{align}
for all $X \in \CB_\mu^+$ and $Y \in \CB_{\mu}^-$. \\

\end{proposition}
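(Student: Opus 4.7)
The plan is to establish \eqref{eqn:pair f 1} by induction on the length of a PBW monomial representing $X$; the proof of \eqref{eqn:pair f 2} is entirely analogous. Since $\Upsilon_\mu : \CE_\mu^+ \xrightarrow{\sim} \CB_\mu^+$ is a bialgebra isomorphism by Proposition \ref{prop:iso small}, the space $\CB_{\mu|[i;j),k}$ is spanned by ordered products of the primitive generators $P^\mu_{[i_r;i_r+a_r)}$, $P^\mu_{l_r\bde,s_r}$ of Subsection \ref{sub:pbw stuff}, so by bilinearity it suffices to verify \eqref{eqn:pair f 1} on such monomials. The base case --- $X$ a single primitive generator --- is immediate from comparing the explicit pairings \eqref{eqn:main pair 2} and \eqref{eqn:main pair 4} with the normalizations \eqref{eqn:simple alpha in b} and \eqref{eqn:imaginary alpha in b} of $\alpha_{[i;j)}$.

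For the inductive step, I would factor $X = X_1 X_2$ with $X_1 \in \CB_{\mu|[s;j),k_1}$ and $X_2 \in \CB_{\mu|[i;s),k_2}$ for some intermediate $s$, and apply the bialgebra property \eqref{eqn:bialg 1} --- valid for the slope coproduct $\Delta_\mu$ by Proposition \ref{prop:restrict} --- together with the coproduct formula for $\barf^\mu_{-[i;j)}$ from Proposition \ref{prop:endowed} transferred through $\Upsilon_\mu$, obtaining
\begin{equation*}
\left\langle X_1 X_2,\, \bF^\mu_{-[i;j)} \right\rangle \;=\; \sum_{s'} \left\langle X_2,\, \bF^\mu_{-[s';j)} \right\rangle \cdot \left\langle X_1,\, \frac{\psi_j}{\psi_{s'}}\, \bF^\mu_{-[i;s')}\, \barc^{(j-s')/\mu} \right\rangle.
\end{equation*}
Horizontal- and vertical-degree considerations should collapse the sum to $s' = s$. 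Once the Cartan decoration $\frac{\psi_j}{\psi_s}\barc^{(j-s)/\mu}$ is shown to pair trivially with $X_1$ (see next paragraph), the inductive hypothesis applied to each factor produces $\alpha_{[s;j)}(X_1)\alpha_{[i;s)}(X_2) \cdot \oq_+^{(\gcd(j-s,k_1) + \gcd(s-i,k_2))/n}$. Writing $\mu = a/b$ in lowest terms with $j-s = at_1$, $k_1 = bt_1$, $s-i = at_2$, $k_2 = bt_2$, the powers of $\oq_+$ match because $\gcd(at_r,bt_r) = t_r$ while $\gcd(a(t_1{+}t_2), b(t_1{+}t_2)) = t_1 + t_2$, and the factorization of $\alpha$-values follows from the pseudo-multiplicativity \eqref{eqn:pseudo} --- closing the induction.

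The main obstacle will be verifying that the Cartan decoration $\frac{\psi_j}{\psi_s}\barc^{(j-s)/\mu}$ contributes only the trivial scalar, with no spurious powers of $q$ or $\oq$. This is precisely what the twisted coproduct \eqref{eqn:twisted coproduct}--\eqref{eqn:twisted coproduct explicit} was engineered to ensure: the $c^{(\hdeg)_n}\barc^{\vdeg}$ weights inserted on the $\tCA^+$ side are designed to cancel the Cartan factors emerging in $\Delta_\mu$ of elements of $\tCA^-$. Concretely, I expect the argument to isolate the Cartan part of $\Delta(X_1)$ (using $\e(X_1) = 0$) and then apply the $\psi$-commutation rule \eqref{eqn:psi x} together with the degree constraint $\hdeg X_1 = [s;j)$ to telescope all Cartan eigenvalues to $1$. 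Once this bookkeeping is settled, no further difficulty should arise.
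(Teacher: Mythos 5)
Your argument is essentially the paper's own proof, just mirrored: the paper proves \eqref{eqn:pair f 2} by checking the identity on the primitive generators (comparing \eqref{eqn:main pair 2}, \eqref{eqn:main pair 4} with \eqref{eqn:simple alpha in b}, \eqref{eqn:imaginary alpha in b}) and then propagating it multiplicatively via the bialgebra property \eqref{eqn:bialg 2} of the slope-$\mu$ pairing of Proposition \ref{prop:restrict}, the explicit coproduct of $\bF^\mu$, and the pseudo-multiplicativity \eqref{eqn:pseudo} together with the additivity of $\gcd$ along a fixed slope, exactly as you do for \eqref{eqn:pair f 1} using \eqref{eqn:bialg 1} and a factorization $X = X_1 X_2$. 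Your handling of the Cartan decorations is no less careful than the paper's (which silently absorbs them when invoking \eqref{eqn:pseudo}), so the proposal is correct and takes the same route.
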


\begin{proof} We will only prove \eqref{eqn:pair f 2} as it will be used in \cite{Rectangular}, and leave the analogous formula \eqref{eqn:pair f 1} as an exercise to the interested reader. Comparing formulas \eqref{eqn:simple alpha in b}, \eqref{eqn:imaginary alpha in b} with \eqref{eqn:main pair 2}, \eqref{eqn:main pair 4} shows us that formula \eqref{eqn:pair f 2} holds when $Y$ is one of the primitive generators of $\CB_\mu^-$. Therefore, all that remains to show is that if \eqref{eqn:pair f 2} holds for $Y,Y' \in \CB_\mu^-$, then it also holds for $Y * Y'$. This follows by comparing:
$$
\left \langle \bF_{[i;j)}^\mu, Y * Y' \right \rangle \stackrel{\eqref{eqn:bialg 2},\eqref{eqn:coproduct pp}}= \begin{cases} \left \langle \bF_{[i;s)}^\mu, Y \right \rangle \left \langle \bF_{[s;j)}^\mu, Y' \right \rangle  &\text{if } \exists s \text{ s.t. } \hdeg Y = -[i;s), \hdeg Y' = -[s;j) \\ 0 &\text{otherwise} \end{cases} 
$$
with \eqref{eqn:pseudo}.

\end{proof}

\end{document}